\title[ASD on K3]{ASD on K3}
\theoremstyle{plain}
\newtheorem{thm}{Theorem}[section]
\newtheorem{prop}[thm]{Proposition}
\newtheorem{defn}[thm]{Definition}
\newtheorem{lem}[thm]{Lemma}
\newtheorem{cor}[thm]{Corollary}
\theoremstyle{defn}
\newtheorem{rk}{Remark}
\numberwithin{equation}{section}
\flushbottom \thispagestyle{empty} \pagestyle{plain}
\newcommand{\be}{\begin{equation}}
\newcommand{\bea}{\begin{eqnarray}}
\newcommand{\eea}{\end{eqnarray}} \newcommand{\ee}{\end{equation}}
 \def\ba{\begin{eqnarray}}
\def\ea{\end{eqnarray}}
\def\E{{\mathcal E}}
\def\Im{{\rm Im}}
\def\det{{\rm det}}
\def\log{\,{\rm log}\,}
\def\exp{\,{\rm exp}\,}
\def\ti{\tilde}
\def\cO{{\mathcal O}}
\def\cO{{\mathcal O}}
\def\cI{{\mathcal I}}
\def\[{{\bf [}}
\def\]{{\bf ]}}
\begin{document}
\author{Ved Datar$^{*}$} \thanks{$^{*}$ Supported in part by  NSF RTG grant DMS-1344991.}
\address{Department of Mathematics, Indian Institute of Science, Bangalore, India.}
\email{vvdatar@iisc.ac.in}
\author{Adam Jacob$^{\dagger}$}   \thanks{$^{\dagger}$ Supported in part  by a grant from the Hellman Foundation.}
\address{Department of Mathematics, UC Davis, One Shields Ave, Davis, CA.}
\email{ajacob@math.ucdavis.edu}
\author{Yuguang Zhang$^{\ddagger}$}
 \thanks{$^{\ddagger}$ Supported  by   the  Simons Foundation's program  Simons Collaboration on Special Holonomy in Geometry, Analysis and Physics (grant \#488620)}
\address{Department of Mathematical Sciences, University of Bath, Bath, BA2 7AY, UK.}
\email{yuguangzhang76@yahoo.com}

\title{Adiabatic limits   of  Anti-self-dual    connections on collapsed K3 surfaces}
\begin{abstract}
We prove a convergence result for a family of  Yang-Mills connections over an  elliptic $K3$ surface $M$ as the fibers collapse. In particular, assume $M$ is projective, admits a section, and has singular fibers of Kodaira type $I_1$ and type $II$. Let $\Xi_{t_k}$ be a sequence of $SU(n)$ connections on a principal $SU(n)$ bundle over $M$, that are anti-self-dual  with respect to a sequence of Ricci flat metrics collapsing the fibers of $M$. Given certain non-degeneracy assumptions on the spectral covers induced by $\bar\partial_{\Xi_{t_k}}$, we show that away from a finite number of fibers, the curvature $F_{\Xi_{t_k}}$ is locally bounded in $C^0$, the connections converge along a subsequence (and modulo unitary gauge change) in $L^p_1$ to a limiting $L^p_1$  connection $\Xi_0$, and the restriction of $\Xi_0$ to any fiber is $C^{1,\alpha}$ gauge equivalent to a flat connection with holomorphic structure determined by the sequence of spectral covers. Additionally, we relate the connections $\Xi_{t_k}$ to a converging family of special Lagrangian multi-sections in the mirror HyperK\"ahler structure, addressing a conjecture of Fukaya in this setting.
\end{abstract}
\maketitle

\maketitle

\section{Introduction}
The adiabatic limit of anti-self-dual connections on 4-manifolds has been extensively  studied by many  authors, with various interesting applications to problems in gauge theory, geometry, and physics. In \cite{DS,DS2}, Dostoglou and Salamon proved the Atiyah-Floer conjecture (see \cite{At2}) by showing that the adiabatic limits of self-dual connections on the product of $\mathbb{R}$ and the mapping cylinder of a principal  $SO(3)$-bundle over a compact Riemann surface of higher  genus  (greater than one) produce holomorphic curves  in the moduli space of flat connections on the $SO(3)$-bundle.  Later, the  behavior of anti-self-dual $SU(n)$-connections along the   adiabatic degenerations  of the product of two compact Riemann surfaces  of higher genus  was studied in \cite{Ch1} and \cite{Nis2} respectively, which gave mathematical rigorous proofs of the  reduction from the 4-dimensional Yang-Mills theory to  2-dimensional sigma models discovered by physicists (cf. \cite{BJSV}).
 Based on previous works of gauge theory on higher dimensional manifolds  \cite{DT,T},   \cite{Ch2} generalized the 4-dimensional case to  complex anti-self-dual connections on  products of Calabi-Yau surfaces.   The Atiyah-Floer conjecture was studied in
  \cite{Du}   for   principal   $PU(n)$-bundles.

Another motivation for the study of adiabatic limits of anti-self-dual connections arises in the context of  the mirror symmetry.
In  \cite{SYZ},  Strominger, Yau and Zaslow proposed  a  conjecture, called  the SYZ conjecture,  for   constructing   mirror Calabi-Yau manifolds via dual special Lagrangian fibrations.   Gross, Wilson, Kontsevich,   Soibelman and Todorov  \cite{GW,KS,KS2} proposed  an alternative  version of the SYZ conjecture by using   the collapsing of Ricci-flat K\"{a}hler metrics.
 Motivated by the study of homological mirror symmetry,
 a gauge theory analogue of the collapsing  of Ricci-flat K\"{a}hler metrics  was conjectured by Fukaya (Conjecture 5.5 in \cite{Fuk}), which relates the adiabatic limits   of  anti-self-dual    connections on Calabi-Yau manifolds  to special Lagrangian  cycles on the mirror Calabi-Yau manifolds.  This conjecture  was studied in the preprints \cite{Fu,Nis1} for Hermitian-Yang-Mills   connections on 2-dimensional complex torus, and in \cite{Ch3} for the case of Hermitian-Yang-Mills connections on higher dimensional semi-flat Calabi-Yau manifolds.
   The present  paper proves a version of   Fukaya's conjecture for anti-self-dual connections on elliptically fibered    K3 surfaces.

Let $M$ be a projective  elliptically fibered  $K3$ surface, $
f:M\rightarrow N\cong \mathbb{CP}^1,$  admitting    a section $\sigma:N\rightarrow M$.  Let $\alpha$ be an ample class on $M$,  $\alpha_{t} =t \alpha + f^*c_1(\mathcal{O}_{\mathbb{CP}^1}(1))$, $t\in (0,1]$, and let $\omega_t \in \alpha_{t} $ be the unique Ricci-flat K\"{a}hler-Einstein metric in this class (from  \cite{Ya}).  We denote by $ \mathrm{g}_t$ the  corresponding Riemannian metric of $\omega_t$, which is a HyperK\"{a}hler metric.
 The limit behavior of $\omega_t$  as $t\rightarrow  0$ was studied by Gross and Wilson in \cite{GW}, for K3 surfaces with only  type $I_1$ singular fibers. This was generalized to any  elliptically fibered  $K3$ surface in \cite{To1,GTZ,GTZ2}. More precisely, if $N_0\subseteq N$ denotes the complement of the discriminant locus of $f$, i.e. for any $w\in N_0$ the fiber $M_w=f^{-1}(w)$ is a smooth elliptic curve, then it is proved in \cite{GTZ} that $\omega_t$ converges to $f^*\omega$ in the locally  $C^\infty$-sense on $M_{N_0}=f^{-1}(N_0)$, where $\omega$ is a K\"{a}hler metric on $N_0$ with Ricci curvature ${\rm Ric}(\omega)=\omega_{WP}$ (obtained previously by \cite{ST,To1}),  and $\omega_{WP}$ denotes the Weil-Petersson metric of the fibers of $f$. Furthermore,
   $(M, \omega_t)$ converges to a compact metric space $Y$ homeomorphic to $N$ in the Gromov-Hausdorff sense  \cite{GTZ2}.

Assume that $f: M \rightarrow N$ has only   singular fibers of  Kodaira type $I_1$ and type  $II$.
   Let  $P$ be a principal $SU(n)$-bundle on $M$, and
 $(\mathcal{V}, H)$ be the  smooth  Hermitian vector bundle of rank $n$ obtained by the twisted product, i.e. $\mathcal{V}\cong P\times_{\rho}\mathbb{C}^n$ where $\rho$ is the standard $SU(n)$ representation on $\mathbb{C}^n$.
 Assume that there is a family of anti-self-dual connections $\Xi_t$ on $P$
  with respect to $\mathrm{g}_t$, for $t\in (0,1]$. This is equivalent to  the curvature $F_{\Xi_t}$ satisfying  $$F_{\Xi_t}\wedge \omega_t =0, \  \  \  {\rm and}  \  \    F_{\Xi_t}\wedge \Omega =0, $$ where $\Omega$ is a holomorphic symplectic form on $M$.   For each $t\in (0,1]$, $\Xi_t$ induces a holomorphic structure on $\mathcal{V}$, and we denote the resulting holomorphic bundle of rank $n$ as $V_t$.

  Under some non-degeneracy assumptions on the   behavior of $V_t$, the main result of this paper, Theorem \ref{thm-main}, 
   asserts that for any sequence $t_k \rightarrow 0$, there exists a Zariski open subset $N^o \subset N_0$ such that   $u_k(\Xi_{t_k})  $ converges subsequentially to $\Xi_0$  in the locally  $C^{0,\alpha}$-sense on $M_{N^o}$, for some sequence of  unitary gauge transformations  $u_k$  on $P$.  Furthermore,     the restriction  of the limit  $\Xi_0 $ to any fiber  is  unitary gauge equivalent to  a smooth  flat $SU(n)$-connection induced  by a holomorphic curve in $M$, which can be regarded as a multi-section of $f$. Furthermore, $\Xi_0 $ is the Fourier-Mukai transform of a certain flat $U(1)$-connection on the multi-section.
       We refer the reader to Theorem \ref{thm-main} and Theorem \ref{thm-main2} for more precise statements. By performing the HyperK\"{a}hler rotation, we can use this result to show a version of Fukaya's conjecture, relating the connections $\Xi_{t_k}$ to a converging family of special Lagrangian multi-sections in the mirror HyperK\"ahler structure.

In comparison to previous results on the adiabatic limits  of anti-self-dual connections, including, for example  \cite{DS,Ch1,Nis2,Fuk2}, one essential difficulty we encounter is that the moduli space $\mathfrak{M}_E(n)$ of flat $SU(n)$-connections on a smooth  elliptic curve  is not smooth, and actually, the whole  $\mathfrak{M}_E(n)$ is degenerated, i.e. there is no smooth point (cf. \cite{Nis11}).  Specifically, since every flat connection is gauge equivalent to a reducible connection, Poincar\'e type inequalities may not follow, creating immense analytic difficulties. The same issue also appears  for   the case of $T^4=\mathbb{C}^2/\mathbb{Z}^4$ as in \cite{Fu,Nis1}. To overcome this, we take  a totally  different approach   from \cite{Fu,Nis1}, which is inspired by the study of collapsing of Einstein  4-manifolds \cite{And,CT}. In addition we adapt some of the  arguments from \cite{DS,DS2}, as suggested in \cite{Fuk}.

Fortunately, in the literature there is a very satisfactory theory about the moduli spaces of semi-stable holomorphic bundles of rank $n$ on elliptic curves in algebraic geometry.  In the proof of Theorem \ref{thm-main}, we utilize the well understood results of  holomorphic bundles on elliptic fibered surfaces in \cite{FMW,FMW0,Fr1}, as opposed to the pseudo-holomorphic curve theory in  symplectic geometry used   in \cite{DS,Nis1}.  Additionally, in the course of our analysis, we obtain a Poincar\'{e} type inequality for the curvatures of  $SU(n)$-connections on smooth elliptic curves, which relies on the earlier work of the first two named authors (cf. \cite{DJ}). This enables  us to generalize certain arguments of  \cite{DS} to the present case. Finally, the small energy estimates for sufficiently collapsed Einstein 4-manifolds developed in \cite{And} can be adapted to the case of Yang-Mills connections on collapsed 4-manifolds, which is used to finish the proof of the main theorem.

Here we  outline  the paper briefly. Section 2 reviews   the background  notions, and preliminary results, which are needed  for the main theorem.  We recall the standard background on gauge theory in Section 2.1,  and the theory of holomorphic vectors bundles on elliptic curves   in Section 2.2.  Section 2.3 reviews the previous work about the gauge fixing on elliptic curves  by the first two named authors, which is one essential ingredient  in the proof of the main result of the present paper.  Section 2.4 recalls the   work of Friedman-Morgan-Witten \cite{FMW,FMW0}, where the relationship between  holomorphic bundles and spectral covers on elliptic surfaces is established. This work is the algebro-geometric input needed to overcome the difficulty of non-smoothness of the moduli spaces of flat connections. In Section 2.5, we setup some notations for the collapsing of Ricci-flat K\"{a}hler Einstein metrics on K3 surfaces, and leave more detailed discussions to the Appendix.   Section 2.6 reviews the notion of Fourier-Mukai transform.   We  adapt the small energy estimates for sufficiently collapsed Einstein 4-manifolds by Anderson  \cite{And} to the present case in Section 2.7.

 Section 3 is devoted to the main theorems of this paper.  We state the main theorems, and in Section 3.1, we apply the main theorems to the SYZ mirror symmetry for K3 surfaces, which proves a version of Fukaya's conjecture in \cite{Fuk}.  Section 4 contains the proof of  Theorem \ref{thm-main}  assuming some important a priori estimates, which are established  in the sections that follow. Section \ref{Poincare} contains the key analytic result  of the paper, namely the Poincar\'{e} type inequality mentioned above.  In Section 6, we obtain  a $C^0$-bound for curvature under the assumption of a certain decay rate of curvatures as the fibers collapse. Section 7 studies the relationship between the energy of curvature and the spectral covers. In Section 8, we use a blowup argument to prove the desired curvature decay rate, thereby completing the proof of Theorem \ref{thm-main}.  Section 9 proves Theorem \ref{thm-main2}.

 Finally, the appendix has some results of independent interest, where we study the collapsing rate of Ricci-flat K\"{a}hler-Einstein metrics on general  Abelian fibered Calabi-Yau manifolds. Here we improve on the previous results of \cite{GTZ,GTZ2,TZ}.
\\

\noindent {\bf Acknowledgements:}  We would like to thank   Mark Haskins for introducing the authors to the question, and some valuable comments. The work  was initiated when the second and the third named author attended the
First Annual Meeting 2017 of  the Simons Collaboration on Special Holonomy in Geometry, Analysis and Physics. We thank the  Simons Foundation and the organisers of the meeting  for  providing this opportunity.
We also thank Simon  Donaldson,   Mark Gross,  Valentino Tosatti, Yuuji Tanaka, and  Michael Singer for some discussions.

\section{Preliminaries}
In this section, we review  the various notions, and preliminary results, which are needed  for the main theorem. Although there is quite a bit of background to cover, we find it necessary to provide all the important details before we can state our results.

Let $M$ be a projective,  elliptically fibered  $K3$ surface. Denote the fibration by $
f:M\rightarrow N\cong \mathbb{CP}^1.$ Assume $f$ admits  a section $\sigma:N\rightarrow M$, and furthermore assume $f$ has only singular fibers of Kodaira type $I_1$ and type $II$. Let $I$ denote the holomorphic structure on $M$ for which $f$ is holomorphic.  We denote by $S_N$   the discriminant locus $f$, and  $N_0=N\backslash S_N$ the regular locus. The preimage of the regular locus is denoted by  $M_0:=f^{-1}(N_0)$. For any point $w\in N$, the fiber over this point is written $M_w:=f^{-1}(w)$. Additionally, for any subset $U\subset N$, we use the notation $M_U:=f^{-1}(U)$.

Let $P$ be a principal $SU(n)$-bundle on $M$, and $\mathcal{V} $ be the smooth vector   bundle of rank $n$ equipped with an Hermitian metric  $H$ induced by $P$, i.e. $\mathcal{V}=P\times_{\rho}\mathbb{C}^n$, where $\rho$ is the standard unitary   representation of $SU(n)$ on $\mathbb{C}^n$. Note that first Chern class of $\mathcal{V}$ vanishes, i.e. $c_1(\mathcal{V})=0$.

For computing norms it is convenient to use a fixed K\"ahler form $\omega$ on $M$, which lies in a fixed K\"ahler class $\alpha$. Unless otherwise specified, all norms are computed with respect to $\omega$ and $H$.  We let $\langle\cdot, \cdot\rangle_w$ denote the inner product of the space of forms  induced by $\omega|_{M_w}$ on the fiber $M_w$, and $\|\cdot\|_w $ the respective $L^2$-norm on $M_w$.

 Throughout the paper, we  let $C$ denote  constants,  which only depend  on fixed background data, whose value may change from line to line.  The constants  may   depend on a compact or open sets contained in $N$, and this dependence is either explicitly stated, or clear from context.


\subsection{Anti-self-dual  connections}  We begin by recalling the standard background on anti-self-dual connections, and readers  are referred to texts \cite{AtB,DK,FU,Kob} for details.

 Given the definition of $P$ above, let $\Xi$ be a connection on $P$, or  an  $SU(n)$-connection of $\mathcal{V} $.  If the curvature $F_{\Xi}$ satisfies
 $$ F_{\Xi}^{0,2}=0,  \  \  \ {\rm or \  \  equivalently} \  \   F_{\Xi}=F_{\Xi}^{1,1}, $$
 then $\Xi$ induces a holomorphic structure on $\mathcal{V}$. We denote the resulting holomorphic bundle as $V_{\Xi}$, and $\bar{\partial}_{\Xi}$ the corresponding Cauchy-Riemann operator. Specifically, we can write the covariant derivative $d_{\Xi}:C^\infty (\wedge^q T^*M \otimes\mathcal{V})\rightarrow C^\infty (\wedge^{q+1} T^*M \otimes\mathcal{V}) $  as  $d_{\Xi}= \partial_{\Xi}+\bar{\partial}_{\Xi}$, and the  Cauchy-Riemann operator is the $(0,1)$-component.
By construction $\Xi$ is the unique Chern connection induced by  $H$ and $\bar\partial_{\Xi}$.

Let $\mathcal{A}^{1,1}$ be  the space  of all unitary connections  with vanishing $(0,2)$-component of curvatures  on  $P$, so for any $\Xi \in \mathcal{A}^{1,1}$, we have $F_{\Xi}^{0,2}=0$. If $\mathcal{G}$ denotes the unitary gauge group, i.e. the space of unitary automorphisms of $\mathcal{V}$ covering the identity on $M$,  then $\mathcal{G}$ acts on $\mathcal{A}^{1,1} $ by $$u(\Xi)=\Xi+ u^{-1}(d_\Xi u),$$ for  $u\in \mathcal{G}$ and $\Xi \in \mathcal{A}^{1,1}$.  The $\mathcal{G}$-action extends to an action of
 the complex gauge group $\mathcal{G}_{\mathbb{C}}$, which consists all automorphisms of $\mathcal{V}$ covering the identity on $M$,  on $\mathcal{A}^{1,1}$ by  $$ g(\Xi)= \Xi +g^{-1}  (\bar{\partial}_{\Xi}g)-(g^{-1} (\bar{\partial}_{\Xi}g))^*,  $$ for   $g \in \mathcal{G}_{\mathbb{C}}$, where $(\cdot)^*$ denotes the conjugate transpose.
 Any two connections $\Xi_1$ and $\Xi_2 \in \mathcal{A}^{1,1}$ induce isomorphic holomorphic structures on $\mathcal{V}$ if and only if $\Xi_1=g(\Xi_2)$ for a certain $g\in \mathcal{G}_{\mathbb{C}}$. Therefore the quotient space $\mathcal{A}^{1,1}/ \mathcal{G}_{\mathbb{C}}$ parameterizes the holomorphic structures on  $\mathcal{V}$.

  Note that if $g \in \mathcal{G}_{\mathbb{C}}$ is an Hermitian gauge, i.e. $g=g^*$, then  for any $\Xi\in \mathcal{A}^{1,1}$, the curvature transforms via
 \bea   F_{ g(\Xi)}& = & F_{\Xi}+ \partial_{\Xi}(g^{-1}(\bar{\partial}_{\Xi}g)) - \bar{\partial}_{\Xi}( (\partial_{\Xi}g)g^{-1})   \nonumber\\ &  & + \partial_{\Xi}g g^{-2} \bar{\partial}_{\Xi}g - g^{-1}\bar{\partial}_{\Xi}g \partial_{\Xi}g g^{-1}.  \nonumber
\eea
  The transformation of $\Xi$ to $g(\Xi)$ by a Hermitian gauge $g$ is equivalent to fixing the holomorphic structure on a bundle $V$, and then changing the Hermitian metric (see \cite{Don1} for details).

Given a K\"{a}hler class $\alpha$ on $M$,  choose a K\"ahler form $\omega\in \alpha$, and let $\mathrm{g}$  be the  corresponding Riemannian metric.

\begin{defn}
An  $SU(n)$-connection $\Xi$ is called  anti-self-dual with respect to the K\"{a}hler metric $\omega$ if $\Xi$ satisfies the equation  \begin{equation}\label{asd}  \star_{ \mathrm{g}} F_{\Xi}=- F_{\Xi} , \end{equation} where $\star_{\mathrm{g}} $ denotes the Hodge star operator of  $\mathrm{g}$.
\end{defn}
For any anti-self-dual connection,  Chern-Weil theory gives
\begin{equation}\label{cw0} \int_M |F_{\Xi}|_{\omega}^2 \omega^2=- \int_M {\rm tr}(F_{\Xi}\wedge F_{\Xi})= 8 \pi^2 c_2(\mathcal{V}). \end{equation}  Furthermore, anti-self-dual connections are absolute minima  of the Yang-Mills functional on $P$, and thus satisfy the Yang-Mills equations  $$d_{\Xi}F_{\Xi}=0, \  \  \ {\rm and}  \  \  d_{\Xi}^* F_{\Xi}=0.$$ This implies  the following  Weitzenb\"{o}ck formula for the curvature of $\Xi$
 \begin{equation}\label{wei} 0=\Delta_{\Xi}F_{\Xi} =\nabla_\Xi^* \nabla_\Xi F_{\Xi}+R_\omega \# F_{\Xi}+ F_{\Xi}\# F_{\Xi}.  \end{equation}
 Here $R_\omega$ denotes the Riemannian curvature of $\omega$, and  $S\#T$  denotes some algebraic bilinear expression involving the tensors $S$ and $T$, where the exact form is not important for the present paper.

In complex  dimension 2, a connection $\Xi$ is anti-self-dual if and only if it is Hermitian-Yang-Mills \cite{DK},
 which is given by the following set of equations
  \begin{equation}\label{HYM0}  F_{\Xi}^{1,1}\wedge \omega =0,  \qquad  \  \ {\rm and} \qquad  \  \ F_{\Xi}^{0,2} =0.   \end{equation}   Thus an anti-self-dual connection $\Xi$ induces a holomorphic structure on $\mathcal{V} $, and we denote the resulting holomorphic vector bundle as $V_\Xi$.

For a given K\"{a}hler class $\alpha$ on $M$,  a holomorphic vector  bundle $V$ is called $\alpha$-stable (respectively $\alpha$-semi-stable), if for any proper torsion-free coherent  subsheaf  $\mathcal{F}$, the following inequality holds $$\frac{c_1(\mathcal{F})\cdot \alpha}{{\rm rank}(\mathcal{F}) }<  \frac{c_1(V)\cdot \alpha}{{\rm rank}(V) }  \   \   ({\rm respectively }  \  \  \leq).  $$
Fundamental work of Donaldson, Uhlenbeck, and Yau, asserts the equivalence between stability   and the existence of Hermitian-Yang-Mills connections (cf. \cite{Don1,UY}).  In partcular, we state the following Theorem, restricted to the $SU(n)$ case.

    \begin{thm}[Donaldson \cite{Don1}, Uhlenbeck-Yau \cite{UY}]\label{DUY} Let $(\mathcal{V}, H)$ be  the  smooth Hermitian bundle induced by a principal $SU(n)$-bundle $P$,  $\alpha$ be a K\"{a}hler class on $M$, and $\omega\in \alpha$  a K\"{a}hler metric.  If  the holomorphic bundle $V$ determined by a  $\mathcal{G}_{\mathbb{C}}$-orbit $ O$ in $\mathcal{A}^{1,1}$ is $\alpha$-stable, then $O$ contains an anti-self-dual connection (equivalently a Hermitian-Yang-Mills connection).  Furthermore, this connection is unique up to unitary gauge transformations. Conversely, if $\Xi$ is an anti-self-dual connection with respect to $\omega$,  and  the holomorphic bundle $V_\Xi$  induced by $\Xi$ is irreducible, then $V_\Xi$ is  $\alpha$-stable.
     \end{thm}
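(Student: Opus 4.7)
The plan is to prove existence and uniqueness by fixing the underlying holomorphic bundle within the orbit $O$ and varying the Hermitian metric, and then to deduce the converse from a Weitzenb\"{o}ck argument applied to an orthogonal projection onto a destabilizing subsheaf.

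For the existence direction, I would pick $\Xi_0 \in O$, set $V = V_{\Xi_0}$, and use the formula $g(\Xi) = \Xi + g^{-1}\bar\partial_\Xi g - (g^{-1}\bar\partial_\Xi g)^*$ for Hermitian $g$ to identify $O/\mathcal{G}$ with the space of Hermitian metrics on the fixed holomorphic bundle $V$; in this picture the ASD condition reduces to $i\Lambda_\omega F_H = 0$, since the $(0,2)$-part and the trace vanish automatically in the $SU(n)$ setting. Following Donaldson, I would run the heat flow
\begin{equation*}
H^{-1}\dot H = -i\Lambda_\omega F_H,
\end{equation*}
starting from a background metric $H_0$, establish long-time existence by standard parabolic estimates, and seek subsequential convergence as $t\to\infty$. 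Higher-order control follows once one has a uniform $C^0$-estimate on the endomorphism $h = H_0^{-1} H$, so the problem reduces to that estimate.

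The crux, and the main obstacle, is exactly this $C^0$-estimate, which is where $\alpha$-stability enters essentially. If $\sup_M \log{\rm tr}(h)$ were unbounded along the flow, the Uhlenbeck-Yau machinery extracts from a suitable rescaled limit a nontrivial weakly holomorphic $L^2_1$ projection onto a torsion-free coherent subsheaf $\mathcal{F} \subsetneq V$ with $\mu_\alpha(\mathcal{F}) \geq \mu_\alpha(V) = 0$, contradicting stability. Once existence is secured, uniqueness follows via a Weitzenb\"{o}ck/maximum-principle argument: if $\Xi$ and $g(\Xi)$ are both ASD, write $g = u h$ with $u$ unitary and $h$ positive self-adjoint; then $\log h$ satisfies an elliptic inequality using $\Lambda_\omega F_\Xi = \Lambda_\omega F_{g(\Xi)} = 0$, which combined with irreducibility forces $h$ to be a positive multiple of the identity, hence unitary after $SU(n)$-normalization.

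For the converse, assume $\Xi$ is ASD, $V_\Xi$ is irreducible, and, for contradiction, that $V_\Xi$ is not $\alpha$-stable, so there exists a torsion-free coherent subsheaf $\mathcal{F} \subsetneq V_\Xi$ with $\mu_\alpha(\mathcal{F}) \geq 0$. On the analytic open set where $\mathcal{F}$ is a holomorphic subbundle, let $\pi$ denote the $H$-orthogonal projection onto $\mathcal{F}$. The standard Chern-Weil identity
\begin{equation*}
\deg_\alpha(\mathcal{F}) = \int_M {\rm tr}(i\pi \Lambda_\omega F_\Xi) \frac{\omega^2}{2} - \int_M |\bar\partial_\Xi \pi|_\omega^2 \frac{\omega^2}{2},
\end{equation*}
together with $\Lambda_\omega F_\Xi = 0$ and $\deg_\alpha(\mathcal{F}) \geq 0$, forces $\bar\partial_\Xi \pi = 0$; hence $\mathcal{F}$ extends to a genuine holomorphic direct summand of $V_\Xi$, contradicting irreducibility. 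The deepest analytical ingredient throughout is the Uhlenbeck-Yau subsheaf extraction procedure, which is the essential obstacle one must overcome to complete the existence half of the theorem.
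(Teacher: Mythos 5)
The paper does not prove this theorem; it is quoted verbatim from the literature (Donaldson \cite{Don1}, Uhlenbeck--Yau \cite{UY}) and used as a black box. Your outline correctly reproduces the standard argument from those sources: Donaldson's heat flow on Hermitian metrics with the $C^0$-estimate on $h=H_0^{-1}H$ supplied by the Uhlenbeck--Yau weakly holomorphic destabilizing subsheaf, uniqueness via the maximum principle plus simplicity of stable bundles, and the converse via the Chern--Weil degree formula for the second fundamental form of a subsheaf. As a sketch it is accurate and complete in structure; the only caveat is that the subsheaf-extraction step you correctly flag as the crux is precisely the part that cannot be compressed, so nothing here substitutes for the cited proofs.
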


Note that if $\omega$ is a Ricci-flat K\"{a}hler-Einstein metric, then the corresponding Riemannian metric $\mathrm{g}$ is a HyperK\"{a}hler metric, and $(\omega, {\rm Re} (\Omega), {\rm Im} (\Omega))$ is a HyperK\"{a}hler triple (cf. \cite{GHJ}), where   $\Omega$ is a holomorphic symplectic form such that $$\omega^2={\rm Re} ( \Omega)^2= {\rm Im} (\Omega)^2, \  \  \omega\wedge \Omega=0,  \  \  {\rm and}  \  \ {\rm Re} ( \Omega)\wedge {\rm Im} (\Omega)=0. $$ Complex structures making $\mathrm{g}$ HyperK\"{a}hler are parameterized by $S^2$, and  any anti-self-dual connection $\Xi$ with respect to $ \mathrm{g}$ is also a Hermitian-Yang-Mills connection with respect to any such complex structure.  In the HyperK\"{a}hler case, the anti-self-dual equation (\ref{asd}) and the  Hermitian-Yang-Mills equation (\ref{HYM0})  are equivalent to the following system
  \begin{equation}\label{hyper}  F_{\Xi}\wedge \omega =0,  \  \  \ {\rm and} \  \  F_{\Xi}\wedge \Omega =0.  \end{equation} For the remainder of the paper, we mainly work with the above equations, as they are the most applicable to our setup.

The above equations \eqref{hyper} are given with respect to the complex structure $I$ making $f:M\rightarrow N$ holomorphic. By the HyperK\"{a}hler rotation, we have another complex structure $J$ such that the holomorphic symplectic form  $\Omega_J= {\rm Im} (\Omega)+ i \omega $, and the K\"{a}hler form $\omega_J= {\rm Re}( \Omega)$.  If $\Xi$ is an anti-self-dual connection  with respect to $ \mathrm{g}$,  then $\Xi$ also satisfies $ F_{\Xi}\wedge \omega_J =0, $ and $  F_{\Xi}\wedge \Omega_J =0$. Thus
     $\Xi$  induces a holomorphic bundle structure on $\mathcal{V}$ with respect to the complex structure $J$, denoted as $V_{\Xi, J}$, and  $\Xi$  is a  Hermitian-Yang-Mills connection on $V_{\Xi, J}$.

     We conclude this section by recalling Uhlenbeck's compactness theorems, which are divided into the cases of weak and strong compactness.

      \begin{thm}[Uhlenbeck \cite{U2,Weh2}]\label{Ucompact} Let $K$ be  a compact subset of $ M$. 
       \begin{itemize}
  \item[i)]{\rm [Weak compactness]}  If $\Xi_k$ is a sequence of unitary connections on $P|_K$ such that $\|F_{\Xi_k}\|_{L^p}\leq C$, for $p>2$, then there exists a sequence of unitary gauge transformations $u_k\in \mathcal{G}^{2,p}$ so that   $u_k(\Xi_k)$ converges along a subsequence in $L^p_{1,loc}$  to a  $L^p_1$-unitary connection $\Xi_\infty$  on $K$.
     \item[ii)]{\rm [Strong  compactness]}  If  we further assume that $\Xi_k$ is anti-self-dual with respect to a Riemannian metric $\mathrm{g}_k$, and   $\mathrm{g}_k$ converges smoothly to a smooth Riemannian metric $\mathrm{g}_\infty$ locally on $K$, then   $u_k(\Xi_k)$ converges to  $\Xi_\infty$ in the locally $C^\infty$-sense, and $\Xi_\infty$ is  anti-self-dual with respect to $\mathrm{g}_\infty$.  \end{itemize}  \end{thm}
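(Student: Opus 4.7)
The plan is to reduce global compactness to Uhlenbeck's local Coulomb gauge-fixing theorem on small balls, and then to patch the local gauges and, under the extra ASD hypothesis, bootstrap regularity. Because $p > 2$ and $\dim_{\mathbb{R}} M = 4$, H\"older's inequality gives $\|F_{\Xi_k}\|_{L^2(B_r(x))} \leq C\, r^{2(1-2/p)}\, \|F_{\Xi_k}\|_{L^p(K)}$, which tends to zero uniformly in $k$ as $r \to 0$. Fix $r_0$ so that this bound lies below Uhlenbeck's small-energy threshold $\varepsilon_0$ for every $k$, and cover $K$ by finitely many balls $B_\alpha = B_{r_0}(x_\alpha)$ whose doubles still lie in $M$. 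On each $B_\alpha$ the local gauge-fixing lemma yields $u_{k,\alpha} \in \mathcal{G}^{2,p}(B_\alpha)$ with $u_{k,\alpha}(\Xi_k) = d + A_{k,\alpha}$, $d^* A_{k,\alpha} = 0$, and $\|A_{k,\alpha}\|_{L^p_1(B_\alpha)} \leq C\|F_{\Xi_k}\|_{L^p(B_\alpha)} \leq C'$.

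For (i), weak $L^p_1$-compactness together with Rellich--Kondrachov (which for $p > 2$ in dimension four gives strong $C^0$-compactness) produces, along a diagonal subsequence, limits $A_{\infty,\alpha}$ of the $A_{k,\alpha}$. To patch, observe that on each overlap $B_\alpha \cap B_\beta$ the transition $\tau_{k,\alpha\beta} := u_{k,\alpha} u_{k,\beta}^{-1}$ satisfies $d\tau_{k,\alpha\beta} = A_{k,\alpha}\tau_{k,\alpha\beta} - \tau_{k,\alpha\beta} A_{k,\beta}$, which combined with the unitarity $|\tau_{k,\alpha\beta}| = 1$ yields a uniform $L^p_1$-bound, and hence subsequences converging in $C^0$ and weakly in $L^p_1$ to limits satisfying the cocycle condition. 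Twisting the local $u_{k,\alpha}$ by fixed correction gauges, chosen inductively along the nerve of the cover so as to absorb the $\tau_{k,\alpha\beta}$-differences, produces global $u_k \in \mathcal{G}^{2,p}$ on $K$ with $u_k(\Xi_k) \to \Xi_\infty$ in $L^p_{1,\mathrm{loc}}$.

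For (ii), once in local Coulomb gauge the ASD condition $F^+_{u_{k,\alpha}(\Xi_k)} = 0$ combined with $d^* A_{k,\alpha} = 0$ becomes a quasilinear elliptic system whose principal symbol agrees with that of the elliptic operator $d^+ \oplus d^*$. Since $p > 2$, Sobolev embedding places $A_{k,\alpha}$ in $C^0$, so the quadratic nonlinearity $A_{k,\alpha} \wedge A_{k,\alpha}$ does not degrade regularity, and standard elliptic bootstrapping, together with the smooth convergence $\mathrm{g}_k \to \mathrm{g}_\infty$, upgrades the uniform $L^p_1$ bounds to uniform $C^{m,\alpha}$ bounds on any interior subdomain, for every $m$. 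Arzel\`a--Ascoli then extracts a $C^\infty_{\mathrm{loc}}$-convergent subsequence, and passing to the limit in the ASD equation shows that $\Xi_\infty$ is anti-self-dual with respect to $\mathrm{g}_\infty$. The principal technical obstacle is the patching in (i): one must verify that the limiting cocycle defines a gauge transformation of the given bundle $P|_K$, not merely an isomorphism to an a priori distinct principal bundle. This is handled by an inductive correction along the nerve of the cover, as detailed in Wehrheim's exposition.
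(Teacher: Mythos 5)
This statement is quoted in the paper as a classical theorem of Uhlenbeck, with a citation to \cite{U2,Weh2} and no proof supplied; there is therefore no in-paper argument to compare against. Your outline is the standard proof (local Coulomb gauge fixing on balls of small energy, patching of transition functions along the nerve of the cover, and elliptic bootstrap of the ASD--Coulomb system for the strong statement), and it is essentially sound. One imprecision worth correcting: in real dimension four, $L^p_1$ embeds into $C^0$ only for $p>4$, so for $2<p\leq 4$ neither the claimed ``strong $C^0$-compactness'' of the $A_{k,\alpha}$ via Rellich--Kondrachov nor the assertion that Sobolev embedding places $A_{k,\alpha}$ in $C^0$ is literally true. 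The argument survives because (a) what the patching actually requires is compactness of the transition functions $\tau_{k,\alpha\beta}$, which are bounded in $L^p_2$ (by differentiating $d\tau=A_\alpha\tau-\tau A_\beta$ twice), and $L^p_2\hookrightarrow C^0$ compactly precisely when $p>2$; and (b) the bootstrap for the quasilinear system is subcritical for $p>2$ since $L^p_1\hookrightarrow L^q$ with $q=4p/(4-p)>4$, so $A\wedge A\in L^{q/2}$ with $q/2>2$ and each elliptic step strictly improves integrability until $C^0$ is reached. With these two points stated correctly, the proposal is a faithful rendering of the Uhlenbeck--Wehrheim argument.
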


   \subsection{Gauge theory  on  elliptic curves}
 While working with bundles over   $M$, we need several preliminary results dealing with the restriction of a bundle to a fixed elliptic fiber, which we detail here.

Fix a point $w\in N_0$, and consider the fiber $M_w=E$,   a   smooth  elliptic curve  with period $\tau$, i.e. $E=\mathbb{C}/{\rm Span}_{\mathbb{Z}}\{1, \tau\}$.  Equip $E$ with the flat metric   $\omega^F_w:=i{\rm Im}(\tau)^{-1}\,dz\wedge d\bar z$. Let $V$ be a holomorphic vector bundle of rank $n$ with trivial  determinant line bundle  $\bigwedge^{n} V \cong \mathcal{O}_{E} $, let $\bar\partial$ be the Cauchy-Riemann operator,  and fix a Hermitian metric $H$ on $V$. Let $A_{ch}$ be the unique  Chern connection determined by the holomorphic structure and the Hermitian metric  $H$, i.e. $A_{ch}=( \partial H )H^{-1}$ under a certain local holomorphic trivialization.    Recall that $\|\cdot\|_w$ denotes the $L^2$ norm on $E$.

\begin{prop}\label{semistablelemma}
There exists a $\delta>0$, dependent only on $E$ and $V$, so that if $A$ is in the complexified gauge orbit of $A_{ch}$ and satisfies $\|F_A\|_{w} <\delta$, then the holomorphic bundle  $V$ is semi-stable. \end{prop}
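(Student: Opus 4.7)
The plan is to argue by contrapositive: assuming $V$ is not semi-stable, I will exhibit a universal positive lower bound on $\|F_A\|_w$ that holds for every $A$ in the complexified gauge orbit of $A_{ch}$. The key observation is that the holomorphic isomorphism class of $V$ is an invariant of the $\mathcal{G}_\mathbb{C}$-orbit (any $g \in \mathcal{G}_\mathbb{C}$ intertwines $\bar\partial_{A_{ch}}$ and $\bar\partial_A$), so semi-stability is a property of the orbit. The whole argument is then a standard Chern-Weil and Cauchy-Schwarz computation, carried out with respect to the fixed Hermitian metric $H$.

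Suppose $V$ is not semi-stable. Since $E$ is a smooth curve, any torsion-free subsheaf is locally free; taking the saturation of a destabilizing subsheaf, I obtain a holomorphic subbundle $\mathcal{F} \subset V$ of rank $r' \geq 1$ with $\mu(\mathcal{F}) > \mu(V) = 0$. Because $\deg$ is integer-valued on $E$, this forces $\deg(\mathcal{F}) \geq 1$. Transporting via $g$, for any $A = g(A_{ch})$ in the complex orbit there is a corresponding holomorphic subbundle of $(\mathcal{V}, \bar\partial_A)$ of the same rank and degree, which I continue to denote $\mathcal{F}$.

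Now fix any $A$ in the orbit and let $\pi\colon \mathcal{V} \to \mathcal{F}$ be the $H$-orthogonal projection. Writing $\mathcal{V} = \mathcal{F} \oplus \mathcal{F}^\perp$ smoothly, the second fundamental form of $\mathcal{F}$ is $\beta \in \Omega^{1,0}(\mathrm{Hom}(\mathcal{F},\mathcal{F}^\perp))$ (its $(0,1)$-part vanishes because $\mathcal{F}$ is a holomorphic subbundle), and the induced Chern connection $A|_\mathcal{F}$ satisfies the standard identity
\begin{equation*}
F_{A|_\mathcal{F}} \;=\; \pi\, F_A\, \pi \;+\; \beta^* \wedge \beta.
\end{equation*}
Computing $\beta^* \wedge \beta$ locally as in the paper's convention $\omega^F_w = i\,\mathrm{Im}(\tau)^{-1} dz \wedge d\bar z$ shows that $\tfrac{i}{2\pi}\int_E \mathrm{tr}(\beta^*\wedge \beta)$ has a definite sign, so Chern-Weil applied to $F_{A|_\mathcal{F}}$ yields
\begin{equation*}
\deg(\mathcal{F}) \;=\; \frac{i}{2\pi}\int_E \mathrm{tr}(\pi F_A) \;-\; C_1 \|\beta\|_w^2
\;\leq\; \frac{i}{2\pi}\int_E \mathrm{tr}(\pi F_A),
\end{equation*}
with $C_1>0$ depending only on $E$.

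Applying pointwise Cauchy-Schwarz $|\mathrm{tr}(\pi F_A)| \leq \sqrt{r'}\,|F_A|$ (since $|\pi|^2 = \mathrm{tr}(\pi) = r'$) and then the $L^2$ Cauchy-Schwarz on $E$, one finds
\begin{equation*}
1 \;\leq\; \deg(\mathcal{F}) \;\leq\; \frac{\sqrt{r'\,\mathrm{vol}(E)}}{2\pi}\,\|F_A\|_w \;\leq\; \frac{\sqrt{n\,\mathrm{vol}(E)}}{2\pi}\,\|F_A\|_w,
\end{equation*}
so $\|F_A\|_w \geq \delta := 2\pi/\sqrt{n\,\mathrm{vol}(E)}$, which depends only on $E$ and $V$. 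This contradicts $\|F_A\|_w < \delta$, proving the proposition. The only delicate point is fixing the sign in the Chern-Weil identity — i.e.\ verifying that the $\beta^*\wedge\beta$ contribution lowers rather than raises the degree bound — but this is a direct local calculation in the decomposition $\mathcal{V}=\mathcal{F}\oplus\mathcal{F}^\perp$; had the sign gone the other way, the same argument applied to the quotient $V/\mathcal{F}$ (which has negative slope) would deliver the conclusion.
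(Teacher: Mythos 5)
Your proof is correct, but it takes a genuinely different route from the paper. The paper's argument is soft: it invokes R\aa de's theorems that the critical values of the Yang--Mills functional over a surface are discrete and that the Yang--Mills flow converges in $L^2_1$, concludes that for $\|F_A\|_w<\delta$ the flow starting at $A$ must limit to a flat connection, and then cites the fact that a bundle admitting an approximate Hermitian--Einstein structure is semi-stable. Your argument is the contrapositive Chern--Weil estimate: a destabilizing subsheaf saturates to a holomorphic subbundle $\mathcal{F}$ with $\deg\mathcal{F}\geq 1$ by integrality, the second-fundamental-form term in $F_{A|_{\mathcal F}}=\pi F_A\pi+\beta^*\wedge\beta$ contributes non-positively to $\deg\mathcal{F}$ (your sign is right --- this is the ``curvature decreases in holomorphic subbundles'' direction, and the closing hedge about possibly passing to the quotient is unnecessary), and Cauchy--Schwarz then forces $\|F_A\|_w$ to be bounded below. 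Your route is more elementary and quantitative: it needs no flow and no discreteness of critical values, it produces an explicit $\delta$ depending only on $\mathrm{rk}(V)$ and $\mathrm{vol}(E)$, and it really only uses an $L^1$ bound on the curvature. What the paper's softer argument buys in exchange is the structural byproduct that the flow line from $A$ terminates at a flat connection in the closure of the complexified orbit, which the authors exploit immediately after the proposition when they discuss whether that flat limit lies in the orbit itself or only in its closure and reduce to Atiyah's classification; your proof does not produce that limit object.
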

\begin{proof}
This proposition follows from the fact, proven by  R$\mathring{\rm a}$de, that the critical values of the Yang-Mills functional (the $L^2$ norm of the curvature) are discrete, and that in real dimension $2$ and $3$ the Yang-Mills flow converges in $L^2_1$  \cite{Rade}.  If $A$ satisfies $\|F_A\|_{w} <\delta$ for $\delta$ sufficiently small, then the Yang-Mills flow starting at $A$ must converge to a flat connection $A_0$, by discreteness of critical values. Thus $\|F_{A(t)}\|_{w}\rightarrow0$, where  $A(t)$ denotes the flow of connections.  Furthermore, the Yang-Mills flow preserves the complex gauge equivalence class of $A$, so   $A(t)$ all define isomorphic holomorphic structures on $V$. As  a result, $V$ admits an approximate Hermitian-Einstein structure, and is semi-stable \cite{Kob}.
\end{proof}
Although the Yang-Mills  flow preserves the complex gauge equivalence class of $A$, it is not immediately clear whether the limiting flat connection $A_0$ is contained in the complexified gauge orbit, or only strictly in the closure. To better understand this, we turn to Atiyah's classification of semi-stable bundles on an elliptic curve.

Let $0\in E$ the identity of the group law. Denote the trivial line bundle by $\cO_E $, and given a point $q\in E$, let $\mathcal{O}_E(q-0)$ be the line bundle associated to the divisor $q-0$. Define $\cI_r$ inductively, with $\cI_1=\cO_E$ and $\cI_r$ the unique nontrivial extension of $\cI_{r-1}$ by $\cO_E$.

\begin{thm}[Atiyah \cite{At}]
Any semi-stable  bundle $V$ over $E$ with trivial determinant bundle is isomorphic to a direct sum of bundles of the form $\mathcal{O}_E(q-0)\otimes\cI_r$, i.e. $$ V\cong \bigoplus_{j=1}^{\ell}\mathcal{O}_E(q_j-0)\otimes\cI_{r_j}. $$
\end{thm}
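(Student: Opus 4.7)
The plan is to combine Atiyah's structural classification of indecomposable bundles on an elliptic curve with the Abel--Jacobi description of $\mathrm{Pic}^0(E)$.

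First I would reduce to the indecomposable case. Apply Krull--Schmidt in the category of holomorphic bundles on $E$ to obtain a decomposition $V \cong \bigoplus_j W_j$ with each $W_j$ indecomposable, unique up to isomorphism and reordering. Since $V$ has trivial determinant, $\deg V = 0$ and $\mu(V) = 0$. In any such decomposition each $W_j$ is simultaneously a subsheaf and a quotient of $V$, so the semi-stability of $V$ forces $\mu(W_j) \le 0$ and $\mu(W_j) \ge 0$, hence $\mu(W_j) = 0$. A destabilizing subsheaf of any $W_j$ would pull back to a destabilizing subsheaf of $V$, so each $W_j$ is itself semi-stable of slope zero, and it suffices to prove the theorem for each $W_j$ separately.

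Next I would invoke Atiyah's classification of indecomposable holomorphic bundles of rank $r$ and degree zero on $E$: such a bundle is uniquely determined up to isomorphism by a point of $\mathrm{Pic}^0(E)$. Concretely, Atiyah constructs $\cI_r$ inductively as the unique non-split extension of $\cI_{r-1}$ by $\cO_E$, using $\dim \mathrm{Ext}^1(\cI_{r-1}, \cO_E) = 1$ on the elliptic curve, and identifies it as the unique indecomposable rank-$r$ degree-zero bundle admitting a non-zero global section. The classification then asserts that every indecomposable rank-$r$ degree-zero bundle $W$ is isomorphic to $L \otimes \cI_r$ for a unique $L \in \mathrm{Pic}^0(E)$; existence is proved by induction on $r$ after choosing $L$ so that $H^0(E, W \otimes L^{-1}) \neq 0$, and uniqueness of $L$ follows from $\det(L \otimes \cI_r) \cong L^{\otimes r}$ together with surjectivity of the $r$-th power map on $\mathrm{Pic}^0(E)$.

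Combining these with the Abel--Jacobi isomorphism $\mathrm{Pic}^0(E) \xrightarrow{\sim} E$ sending $\cO_E(q-0) \mapsto q$, every indecomposable summand $W_j$ takes the form $\cO_E(q_j - 0) \otimes \cI_{r_j}$ for a unique $q_j \in E$, which gives exactly the decomposition asserted in the theorem. The main obstacle is Atiyah's structure theorem in the second step, which is the substantive algebro-geometric input; the first and third steps are essentially formal once that classification is available. The one-dimensionality of $\mathrm{Ext}^1(\cI_{r-1}, \cO_E)$, a feature special to elliptic curves, is precisely what makes the moduli space of indecomposable bundles of fixed rank a single copy of $E$ rather than something higher-dimensional.
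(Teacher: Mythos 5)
This statement is quoted from Atiyah's paper and the present paper supplies no proof of its own, so there is nothing internal to compare against; your outline is the standard argument and is essentially correct. The reduction to indecomposable summands via Krull--Schmidt, the observation that a direct summand of a slope-zero semi-stable bundle is itself slope-zero semi-stable, and the appeal to Atiyah's classification of indecomposable degree-zero bundles as $L\otimes\cI_r$ together form exactly the intended route.

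One small imprecision: your uniqueness argument for $L$ does not work as stated. Knowing $\det(L\otimes\cI_r)\cong L^{\otimes r}$ and that the $r$-th power map on $\mathrm{Pic}^0(E)$ is surjective only pins down $L$ up to an $r$-torsion point of $\mathrm{Pic}^0(E)$; surjectivity is not injectivity. The correct argument is that $\cI_r\otimes M\cong\cI_r$ forces $M\cong\cO_E$, because $h^0(E,\cI_r\otimes M)=0$ for every nontrivial degree-zero $M$ while $h^0(E,\cI_r)=1$. This is immaterial here, since the theorem as stated only asserts existence of the decomposition and makes no uniqueness claim about the $q_j$, but you should be aware of it if you later need the fact (used elsewhere in the paper) that the divisor $\sum r_j q_j$ is well defined by the S-equivalence class of $V$.
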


\begin{defn}
A semi-stable bundle  $V$ is called regular if it is of the form $V\cong \bigoplus\limits_{j=1}^{\ell}\mathcal{O}_E(q_j-0)\otimes\cI_{r_j}$  with $q_j \neq q_i$ for any $j\neq i$.
\end{defn}

Now,  in our setting one (and only one) of two things can happen. Either $V$ is isomorphic a direct sum of line bundles $V=\oplus \mathcal{O}_E(q-0)$,  and the limiting flat connection $A_0$ is in the complex gauge orbit of $A$, or $V$ is isomorphic a direct sum  of bundles of the form $\mathcal{O}_E(q-0)\otimes\cI_r$, with at least one   $r>1$. In the latter case, $\mathcal{O}_E(q-0)\otimes\cI_r$ is strictly semi-stable, since $\mathcal{O}_E(q-0)\subset \mathcal{O}_E(q-0)\otimes\cI_r$ has degree zero but $\mathcal{O}_E(q-0)\otimes\cI_r$ does not split holomorphically. As a result $V$ does not admit a flat connection, and so $A$ is not complex gauge equivalent to $A_0$.

 Note that if $V\cong \bigoplus\limits_{j=1}^{\ell}\mathcal{O}_E(q_j-0)\otimes\cI_{r_j}$, then $V$ is {\it S-equivalent} to the flat   bundle  $\bigoplus\limits_{j=1}^{\ell}\mathcal{O}_E(q_j-0)^{\oplus r_j}$ (see \cite{Fr1} for the precise definition of S-equivalence).    Every S-equivalence   class corresponds to a divisor $\sum\limits_{j=1}^{\ell} r_j q_j $ in the complete linear system $| n 0|$.  Conversely, any divisor $\sum\limits_{j=1}^{\ell} r_j q_j \in | n 0|$  on $E$ induces an S-equivalence class of  semi-stable bundles with trivial determinant,   which  contains  $\bigoplus\limits_{j=1}^{\ell}\mathcal{O}_E(q_j-0)^{\oplus r_j}$. Therefore, the moduli space of S-equivalence classes of  semi-stable bundles  with trivial determinant is given by the complete linear system $| n 0| \cong \mathbb{CP}^{n-1}$.

Furthermore, the moduli space of flat line bundles on $E$ is the dual torus $\check{E}\cong H^{0,1}(E)/ H^1(E, \mathbb{Z})$, and we identify $E$ and $\check{E}$ by $q \mapsto \mathcal{O}_E(q-0)$.  Another way to state this is that a point $q\in E$ corresponds to a flat connection $\pi ({\rm Im} \tau)^{-1} ( qd\bar{z}-\bar{q}dz)$ on the trivial Hermitian bundle $E\times \mathbb{C}$.  Therefore the flat bundle structure of $\bigoplus\limits_{j=1}^{n}\mathcal{O}_E(q_j-0)$ is given by the flat connection
 \be
 \label{flatconnection}
 A_0= \pi ({\rm Im} \tau)^{-1} ({\rm diag}\{q_1, \cdots, q_n\}d\bar{z}-{\rm diag}\{\bar{q}_1, \cdots, \bar{q}_n\}dz),
 \ee
 where $\sum\limits_{j=1}^{n}q_j \in | n 0|$. Note that the above connection has this form in a global unitary  frame for $V$. Let $\mathfrak{M}_E(n) $ denote the moduli space of flat $SU(n)$ connections on $V$, which is naturally identified with $| n 0|$,  the moduli space of S-equivalence classes of  semi-stable bundles  with trivial determinant.

We note that from the perspective of algebraic geometry, the linear system  $| n 0|$ is a  well behaved  object. On the other hand, from the perspective of symplectic geometry,  the moduli space $\mathfrak{M}_E(n) $ is quite complicated.  In particular, any flat $SU(n)$-connection on $E$ is degenerate, the virtual dimension of $\mathfrak{M}_E(n) $ is zero, and the whole space $\mathfrak{M}_E(n) $ is regarded as singular, i.e. there is no smooth point (cf. \cite{Nis1,Nis2}).  If we let  $\mathcal{A}$ denote the space of all unitary connections on the trivial bundle on $E$, and $\mathcal{G}$ the unitary gauge group, then following Atiyah-Bott \cite{AtB}, one can construct $\mathfrak{M}_E(n) $ as the symplectic reduction $\mathfrak{M}_E(n)=\{A\in \mathcal{A}|F_A=0\}/\mathcal{G} $. Using this construction $\mathfrak{M}_E(n) $ is in the singular locus of $\mathcal{A}/\mathcal{G}$.  Such ill behavior of $\mathfrak{M}_E(n)$ prevents us to generalize the arguments in \cite{Ch1,DS,Fuk2,Nis11} directly, where the moduli space of flat connections on Riemann surfaces of higher genus are considered.   Instead we follow an algebro-geometric approach  combined with  estimates for the above non-linear partial differential equations.

\subsection{Gauge fixing} In this section we continue to work on a single elliptic curve $(E,\omega)$. Let $V$ be a regular, semi-stable, holomorphic vector bundle of rank $n$ which admits a flat connection $A_0$, equipped with a Hermitian metric $H$. Suppose $A$ is another connection in the complex gauge orbit of $A_0$, i.e. $A=g(A_0)$ for some $g\in\mathcal{G}_{\mathbb C}$. It will be important for us to know under what conditions we have control over the $C^0$ norm of $g$. Since the action of a fixed unitary gauge transformation will not affect this norm, without loss of generality we assume that $A=e^s(A_0)$ for a trace free Hermitian endomorphism $s$.

In general it is not reasonable to expect direct control of $s$. For example, if $e^s$ were a diagonal matrix of constants $c_1,..., c_n$ in the trivial frame, then $e^s( A_0)$ will also be a flat connection. However, one eigenvalue $c_i$ can be arbitrarily large while still preserving the condition that $s$ be trace free, so $s$ cannot be controlled. What does end up being true is that under a small curvature assumption, there exists a normalized endomorphism $\hat s$, which may be distinct from $s$, that nevertheless gives the same connection under the complexified gauge group action, and is uniformly controlled in $C^0$. The key result of the first two named authors is as follows.

\begin{thm}[Datar-Jacob \cite{DJ}]
\label{gaugef}
Let  $e^s(A_0)$ be a connection on $V$ given by the action of a trace free Hermitian endomorphism $s.$ There exists constants $\epsilon_0>0$, and $C_0>0$, depending only on $\omega$,  $A_0,$ and $H$, so that if
\be
\qquad \|F_{e^s(A_0)}\|^2_{C^0(E)}\leq\epsilon_0,\nonumber
\ee
then there exists another trace free Hermitian endomorphism $\hat s$ satisfying that $\hat s$ is  perpendicular to the Kernel  of $d_{A_0}$, in addition to
\be
e^s( A_0)=e^{\hat s}(A_0)\qquad{\rm and}\qquad \|\hat s\|_{C^0(E) }\leq C_0.\nonumber
\ee
\end{thm}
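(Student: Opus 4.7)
The starting observation is that the indeterminacy in the Hermitian representative of a point in the complex orbit $\mathcal{G}_{\mathbb{C}} \cdot A_0$ is controlled entirely by the complex stabilizer $\mathrm{Stab}_{\mathcal{G}_{\mathbb{C}}}(A_0)$. Under the regularity hypothesis $V \cong \bigoplus_j \mathcal{O}_E(q_j - 0)$ with distinct $q_j$, this stabilizer is the abelian complex torus $T^{\mathbb{C}} \subset \mathcal{G}_{\mathbb{C}}$ consisting of constant diagonal matrices in a frame splitting the direct sum. The trace-free Hermitian part of its Lie algebra is precisely $\ker d_{A_0}$ restricted to trace-free Hermitian endomorphisms, so requiring $\hat s \perp \ker d_{A_0}$ is the natural slice condition transverse to this finite-dimensional stabilizer direction.

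My plan is to construct $\hat s$ in two steps via an implicit function theorem. First, consider the map $\Phi : (\ker d_{A_0})^\perp \to \mathcal{A}$, $\Phi(\hat s) = e^{\hat s}(A_0)$, whose linearization at $\hat s = 0$ is $\delta s \mapsto d_{A_0}\delta s$. This is elliptic, and by definition of the perpendicular complement, injective on $(\ker d_{A_0})^\perp$; Hodge theory on the compact curve $E$ plus IFT then produces constants $\delta_0, C_1 > 0$ such that any $A$ in the orbit $\mathcal{G}_{\mathbb{C}}\cdot A_0$ with $\|A - A_0\|_{L^2_1} < \delta_0$ can be written $A = \Phi(\hat s)$ for a unique $\hat s \in (\ker d_{A_0})^\perp$ bounded by $C_1 \|A - A_0\|_{L^2_1}$. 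The task reduces to showing that $A := e^s(A_0)$, under the smallness hypothesis $\|F_A\|_{C^0}^2 \leq \epsilon_0$, is $L^2_1$-close to $A_0$, possibly after applying a unitary gauge change lying in the compact stabilizer $T \subset T^{\mathbb{C}}$.

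For this last reduction I would invoke Proposition \ref{semistablelemma}: for $\epsilon_0$ small enough, the Yang-Mills flow starting at $A$ converges in $L^2_1$ to a flat connection $A_\infty$ in the orbit, which by Atiyah's classification and the regularity of $V$ is unitary gauge equivalent to $A_0$ via an element of $T$. Composing with this gauge transformation I may assume $A_\infty = A_0$, and then $\|A - A_0\|_{L^2_1}$ is controlled via a standard energy argument along the flow. The IFT from the previous step then yields $\hat s \in (\ker d_{A_0})^\perp$ with a quantitative $L^2_1$-bound, and an elliptic bootstrap for the nonlinear equation $e^{\hat s}(A_0) = A$ on the elliptic curve upgrades this to the advertised $C^0$-bound. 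The main obstacle is making the last gauge-fixing quantitative: \emph{a priori} the flow need not travel bounded distance in $L^2_1$ merely from smallness of $\|F_A\|_{C^0}^2$, and closing this gap will likely require either a \L{}ojasiewicz--Simon inequality at $A_\infty$ or a direct estimate exploiting the explicit diagonal form of $A_0$ in equation \eqref{flatconnection} and the abelian structure of the stabilizer.
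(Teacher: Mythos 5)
Your local analysis (slice transverse to the stabilizer, linearization $\delta s \mapsto d_{A_0}\delta s$, Hodge theory plus the inverse/implicit function theorem) matches the strategy the paper sketches for this theorem, which is the contraction-mapping argument of \cite{DJ} built on the Poincar\'e inequality coming from regularity of $V$. The problem is that this local step is not where the content of the theorem lies. The whole point of the statement is the passage from the gauge-invariant hypothesis $\|F_{e^s(A_0)}\|^2_{C^0}\le\epsilon_0$ to the existence of a \emph{bounded} gauge representative, and your proposed bridge --- running the Yang--Mills flow from $A=e^s(A_0)$ to a flat limit and then bounding $\|A-A_0\|_{L^2_1}$ by ``a standard energy argument along the flow'' --- does not close. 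R\aa de's theorem gives convergence of the flow and the energy identity controls $\int_0^\infty\|d_A^*F_A\|^2\,dt$, but the $L^2$ length of the flow line is $\int_0^\infty\|d_A^*F_A\|\,dt$, which is not controlled by the initial energy without a rate of convergence; and even granting a \L{}ojasiewicz--Simon inequality at the limit (which you mention but do not execute), you would still need to upgrade an $L^2$ length bound to the $L^2_1$ smallness $\|A-A_0\|_{L^2_1}<\delta_0$ required to enter the range of your IFT chart. You correctly flag this yourself, but as written the proof has a hole exactly at the step that distinguishes the theorem from a routine local slice lemma.

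The paper (following \cite{DJ}) closes this gap differently: rather than flowing to $A_0$ and measuring distance traveled, one runs a connectedness argument on the set of connections in the orbit satisfying the small-curvature hypothesis. Openness of the set where the conclusion holds comes from the contraction mapping; closedness comes from a priori estimates on $\hat s$ of the kind reproduced in Lemma \ref{C0slem} of the present paper, namely the differential inequality $-\Delta_w|\hat s|^2\le -|\partial_{A_0}\hat s|^2+\mathrm{tr}(e^{\hat s}\star F e^{-\hat s}\hat s)$ combined with the spectral gap for $\Delta_{A_0}$ on $(\ker d_{A_0})^\perp$. This converts small curvature directly into a $C^0$ bound on $\hat s$ wherever $\hat s$ exists, with no reference to the flow. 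A secondary, fixable issue in your write-up: the unitary gauge $u$ identifying the flow limit $A_\infty$ with $A_0$ need not lie in the stabilizer torus $T$, so after conjugating you land at $e^{u s u^{-1}}(u(A_0))$ with $u(A_0)\ne A_0$ in general, and the exact identity $e^{\hat s}(A_0)=e^{s}(A_0)$ demanded by the statement requires additional care to restore.
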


We remark that the assumptions that $V$ be regular and admit a flat connection  are critical, as they imply that the holomorphic automorphism group of $V$ is precisely $n$ dimensional \cite{FMW}. The idea of the proof is that the linearization of  the complex gauge group action of a Hermitian endomorphism on $A_0$ is $\star d_{A_0}s$. Restricting to endomorphisms perpendicular to the Kernel of $d_{A_0}$, a Poincar\'e inequality gives that the linearized map is invertible with bounded inverse. Thus, if $e^s(A_0)$ is sufficiently close to $A_0$, via the contraction mapping principle the results of the theorem hold.  In order for the theorem to hold under the small curvature assumption, a connectedness  argument is applied. We direct the reader to \cite{DJ} for further details.

 \subsection{Spectral  covers}
 We now discuss holomorphic vector bundles over our elliptic fibration $M$, as opposed to a single elliptic curve.

We assume that $f: M \rightarrow N$ has only singular fibers of Kodaira type $I_1$ and type $II$. Then  $ M $ coincides with the Weierstrass model $\check{f}:\check{M}\rightarrow N$, i.e. $M=\check{M}$ and $f=\check{f}$.
 Let $V$ be  a holomorphic  vector bundle $V$ of rank $n$ on $M$  such that the determinant line bundle $  \bigwedge^{n} V$ is trivial, i.e. $\bigwedge^{n} V \cong \mathcal{O}_{M} $.  If the restriction of $V$ on the generic fiber of $f$ is regular semi-stable, then a multi-valued section of $f$ is constructed in \cite{FMW}, which is called the spectral cover associated to $V$. More precisely, we have the following theorem.

 \begin{thm}[\cite{FMW}]\label{special cover}
Assume that  the restriction of $V$ on the generic fiber of $f$ is  semi-stable and regular.  Then there exists a divisor $$D_V \in | n \sigma (N)+m  l|, $$ called the spectral cover associated to $V$, where  $ l$  denotes  effective  divisor class of the fibers  of $f$,    $m\in \mathbb{Z}$  satisfies $0\leq m \leq c_2(V)$, and  for a generic $w\in N_0$, $$ V|_{M_w}\cong \bigoplus_{j=1}^{\ell}\mathcal{O}_{M_w}(q_j-0)\otimes\cI_{r_j}, \  \   \  D_{V}\cap M_w =\sum_{j=1}^{\ell}r_j q_j\in | n \sigma (w)|. $$
\end{thm}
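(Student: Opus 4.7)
The plan is to construct $D_V$ as the spectral support of $V$ using the relative Fourier--Mukai transform. Since $f\colon M\ra N$ coincides with its Weierstrass model and admits the section $\sigma$, $M$ itself plays the role of the relative compactified Jacobian. Let $p_1,p_2\colon M\times_N M\ra M$ be the two projections and let $\mathcal{P}$ be the relative Poincar\'e sheaf on $M\times_N M$, normalized so that both $\mathcal{P}|_{\sigma(N)\times_N M}$ and $\mathcal{P}|_{M\times_N \sigma(N)}$ are trivial. Define
$$ \cT(V) := R^1 (p_2)_*\bigl(p_1^*V\otimes \mathcal{P}\bigr), $$
a coherent torsion sheaf on $M$; the divisor $D_V$ will be its scheme-theoretic support.

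For the fiberwise description, I would use cohomology and base change. On a smooth fiber $M_w$ with $w\in N_0$ where $V|_{M_w}$ is regular semi-stable, Atiyah's classification gives $V|_{M_w}\cong \bigoplus_j \mathcal{O}_{M_w}(q_j-0)\otimes\cI_{r_j}$, and a direct calculation shows that $H^1$ of $\mathcal{O}_{M_w}(q_j-0)\otimes\cI_{r_j}\otimes\mathcal{O}_{M_w}(q-0)^{-1}$ has length exactly $r_j$ when $q=q_j$ and vanishes otherwise. Hence $\cT(V)|_{M_w}$ is supported at $\{q_j\}$ with multiplicity $r_j$, giving $D_V\cap M_w=\sum_j r_j q_j\in |n\sigma(w)|$ on the Zariski open locus of fiberwise-regular semi-stability. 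Across the non-regular semi-stable locus the divisor $\sum_j r_j q_j$ is an S-equivalence invariant (Section~2.3), and so it extends as a flat family; above the singular fibers of type $I_1$ and $II$, the irreducibility of these fibers together with the Gorenstein property of the Weierstrass model allows $\mathcal{P}$ and $\cT(V)$ to extend and $D_V$ to close up.

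The cohomology class of $D_V$ is extracted by Grothendieck--Riemann--Roch applied to $(p_2)_*$: expanding $\mathrm{ch}(\cT(V)) = -\mathrm{ch}((p_2)_!(p_1^*V\otimes \mathcal{P}))$ in terms of $\mathrm{ch}(V) = (n,0,-c_2(V))$, $\mathrm{ch}(\mathcal{P})$, and the relative Todd class, one reads off $\mathrm{ch}_1(\cT(V)) = [D_V]$ and identifies it with $n[\sigma(N)] + m[l]$ for an integer $m$. Effectivity forces $m\ge 0$, while the GRR computation produces $m$ as $c_2(V)$ minus a non-negative correction coming from the non-regular locus (each non-regular fiber contributes a length measuring the gap between $V|_{M_w}$ and its regular S-equivalence representative), yielding $m\le c_2(V)$.

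The main obstacle I foresee is verifying that $\cT(V)$ is genuinely supported on a Cartier divisor, that is, that it has pure codimension-one support with no embedded associated primes and no vertical or excess-dimensional components. The restriction to fibers of type $I_1$ and $II$ is essential: reducible singular fibers would produce vertical components in the support, and more complicated Kodaira types would obstruct the Poincar\'e sheaf from extending globally. This last step, rather than the cohomology and base change, is where the Friedman--Morgan--Witten construction cited in the statement does the real work.
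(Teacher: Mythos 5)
Your route is genuinely different from the one the paper (following Friedman--Morgan--Witten) records. The paper never introduces the Poincar\'e sheaf: it uses $f_*\mathcal{O}_M(n\sigma)=\mathcal{O}_N\oplus L^{-2}\oplus\cdots\oplus L^{-n}$ to form $\mathcal{P}_{n-1}=\mathbb{P}f_*\mathcal{O}_M(n\sigma)$, whose fiber over $w$ is the coarse moduli space $|n\sigma(w)|$ of fiberwise semistable bundles; the S-equivalence class of $V|_{M_w}$ defines a section $\mathcal{A}_V$ of $\mathcal{P}_{n-1}$ over a Zariski open subset, which extends to all of $N$ because $N$ is a curve and $\mathcal{P}_{n-1}\to N$ is proper; $D_V$ is then the scheme-theoretic preimage of $\mathcal{A}_V(N)$ under the tautological $n$-sheeted cover $\varrho:\mathcal{T}\to\mathcal{P}_{n-1}$. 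The class falls out of $\mathcal{O}_M(D_V)\cong\mathcal{O}_M(n\sigma(N))\otimes f^*\mathcal{L}_V$ with $m=\deg\mathcal{L}_V$, and $0\le m\le c_2(V)$ is proved not by Grothendieck--Riemann--Roch but by elementary modifications reducing to a bundle that is regular semistable on every fiber, for which $\deg\mathcal{L}_{V_0}=c_2(V_0)$. What that route buys is that the extension over bad fibers is automatic (one extends a section of a proper map, not a sheaf) and the degree bound is a clean induction; your Fourier--Mukai route makes the fiberwise picture transparent but concentrates all the difficulty in exactly the two places you flag, namely purity of the support of $\cT(V)$ and the bookkeeping over the non-regular locus.

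One step, as written, would fail. You assert that $H^1\bigl(M_w,\mathcal{O}_{M_w}(q_j-0)\otimes\cI_{r_j}\otimes\mathcal{O}_{M_w}(q_j-0)^{-1}\bigr)=H^1(M_w,\cI_{r_j})$ has length $r_j$. It is one-dimensional: $\cI_{r_j}$ has $h^0=h^1=1$ by its construction as an iterated extension of $\mathcal{O}_{M_w}$ together with Riemann--Roch. The multiplicity $r_j$ does not live in the fiberwise cohomology but in the length of the stalk of $R^1(p_2)_*$ at $q_j$ as an $\mathcal{O}_{M_w,q_j}$-module: the transform of $\cI_{r_j}$ is $\mathcal{O}_{r_j\cdot q_j}$, the structure sheaf of the $r_j$-th infinitesimal neighbourhood. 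Weighting the support by $\dim H^1$ of the fibers would give $\sum_j q_j$ rather than $\sum_j r_j q_j$. Relatedly, the scheme-theoretic (annihilator) support is the wrong invariant once $V|_{M_w}$ fails to be regular --- for $\mathcal{O}_{M_w}(q-0)^{\oplus 2}$ the transform is $k(q)^{\oplus 2}$, whose annihilator support is reduced --- so to obtain a divisor finite flat of degree $n$ over the semistable locus you must take the Fitting support; regularity on the generic fiber is exactly what makes the two notions agree there, and this is the hypothesis doing the work in the theorem.
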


We recall the construction in \cite{FMW}.
Since $h^{0}(M_{w}, \mathcal{O}_{M_w}(n \sigma (w)))=n$ for any fiber $M_w$, the push forward $f_{*}\mathcal{O}_{M}(n \sigma)$ is a vector bundle of rank $n$ on $N$, and more precisely,  $$f_{*}\mathcal{O}_{M}(n \sigma)=\mathcal{O}_{N}\oplus L^{-2}\oplus \cdots \oplus L^{-n},$$  where $L^{-1}=\sigma^* \mathcal{O}_M(\sigma)$ by Lemma 4.1 of \cite{FMW}.
 We denote $p: \mathcal{P}_{n-1}\rightarrow N$ the projection bundle, so $\mathcal{P}_{n-1}=\mathbb{P}f_{*}\mathcal{O}_{M}(n \sigma)$ (cf. Section 4.1 of \cite{FMW}). For any $w\in N$, the fiber $p^{-1}(w)$ is the complete linear system  $|n \sigma (w)| \cong \mathbb{CP}^{n-1}$, i.e. $p^{-1}(w)= |n \sigma (w)|$, and   is identified as the coarse moduli space for semi-stable bundles of rank $n$ on $M_{w}$ (cf. Section 1 of \cite{FMW}). Since the restriction of $V$ to the generic fiber is   semi-stable, there is a non-empty  Zariski open subset $N'\subset N$ such that for any $w\in N'$, $V|_{M_w}$ is semi-stable, which defines a point $\varrho (V|_{M_w})\in |n \sigma (w))|$ by Theorem 1.2 in \cite{FMW}.  Then Lemma 4.2 of \cite{FMW} defines a section $$\mathcal{A}_{V}: N' \rightarrow p^{-1}(N'), \  \ \  \  {\rm by}  \  \  \ \mathcal{A}_{V}(w)=\varrho (V|_{M_w}),$$ and by Lemma 6.1 in \cite{FMW}, $\mathcal{A}_{V}$ extends to $N$ as a section of $\mathcal{P}_{n-1}$, denoted still by $\mathcal{A}_{V}: N \rightarrow \mathcal{P}_{n-1}$.

Section 4.3 in \cite{FMW} constructs an $n$-sheeted branched covering $ \varrho : \mathcal{T}\rightarrow \mathcal{P}_{n-1}$, which admits a $\mathbb{CP}^{n-2}$--fibration $r: \mathcal{T}\rightarrow M$. For any smooth fiber $M_w$, $\mathcal{T}_w= r^{-1}(M_w) \rightarrow M_w$ coincides with the construction in Section 2.1 of \cite{FMW} as  follows.  Let $\Pi_w \subset M_{w}^{\otimes n}$ be the subset such that $(q_1, \cdots, q_n)\in \Pi_w$ if and only if the divisor $q_1+ \cdots +q_n $ is linearly equivalent to $n \sigma (w)$. If $\mathbb{S}_n$ denotes the symmetric group, and $\mathbb{S}_{n-1} \subset \mathbb{S}_n$ is the subgroup fixing the last element, then $\mathbb{S}_n$ acts on $\Pi_w$, and the quotient  $\Pi_w / \mathbb{S}_n = |n \sigma (w)| \cong \mathbb{CP}^{n-1}$.  Also $\mathcal{T}_w=\Pi_w / \mathbb{S}_{n-1}$, $r|_{\mathcal{T}_w}:\mathcal{T}_w  \rightarrow M_w$ is given by $(q_1, \cdots, q_{n-1},  q_n) \mapsto q_n$, and $\varrho|_{\mathcal{T}_w}: \mathcal{T}_w \rightarrow |n \sigma (w)|$ is a branched $n$-sheeted cover such that $\varrho|_{\mathcal{T}_w}$ is unbranched over $q_1+ \cdots +q_n \in | n \sigma (w)|$ if and only if $q_i\neq q_j$ for any $i\neq j$. Clearly, $r|_{\mathcal{T}_w}(\varrho|_{\mathcal{T}_w}^{-1}(q_1+ \cdots +q_n))=\{q_1, \cdots ,q_n\}\subset M_w$ for any  $q_1+ \cdots +q_n \in | n \sigma (w)|$.

The spectral cover $D_{V}$ is defined as the scheme-theoretic inverse image of $\mathcal{A}_{V}(N)$, i.e. $D_{V}=\varrho^{-1}(\mathcal{A}_{V}(N)) $, which is a subscheme of $\mathcal{T}$, and $p\circ \varrho|_{D_{V}}:D_{V}\rightarrow N$ is finite and flat of degree $n$ (cf. Definition 5.3 in \cite{FMW}).  By Lemma 5.4 of  \cite{FMW},  $r|_{D_{V}}$ embeds  $D_{V}$ in $M$ as an effective  Cartier divisor, and $f\circ r|_{D_{V}}= p \circ \varrho|_{D_{V}}$. Therefore, we always regard $D_{V}$ as a divisor of $M$ in the present paper. Furthermore, Lemma 5.4 in \cite{FMW} shows that $\mathcal{O}_{M}(D_{V})\cong \mathcal{O}_{M}(n \sigma(N) )\otimes f^{*} \mathcal{L}_{V}$ where $\mathcal{L}_{V}=\mathcal{A}_{V}^{*}\mathcal{O}_{\mathcal{P}_{n-1}}(1)$.  Thus $$D_{V} \in | n \sigma(N)+ m  l |,$$ where $l$ denotes the effective  divisor class of the fibers of $f$, and $m={\rm deg} \mathcal{L}_{V}\in \mathbb{Z}$.

The arguments in Section 6.1 of  \cite{FMW} show  that \begin{equation}\label{chern2} 0 \leq m={\rm deg} \mathcal{L}_{V} \leq c_{2}(V), \end{equation} which is sketched  as  follows.  Since  the restriction of $V$ to the generic fiber is regular semi-stable, there are only finite possible fibers such that the restrictions of $V$ are unstable. Lemma 6.2 of \cite{FMW} proves that by preforming  finite allowable elementary modifications to $V$, one    obtains a new bundle $V'$ such that the restriction of $V'$ to any fiber is semi-stable. Furthermore $c_2(V')\leq c_2(V)$, and equality holds if and only if $V'=V$, i.e. there is no elementary modification preformed.

The proof of Corollary 6.3 in \cite{FMW} shows that there is a coherent sheaf  $V_0$, whose restriction on any fiber is regular semi-stable, and
a morphism $\psi:V_0 \rightarrow V'$, which is an isomorphism on $f^{-1}(U)$ for a nonempty Zariski open set $U\subset N$.  The cokernel coherent  sheaf $Q$ is a torsion sheaf supported on finite fibers, and admits a filtration by degree zero sheaves.  Consequently, $c_2(V_0)=c_2(V')$.   Note that $V_0$ is isomorphic to $V$ on $f^{-1}(U')$ for a nonempty Zariski open set $U'\subset N$, as the above two processes only change the restrictions  of $V$ on finite fibers. Therefore we have  $\mathcal{A}_{V_0}=\mathcal{A}_{V}$, $D_{V_0}=D_{V}$, and $\mathcal{L}_{V_0}=\mathcal{L}_{V} $.  By Proposition 5.15 of \cite{FMW}, ${\rm deg} \mathcal{L}_{V_0} = c_{2}(V_0)$, and we obtain the inequality (\ref{chern2}).

 The spectral cover $D_{V}$ gives a criterion of $V$ being stable.

 \begin{thm}[Theorem 7.4 of \cite{FMW}]\label{stable criterion}
If  $D_{V}$ is reduced and irreducible, then $V$ is stable with respect to  $f^{*} c_{1}(\mathcal{O}_{\mathbb{CP}^{1}}(1))+t \alpha$, for all $0<t\leq (\frac{n^3}{4}c_{2}(V))^{-1}$, where $\alpha$ is an ample class on $M$.
\end{thm}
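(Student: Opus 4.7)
Since $c_1(V) = 0$, stability of $V$ with respect to $H_t := f^* c_1(\mathcal{O}_{\mathbb{CP}^1}(1)) + t \alpha$ amounts to showing that every torsion-free coherent subsheaf $\mathcal{F} \subsetneq V$ of rank $r$ with $0 < r < n$ satisfies $c_1(\mathcal{F}) \cdot H_t < 0$. Note that $f^* c_1(\mathcal{O}_{\mathbb{CP}^1}(1))$ represents the fiber class $l$. I would first replace $\mathcal{F}$ by its saturation so that $V/\mathcal{F}$ is torsion-free, which can only increase $c_1(\mathcal{F}) \cdot H_t$. The strategy then splits into a qualitative input coming from the spectral cover and a quantitative bound coming from Bogomolov-type inequalities.

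The first, qualitative step is to restrict to a generic smooth fiber $M_w$ and use semi-stability of $V|_{M_w}$ of degree zero, giving $c_1(\mathcal{F}) \cdot l = \deg(\mathcal{F}|_{M_w}) \leq 0$. The key upgrade to the strict inequality $c_1(\mathcal{F}) \cdot l \leq -1$ is where the irreducibility hypothesis on $D_V$ enters. Indeed, since $D_V$ is reduced, $V|_{M_w}$ splits as $\bigoplus_{j=1}^n \mathcal{O}_{M_w}(q_j - 0)$ with $n$ distinct $q_j$ on a dense open subset $U \subset N$. A rank-$r$ degree-zero subsheaf of such a direct sum of non-isomorphic degree-zero line bundles is forced to be a direct sum of exactly $r$ of the summands, and this selection varies algebraically in $w$ because $\mathcal{F} \subset V$ is globally defined. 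The selected spectral points therefore glue to a proper effective sub-divisor $D_{\mathcal{F}} \subsetneq D_V$ finite of degree $r$ over $N$, contradicting the irreducibility of $D_V$. Equivalently, the monodromy action of $\pi_1(U)$ on $\{q_1(w), \dots, q_n(w)\}$ is transitive, so no proper invariant subset exists.

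The remaining quantitative step is to bound $c_1(\mathcal{F}) \cdot \alpha$ from above. Using the exact sequence $0 \to \mathcal{F} \to V \to V/\mathcal{F} \to 0$ together with the Bogomolov--Gieseker inequality, and noting that the discriminant of $V$ equals $\Delta(V) = 2n\, c_2(V)$ since $c_1(V) = 0$, I would combine this with the Hodge index theorem on the surface $M$ to produce the bound $c_1(\mathcal{F}) \cdot \alpha \leq \tfrac{n^3}{4}\, c_2(V)$ for every saturated proper subsheaf. The factor $\tfrac{n^2}{4}$ reflects the maximum of $r(n-r)$ over $1 \le r \le n-1$, and the extra $n$ arises from clearing denominators in the slope comparison. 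Combining the two steps,
\[
c_1(\mathcal{F}) \cdot H_t \;\leq\; -1 + t \cdot \tfrac{n^3}{4}\, c_2(V) \;<\; 0
\]
for all $0 < t \leq \bigl(\tfrac{n^3}{4}\, c_2(V)\bigr)^{-1}$, where strict inequality at the boundary follows from strictness of the $\alpha$-bound whenever $\mathcal{F}$ is not a holomorphic summand.

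The main obstacle is the quantitative bound on $c_1(\mathcal{F}) \cdot \alpha$: because $\mathcal{F}$ itself need not be $\alpha$-semi-stable, Bogomolov cannot be applied to it directly, and one must instead pass to $\mathcal{F}^{**}$ and the torsion-free quotient, or argue through the Harder--Narasimhan filtration of $\mathcal{F}$ while tracking how discriminants add in extensions. Keeping the constant sharp enough to yield $\tfrac{n^3}{4}\, c_2(V)$, rather than some larger universal multiple of $c_2(V)$, is the delicate part; by contrast, the spectral-cover/monodromy step producing $c_1(\mathcal{F}) \cdot l \leq -1$ is essentially formal once the structure from Theorem \ref{special cover} and the irreducibility hypothesis are in place.
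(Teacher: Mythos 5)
First, a point of comparison: the paper does not prove this statement at all --- it is imported verbatim as Theorem 7.4 of Friedman--Morgan--Witten \cite{FMW}, so there is no internal proof to measure your attempt against. Judged on its own terms, your outline does follow the strategy of the cited proof: reduce to saturated subsheaves, use irreducibility of the spectral cover to force $c_1(\mathcal{F})\cdot l\leq -1$, and then control the $\alpha$-component of $c_1(\mathcal{F})$ by a Bogomolov/Hodge-index argument. The qualitative step is essentially correct: a degree-zero saturated subsheaf of $\bigoplus_j\mathcal{O}_{M_w}(q_j-\sigma(w))$ with the $q_j$ distinct must be a partial sum of the summands, and the resulting proper subcover of $D_V$, finite of degree $r<n$ over $N$, contradicts irreducibility.

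The quantitative step, however, has a genuine gap as written. The intermediate claim that $c_1(\mathcal{F})\cdot\alpha\leq \tfrac{n^3}{4}c_2(V)$ holds for \emph{every} saturated proper subsheaf is not something Bogomolov plus Hodge index can deliver: $\mu_{\max,\alpha}(V)$ is finite but is not bounded in terms of $c_2(V)$ alone, since $V$ is not assumed $\alpha$-semistable (already for rank two one can have a saturated line subbundle $\mathcal{O}(D)$ with $D^2$ fixed but $D\cdot\alpha$ arbitrarily large). The correct argument is a wall-crossing one and must use the destabilization hypothesis itself as an input rather than as the conclusion: assume $c_1(\mathcal{F})\cdot H_t\geq 0$ for some $t$ in the stated range; together with $c_1(\mathcal{F})\cdot l\leq -1$ this forces $c_1(\mathcal{F})$ to vanish against a class of positive square on the segment from $l$ to $H_t$, so the Hodge index theorem gives $c_1(\mathcal{F})^2<0$; a lower bound $c_1(\mathcal{F})^2\geq -C(n)\,c_2(V)$ then comes from Bogomolov applied to the Harder--Narasimhan factors via additivity of the discriminant (not to $\mathcal{F}$ or $V/\mathcal{F}$ directly); and a final application of Hodge index to $l$, $\alpha$, $c_1(\mathcal{F})$ converts these two facts into the stated lower bound on the admissible $t$. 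You name all the right ingredients and correctly flag that Bogomolov does not apply to $\mathcal{F}$ directly, but passing to $\mathcal{F}^{**}$ or to the filtration of $\mathcal{F}$ cannot rescue an unconditional bound that is false; the logical order must be reversed. (A smaller issue: at the endpoint $t=(\tfrac{n^3}{4}c_2(V))^{-1}$ your displayed inequality is only $\leq 0$, so strictness there needs a separate word.)
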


This theorem can be used  to construct stable bundles on $M$ as  follows. If $D\in | n \sigma (N)+m l|$, $m>2n$,  is an effective  reduced and irreducible divisor, then Lemma 5.4 in \cite{FMW} asserts that $D$ is the spectral cover  of a unique section $\mathcal{A}$  of $\mathcal{P}_{n-1}$, which satisfies $m ={\rm deg}\mathcal{A}^* \mathcal{O}_{\mathcal{P}_{n-1}}(1)$.  A holomorphic vector bundle $V$ is constructed from $\mathcal{A}$ (cf. Definition 5.2 in \cite{FMW}) such that the restriction of  $V$ on every fiber is regular semi-stable with trivial determinant line bundle, and $D$ is the spectral cover of $V$, i.e. $D_{V}=D$.

 We recall the construction in Section 5.1 of \cite{FMW} by assuming that $D$ is smooth, and does not intersect with any singular set of the singular fibers of $f$. If $\tilde{M}= D\times_{N} M$ denotes the base change, which is smooth,  then there are morphisms $\tilde{f}: \tilde{M} \rightarrow D$ and $\nu_D: \tilde{M} \rightarrow M$ such that $f\circ \nu_D= f|_D \circ  \tilde{f}$.  We regard $\tilde{M}= D\times_{N} M\subset M\times_N M$ via the natural  embedding $D\hookrightarrow  M$.   Then $\Sigma_D = \nu_D^* \sigma$ and $\Delta= \tilde{M} \cap \Delta_0$ are divisors, where  $\Delta_0$ is the diagonal of $ M\times_N M$. For any $w\in N_0$, and $q_j(w)\in M_w\cap D$, we have $\tilde{M}_{(w, q_j(w))}=M_w$, $\Sigma_D \cap \tilde{M}_{(w, q_j(w))}= \{\sigma(w)\}$, and $\Delta \cap \tilde{M}_{(w, q_j(w))}= \{q_j(w)\}$.
 Lemma 5.5 of \cite{FMW}  asserts that the push forward  $(\nu_D)_* \mathcal{O}_{\tilde{M}}(\Delta - \Sigma_D)$ satisfies that its  restriction  on every fiber is regular semi-stable with trivial determinant line bundle.  Furthermore, for any line bundle $\tilde{L}$ on $D$, $(\nu_D)_* (\mathcal{O}_{\tilde{M}}(\Delta - \Sigma_D)\otimes \tilde{f}^*\tilde{L})$  also satisfies the required conditions.

  Conversely, if $V$ is a holomorphic vector bundle whose restriction of  $V$ on every fiber is regular semi-stable with trivial determinant line bundle, and $D$ is the spectral cover of $V$, then $$V=(\nu_D)_* (\mathcal{O}_{\tilde{M}}(\Delta - \Sigma_D)\otimes \tilde{f}^*\tilde{L})$$ for a certain line bundle $\tilde{L}$ on $D$ by Proposition 5.7 in \cite{FMW}.
 Now, since ${\rm deg}L=-\sigma^2=2$, Proposition 5.12 of \cite{FMW} asserts that one can choose $V$ via  a suitable $\tilde{L}$ on $D$  such that the first Chern class $c_1(V)=0$, and therefore, $V$ has  trivial determinant line bundle on $M$.
   Now Theorem 7.4 of \cite{FMW} shows that $V$ is stable with respect to  $f^{*} c_{1}(\mathcal{O}_{\mathbb{CP}^{1}}(1))+t \alpha$ for $0<t \ll 1$.
In summary, we have

\begin{thm}\label{stable construction}  If $D\in | n \sigma (N)+m l|$, $m>2n$,  is an effective  reduced and irreducible divisor, then there exists a  holomorphic vector bundle $V$ of rank $n$  with $c_1(V)=0$ on $M$ such that  the restriction of  $V$ on every fiber is regular semi-stable, and $D$ is the spectral cover of $V$, i.e. $D_{V}=D$. Furthermore,
 $V$ is stable with respect to  $f^{*} c_{1}(\mathcal{O}_{\mathbb{CP}^{1}}(1))+t \alpha$, for all $0<t\leq (\frac{n^3}{4}c_{2}(V))^{-1}$, where $\alpha$ is an ample class on $M$.
\end{thm}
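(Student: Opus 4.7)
The plan is to assemble the theorem from the Friedman--Morgan--Witten ingredients already recalled in the paragraphs immediately preceding the statement, so the main task is to organize the references in the right order and handle the last compatibility (stability condition) carefully.

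First I would produce the bundle $V$ directly via the spectral construction of Section 5.1 of \cite{FMW}. Since $D\in|n\sigma(N)+ml|$ is reduced and irreducible, Lemma 5.4 of \cite{FMW} gives a unique section $\mathcal{A}\colon N\to\mathcal{P}_{n-1}$ with $D=\varrho^{-1}(\mathcal{A}(N))$ and $m=\deg\mathcal{A}^{*}\mathcal{O}_{\mathcal{P}_{n-1}}(1)$. Form the base change $\tilde M=D\times_{N}M$ with the natural projections $\tilde f\colon\tilde M\to D$ and $\nu_{D}\colon\tilde M\to M$, and inside $\tilde M$ the divisors $\Sigma_{D}=\nu_{D}^{*}\sigma$ and $\Delta=\tilde M\cap\Delta_{0}$. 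For any line bundle $\tilde L$ on $D$, define
\[
V \;=\; (\nu_{D})_{*}\bigl(\mathcal{O}_{\tilde M}(\Delta-\Sigma_{D})\otimes\tilde f^{*}\tilde L\bigr).
\]
By Lemma 5.5 of \cite{FMW}, $V$ is a rank $n$ bundle whose restriction to each fiber of $f$ is regular semi-stable with trivial determinant, and by Proposition 5.7 its spectral cover is precisely $D_{V}=D$.

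Next I would pin down the choice of $\tilde L$ so as to force $c_{1}(V)=0$. Since $\deg L=-\sigma^{2}=2$ on $N\cong\mathbb{CP}^{1}$, Proposition 5.12 of \cite{FMW} computes $c_{1}(V)$ as an explicit linear combination of $c_{1}(\tilde L)$, $c_{1}(\mathcal{O}_{\tilde M}(\Sigma_{D}))$ and pullbacks from $N$, and shows that the equation $c_{1}(V)=0$ admits a solution $\tilde L$ on $D$. Fixing such an $\tilde L$, the bundle $V$ produced above has trivial determinant on $M$, regular semi-stable restriction to every fiber, and spectral cover $D$. This gives the existence half of the statement.

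Finally, for the stability assertion I would invoke Theorem \ref{stable criterion} (Theorem 7.4 of \cite{FMW}) directly: since $D=D_{V}$ is reduced and irreducible by hypothesis, $V$ is stable with respect to $f^{*}c_{1}(\mathcal{O}_{\mathbb{CP}^{1}}(1))+t\alpha$ for every $0<t\leq \bigl(\tfrac{n^{3}}{4}c_{2}(V)\bigr)^{-1}$. I expect no real obstacle here: the argument is purely a bookkeeping of existing results. The only delicate point is checking that the choices of $\tilde L$ achieving $c_{1}(V)=0$ do exist for the given $m$; this is exactly the content of Proposition 5.12 in \cite{FMW} (and the hypothesis $m>2n$ ensures we are in the range where the constructions of Section 5 of \cite{FMW} apply without pathology), so no new estimate or deformation argument is required beyond quoting that result.
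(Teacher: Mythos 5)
Your proposal is correct and follows essentially the same route as the paper: both arguments assemble the theorem from Lemma 5.4 and Lemma 5.5 of \cite{FMW} (spectral construction of $V=(\nu_D)_*(\mathcal{O}_{\tilde M}(\Delta-\Sigma_D)\otimes\tilde f^*\tilde L)$ with regular semi-stable fiber restrictions and $D_V=D$), then use Proposition 5.12 together with $\deg L=-\sigma^2=2$ to choose $\tilde L$ so that $c_1(V)=0$, and finally quote Theorem 7.4 of \cite{FMW} for stability with respect to $f^*c_1(\mathcal{O}_{\mathbb{CP}^1}(1))+t\alpha$. No gaps.
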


\subsection{Collapsing  of Ricci-flat   K\"{a}hler-Einstein metrics}
\label{basemetric}

We now introduce  some preliminary results on our family of collapsing base metrics on $M$, and highlight a new decay estimate necessary for our main theorem. The reader is directed to  Appendix A for a proof of this particular asymptotic decay.

Let $\alpha$ be an ample class on $M$,  $\alpha_{t} =t \alpha + f^*c_1(\mathcal{O}_{\mathbb{CP}^1}(1))$, $t\in (0,1]$, and $\omega_t \in \alpha_{t} $  the unique Ricci-flat K\"{a}hler-Einstein metric, which satisfies the complex Monge-Amp\`ere equation $$ \omega_t^2=c_t t \Omega \wedge \overline{\Omega}.  $$ Here $\Omega$ is a holomorphic symplectic  form on $M$, and $c_t $ tends to a positive number $ c_0$ when $t\rightarrow 0$.

For any $t\in(0,1]$, there exists a family of K\"ahler metrics $\omega_t^{SF}$ on $M_0$,  such that $\omega_t^{SF}|_{M_w}$ is the flat metric in the class $t\alpha|_{M_w}$. Such metrics are called  {\it semi-flat}, and we recall their construction here.
 Note that $M_0$ is obtained by the quotient of the holomorphic cotangent bundle $T^*N_0$ by a lattice subbundle $\Lambda$. More precisely, we have a covering map $p:T^*N_0\rightarrow M_0$, so that $p(\Lambda)=\sigma(N_0)$, and the pull-back  $p^*\Omega$ is the canonical holomorphic symplectic form on $T^*N_0$.
 If $U\subset N_0$ is a small open disk, we can choose a holomorphic coordinate $w$ on $U$ so that $\Lambda\cap T^*U={\rm Span}_{\mathbb Z}\{dw,\tau(w) dw\}$, where $\tau(w)$ is the period of the elliptic curve $M_w$. Under the trivialization $T^*U\cong U\times\mathbb C$ given by $zdw\mapsto(w,z)$, we see $p^*\Omega=dw\wedge dz$.  Note that the $(1,1)$-form $$ i\partial\overline{\partial}{\rm Im}(\tau)^{-1}({\rm Im}(z))^2 =\frac i2W(dz+bdw)\wedge\overline{(dz+bdw)}$$ is invariant under the translation of any local constant section of $\Lambda$ (cf. Section 3 in \cite{GTZ}),  where
\be
W={\rm Im}(\tau)^{-1}\qquad{\rm and}\qquad b= -\frac{{\rm Im}(z)}{{\rm Im}(\tau)}\frac{\partial\tau}{\partial w}.\nonumber
\ee
Thus the above $(1,1)$-form  can be regarded as living on $f^{-1}(U)$.   The semi-flat metric is defined as
\be
\label{SFmetric}
\omega_t^{SF}  = \frac i2\left(tW(dz+bdw)\wedge\overline{(dz+bdw)}+W^{-1} dw\wedge d\bar w\right).
\ee  For simplicity we denote $\omega^{SF}:=\omega_1^{SF}$, which we use as a fixed base metric, and $\theta= dz+bdw$.

We now state our decay result for $\omega_t$ as $t\rightarrow 0$, which is contained in Theorem \ref{ttm-decay} (see Appendix A below).  Given $U\subset N_0$,
 \cite{GTZ} asserts that
there exists a local section $\sigma_0$ such that for any $\ell\geq 0$,   $$ \|T_{\sigma_0}^*\omega_t- \omega^{SF}_t\|_{C_{\rm loc}^\ell(M_U, \omega^{SF}_t)}\rightarrow 0,$$ when $t\rightarrow \infty$,  where $T_{\sigma_0}$ denotes the fiberwise  translation by $\sigma_0$ (cf. Lemma 4.7 in \cite{GTZ}).  Theorem \ref{ttm-decay} shows that there is
a $(1,1)$-form     $\chi_t$   satisfying $\chi_t \rightarrow 0$ in  $C^\infty$ as $t\rightarrow 0$,  so that  $T_{\sigma_0}^*\omega_t$ approaches to $ \omega^{SF}_t+ f^*\chi_t$ faster than any polynomial rate, i.e.   $$ T_{\sigma_0}^*\omega_t= \omega^{SF}_t+ f^*\chi_t+ o(t^{\frac{\nu}{2}}), $$ for any $\nu\gg 1$.

In the proof of the main theorem we need a slightly stronger statement.  The difference between $T_{\sigma_0}^*\omega_t$ and $\omega^{SF}_t$ can be written out in components in the fiber and base directions:
$$T_{\sigma_0}^*\omega_t- \omega^{SF}_t=\varphi_{t, z \bar{z}}dz\wedge d\bar{z}+\varphi_{t, w \bar{w}}dw\wedge d\bar{w}+\varphi_{t, w \bar{z}}dw\wedge d\bar{z}+ \varphi_{t, z \bar{w}}dz\wedge d\bar{w}. $$ We need the following important lemma, which  is a direct consequence of Lemma  \ref{decay}.

 \begin{lem}\label{lem-decay}
For any $\nu\gg  1$ and $\ell\geq 0$, there is a constant $C_{\ell,\nu}>0$ such that on $M_{U'}$, $U' \subset U$,  $$\|\varphi_{t, w \bar{w}}-\chi_{t,w\bar{w}}\|_{C^0}\leq C_{0,\nu}t^{\frac{\nu}{2}}, $$  $$\|\frac{\partial}{\partial z}\varphi_{t, w \bar{w}}\|_{C^\ell}+\|\frac{\partial}{\partial \bar{z}}\varphi_{t, w \bar{w}}\|_{C^\ell}+ \|\varphi_{t, z \bar{z}}\|_{C^\ell}+ \|\varphi_{t, z \bar{w}}\|_{C^\ell}+  \|\varphi_{t, w \bar{z}}\|_{C^\ell}\leq C_{\ell,\nu} t^{\frac{\nu}{2}}, $$ and $\chi_{t, w \bar{w}} \rightarrow 0$ in the $C^\infty$-sense when $t\rightarrow 0$. Here $\chi_t=\chi_{t,w\bar{w}}dw\wedge d\bar{w} $, and the $C^\ell$-norms are calculated using the   fixed K\"{a}hler metric $\omega^{SF}$ on $M_U$.
\end{lem}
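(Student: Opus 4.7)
The plan is to extract the component-wise estimates directly from Lemma \ref{decay}. That lemma, summarized in Theorem \ref{ttm-decay}, produces a base-direction $(1,1)$-form $\chi_t\to 0$ in $C^\infty$ together with the decay statement
\begin{equation*}
\varphi := T_{\sigma_0}^*\omega_t - \omega^{SF}_t - f^*\chi_t = o(t^{\nu/2})\qquad \text{for every }\nu\gg 1,
\end{equation*}
measured in every $C^\ell$-norm taken with respect to the collapsing semi-flat metric $\omega^{SF}_t$ on any $M_{U'} \Subset M_U$. All the work lies in converting this $\omega^{SF}_t$-bound into the five coordinate bounds of the lemma against the fixed metric $\omega^{SF}$, carefully tracking the different scalings of fiber versus base directions.

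I would first exploit the diagonal form $\omega^{SF}_t = \tfrac{i}{2}(tW\,\theta\wedge\bar\theta + W^{-1}\,dw\wedge d\bar w)$ with $\theta = dz+b\,dw$, which yields $|\theta|_{\omega^{SF}_t}^2 = (tW)^{-1}$ and $|dw|_{\omega^{SF}_t}^2 = W$. Passing to the $\omega^{SF}_t$-orthonormal frame, the four pointwise components of $\varphi$ written in the coordinate basis $\{dz, dw\}$ become, up to bounded smooth factors of $W$ and $b$,
\begin{equation*}
tW\,\varphi_{t,z\bar z},\ \ \sqrt{t}\,W\,\varphi_{t,z\bar w},\ \ \sqrt{t}\,W\,\varphi_{t,w\bar z},\ \ W\,(\varphi_{t,w\bar w}-\chi_{t,w\bar w}),
\end{equation*}
each of which is pointwise bounded by $\|\varphi\|_{C^0(\omega^{SF}_t)}\le C_\nu t^{\nu/2}$. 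Solving gives $|\varphi_{t,z\bar z}|\le Ct^{\nu/2+1}$, $|\varphi_{t,z\bar w}|+|\varphi_{t,w\bar z}|\le Ct^{\nu/2+1/2}$, and $|\varphi_{t,w\bar w}-\chi_{t,w\bar w}|\le Ct^{\nu/2}$. This produces both the marginal $C^0$-estimate claimed for $\varphi_{t,w\bar w}-\chi_{t,w\bar w}$ and estimates already stronger than needed for the remaining three components.

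The $C^\ell$-estimates come from the same bookkeeping applied to $\omega^{SF}_t$-covariant derivatives of $\varphi$. Since an $\omega^{SF}_t$-orthonormal frame differentiation is $(tW)^{-1/2}\partial_z$ in the fiber direction and comparable to $W^{1/2}(\partial_w - b\partial_z)$ in the base direction, each coordinate fiber derivative $\partial_z$ or $\partial_{\bar z}$ converts an $\omega^{SF}_t$-bound of order $t^{\nu/2}$ into a coordinate bound with an additional factor of $t^{1/2}$, while $\partial_w$ and $\partial_{\bar w}$-derivatives are essentially free. The Christoffel symbols of both $\omega^{SF}_t$ and $\omega^{SF}$ in the $\{\theta, dw\}$ frame are smooth and uniformly bounded in $t$ on $M_{U'}$, so they contribute only harmless constants. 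Consequently each fiber derivative of $\varphi_{t,w\bar w}$ gains an extra $t^{1/2}$, which suffices to bound $\partial_z\varphi_{t,w\bar w}$ and $\partial_{\bar z}\varphi_{t,w\bar w}$ in every $C^\ell(\omega^{SF})$-norm by $C_{\ell,\nu}t^{\nu/2}$; the components $\varphi_{t,z\bar z}$, $\varphi_{t,z\bar w}$, $\varphi_{t,w\bar z}$ already enjoy the required decay from the $C^0$ step and their higher derivatives are handled identically. The main obstacle is purely the bookkeeping — keeping the four coordinate blocks and the fiber/base derivative counts straight — after which all five estimates follow from a single application of Lemma \ref{decay}, and the smoothness statement $\chi_{t,w\bar w}\to 0$ in $C^\infty$ is exactly the conclusion of that lemma.
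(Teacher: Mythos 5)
Your proposal is correct and is essentially the paper's own argument: the paper simply declares Lemma \ref{lem-decay} a ``direct consequence'' of Lemma \ref{decay}, and your contribution is to spell out the fiber/base scaling bookkeeping, phrased via $\omega^{SF}_t$-norms rather than via the dilation $\lambda_t$ used in Appendix A --- these are equivalent, and you correctly use only the $C^0$ information for the base-base component $\varphi_{t,w\bar w}-\chi_{t,w\bar w}$ while invoking the full $C^\ell$ decay of $\partial_z\psi_{t,w\bar w}$, $\psi_{t,z\bar z}$, $\psi_{t,z\bar w}$ for everything else. One slip: since $|dz\wedge d\bar z|_{\omega^{SF}_t}\sim (tW)^{-1}|dz\wedge d\bar z|_{\omega^{SF}}$, the orthonormal-frame components should read $(tW)^{-1}\varphi_{t,z\bar z}$, $t^{-1/2}\varphi_{t,z\bar w}$, etc.\ (reciprocal to what you wrote), which is consistent with the bounds $|\varphi_{t,z\bar z}|\leq Ct^{\nu/2+1}$ you then solve for; in any case the arbitrariness of $\nu$ absorbs any fixed power of $t$, so the conclusion stands.
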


In this section we also recall the blow-up limit of $t^{-1} \omega_t$, which shows up in the analysis to follow.   Let $t_k \rightarrow 0$ and $w_k \rightarrow w_0$ in $U\subset N_0$.
  By \cite{GTZ}, $$(M, t_k^{-1} \omega_{t_k}, p_k) \rightarrow (\mathbb{C} \times  M_{w_0}, \omega_\infty =\omega_{w_0}^{F}+\frac{i}{2}W^{-1}(w_0)d\tilde{w}\wedge d\bar{\tilde{w}}, p_0), $$ in the $C^\infty$-Cheeger-Gromov sense, where $w_k=f(p_k)$,  $p_k\rightarrow p_0\in M_{w_0} $,  $\omega_{w_0}^{F}$ is the flat K\"{a}hler metric representing $\alpha|_{M_{w_0}}$, i.e. $\omega_{w_0}^{F}=\omega^{SF}|_{M_{w_0}} $,  and $\tilde{w}$ denotes the coordinate of $ \mathbb{C}$.  More precisely, if
    $D_r=\{\tilde{w} \in \mathbb{C}| |\tilde{w}|<r\}$,    we define smooth embeddings $\Phi_{k,r}: D_r \times  M_{w_0}  \rightarrow M_U$ by $$( \tilde{w},  a_1+a_2 \tau (w_0))\mapsto (w_k +\sqrt{t_k}\tilde{w},  a_1+a_2 \tau(w_k+\sqrt{t_k}\tilde{w})),  \  \  a_1,a_2 \in\mathbb{R}/\mathbb{Z},$$  where we identify $M_U$ with $(U\times\mathbb{C})/{\rm Span}_{\mathbb{Z}}\{1, \tau\}$.
     If $z=a_1+a_2 \tau (w_0)$, then $ a_1+a_2 \tau(w_k+\sqrt{t_k}\tilde{w})=z +h_k$, where $$h_k=i(2{\rm Im}\tau(w_0))^{-1}(\bar{z}-z)(\tau(w_k+\sqrt{t_k}\tilde{w})-\tau(w_0)), $$  which satisfies that
     $\|h_k\|_{C^{\ell}}\rightarrow 0$ when $t_k \rightarrow 0$.
     Therefore
      $$\Phi_{k,r}^*(dz +bdw)= dz+dh_k +\sqrt{t_k }(b-{\rm Im}h_k({\rm Im}\tau)^{-1}\partial_w \tau )d\tilde{w}\rightarrow dz,$$ in the $C^\infty$-sense.    Clearly,   $d\Phi_{k,r}^{-1}I d\Phi_{k,r} \rightarrow I_\infty$, where $I$ is the complex structure of $M$ and $I_\infty$ denotes the complex structure of $  \mathbb{C}\times M_{w_0}$, and \begin{equation}\label{blowup1}\Phi_{k,r}^*  t_k^{-1} \omega_{t_k}^{SF} \rightarrow \omega_\infty=\frac i2\left(W(w_0)dz \wedge d\bar{z} +W^{-1}(w_0) d\tilde{w}\wedge d\bar {\tilde{w}}\right),  \end{equation} in the $C^\infty$-sense on $ D_r\times M_{w_0}$.   Furthermore,  \begin{equation}\label{blowup2}(T_{\sigma_0} \circ \Phi_{k,r})^*  t_k^{-1} \omega_{t_k} =\Phi_{k,r}^*  t_k^{-1}T_{\sigma_0}^* \omega_{t_k} \rightarrow \omega_\infty,  \end{equation} in the $C^\infty$-sense, on $ D_r\times M_{w_0}$,    when $t_k\rightarrow 0$ by \cite{GTZ}.

\subsection{Fourier-Mukai transform} In this section, we recall a notion, called the  Fourier-Mukai transform (cf.  \cite{AP,LYZ,Ch3,Ch4} etc.), and we present a little variant of the standard construction  for the convenience of  the proof of Theorem \ref{thm-main2}.

  Let $N^o \subset N_0$ be a Zariski open subset, and $D^o \subset M_{N^o}$ be a smooth curve such that $f|_{D^o}: D^o \rightarrow N^o$ is a unbranched $n$-sheets  cover.
Note that the moduli space  of flat $U(1)$-connections on $D^o$ is the cohomology group $H^1(D^o, \mathcal{U}_c(1))\cong H^1(D^o,U(1))$, where $\mathcal{U}_c(1)$ is the $U(1)$-valued locally  constant sheaf. For any $\Theta \in H^1(D^o, \mathcal{U}_c(1))$, the Fourier-Mukai transform takes the pair $(D^o, \Theta)$ to    a unitary gauge equivalent class  $ \mathcal{FM}(D^o, \Theta)$ of  $U(n)$-connections on $M_{N^o}$. We review the construction   as the following.

 If $\tilde{M}^o= D^o\times_{N^o} M_{N^o}$ is  the base change, then the projection  $\tilde{f}: \tilde{M}^o\rightarrow D^o$ is a fibration with the fiber $\tilde{M}_p^o=M_{f(p)}$,
 and $\nu_D: \tilde{M}^o \rightarrow M_{N^o}$ is a unbranched $n$-sheets cover  satisfying
   $f\circ \nu_D= f|_{D^o} \circ  \tilde{f}$.
    We embed  $\tilde{M}^o= D^o\times_{N^o} M_{N^o} \hookrightarrow  M_{N^o}\times_{N^o} M_{N^o}$ via the natural  inclusion  $D^o\hookrightarrow  M_{N^o}$.
       Let $\Sigma = \nu_D^* \sigma$ and $\Delta= \tilde{M}^o \cap \Delta_0$, where  $\Delta_0$ denotes the diagonal of $  M_{N^o}\times_{N^o} M_{N^o}$. For any $x\in N^o$, and $q(x)\in M_x\cap D^o$, we have $\tilde{M}_{(x, q(x))}^o=M_x$, $\Sigma \cap \tilde{M}_{(x, q(x))}^o= \{\sigma(x)\}$, and $\Delta \cap \tilde{M}_{(x, q(x))}^o= \{q(x)\}$.  We regard  $\Sigma$ as the zero section of $\tilde{f}$, which is used to identify the fibers with elliptic curves, and view $\Delta$ as the pull back the multi-section $D^o$, which is a section of $\tilde{f}$.

   There is a $U(1)$-connection $A^o$ on the smooth trivial line bundle $   \tilde{M}^o \times \mathbb{C}$, which is obtained by the
       restriction of the Poincar\'{e} line bundle (cf. \cite{AP}) on $ M_{N^o}\times_{N^o} M_{N^o}$ by identifying $M_{N^o}$ with the Jacobian $\check{M}_{N^o}$.  We exhibit  $A^o$ explicitly.

 If $U\subset D^o$ is an open disc such that $f|_U:U\rightarrow f(U)$ is biholomorphic,  we choose the coordinate $w$ such that $\tilde{M}_{U}^o\cong T^*U/ {\rm Span}_{\mathbb{Z}}\{dw, \tau dw\}$, where $\tau(w)$ is the period of $\tilde{M}_w^o$. Here the section $\Sigma\equiv 0$ under this identification.   If $z$ denotes  the coordinate on the fiber, then  the holomorphic symplectic form $\nu_D^*\Omega=dw\wedge dz$, and $\Delta \cap \tilde{M}_{U}^o$ is given by a holomorphic function $q=q(w)$ on $U$, i.e. $\Delta \cap \tilde{M}_{U}^o=\{(w, q(w))\}\subset U\times \mathbb{C}/ {\rm Span}_{\mathbb{Z}}\{1, \tau \} $. We have the  $U(1)$-connection \begin{equation}\label{u(1)} A^o=  \pi ( {\rm Im} (\tau))^{-1} (q\bar{\theta}-\bar{q}\theta),  \end{equation} on  $\tilde{M}_{U}^o $, where $\theta=dz+bdw$.

    If $y_1$ and $y_2$ are real functions  defined  on $U\times \mathbb{C}$ by $z=y_1+\tau y_2$, then $dy_1$ and $dy_2$ are well-defined 1-forms on $\tilde{M}_U^o$. Note that $\tilde{M}_U^o$ is diffeomorphic to $U\times (\mathbb{R}/\mathbb{Z})^2$, and we can regard $y_1$ and $y_2$ as the  angle coordinates of $\mathbb{R}/\mathbb{Z}$.  We have the decomposition of the cotangent bundle $T^*\tilde{M}_U^o= {\rm Span}_{\mathbb{R}}\{dy_1, dy_2\}\oplus {\rm Span}_{\mathbb{R}}\{dx_1, dx_2\}$, where $w=x_1+ix_2$.
     Since  $dz=dy_1+\tau dy_2+y_2 d\tau$,  $2i{\rm Im} (\tau )y_2=z-\bar{z}$, we have  $\theta=dy_1+\tau dy_2$.  If we write  $q=q_1+\tau q_2$, then  \begin{equation}\label{u(1)1} A^o=  2\pi i (q_2dy_1-q_1dy_2). \end{equation} If we choose  another basis of the lattice ${\rm Span}_{\mathbb{Z}}\{1, \tau \}$, and let $y_1'$ and $y_2'$ be the corresponding angle coordinates, then $y_j'=\sum c_{ji}y_i$ and  $q_j'=\sum c_{ji}q_i$ with $\det (c_{ji})=1 $ and $c_{ji}\in\mathbb{Z}$.  A direct calculation shows that $A^o$ is independent of the choice of the basis, and therefore $A^o$ is a global defined  $U(1)$-connection on the trivial line bundle $   \tilde{M}^o \times \mathbb{C}$.

    Let $\check{y}_1$ and $\check{y}_2$ be the dual coordinates of $y_1$ and $y_2$ on the dual space $(\mathbb{R}^2)^*$, i.e. if we view  $(\mathbb{R}^2)^*$ as the cotangent space, then $\check{y}_1$ and $\check{y}_2$ are coordinates with respect to the basis  $dy_1$ and $dy_2$.  We
     identify $\mathbb{R}^2/\mathbb{Z}^2$ with the dual torus
    $(\mathbb{R}^2)^*/(\mathbb{Z}^2)^*$ via the   symplectic form $\omega=dy_2\wedge dy_1$, i.e. $v \mapsto \omega(v, \cdot)$. Then $q=(q_1,q_2)\in \mathbb{R}^2/\mathbb{Z}^2$ is mapped to  $\check{q}=(q_2,-q_1)$ in $(\mathbb{R}^2)^*/(\mathbb{Z}^2)^*$.  The  Poincar\'{e} line bundle is a line bundle on $U\times \mathbb{R}^2/\mathbb{Z}^2 \times (\mathbb{R}^2)^*/(\mathbb{Z}^2)^*$ with the  $U(1)$-connection $$A_P=2\pi i (\check{y}_1dy_1+\check{y}_2dy_2).  $$
    Thus $A^o= A_P|_{U\times \mathbb{R}^2/\mathbb{Z}^2\times \{\check{q}\}}$, which coincides with the constructions  \cite{AP,LYZ}.

  Let $\{U_\lambda| \lambda\in\Lambda\}$ be a locally finite open cover of $N^o$ such that any intersection $U_{\lambda_1}\cap \cdots \cap U_{\lambda_h}$ is contractible.  On any $U_\lambda$, there are holomorphic functions   $q_{1}, \cdots, q_{n}$, such that  $D^o \cap M_{U_\lambda}=\{(w, q_{1}(w)), \cdots, (w, q_{n}(w))| w\in U_\lambda\}$.  Furthermore,  $D^o \cap M_{U_\lambda}=U_\lambda^1 \cup \cdots \cup  U_\lambda^n$ is a disjoint union of open sets biholomorphic to $U_\lambda$ where $U_\lambda^j=\{(w, q_j(w))\}$, and $\{U_\lambda^j|\lambda\in\Lambda, j=1, \cdots, n \}$ is an open cover of $D^o $ such that any intersections are contractible.

    If  $\Theta\in H^1(D^o, \mathcal{U}_c(1))$, then we let $\{g_{\mu\lambda}^{ij_i}\}\in \mathcal{C}^1(\{U_\lambda^j\}, \mathcal{U}_c(1))$ be the cocycle  representing  $\Theta$, where $U_\mu^i \cap U_\lambda^{j_i} \neq\emptyset$, and $g_{\mu\lambda}^{ij_i}$ are  $U(1)$-valued   constant functions  on $U_\mu^i \cap U_\lambda^{j_i} $.  If $U_\mu^i \cap U_\lambda^j \cap U_\nu^k\neq\emptyset$, then $g_{\mu\lambda}^{ij}g_{\lambda\nu}^{jk}g_{\nu\lambda}^{ki}=1$.  We identify $\tilde{f}^*g_{\mu\lambda}^{ij_i}=g_{\mu\lambda}^{ij_i}$, and regard $g_{\mu\lambda}^{ij_i}$ as $U(1)$-valued  constant functions on $\tilde{M}^o_{U_\mu^i}\cap \tilde{M}^o_{U_\lambda^{j_i}}$.
     Note that $(g_{\mu\lambda}^{ij_i})^{-1}A^o g_{\mu\lambda}^{ij_i}+(g_{\mu\lambda}^{ij_i})^{-1}dg_{\mu\lambda}^{ij_i}=A^o$.  If $L_\Theta$ denotes the line bundle on $\tilde{M}^o $ given by the cocycle  $\{(\tilde{M}^o_{U_\mu^i}\cap \tilde{M}^o_{U_\lambda^{j_i}}, g_{\mu\lambda}^{ij_i})\}$,  then $A^o$ induces a $U(1)$-connection on $L_\Theta$ locally given  by (\ref{u(1)})  denoted still by $A^o$.

     The pushforward  $(\nu_D)_* L_\Theta $ is a rank $n$ bundle on $ M_{N^o}$ given by the transitions $g_{\mu\lambda}={\rm diag}\{g_{\mu\lambda}^{1,j_1}, \cdots, g_{\mu\lambda}^{n,j_n}\}$ on $M_{U_\mu }\cap M_{U_\lambda}$, where $U_\mu^{i} \cap U_\lambda^{j_i}\neq\emptyset$.  There is a natural $U(n)$-connection $\Xi$ on $(\nu_D)_* L_\Theta $ induced by $A^o$    given  locally by  \begin{eqnarray*} \Xi|_{M_{U_\lambda}}& =& {\rm diag}\{(\nu_D)_* A^o|_{\tilde{M}^o_{U_\lambda^{1}}}, \cdots, (\nu_D)_*  A^o|_{\tilde{M}^o_{U_\lambda^{n}}}\} \\ & =& \pi ( {\rm Im} (\tau))^{-1} ({\rm diag}\{q_{1}, \cdots, q_{n}\}\bar{\theta}-{\rm diag}\{\bar{q}_{1}, \cdots, \bar{q}_{n}\}\theta),  \end{eqnarray*} which satisfies  $g_{\mu\lambda}^{-1}\Xi|_{M_{U_\mu}}g_{\mu\lambda}+g_{\mu\lambda}^{-1}dg_{\mu\lambda}=\Xi|_{M_{U_\lambda}}$.
If $\{g_{\mu\lambda}'^{ij_i}\}$ is an  another cocycle representing  $\Theta$, then there is a cycle $\{s_{\lambda}^{j}\}\in \mathcal{C}^0(\{U_\lambda^j\}, \mathcal{U}_c(1))$ such that $g_{\mu\lambda}'^{ij_i} s_{\lambda}^{j_i}=s_{\mu}^{i}g_{\mu\lambda}^{ij_i}$ when  $U_\mu^{i} \cap U_\lambda^{j_i}\neq\emptyset$.
   If we define   $s_\lambda={\rm diag}\{s_{\lambda}^{1},\cdots, s_{\lambda}^{n} \}$ on $M_{U_\lambda}$, then  $g_{\mu\lambda}' s_{\lambda}=s_{\mu}g_{\mu\lambda}$, and   $\{s_\lambda| \lambda\in\Lambda\}$ induces a unitary gauge change of  $(\nu_D)_* L_\Theta $.

 \begin{defn}  The Fourier-Mukai transform $\mathcal{FM} (D^o, \Theta)$ of $(D^o, \Theta)$ is defined as the unitary gauge  equivalent  class $[\Xi] $ of the $U(n)$-connection  $\Xi$ on   $(\nu_D)_* L_\Theta $, i.e.  $$ \mathcal{FM} (D^o, \Theta)= [\Xi].$$
 \end{defn}

 For any $t\in (0,1]$, the semi-flat metric  $\omega^{SF}_t$ is HyperK\"{a}hler, and by using the HyperK\"{a}hler rotation, we can find a new complex structure and a symplectic form such that  $D^o$ is a special lagrangian submanifold.
   In \cite{LYZ}, it is shown  that the connection obtained by the Fourier-Mukai transform of a special lagrangian section satisfies the deformed Hermitian-Yang-Mills equation, and in the case of dimension 2,  the standard Hermitian-Yang-Mills  equation. In the present case, the bundle $(\nu_D)_* L_\Theta $ with the connection $\Xi$ splits locally, and therefore, it is a corollary of  \cite{LYZ} that $\Xi$ is an anti-self-dual connection.
    We  give  a   direct calculation  proof of this assertion.

\begin{prop}
\label{propFM} If $\Theta\in H^1(D^o, \mathcal{U}_c(1))$, then for any $\Xi\in \mathcal{FM} (D^o, \Theta)$, $\Xi$ is an anti-self-dual connection with respect to the semi-flat HyperK\"{a}hler structure $(\omega^{SF}_t, \Omega)$, $t\in (0,1]$, i.e. the curvature $F_\Xi$ satisfies that  $$ F_\Xi\wedge \omega^{SF}_t=0,  \  \  and  \  \  F_\Xi\wedge \Omega =0.$$
 \end{prop}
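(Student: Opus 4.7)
The plan is to exploit the local diagonal structure of $\Xi$ and reduce the proposition to a direct abelian computation. In the local trivialization described above, $\Xi|_{M_{U_\lambda}}$ is block-diagonal with entries $A_j = \pi(\Im\tau)^{-1}(q_j\bar\theta - \bar q_j\theta)$, where each $q_j = q_j(w)$ is a holomorphic function (the $j$-th sheet of $D^o$) and $\theta = dz + b\,dw$. The transition cocycle $g_{\mu\lambda}$ from the construction is a constant diagonal matrix (after possibly permuting blocks), so both the curvature $F_\Xi$ and the wedge products $F_\Xi\wedge\omega_t^{SF}$, $F_\Xi\wedge\Omega$ are themselves block-diagonal. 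Consequently it suffices to verify the ASD conditions for a single abelian $U(1)$-connection $A = \pi W(q\bar\theta - \bar q\theta)$ with $W = (\Im\tau)^{-1}$ and $q(w)$ holomorphic; $\Xi$ inherits the property block by block.

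For the condition $F_A\wedge\Omega = 0$: since $\Omega$ is of type $(2,0)$, this is equivalent to $F_A^{(0,2)}=0$. Writing $A^{(0,1)} = \pi Wq\bar\theta$, one computes $\bar\partial A^{(0,1)} = \pi\bar\partial(Wq)\wedge\bar\theta + \pi W q\,\bar\partial\bar\theta$. Holomorphicity of $q(w)$ gives $\bar\partial(Wq) = q\,\bar\partial W$, and a short computation produces $\bar\partial W$ as a pure $d\bar w$-form proportional to $W^{2}\bar\tau_{\bar w}d\bar w$. On the other hand, using $\bar\theta = d\bar z+\bar b\,d\bar w$ and $\bar b = -\Im(z)W\bar\tau_{\bar w}$, the $(0,2)$-piece of $d\bar\theta$ is proportional to $W\bar\tau_{\bar w}\,d\bar z\wedge d\bar w$. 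Wedging $d\bar w$ against $\bar\theta$ selects the $d\bar w\wedge d\bar z$ component, and the two contributions cancel by the antisymmetry of the wedge, yielding $F_A^{(0,2)}=0$.

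For the condition $F_A\wedge\omega_t^{SF}=0$: I expand $F_A^{(1,1)} = \partial A^{(0,1)} + \bar\partial A^{(1,0)}$ in the basis $\{dz\wedge d\bar z,\, dw\wedge d\bar w,\, dz\wedge d\bar w,\, dw\wedge d\bar z\}$. Because the scalar coefficients $Wq$ and $W\bar q$ depend only on $(w,\bar w)$, differentiation produces no isolated $dz$ or $d\bar z$; hence the $dz\wedge d\bar z$-coefficient vanishes. Writing the remaining coefficients as $\alpha, \beta, \gamma$ (for $dw\wedge d\bar z$, $dz\wedge d\bar w$, $dw\wedge d\bar w$ respectively), direct computation using $b = -\Im(z)W\tau_w$ and $q-\bar q = 2i(\Im\tau)\,q_2$ with $q_2 = \Im(q)/\Im\tau$ yields
\begin{equation*}
\alpha = \pi W(q_w - \tau_w q_2), \qquad \beta = \pi W(\bar q_{\bar w} - \bar\tau_{\bar w}q_2), \qquad \gamma = \bar b\,\alpha + b\,\beta,
\end{equation*}
the last being the key algebraic identity. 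Now $\omega_t^{SF} = \frac{i}{2}(tW\theta\wedge\bar\theta + W^{-1}dw\wedge d\bar w)$; the term $W^{-1}dw\wedge d\bar w$ wedged with $F_A^{(1,1)}$ vanishes because there is no $dz\wedge d\bar z$ coefficient, so only the $\theta\wedge\bar\theta$ piece contributes. Expanding $\theta\wedge\bar\theta = dz\wedge d\bar z + \bar b\,dz\wedge d\bar w + b\,dw\wedge d\bar z + |b|^2\,dw\wedge d\bar w$ and taking the top-degree component, the coefficient of $dz\wedge d\bar z\wedge dw\wedge d\bar w$ equals $-\alpha\bar b - \beta b + \gamma$, which vanishes by the identity above. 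Hence $F_A^{(1,1)}\wedge\omega_t^{SF}=0$, uniformly in $t\in(0,1]$.

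The main obstacle is computational bookkeeping: $\theta$ is not $d$-closed on $M_{N^o}$, so $d\theta$ and $d\bar\theta$ contribute nontrivial cross-terms that must be type-decomposed, while $W$ and $b$ depend on $(w,\bar w)$ and $(z,\bar z,w,\bar w)$ respectively. The structural reason behind the identity $\gamma = \alpha\bar b + \beta b$ is that $A$ is the Fourier--Mukai transform of a flat $U(1)$-connection on a holomorphic local section of $\tilde f$, which in the HyperK\"ahler-rotated complex structure becomes a special Lagrangian submanifold; the above direct calculation is simply an explicit verification of this structural fact, consistent with the general framework of \cite{LYZ}.
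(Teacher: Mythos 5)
Your proof is correct. The reduction to a single $U(1)$ summand and the verification of $F_A^{(0,2)}=0$ coincide with the paper's argument (the paper phrases the reduction via unitary gauge invariance of the ASD equations rather than via constancy of the cocycle, but these amount to the same thing, since the $g_{\mu\lambda}$ are locally constant and diagonal). Where you diverge is in the second identity $F_A\wedge\omega_t^{SF}=0$: you stay in the complex coordinates $(z,w)$, expand $F_A^{(1,1)}$ in the basis $\{dz\wedge d\bar z, dw\wedge d\bar w, dz\wedge d\bar w, dw\wedge d\bar z\}$, and establish the cancellation $\gamma=\bar b\,\alpha+b\,\beta$, which ultimately rests on the identities $\partial_w\bar b-b\,\partial_z\bar b=0$ and its conjugate. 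The paper instead rewrites the connection in the real action--angle frame, $A^o=2\pi i(q_2\,dy_1-q_1\,dy_2)$ with $q_1,q_2$ functions of the base alone, so that $F_{A^o}$ consists only of mixed $dx_j\wedge dy_i$ terms while $\omega_t^{SF}=t\,dy_1\wedge dy_2+W^{-1}dx_1\wedge dx_2$; the wedge then vanishes for purely type reasons, with no cancellation to check. The real-coordinate route is shorter and makes the vanishing manifest (it is essentially the statement that the connection is pulled back from a flat family over the base in the integral-affine frame), whereas your complex-coordinate computation has the merit of exhibiting explicitly which combination of derivatives of $q$ must vanish and of connecting directly to the $(1,1)$-type bookkeeping used elsewhere in the paper. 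One small point of exposition: your justification for the vanishing of the $dz\wedge d\bar z$ coefficient should be stated as ``the coefficients of $dz$ and $d\bar z$ in $A$ depend only on $(w,\bar w)$'' rather than ``the coefficients of $A$''---the $dw$ and $d\bar w$ components of $A$ do depend on $z$ through $b$ and $\bar b$, but those only feed the mixed and $dw\wedge d\bar w$ terms, so your conclusion stands.
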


 \begin{proof}  Since the anti-self-dual equation is unitary gauge invariant, we only need to verify the split  case, i.e. $\Xi|_{M_{U_\lambda}} =  {\rm diag}\{ A^o|_{\tilde{M}^o_{U_\lambda^{1}}}, \cdots,  A^o|_{\tilde{M}^o_{U_\lambda^{n}}}\}$, where we identify $M_{U_\lambda}$ and $\tilde{M}^o_{U_\lambda^{j}}$ via $\nu_D$.  The curvature $$F_{\Xi}|_{M_{U_\lambda}} =  {\rm diag}\{F_{ A^o}|_{\tilde{M}^o_{U_\lambda^{1}}}, \cdots,  F_{A^o}|_{\tilde{M}^o_{U_\lambda^{n}}}\} , $$ and thus we need to prove that $F_{ A^o}$ satisfies the  anti-self-dual equation.

By
$\overline{\partial}\tau=0$, we have  $0=\overline{\partial}\tau_1+i\overline{\partial}\tau_2$, where
$\tau=\tau_1+i\tau_2$, and  $\partial_{\bar{w}} \bar{\tau}=\partial_{\bar{w}} \tau_1 -i\partial_{\bar{w}} \tau_2=-2i\partial_{\bar{w}} \tau_2$. Thus $$F_{A^o}^{0,2}=\pi\overline{\partial} (\tau_2^{-1} q\bar{\theta})=\frac{\pi q}{2i\tau_2^2}\partial_{\bar{w}} \bar{\tau} d \bar{z}\wedge d\bar{w}+\frac{\pi q}{\tau_2^2}\partial_{\bar{w}}\tau_2d \bar{z}\wedge d\bar{w}=0,  $$ which is equivalent to $F_{A^o}\wedge \Omega =0$.
By (\ref{u(1)1}),  $$F_{A^o}=d A^o=  2\pi i \sum_{j=1,2}(\partial_{x_j} q_2 dx_j \wedge dy_1-\partial_{x_j} q_1 dx_j \wedge dy_2),$$ and by (\ref{SFmetric}),
$$\omega_t^{SF}  = t dy_1 \wedge dy_2 +W^{-1} dx_1\wedge dx_2.$$
Thus $$F_{A^o} \wedge \omega_t^{SF}=0,$$ and we obtain the conclusion.
 \end{proof}

 Finally, we remark that   the split $\Xi $ obtained by the Fourier-Mukai transform is $T^2$-invariant, and thus reduces to
  solutions of Hitchin equations \cite{Hit} on the base $N^o$.

\subsection{Small energy estimates on collapsed K3 surfaces}  Finally, we review small energy estimates for curvatures of anti-self-dual connections with respect to collapsed metrics.

 As above, for $t\in (0,1]$, let   $\omega_t \in \alpha_{t} =t \alpha + f^*c_1(\mathcal{O}_{\mathbb{CP}^1}(1))$  be the unique Ricci-flat K\"{a}hler-Einstein metric in $\alpha_t$, and consider $\Xi_t$,  a family of anti-self-dual connections on $P$ with  with respect to $\omega_t$.

For any $p\in M$, and $r>0$, we define  the local energy of the curvature $F_{\Xi_t}$ as
\be
\label{energy}
\E_t(p,r)=\frac{r^4}{{\rm Vol}_{\omega_t}(B_{\omega_t}(p,r))}\int_{{B_{\omega_t}(p,r)}} |F_{\Xi_t}|^2_{\omega_t}\,\omega_t^2.
\ee
This energy is a continuous function of $p$ and $r$.    By the Bishop-Gromov comparison Theorem, for  $r_1\leq r_2$ it holds
\be
\E_t(p,r_1)\leq\E_t(p,r_2)\qquad{\rm and}\qquad \E_t (p,0)=0.\nonumber
\ee

We have the following small energy estimate for curvatures of anti-self-dual connections, which is essentially Theorem 4.4 in  \cite{And}.

\begin{lem}
\label{energybound} There exists  a universal constant $\varepsilon >0$, independent of $t$, such that if \be
\E_t (p,r)\leq \varepsilon, \nonumber
\ee for  $p$ and $r$ satisfying  $p \in M_K$ and $B_{\omega_t}(p,r)\subset M_{K'}$ (for fixed compact subsets $K\subset K'\subset N_0$),  then \be
\sup_{{B_{\omega_t}(p,r/2)}} |F_{\Xi_t}|_{\omega_t}\leq \frac {C_{K'}\varepsilon^{\frac12}}{r^2} \nonumber
\ee for a constant $C_{K'}>0$.
\end{lem}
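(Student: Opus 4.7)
The plan is to adapt Anderson's $\varepsilon$-regularity argument for sufficiently collapsed Einstein $4$-manifolds \cite{And} to anti-self-dual Yang--Mills connections, by combining the Weitzenb\"ock formula (\ref{wei}) with Moser iteration. First I would rescale: set $\tilde\omega_t:= r^{-2}\omega_t$, so that $B_{\omega_t}(p,r)$ becomes the unit ball $B_{\tilde\omega_t}(p,1)$. Since anti-self-duality is conformally invariant in dimension four, $\Xi_t$ remains ASD with respect to $\tilde\omega_t$, and a direct computation shows that the normalized energy $\E_t(p,r)$ equals precisely the volume-averaged $L^2$-norm of $|F_{\Xi_t}|_{\tilde\omega_t}^2$ on $B_{\tilde\omega_t}(p,1)$. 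In particular the hypothesis translates to $\int_{B_{\tilde\omega_t}(p,1)}|F_{\Xi_t}|_{\tilde\omega_t}^2\,\tilde\omega_t^2 \leq \varepsilon\cdot {\rm Vol}_{\tilde\omega_t}(B_{\tilde\omega_t}(p,1))$.

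Next I would derive a pointwise differential inequality for $|F_{\Xi_t}|_{\tilde\omega_t}$. The Yang--Mills Weitzenb\"ock formula (\ref{wei}), applied in the rescaled metric and combined with Kato's inequality, produces the weak inequality
\begin{equation*}
\Delta_{\tilde\omega_t}|F_{\Xi_t}|_{\tilde\omega_t}\ \leq\ C\,|R_{\tilde\omega_t}|\,|F_{\Xi_t}|_{\tilde\omega_t}\ +\ C\,|F_{\Xi_t}|_{\tilde\omega_t}^{2},
\end{equation*}
with a universal constant $C$. The geometric inputs needed to run Moser iteration from this inequality are (a) a uniform bound on $|R_{\tilde\omega_t}|$ over $B_{\tilde\omega_t}(p,1)$ for $p\in M_K$, and (b) a uniform local Sobolev inequality on $B_{\tilde\omega_t}(p,1)$. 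Both follow from the collapsing theory summarized in Section~\ref{basemetric}. By \cite{GTZ,GTZ2} and the blow-up analysis at \eqref{blowup1}--\eqref{blowup2}, the rescaled metrics $r^{-2}\omega_t$ on $B_{\tilde\omega_t}(p,1)$ converge smoothly (in the deeply collapsed regime $r\lesssim\sqrt{t}$) to the flat product $\mathbb{C}\times M_w$; or, when $r\gg\sqrt{t}$, one unwraps the short fiber circles in a local cover and obtains convergence to an open piece of the Gross--Wilson base $(N_0,\omega)$. In either situation curvature, injectivity radius on a suitable cover, and the Sobolev constant are all uniformly controlled by data depending only on $K'$.

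Finally I would run the standard Moser iteration on the above inequality with a cutoff supported in $B_{\tilde\omega_t}(p,1)$ and equal to $1$ on $B_{\tilde\omega_t}(p,1/2)$. Testing against $|F|^{2q-1}\eta^2$, integrating by parts, and applying the Sobolev inequality, the cubic term contributes a factor proportional to $(\int|F|^2)^{1/2}$ that is bounded by $\varepsilon^{1/2}$, so can be absorbed into the leading Sobolev term once $\varepsilon$ is less than a universal threshold. The iteration then delivers
\begin{equation*}
\sup_{B_{\tilde\omega_t}(p,1/2)}|F_{\Xi_t}|_{\tilde\omega_t}\ \leq\ C_{K'}\Big(\tfrac{1}{{\rm Vol}_{\tilde\omega_t}(B_{\tilde\omega_t}(p,1))}\int_{B_{\tilde\omega_t}(p,1)}|F_{\Xi_t}|_{\tilde\omega_t}^2\,\tilde\omega_t^2\Big)^{1/2}\ \leq\ C_{K'}\varepsilon^{1/2},
\end{equation*}
and rescaling back via $|F_{\Xi_t}|_{\omega_t}=r^{-2}|F_{\Xi_t}|_{\tilde\omega_t}$ yields the stated bound. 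I expect the main obstacle to be step (b), the uniform local Sobolev inequality: in the deeply collapsed regime the naive Sobolev constant degenerates, and one must either invoke the Cheeger--Fukaya--Gromov local-cover machinery exactly as in \cite{And}, or exploit the explicit semi-flat model \eqref{SFmetric} on $M_K$ to verify the inequality directly on the relevant covers.
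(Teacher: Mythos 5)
Your overall strategy (Bochner inequality from the Weitzenb\"ock formula \eqref{wei}, curvature bound on $M_{K'}$ from \cite{GTZ}, then Moser iteration) is the same as the paper's, but there is a genuine gap at exactly the point you yourself flag as ``the main obstacle'': the Sobolev inequality. A \emph{uniform} local Sobolev inequality on the unit ball of $\tilde\omega_t=r^{-2}\omega_t$ is simply false under collapse -- the standard Sobolev constant of that ball degenerates like a power of ${\rm Vol}_{\tilde\omega_t}(B_{\tilde\omega_t}(p,1))\to 0$ -- and neither of your proposed workarounds closes this. The blow-up models \eqref{blowup1}--\eqref{blowup2} only describe the regimes $r\lesssim\sqrt t$ and $r\gg\sqrt t$, not the intermediate scales over which $r$ must be allowed to range; and if you pass to a local cover to restore the injectivity radius, you must then compare $L^p$ norms on the cover with those downstairs, which reintroduces the very volume factor you were trying to eliminate. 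As written, the constant in your concluding Moser estimate is therefore not uniform in $t$.

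The missing idea, which is the one the paper (following Anderson \cite{And}) uses, is that no uniform Sobolev inequality is needed: one works with the volume-normalized local Sobolev inequality
$$\frac{c_S}{3}\Big(\frac{{\rm Vol}_{\omega_t}(B_{\omega_t}(p,r))}{r^4}\Big)^{\frac14}\|\xi\|_{L^4(\omega_t)}\ \leq\ \|d\xi\|_{L^2(\omega_t)},$$
whose constant $c_S$ is universal (it requires only a Ricci lower bound), the entire effect of collapsing being recorded in the volume-ratio prefactor. Running Moser iteration with \emph{this} inequality, the degenerating prefactor is exactly cancelled by the factor $r^4/{\rm Vol}_{\omega_t}(B_{\omega_t}(p,r))$ already built into the definition \eqref{energy} of $\E_t(p,r)$ -- this is precisely why the energy is normalized that way, and why the smallness threshold $\varepsilon$ comes out universal. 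The lower-order term $c_{K'}|F_{\Xi_t}|$ contributed by $R_{\omega_t}$ only affects the constant $C_{K'}$, not $\varepsilon$. With this substitution your rescaling step, the absorption of the cubic term for $\E_t(p,r)\leq\varepsilon$, and your final display all go through; without it the argument does not.
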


\begin{proof}
 By Lemma 4.4 of \cite{GTZ}, the curvature $R_{\omega_t}$  is bounded by a uniform constant $c_{K'}$ on $M_{K'}$. The  Weitzenb\"{o}ck formula (\ref{wei}) implies the  Bochner formula
 \be
\Delta_{\omega_t}  |F_{\Xi_t}|_{\omega_t}\geq -|F_{\Xi_t}|^2_{\omega_t}-c_{K'} |F_{\Xi_t}|_{\omega_t}.\nonumber
\ee
One can now carry over the exact argument from \cite{And}, consisting of  Moser iteration with the local Sobolev inequality $$ \frac{c_S}{3}\big(\frac{B_{\omega_t}(p,r)}{r^4}\big)^{\frac{1}{4}}\| \xi\|_{L^{4}(\omega_t)} \leq \| d \xi\|_{L^{2}(\omega_t)}$$ for any compactly supported function $\xi$ on $B_{\omega_t}(p,r)$, where $c_S$ is a universal constant (cf. (4.1) and  Theorem 4.1 in \cite{And}).
If we
keep track of the extra $c_{K'}$ term,  because this term is of lower order,  it does not effect the choice of the uniform constant $\tau$, which is thus independent of $K$ and  $K'$.\end{proof}

Choose $\varepsilon\ll 1$ such that $C_{K'}\varepsilon^{\frac12} \leq 4$.
This allows us to make the following definition.

\begin{defn}
For any $t\in(0,1]$, we define   $R_t(p)>0$ be the minimal  number such that
\be
\E_t (p, R_t(p))=\varepsilon. \nonumber
\ee
\end{defn}

In particular, for any compact set $K\subset N_0$, and $p\in M_K$, as long as $R_t(p)$ is small enough, it holds
\be
\label{curvradconclusion}
|F_{\Xi_t}|_{\omega_t}(p)\leq  4 R_t(p)^{-2},
\ee
 and for any $r\geq R_t(p)$,  $$\E_t(p, r)\geq \varepsilon.$$

\section{The main theorems}
In this section, we present the main theorems  of this paper, and demonstrate its applications to SYZ mirror symmetry of K3 surfaces.

 \begin{thm}\label{thm-main}

Let $M$ be a projective  elliptically fibered  $K3$ surface with fibration $
f:M\rightarrow N\cong \mathbb{CP}^1.$ Assume $f$ has    a section $\sigma:N\rightarrow M$, and assume it has only singular fibers of Kodaira type $I_1$ and type $II$.
  Let $\Omega$ be a holomorphic symplectic form on $M$, and let  $\omega_t \in \alpha_{t} $ be the unique Ricci-flat K\"{a}hler-Einstein metric in   $\alpha_{t} =t \alpha + f^*c_1(\mathcal{O}_{\mathbb{CP}^1}(1))$, $t\in (0,1]$, where $\alpha$ is an ample class on $M$. Let $P$ be a principal $SU(n)$-bundle on $M$, and let $\mathcal{V} $ be the smooth vector bundle of rank $n$ equipped with a Hermitian metric  $H$ induced by $P$, i.e. $\mathcal{V}=P\times_{\rho}\mathbb{C}^n$.

Assume there exists a family of anti-self-dual $SU(n)$-connections $\Xi_t$ on $P$ with respect to $(\omega_t, \Omega)$, i.e. $$   F_{\Xi_t}\wedge \omega_t=0, \  \  and \  \  F_{\Xi_t}\wedge \Omega=0,$$ with $t\in (0,1]$. Let $V_t$ denote the   holomorphic bundle of $\mathcal{V}$ equipped with the holomorphic structure induced by $\Xi_t$.
   Furthermore, assume:
  \begin{itemize}
  \item[i)]  The restriction of $V_t$ to a generic fiber of $f$ is semi-stable and regular.
  \item[ii)]  Let $D_{t} \in | n \sigma(N)+ m l |$ be the corresponding spectral cover  of $V_t$, where $0<m \leq c_2(\mathcal{V})$. As $t\rightarrow 0$, $$ D_{t} \rightarrow D_{0}  \  \  \  in  \  \  | n \sigma(N)+ m  l |.$$
    \item[iii)]  The limit $D_0$ can be written $$D_0= D_{0}^o+ D_0',  $$  where $D_{0}^o \in | n \sigma(N)+ m'  l |$   is reduced,  for some $0\leq m' \leq m$, and   $ D_0'\in |(m-m')l|$  consists of all  irreducible components of $D_0$ supported on   fibers.
         \end{itemize}
      Then  the following holds:
     \begin{itemize}
  \item[i)]
       For any sequence $t_k \rightarrow 0$, and any $p>2$, there exists  a Zariski open subset $N^o \subset N_0$,  a subsequence (still denoted $t_k$), a sequence of $L^{p}_2$ unitary gauge changes $u_k\in \mathcal{G}^{2,p}$ of $P|_{M_{N^o}}$,  and a  $L^{p}_1$  $SU(n)$-connection $\Xi_0$ on $P|_{M_{N^o}}$ so that on $M_{N^o}$
  $$u_k(\Xi_{t_k})  \rightarrow \Xi_0 $$
  in the locally  $L^{p}_{1}$ sense.  Here the norms are calculated   using a fixed K\"{a}hler metric on $M$, and the Hermitian metric $H$ on $\mathcal{V}$.
  \item[ii)]  The curvature $F_{\Xi_{t_k}}$ of $\Xi_{t_k}$ is locally  bounded, 
  i.e.  for any compact subset $K\subset N^o$, there exists a constant $C_K$ so that $$ \|F_{\Xi_{t_k}}\|_{C^0(M_K)}\leq C_K.$$
      \item[iii)] For any  $w \in N^o$ and   $0<\alpha <1$, there is a $C^{1,\alpha}$  unitary gauge $u_\infty$ on $M_w$ so that
    $u_\infty(\Xi_0|_{M_w})$ is a smooth  flat connection. This limiting connection satisfies  that the bundle $\mathcal{V}|_{M_{w}}$ equipped with the holomorphic structure  induced by $u_\infty(\Xi_0|_{M_w})$  is bi-holomorphic to $$\bigoplus_{q \in D_{0}^o \cap M_w}\mathcal{O}_{M_w}(q-\sigma(w)). $$\end{itemize}
   \end{thm}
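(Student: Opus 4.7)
The strategy is to isolate a Zariski open set $N^o \subset N_0$ on which the spectral covers $D_{t_k}$ remain well-behaved in the limit, establish a uniform $C^0$-bound on the curvatures $F_{\Xi_{t_k}}$ on compact subsets of $M_{N^o}$, and then pass to the limit via Uhlenbeck compactness combined with the gauge fixing of Theorem \ref{gaugef}. Take $N^o := N_0 \setminus (B \cup f(\mathrm{supp}(D_0')))$, where $B \subset N_0$ is the finite branch locus of the degree-$n$ cover $D_0^o \to N$ determined by assumption iii). Over $N^o$, each slice $D_0^o \cap M_w$ consists of $n$ distinct reduced points $q_1(w),\ldots,q_n(w)$, and by assumption ii) the fiberwise S-equivalence classes of $V_{t_k}|_{M_w}$ converge to $\sum_j q_j(w)$.

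The heart of the argument is the $C^0$-bound $\|F_{\Xi_{t_k}}\|_{C^0(M_K)}\le C_K$ for every compact $K\subset N^o$. By Lemma \ref{energybound} this is equivalent to a uniform lower bound $R_{t_k}(p)\ge r_0(K)>0$ on the curvature radius. I would argue by contradiction: if $R_{t_k}(p_k)\to 0$ with $w_k=f(p_k)\to w_0\in K$, rescale by $R_{t_k}(p_k)^{-2}$ and analyse the three regimes distinguished by $\lambda :=\lim R_{t_k}(p_k)/\sqrt{t_k}$. When $\lambda\in(0,\infty)$, the diffeomorphisms of Section \ref{basemetric} together with the asymptotic \eqref{blowup2} and Theorem \ref{Ucompact} ii) produce in the limit a non-flat finite-energy ASD connection $\Xi_\infty$ on the HyperK\"ahler product $(\mathbb{C}\times M_{w_0},\omega_\infty)$. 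When $\lambda=\infty$ the fibers collapse at the rescaled scale and the limit reduces to a Hitchin-type object on an open set of $N_0$; when $\lambda=0$ the fibers expand and the limit is a non-trivial ASD instanton on $\mathbb{C}^2$ whose removable-singularity extension to $S^4$ contradicts the energy bookkeeping \eqref{cw0} combined with \eqref{chern2}.

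The central regime $\lambda\in(0,\infty)$ is where the spectral-cover hypothesis does the work. Each slice $\{\tilde w\}\times M_{w_0}$ of $\Xi_\infty$ is itself ASD with small fiber energy, hence flat on the 2-torus; by Proposition \ref{semistablelemma} the slice bundle is semistable, and by continuity of the spectral cover its S-equivalence class equals $\sum_j q_j(w_0)$. Since the $q_j(w_0)$ are distinct, Theorem \ref{gaugef} produces a uniform Hermitian gauge on each slice that brings $\Xi_\infty|_{\{\tilde w\}\times M_{w_0}}$ into the explicit flat form \eqref{flatconnection}. The Poincar\'e inequality for curvatures of $SU(n)$ connections on smooth elliptic curves established in Section \ref{Poincare} then promotes slicewise rigidity to $\tilde w$-independence of $\Xi_\infty$, forcing $\Xi_\infty$ to be globally flat. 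This contradicts the curvature normalisation $|F_{\Xi_\infty}|(p_\infty)=4$ coming from \eqref{curvradconclusion}.

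Once the $C^0$-bound is in hand, assertion (ii) is immediate, and Uhlenbeck's weak compactness (Theorem \ref{Ucompact} i)) delivers unitary gauges $u_k\in\mathcal{G}^{2,p}$ and an $L^p_1$ limit $\Xi_0$ on $M_{N^o}$, giving (i). For (iii), fix $w\in N^o$: using the ASD equations, the horizontal/vertical splitting \eqref{SFmetric} and the decay estimates of Lemma \ref{lem-decay}, one shows $\|F_{\Xi_{t_k}}\|_w\to 0$, so $\Xi_0|_{M_w}$ is flat in the distributional sense; Proposition \ref{semistablelemma} and Theorem \ref{gaugef} then yield a $C^{1,\alpha}$ unitary gauge $u_\infty$ putting $\Xi_0|_{M_w}$ into the form \eqref{flatconnection} with eigenvalues $q_j(w)$, producing the bi-holomorphism onto $\bigoplus_j \mathcal{O}_{M_w}(q_j(w)-\sigma(w))$. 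The main obstacle is the blowup step in the $\lambda\in(0,\infty)$ regime: the singularity of the moduli of flat $SU(n)$ connections on $M_w$ disables transversality arguments, and it is precisely the combination of the spectral-cover hypothesis with the Poincar\'e inequality of Section \ref{Poincare} that supplies the missing rigidity.
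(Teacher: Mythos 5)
Your overall architecture (restrict to a Zariski open set, prove a uniform $C^0$ curvature bound, apply Uhlenbeck compactness, then use Proposition \ref{semistablelemma} and Theorem \ref{gaugef} fiberwise) matches the paper's, and you correctly identify the Poincar\'e inequality of Section \ref{Poincare} as the rigidity input. But the central step — the $C^0$ bound — has a genuine gap. First, your $N^o$ is too large: it must also exclude the finite set $S_1$ of fibers over which type $I$ or type $II$ bubbles actually form, a set that depends on the chosen subsequence and is not visible in the spectral cover data. Your attempted contradictions in the bubbling regimes do not close. In the $\lambda=0$ regime, a nontrivial instanton on $\mathbb{C}^2$ carries energy at least $\tau$, and since $c_2(\mathcal{V})\geq 2n-2>0$ there is ample energy available, so \eqref{cw0} and \eqref{chern2} yield no contradiction — such bubbles genuinely occur and are merely confined to finitely many fibers by energy quantization, which is exactly Proposition \ref{estimate 1}. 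In the $\lambda\in(0,\infty)$ regime your claim that each slice $\{\tilde w\}\times M_{w_0}$ of the bubble limit is "ASD with small fiber energy, hence flat" is unjustified: a non-flat finite-energy anti-self-dual connection on $\mathbb{C}\times T^2$ (the type $II$ bubble of \cite{Weh}) has slices whose curvature need not be small, so neither Proposition \ref{semistablelemma} nor Theorem \ref{gaugef} applies to them, and the slicewise rigidity you invoke is unavailable.

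Second, even after excluding $S_1$ one only knows $\sup_{M_K}|F_{\Xi_{t_k}}|_{\omega_{t_k}}\leq \epsilon_k/t_k$, and closing the gap between this and a uniform bound is where most of the work lies; this is precisely your $\lambda=\infty$ regime, which you dismiss with a reference to "Hitchin-type objects" without extracting a contradiction. The paper handles it quantitatively: the small fiberwise curvature allows the gauge fixing and the Poincar\'e inequality, which feed a Bochner-type inequality in the base direction $\Delta\|F_{A_t}\|_w^2\geq \frac{\delta}{t}\|F_{A_t}\|_w^2-Ct$ and a maximum principle (Lemmas \ref{le2} and \ref{le3}) giving $\|F_{A_t}\|_w\leq Ct$; the energy identity of Proposition \ref{prop2+0} relating $\|F_{\Xi_t}\|_{L^2(\omega_t)}^2$ to $\int\sum_j\|\partial_{x_j}A_{0,t}\|_w^2$ then shows, after the intermediate rescaling by $\delta_k=t_k^{-1/2}|F_{\Xi_{t_k}}|^{-1}(p_k)$ of Section \ref{lowerbounds}, that the energy in a ball of radius $R_{t_k}(p_k)$ is $o(1)$, contradicting $\E_{t_k}(p_k,R_{t_k}(p_k))=\varepsilon$. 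Without this quantitative chain (or a substitute for it), the uniform bound in part (ii), and hence parts (i) and (iii), is not established.
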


\begin{rk}\label{remark1}\rm
We  remark that  $D_0' \in |(m-m')l|$ is supported on fibers over a finite number of points, and we refer to these fibers as  type III bubbles, which is the terminology used in the  previous  relevant works \cite{DS,Nis1,Nis2}.
\end{rk}

\begin{rk}\label{remark1+}\rm   There is a  topological constraint on $\mathcal{V}$ built into the above theorem, namely that  $$c_2(\mathcal{V} )\geq 2n-2.$$
 To see this, note that if $\sigma (N)$ is not  an irreducible component of $D_{0}^o$, then
  $D_{0}^o \cdot \sigma (N)=-2n+m' \geq 0$. Otherwise, $(D_{0}^o-\sigma (N))\cdot \sigma (N)=-2n+2+m' \geq 0$. In both cases, we have $m' \geq 2n-2$, which implies the inequality for the second Chern number.
\end{rk}

Let us demonstrate a case in which the hypotheses  of Theorem \ref{thm-main} hold. For  a given $m  \in \mathbb{N}$ and $s\in (0,1]$, let $D_s$  be  a family of effective  reduced irreducible divisors in the complete linear system $|n \sigma (N)+m l|$ such that as $s\rightarrow 0,$ $$D_s \rightarrow D_0=D_{0}^o+\sum_j D_j  \  \  \  {\rm in} \  \ |n \sigma (N)+m l|, $$  where $D_{0}^o$ is  reduced and  irreducible,  $D_{0}^o\in  |n \sigma (N)+m' l|$ for some $m'  \leq m$, and $\sum_j D_j \in |(m-m')l|$.  For example, we can take $D_s\equiv D$ for some fixed divisor.
 By Theorem \ref{stable construction}, we can construct a family of  holomorphic bundles $V_s$ of rank $n$ satisfying $c_1(V_s)=0$, the restriction of $V_s$ to any fiber $M_w$  is semi-stable and regular, and $D_s$ is the spectral cover of $V_s$.  Furthermore, Proposition 5.15 of \cite{FMW} asserts that $c_2(V_s)=m$, and therefore, all of $V_s$ are smoothly isomorphic to the same smooth bundle, since $SU(n)$ is simply connected. Now,
   Theorem 7.4 of \cite{FMW} shows that for any $s$ the bundle $V_s$ is stable with respect to  $f^{*} c_{1}(\mathcal{O}_{\mathbb{CP}^{1}}(1))+t \alpha$ for $0<t \ll 1$ and $t\leq s$. As a result, by Theorem \ref{DUY} (and taking a diagonal sequence) we obtain a family of anti-self-dual connections $\Xi_t$, for which the hypotheses  of Theorem \ref{thm-main} are verified.

\begin{thm}\label{thm-main2}
Under the  setup of Theorem \ref{thm-main}, the unitary  gauge equivalent class of  the limit connection $\Xi_0$ is  the Fourier-Mukai transform of   a  $\Theta\in H^1(D_{0}^o\cap M_{N^o}, \mathcal{U}_c(1))$, i.e.    $$\Xi_0 \in\mathcal{FM}(D_{0}^o\cap M_{N^o}, \Theta),$$ where $\mathcal{U}_c(1)$ is the  $U(1)$-valued locally constant sheaf.
\end{thm}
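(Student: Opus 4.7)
The plan is to construct over a trivializing cover of $N^o$ explicit local unitary gauges that put $\Xi_0$ into the Fourier-Mukai normal form (\ref{u(1)}), and to read off the transition data as a locally constant $U(1)$-valued \v{C}ech 1-cocycle on $D_0^o \cap M_{N^o}$ which, by the construction of Section 2.6, represents the desired class $\Theta$.

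First, I would refine $N^o$ if necessary and pick a locally finite cover $\{U_\lambda\}$ by small disks so that $f|_{D_0^o} : D_0^o \cap M_{U_\lambda} \to U_\lambda$ is a disjoint union of graphs $U_\lambda^j = \{(w, q_j^\lambda(w))\}$ with $q_j^\lambda(w)$ pairwise distinct; on each such chart the local Fourier-Mukai model is the diagonal flat connection $\Xi^{FM,\lambda}$ of (\ref{u(1)}). Then, using the fiberwise statement (iii) of Theorem \ref{thm-main} together with the gauge-fixing Theorem \ref{gaugef} applied on each fiber above $U_\lambda$, I would build a unitary gauge transformation $v_\lambda$ on $P|_{M_{U_\lambda}}$ such that $v_\lambda(\Xi_0)|_{M_w} = \Xi^{FM,\lambda}|_{M_w}$ for every $w\in U_\lambda$. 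The required regularity of $v_\lambda$ should follow from the $L^p_1$-control on $\Xi_0$ horizontally together with elliptic estimates for the fiberwise gauge-fixing equation, while the reducedness of $D_0^o$ on $N^o$ (the $q_j^\lambda$ being distinct) ensures that the fiberwise flat bundle has automorphism group precisely the maximal torus $U(1)^n$, pinning $v_\lambda$ down up to a fiberwise $U(1)^n$-action.

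After this fiberwise normalization the difference $v_\lambda(\Xi_0)-\Xi^{FM,\lambda}$ restricts trivially to every fiber and hence is a horizontal 1-form $\eta_\lambda=\eta^w_\lambda dw+\eta^{\bar w}_\lambda d\bar w$ valued in $\mathfrak{su}(n)$. I would then use that $\Xi_0$ carries a global holomorphic structure on $\mathcal{V}|_{M_{N^o}}$ which, by the spectral-cover description of Section 2.4 applied to the limit, is the pushforward of a line bundle on $D_0^o \cap M_{N^o}$; the condition that $\bar\partial_{v_\lambda(\Xi_0)}$ preserves the fiberwise splitting $\bigoplus_j \mathcal O_{M_w}(q_j^\lambda(w)-\sigma(w))$ forces $\eta_\lambda$ to commute with the diagonal fiber connection, and distinctness of the $q_j^\lambda$ forces $\eta_\lambda$ to be diagonal. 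A further gauge by $\mathrm{diag}(e^{i\psi_1^\lambda(w)},\ldots,e^{i\psi_n^\lambda(w)})$ with $\psi_j^\lambda$ chosen as primitives of the diagonal entries of $\eta_\lambda$ then eliminates $\eta_\lambda$, so that after this modification $v_\lambda(\Xi_0)=\Xi^{FM,\lambda}$ on the nose.

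Finally, on an overlap $M_{U_\mu}\cap M_{U_\lambda}$ the transition $g_{\mu\lambda}=v_\mu v_\lambda^{-1}$ intertwines $\Xi^{FM,\lambda}$ with $\Xi^{FM,\mu}$, hence on each sheet it is a relabeling of the $q_j$'s by a permutation composed with a $U(1)$-phase multiplication; choosing the labels consistently via the geometry of $D_0^o$ makes the permutation trivial, so $g_{\mu\lambda}$ becomes block-diagonal with entries $g_{\mu\lambda}^{i,j_i}$ that must be locally constant in order to intertwine the two explicit flat models on $U_\mu^i\cap U_\lambda^{j_i}$. These assemble into a cocycle in $\mathcal C^1(\{U_\lambda^j\},\mathcal U_c(1))$ whose class is $\Theta\in H^1(D_0^o\cap M_{N^o},\mathcal U_c(1))$, and by the very definition of Section 2.6 this gives $\Xi_0\in \mathcal{FM}(D_0^o\cap M_{N^o},\Theta)$. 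The main obstacle will be the third step: constructing the fiberwise-diagonalizing gauges $v_\lambda$ with enough Sobolev regularity on the 4-manifold $M_{U_\lambda}$ for $v_\lambda(\Xi_0)$ to be a bona fide $L^p_1$ connection, and rigorously ruling out off-diagonal contributions in $\eta_\lambda$. The former should require a parameter-dependent version of Theorem \ref{gaugef} with estimates uniform over the continuous family of fibers, and the latter forces one to use the limiting holomorphic structure globally on $M_{U_\lambda}$ through the spectral-cover theory of Section 2.4, not merely fiberwise.
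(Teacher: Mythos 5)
Your overall architecture matches the paper's: fiberwise normalization of $\Xi_0$ to the diagonal flat model, smooth dependence of the fiberwise gauges on the base (the paper does this in Lemma \ref{prop9.3} via an orbit-map/regular-value argument producing a $T^{n-1}$-bundle over $U_\lambda$ that admits a section, rather than a parameter-dependent version of Theorem \ref{gaugef}), elimination of the residual horizontal $1$-form by a diagonal gauge, and reading off locally constant $U(1)$-valued transition functions. But there is a genuine gap at the step where you gauge away $\eta_\lambda$. You propose to choose $\psi_j^\lambda$ as ``primitives of the diagonal entries of $\eta_\lambda$.'' Such primitives exist only if those diagonal entries are \emph{closed} $1$-forms on $U_\lambda$, and closedness is exactly the vanishing of the base component of the limit curvature, $F_{\tilde B,0}=0$. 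Nothing in your argument establishes this. In the paper this is the analytic heart of Section 9: one first proves the refined decay $\int_U\|F_{A_t}\|_w^2\,dx_1dx_2\le Ct^3$ and hence $\|F_{B,t}\|_{L^p(M_U,\omega^{SF})}\to 0$ (Lemma \ref{prop9.2}, which itself rests on the Poincar\'e inequality of Section \ref{Poincare} and on Lemma \ref{lem9.1}), so that the $L^p$-limit $\Xi_0$ is anti-self-dual for the \emph{semi-flat} structure $(\omega^{SF},\Omega)$, with $F_{\tilde A_0}\equiv 0$, $F_{\tilde B,0}\equiv 0$ and $\star_w\tilde\kappa_{0,1}=\tilde\kappa_{0,2}$. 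Without this input your final normalization $v_\lambda(\Xi_0)=\Xi^{FM,\lambda}$ cannot be completed, because a nonzero curvature in the base direction is a gauge-invariant obstruction to matching the (flat) Fourier--Mukai model.

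The same missing input also undermines your diagonality argument for $\eta_\lambda$. You appeal to the limiting holomorphic structure being the pushforward of a line bundle on $D_0^o$, but that identification is not available for the $L^p_1$ limit on the open set $M_{N^o}$ without further proof; the paper instead derives diagonality of $B_{0,j}$ directly from the limiting relation $\star_w\kappa_{0,1}=\kappa_{0,2}$ together with the Hodge decomposition on the fiber ($\partial_{x_j}A_0\in\ker\Delta_{A_0}$ while $d_{A_0}B_{0,2}\in\mathrm{Im}\,d_{A_0}$ and $\star_w d_{A_0}B_{0,1}\in\mathrm{Im}\,d_{A_0}^*$ are mutually orthogonal), which again requires knowing that $F_{\Xi_{t_k}}$ converges in $L^p$ to a curvature with vanishing fiber component and self-dual mixed part. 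Incidentally, the paper also first upgrades $\Xi_0$ to a smooth connection via elliptic regularity for anti-self-dual connections (Theorem 9.4 of \cite{Weh2}) before running the gauge constructions; you will need some such regularization, since an $L^p_1$ connection is not enough to make sense of the pointwise orbit-map or primitive-taking arguments. So: supply the decay of $F_{A_t}$ and $F_{B,t}$ (or an equivalent proof that $\Xi_0$ is anti-self-dual for $(\omega^{SF},\Omega)$) before the gauge normalization, and the rest of your outline goes through essentially as in the paper.
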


\subsection{Strominger-Yau-Zaslow mirror symmetry with   anti-self-dual connections}

We now apply Theorem \ref{thm-main} to Fukaya's Conjecture 5.5 in \cite{Fuk}, which relates the adiabatic limits of anti-self-dual connections to
  special Lagrangian cycles on the mirror Calabi-Yau manifolds. While describing the mirror symmetry background, we first consider the more general setup where $M$ is any projective  elliptically fibered  $K3$ surface admitting     a section. 

We normalize $\alpha_t$ by multiplying a constant, so that the normalized class $\tilde{\alpha}_t$ satisfies $ \tilde{\alpha}_t ^2=[{\rm Re}\Omega]^2=[{\rm Im}\Omega]^2$. Let $\tilde{\omega}_t\in \tilde{\alpha}_t$ be the Ricci-flat K\"{a}hler-Einstein metric in this class, and so $(\tilde{\omega}_t,  {\rm Re}\Omega,  {\rm Im}\Omega)$ is a HyperK\"{a}hler triple.   Using the HyperK\"{a}hler rotation, we have a family of complex structures $J_t$ with corresponding K\"{a}hler form and the holomorphic symplectic from
$$\omega_{J_t}={\rm Im}\Omega  \  \ {\rm and}  \  \    \Omega_{J_t}=\tilde{\omega}_t+i {\rm Re}\Omega.$$
Using $\Omega|_{M_w}=0$ and $\Omega|_{\sigma (N)}=0$, under $J_t$   the fibration $f$ becomes a special Lagrangian fibration, and the section $\sigma$ is a special Lagrangian section with respect to $\omega_{J_t}$ and $\Omega_{J_t}$.

Mirror symmetry for  K3 surfaces  is well understood (cf. \cite{AM,Do,GW0,GroII,ABC}), and in particular the SYZ mirror symmetry of K3 surfaces was studied in Section 7 of Gross \cite{GroII} and  in Gross-Wilson \cite{GW0}. For the reader's convenience we elaborate further on this setup.    Let $[\sigma]$ denotes the class of the section $\sigma(N)$ in $ H^2(M, \mathbb{Z})$ and $l$  the fiber class. Then we have the following intersection pairings:
$$l^2=0, \qquad [\sigma]\cdot l=1, \qquad  [\sigma]^2=-2,  \qquad[\omega_{J_t}]\cdot [\sigma]=0,$$
$$[{\rm Im}\Omega_{J_t}]\cdot [\sigma]=0,  \qquad [\omega_{J_t}]\cdot l=0,\qquad{\rm  and}\qquad[{\rm Im}\Omega_{J_t}]\cdot l=0.$$
Now, the  SYZ construction from Section 7 of \cite{GroII}  uses the choice of a B-field  $\mathbb{B}\in l^\bot /l \otimes \mathbb{R}/\mathbb{Z}$. However, Gross' assumptions are slightly different than those of the present paper. Namely, Gross assumes the K3 surface $M$ is generic, i.e. the picard group ${\rm Pic}(M)\cong \mathbb{Z}$, while in our case we have  $\dim {\rm Pic}(M)\geq 2$. Nevertheless, the proof of Theorem 7.3 of \cite{GroII} shows  that, in our case, if we further assume that $[\sigma]+(1+\frac{1}{2}[\omega_{J_t}]^2)l$ is an  ample class on $M$, and the  B-field $\mathbb{B}$ vanishes,  then the SYZ  mirror of $(M, \tilde{\omega}_t, \Omega_{J_t}) $ is  $f: M \rightarrow N$ equipped with the HyperK\"{a}hler structure $(\check{\omega}_t, \check{\Omega}_t)$ and the B-field $\check{\mathbb{B}}_t$ satisfying  $$ [\check{\Omega}_{t}]= (l\cdot [{\rm Re}\Omega_{J_t}])^{-1}([\sigma]+(1+\frac{1}{2}[\omega_{J_t}]^2)l-i[\omega_{J_t}]), \  \ \  [\check{\omega}_t]=(l\cdot [{\rm Re}\Omega_{J_t}])^{-1}[{\rm Im}\Omega_{J_t}],$$
  $${\rm and}  \  \  \   \check{\mathbb{B}}_t=(l\cdot [{\rm Re}\Omega_{J_t}] )^{-1} [{\rm Re}\Omega_{J_t}]-[\sigma]+{\rm mod} (l),  $$ on the cohomological level.

We  study the  case that  $[\sigma]+(1+\frac{1}{2}[\omega_{J_t}]^2)l$ is not necessarily ample. Recall that the Weierstrass model $\check{f}: \check{M}\rightarrow N$ of $f: M\rightarrow N$ is obtained by contracting the irreducible components of singular fibers of $f$, which do  not intersect  with the section $\sigma$  (cf. Chapter 7 in \cite{Fr1}). Denote by  $\pi: M \rightarrow \check{M}$  the contraction  morphism. Since $\pi$ contracts finitely many $(-2)$-curves,    $ \check{M}$ has only orbifold A-D-E singularities.

   \begin{prop}\label{lemma-mirror}  Normalize $\Omega$ so that $[{\rm Im}\Omega]^2=4$. The SYZ  mirror of $(M, \omega_{J_t}, \Omega_{J_t})$ with vanishing B-field  is  $(M, (l\cdot \tilde{\alpha}_t)^{-1} \check{\omega}, (l\cdot \tilde{\alpha}_t)^{-1}\check{\Omega})$ with the B-field $ \check{\mathbb{B}}_t$, where $$ \check{\Omega}= \pi^* \omega_{\check{M}}- i{\rm Im}\Omega, \  \  \  \check{\omega}={\rm Re}\Omega,  \  \  {\rm and} $$  $$ \check{\mathbb{B}}_t=(l\cdot \tilde{\alpha}_t)^{-1} \tilde{\alpha}_t-[\sigma]+{\rm mod} (l). $$ Here $ \omega_{\check{M}}$ is the  Ricci-flat K\"{a}hler-Einstein metric, possibly in the orbifold sense, such that     $\pi^* \omega_{\check{M}} \in c_1(\mathcal{O}_{M}(\sigma (N)+3l))$.
 \end{prop}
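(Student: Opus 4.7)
The plan is to reduce Proposition \ref{lemma-mirror} to Gross' Theorem 7.3 of \cite{GroII} by passing to the Weierstrass model $\check{M}$, where the relevant cohomology class becomes ample in the orbifold sense. Under the normalization $[{\rm Im}\,\Omega]^2 = 4$, we have $[\omega_{J_t}]^2 = 4$, so the class in question is $[\sigma] + (1 + \frac{1}{2}[\omega_{J_t}]^2)l = [\sigma] + 3l = c_1(\mathcal{O}_M(\sigma(N) + 3l))$. The obstruction to applying Gross' result directly is that this class is only nef on $M$ in general, failing to be strictly positive precisely on the $(-2)$-curves contracted by $\pi: M \to \check{M}$.

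First I would verify that $[\sigma] + 3l$ is nef on $M$ and trivial on each contracted curve $C$. Any such $C$ is an irreducible component of a reducible singular fiber disjoint from $\sigma(N)$, so $[\sigma]\cdot C = 0$ and $l\cdot C = 0$. Hence $[\sigma] + 3l = \pi^*\xi$ for a unique orbifold divisor class $\xi$ on $\check{M}$, and $\xi^2 = ([\sigma]+3l)^2 = -2 + 6 = 4$. The class $\xi$ is ample by the Nakai--Moishezon criterion in the orbifold category, so by Yau's theorem in the orbifold setting there is a unique orbifold Ricci-flat K\"ahler-Einstein metric $\omega_{\check{M}}$ on $\check{M}$ with $[\omega_{\check{M}}] = \xi$; its pullback $\pi^*\omega_{\check{M}}$ is a semi-positive closed $(1,1)$-form on $M$ representing $[\sigma] + 3l$, smooth off the contracted curves. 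Since $\Omega$ vanishes on each contracted $(-2)$-curve it descends to a holomorphic symplectic form on $\check{M}$, so $(\pi^*\omega_{\check{M}}, {\rm Re}\,\Omega, {\rm Im}\,\Omega)$ induces a HyperK\"ahler orbifold structure on $\check{M}$ for which, after the HyperK\"ahler rotation, $\check{f}:\check{M}\to N$ remains a special Lagrangian fibration with Lagrangian section $\sigma$.

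Next I would run Gross' SYZ construction on the orbifold $\check{M}$. Since the mirror cohomology data depend only on the classes of the HyperK\"ahler triple together with the B-field, applying (the orbifold version of) Theorem 7.3 of \cite{GroII} yields on $\check{M}$ the formulas $[\check\Omega_{\check{M}}] = (l\cdot[{\rm Re}\,\Omega_{J_t}])^{-1}(\xi - i[\omega_{J_t}])$ and $[\check\omega_{\check{M}}] = (l\cdot[{\rm Re}\,\Omega_{J_t}])^{-1}[{\rm Im}\,\Omega_{J_t}]$. Using ${\rm Re}\,\Omega_{J_t} = \tilde\omega_t \in \tilde\alpha_t$ to identify $l\cdot[{\rm Re}\,\Omega_{J_t}] = l\cdot\tilde\alpha_t$, together with ${\rm Im}\,\Omega_{J_t} = {\rm Re}\,\Omega$ and the explicit representatives $\pi^*\omega_{\check{M}} \in [\sigma]+3l$ and ${\rm Re}\,\Omega \in [{\rm Im}\,\Omega_{J_t}]$, pulling back along $\pi$ produces the stated forms $\check{\Omega} = \pi^*\omega_{\check{M}} - i\,{\rm Im}\,\Omega$ and $\check{\omega} = {\rm Re}\,\Omega$. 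The B-field formula follows immediately by substituting the same identifications into Gross' expression $(l\cdot[{\rm Re}\,\Omega_{J_t}])^{-1}[{\rm Re}\,\Omega_{J_t}] - [\sigma] + {\rm mod}(l)$.

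The main obstacle will be justifying Gross' theorem on the orbifold Weierstrass model, as his statement is formulated for generic K3 surfaces with ${\rm Pic}\cong\mathbb{Z}$ and a strictly ample class on the smooth model. I would handle this either by tracing through his proof to see that ampleness is used only to invoke Yau's theorem (which extends to the ADE orbifold setting) and to produce a special Lagrangian fibration (already available here via the HyperK\"ahler rotation above), or by an approximation argument: perturb $\xi$ to $\xi + \varepsilon[H]$ for an ample class $H$ on $\check{M}$, apply Gross' theorem to each perturbation, and let $\varepsilon\to 0$ using continuity of the cohomological SYZ data and Kobayashi-type convergence of Ricci-flat K\"ahler-Einstein metrics to $\omega_{\check{M}}$. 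A secondary subtlety is to verify that the B-field, defined only modulo $l$ in $l^{\perp}/l\otimes \mathbb{R}/\mathbb{Z}$, transforms correctly under the pullback $\pi^*$.
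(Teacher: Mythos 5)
Your proposal follows essentially the same route as the paper: verify that $[\sigma]+3l$ is nef and big with null locus exactly the fiber components disjoint from $\sigma$, descend it to an ample class on the Weierstrass model, take the orbifold Ricci-flat K\"ahler--Einstein metric there (the paper cites \cite{KT}), form the HyperK\"ahler triple $(\pi^*\omega_{\check M},{\rm Re}\,\Omega,{\rm Im}\,\Omega)$, rotate, and invoke the proof of Theorem 7.3 of \cite{GroII}. The only (cosmetic) difference is that the paper avoids your worry about an ``orbifold version'' of Gross' theorem by working with the triple only on $\pi^{-1}(\check M_{reg})$ and noting that the dual special Lagrangian fibration is constructed over $N_0$, where everything is smooth.
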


 \begin{proof} Firstly,  note that $([\sigma]+3l)^2= 4 >0$.
  Now, let $D$ be   an irreducible  curve   such that $( [\sigma]+3l)\cdot [D]\leq 0$. If $[D]\cdot l >0$, then $[\sigma]\cdot [D]<0$. Thus   $D=\sigma$, and   $( [\sigma]+3l)\cdot [D]=1>0$, which is a contradiction.
    We obtain that $[D]\cdot l\leq 0$, and  $D$ is an irreducible component of a fiber. Thus  $[D]\cdot l=0$, and  $[\sigma]\cdot [D]\leq 0$, which implies that $[\sigma]\cdot [D]=0$, and  $D$ is an irreducible component of a singular fiber of $f$ which does not intersect with $\sigma$. Therefore $[\sigma]+3l$ is nef and big, and  an irreducible  curve $D$  satisfies   $( [\sigma]+3l)\cdot [D]= 0$ if and only if $D$ is an irreducible component of a singular fiber of $f$ which does not intersect with $\sigma$.   There is an ample  class  $\alpha_{\check{M}}$ on the  Weierstrass model $\check{M}$ such that
      $[\sigma]+3l=\pi^* \alpha_{\check{M}}$, and by \cite{KT}, there exists a unique Ricci-flat K\"{a}hler-Einstein metric $\omega_{\check{M}}\in \alpha_{\check{M}}$ on $\check{M}$ in the orbifold sense.

       Since $[\pi^*\omega_{\check{M}}]^2=([\sigma]+3l)^2=  [{\rm Im}\Omega]^2= [{\rm Re}\Omega]^2, $  $(\pi^*\omega_{\check{M}}, {\rm Re}\Omega, {\rm Im}\Omega)$ is a HyperK\"{a}hler triple on $\pi^{-1}(\check{M}_{reg})$.
 By using the  HyperK\"{a}hler rotation, we can find new complex structure $K$, and define a family of HyperK\"{a}hler  structures
 $$ \check{\Omega}_{t}= (l\cdot \tilde{\alpha}_t)^{-1}(\pi^* \omega_{\check{M}}-i{\rm Im}\Omega), \  \  \  \check{\omega}_t=(l\cdot \tilde{\alpha}_t)^{-1} {\rm Re}\Omega,  $$
 which satisfy
  $$ [\check{\Omega}_{t}]= (l\cdot [{\rm Re}\Omega_{J_t}])^{-1}([\sigma]+3l-i[\omega_{J_t}]), \  \ {\rm and}  \   \  [\check{\omega}_t]=(l\cdot [{\rm Re}\Omega_{J_t}])^{-1}[{\rm Im}\Omega_{J_t}].$$ By letting $$ \check{\mathbb{B}}_t=(l\cdot [{\rm Re}\Omega_{J_t}] )^{-1} [{\rm Re}\Omega_{J_t}]-[\sigma]+{\rm mod} (l), $$  the proof of Theorem 7.3 in  \cite{GroII} shows that
  $(M,  \check{\omega}_{t},  \check{\Omega}_t)$ with $\check{\mathbb{B}}_t$  is the SYZ
 mirror of $(M, \omega_{J_t}, \Omega_{J_t})$, i.e.  $(f: M_{N_0}\rightarrow N_0,  \check{\omega}_{t},  \check{\Omega}_t)$ is the dual special Lagrangian fibration of $(f: M_{N_0}\rightarrow N_0,  \omega_{J_t}, \Omega_{J_t})$.
  \end{proof}

 We now assume that $M$ satisfies the hypotheses of Theorem \ref{thm-main}, which gives $M=\check{M}$ and $\pi$ is the identity. We can now see how Theorem \ref{thm-main} applies to  Conjecture 5.5 in \cite{Fuk}.  In our setup,  the anti-self-dual connection $\Xi_t$ and the complex structure $J_t$ induce a holomorphic structure on $\mathcal{V}$ for any $t\in (0,1]$, and $\Xi_t$ satisfies the  Hermitian-Yang-Mills equation $$ F_{\Xi_t}\wedge \omega_{J_t} =0,  \  \  {\rm and}  \  \   F_{\Xi_t}\wedge \Omega_{J_t} =0.$$ The spectral cover $D_t$ and the limit $D_0$ are special Lagrangian cycles with respect to the mirror HyperK\"{a}hler structure $( \check{\omega},  \check{\Omega})$.  We now rephrase Theorem \ref{thm-main} and Theorem \ref{thm-main2} in the context of SYZ mirror symmetry.

 \begin{thm}\label{conj} Under the assumptions   of Theorem \ref{thm-main},   for any sequence $t_k \rightarrow 0$ and any $p>2$, there exists  an   open dense  subset $N^o \subset N_0$,  a subsequence (still denoted $t_k$), a sequence of $L^{p}_2$ unitary gauge changes $u_k$ of $P$,  and a  $L^{p}_1$  $SU(n)$-connection $\Xi_0$ on $P|_{M_{N^o}}$ so that
  $$u_k(\Xi_{t_k})  \rightarrow \Xi_0 $$   in the locally   $L^{p}_1$ sense  on $M_{N^o}$. Here the norms are calculated by using a fixed metric on $M$.

   For any  $w \in N^o$,  the restriction of $\Xi_0$ to the fiber $M_w$, denoted $\Xi_0|_{M_w}$, is $C^{1,\alpha}$ gauge equivalent to a smooth  flat $SU(n)$-connection
 $$u_\infty ( \Xi_0|_{M_w} )=  \frac{\pi}{{\rm Im} (\tau)}({\rm diag}\{q_{1}(w), \cdots, q_{n}(w)\}d\bar{z}-{\rm diag}\{\bar{q}_{1}(w), \cdots, \bar{q}_{n}(w)\}dz), $$
 where $u_\infty\in\mathcal{G}^{1,\alpha}(M_w)$,
      $M_w \cong \mathbb{C}/\Lambda_{\tau}$,  $\Lambda_{\tau}={\rm Span}_{\mathbb{Z}}\{1, \tau\}$,  $\sigma(w)=0$, 
      and   $z$ denotes the coordinate on $\mathbb{C}$.
    As $w$ varies, $\{q_{1}(w), \cdots, q_{n}(w)\}\subset M_w$ forms a special Lagrangian multisection of $f^{-1}(N^o) \rightarrow N^o$ with respect to the SYZ mirror HyperK\"{a}hler structure  $( \check{\omega},  \check{\Omega})$, and its closure $D_{0}^o$ is a  special Lagrangian cycle, i.e. $$\check{\omega}|_{D_{0}^o}\equiv0,  \  \  {\rm and}  \  \  {\rm Im}\check{\Omega}|_{D_{0}^o}\equiv0. $$
 The  family of special Lagrangian submanifolds $D_t$ with respect to  $( \check{\omega},  \check{\Omega})$ converges  to $D_{0}^o$ on $f^{-1}(N^o)$ in the locally  Hausdorff sense.
Furthermore,
 the unitary  gauge equivalent class of  the limit connection $\Xi_0$ is  the Fourier-Mukai transform of   a flat $U(1)$-connection  $\Theta$ on $D_{0}^o\cap M_{N^o}$, i.e.    $$\Xi_0 \in\mathcal{FM}(D_{0}^o\cap M_{N^o}, \Theta).$$
 \end{thm}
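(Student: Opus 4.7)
The plan is to deduce Theorem \ref{conj} from Theorems \ref{thm-main} and \ref{thm-main2} together with the explicit description of the mirror HyperK\"ahler structure given in Proposition \ref{lemma-mirror}; most of the analytical work is already packaged in those two theorems, so the argument amounts to translating holomorphic data on $(M,I)$ into symplectic data on the mirror and verifying the explicit form of the fiber connection.

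First I would invoke Theorem \ref{thm-main}(i)--(iii) to extract a Zariski open set $N^o\subset N_0$, unitary gauge transformations $u_k\in\mathcal{G}^{2,p}$, and a limiting $L^p_1$ connection $\Xi_0$ on $P|_{M_{N^o}}$ with $u_k(\Xi_{t_k})\to\Xi_0$ in $L^p_{1,\mathrm{loc}}$; for every $w\in N^o$ part (iii) produces a $C^{1,\alpha}$ gauge $u_\infty$ in which $u_\infty(\Xi_0|_{M_w})$ is smooth and flat, with holomorphic bundle isomorphic to $\bigoplus_j\mathcal O_{M_w}(q_j(w)-\sigma(w))$ where $\{q_j(w)\}=D_0^o\cap M_w$. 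Using the explicit identification of flat unitary connections with points of $\check E$ recalled in (\ref{flatconnection}) (i.e.\ $q\mapsto\pi({\rm Im}\tau)^{-1}(qd\bar z-\bar q dz)$), taking the direct sum gives the diagonal flat connection displayed in the statement in a global unitary frame. The final Fourier--Mukai identification $\Xi_0\in\mathcal{FM}(D_0^o\cap M_{N^o},\Theta)$ for some $\Theta\in H^1(D_0^o\cap M_{N^o},\mathcal U_c(1))$ is exactly Theorem \ref{thm-main2}, and I would simply note that $H^1(D_0^o\cap M_{N^o},\mathcal U_c(1))$ is the space of flat $U(1)$-connections on the multisection.

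Next I would establish the special Lagrangian claims. By the construction of the spectral cover in Section 2.4, each $D_t$ and the limit $D_0^o$ are holomorphic curves in $(M,I)$. Since $\Omega$ is of type $(2,0)$ on the complex surface $(M,I)$, it restricts to zero on any complex curve, so $\Omega|_{D_t}\equiv 0$ and $\Omega|_{D_0^o}\equiv 0$. Because Theorem \ref{thm-main} is stated under the hypothesis that $f$ has only Kodaira fibers of type $I_1$ and $II$, the contraction $\pi:M\to\check M$ in Proposition \ref{lemma-mirror} is the identity, so the mirror triple reduces to $\check\omega={\rm Re}\Omega$ and $\check\Omega=\pi^*\omega_{\check M}-i\,{\rm Im}\Omega$. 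Splitting $\Omega={\rm Re}\Omega+i\,{\rm Im}\Omega$ into real and imaginary parts along $D_0^o$ and $D_t$ gives $\check\omega|_{D_0^o}={\rm Re}\Omega|_{D_0^o}=0$ and ${\rm Im}\check\Omega|_{D_0^o}=-{\rm Im}\Omega|_{D_0^o}=0$, i.e.\ the special Lagrangian condition, and identically for $D_t$.

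For the Hausdorff convergence, by assumption $D_t\to D_0=D_0^o+D_0'$ in the complete linear system $|n\sigma(N)+ml|$; by Remark \ref{remark1}, $D_0'$ is supported over finitely many points of $N$, so after possibly shrinking $N^o$ to avoid their projections, convergence in the linear system translates into locally uniform convergence of the defining sections of the $D_t$ on compact subsets of $M_{N^o}$, and hence into local Hausdorff convergence of the zero loci $D_t\cap M_{N^o}\to D_0^o\cap M_{N^o}$. The main obstacle in this reformulation is the bookkeeping of normalizations: one has to check that the HyperK\"ahler rotation sending $I$ to $J_t$, the rescaling $\tilde\alpha_t^2=[{\rm Re}\Omega]^2=[{\rm Im}\Omega]^2$, and the choice $[{\rm Im}\Omega]^2=4$ in Proposition \ref{lemma-mirror} are all consistent with the conventions used to define $\mathcal{FM}$ in Section 2.6, so that the asserted equalities hold on the nose rather than merely up to a rescaling of the mirror HyperK\"ahler triple; once this bookkeeping is settled the theorem follows immediately from Theorems \ref{thm-main} and \ref{thm-main2}.
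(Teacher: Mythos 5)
Your proposal is correct and follows essentially the same route as the paper: the paper presents Theorem \ref{conj} as a direct rephrasing of Theorems \ref{thm-main} and \ref{thm-main2}, with the special Lagrangian claims following exactly as you argue — $D_t$ and $D_0^o$ are holomorphic curves in $(M,I)$, so the $(2,0)$-form $\Omega$ restricts to zero on them, and since $\check\omega={\rm Re}\,\Omega$ and ${\rm Im}\,\check\Omega=-{\rm Im}\,\Omega$ (with $\pi={\rm id}$ under the type $I_1$/$II$ hypothesis), the vanishing conditions are immediate and, being vanishing conditions, are insensitive to the normalization constants you worry about at the end.
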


 Conversely, if $D$ is a smooth   special Lagrangian submanifold with respect to $( \check{\omega},  \check{\Omega})$ on $M$ such that $D$  represents  $n [\sigma ]+m l \in H_2(M, \mathbb{Z})$ for some  $m  \in \mathbb{N}$, and $\Theta$ is   a flat $U(1)$-connection  on $D$,  then  $D$ is a smooth holomorphic curve in $M$.
 The argument in Section 3.1 shows that there is a stable bundle  $V$ of rank $n$ with respect to  $f^{*} c_{1}(\mathcal{O}_{\mathbb{CP}^{1}}(1))+t \alpha$ for $0<t \ll 1$. The anti-self-dual connections $\Xi_t$ on $V$ are also Hermitian-Yang-Mills with respect to  $(\omega_{J_t}, \Omega_{J_t})$.

 In the context of mirror symmetry, a special Lagrangian submanifold with a flat $U(1)$-connection is called an A-cycle, and a Hermitian-Yang-Mills connection  on a complex submanifold is called a B-cycle (cf. \cite{LYZ,JY,Va}).  The   correspondence between B-cycles  and A-cycles is motivated by the study of homological mirror symmetry via  the SYZ construction in \cite{AP,Fuk1,Fuk}, and the extended mirror symmetry with bundles \cite{LYZ,Va}.
  Theorem \ref{conj} says that in the current case, the adiabatic limit of B-cycles is corresponding to an A-cycle on the mirror K3 surface.

\subsection{Remarks}
We conclude this section with a few more remarks.

\begin{rk}\label{remark2} \rm  Note that the Levi-Civita connection of the Ricci-flat K\"{a}hler-Einstein metric $\omega_t$ is an anti-self-dual connection. However Theorem \ref{thm-main}  does not apply to this case due to the following.  If $M_w$ is a smooth fiber, then the restriction of the tangent bundle of $M$ satisfies a short exact sequence $$0 \rightarrow TM_w \rightarrow TM|_{M_w} \rightarrow f^*T_w N \rightarrow0,$$ and $TM|_{M_w}$ is S-equivalent to $\mathcal{O}_{M_w}\oplus \mathcal{O}_{M_w}$.  Thus  the special cover of $TM$ is $D_{TM}=2\sigma(N)$, and is not reduced.  Consequently,   the hypotheses of  Theorem \ref{thm-main} are not satisfied.

The curvature $F_{\Xi_t}$ in Theorem \ref{thm-main} behaves very differently  from the curvature of the Ricci-flat K\"{a}hler-Einstein metric $\omega_t$.  In the metric case, the  curvature $R_{\omega_t}$ of $\omega_t$ is bounded away from the singular fibers along the collapsing of $\omega_t$, i.e. $$\sup_{M_K}| R_{\omega_t}|_{\omega_t}\leq C_K,$$ for any compact subset $K\subset N_0$, by \cite{GW,GTZ}.  Furthermore, there is a more general result in \cite{CT} that asserts the boundedness of curvatures  of  sufficiently collapsed  Ricci-flat Riemannian  Einstein metrics  $\mathrm{g}$ on 4-manifolds away from finite metric balls.   The readers are referred to \cite{CT} for details.

In Theorem \ref{thm-main},
it is shown  that the curvature  $F_{\Xi_t}$ is bounded with respect to any fixed metric on $M_U$.
However,
  $F_{\Xi_t}$ can not be bounded with respect to the collapsed metric $\omega_t$ as the following demonstrates. If it were bounded, then  Proposition \ref{prop2+0} of Section 7  shows that on any $U\subset N^o$,  \bea  \int_{U}\sum_{j=1,2}\|\partial_{x_j}A_{0,t}\|_w^2  dx_1dx_2 &  \leq &  C( \|F_{\Xi_t}\|_{L^2(M_{U}, \omega_t)}^2  +t)\nonumber\\ & \leq &  C (\sup_{M_U}| F_{\Xi_t}|_{\omega_t}^2{\rm Vol}_{\omega_t}(M_U)+t)  \nonumber \\ & \leq & C t \rightarrow 0,   \nonumber\eea  where $x_1$ and $x_2$ are coordinates on $U$, which implies  $\partial_{x_j}A_{0}\equiv 0$, $j=1,2$.  Thus $\partial_{x_j}({\rm Im} (\tau)^{-1} q_i(w))\equiv 0$, $j=1,2$, and $q_i(w)=c_i (\tau(w) -\bar{\tau}(w))$ for constants $c_i\in \mathbb{C}$, $i=1, \cdots, n$.  Note that $q_i(w)$ is holomorphic, and $\tau(w)$ is not constant as the fibration $f$ is a Weierstrass fibration. We have $c_i=0$ and $q_i(w)\equiv 0$, $i=1, \cdots, n$. Hence $D_0^o\cap M_U=n \sigma (U)$, which contradicts to the assumption of $D_0^o$ being  reduced.
\end{rk}

\begin{rk}\label{remark3-}\rm   Theorem \ref{thm-main} is a compactness result, i.e. the convergence of $\Xi_t $ occurs along  subsequences $t_k$. The convergence along the parameter $t$ may hold   under  certain  stronger assumptions, for example  the follows.
For any $t\ll 1$, we assume  that $V_{t}|_{M_w}$  is regular semi-stable for any $w\in N$.
 As in Section 2.4, Proposition 5.7 of \cite{FMW} shows that $$V_t= (\nu_{D_t})_* (\mathcal{O}_{\tilde{M}}(\Delta_t - \Sigma_{D_t})\otimes \tilde{f}^*\tilde{L}_t)$$ for a  line bundle $\tilde{L}_t$ on $D_t$. If we assume further that $\tilde{L}_t$ converges to a $\tilde{L}_0$ on $D_0$ as divisors along the convergence of $D_t$ to $D_0$, then we expect that $\Xi_t $ converges away from finite fibers without passing to  any subsequence, which would be left  for  the future study.
\end{rk}

\begin{rk}\label{remark3} \rm There are many more  questions that the authors would like to investigate  in the future. Firstly,  we would like to understand what are the corresponding  algebraic geometric descriptions  of the type $I$  and type $II$ bubbles in the proof of Proposition \ref{estimate 1}. Secondly, we like to have  an  explicit formula for the second Chern number $c_2(\mathcal{V} )$ via  the  bubbles and the limit special cover $D_0$. Here a certain bubble tree convergence is expected.

Finally, we like to study the metric geometry of the moduli space of anti-self-dual Yang-Mills connections on collapsed K3 surfaces, inspired by the F-theory/heterotic string theory duality as in \cite{FMW0}.  For any $0<t \leq (\frac{n^3}{4}c)^{-1}$, let $\mathfrak{M}_t(n, c)$ be the  moduli space of anti-self-dual connections on $\mathcal{V}$ with respect to the HyperK\"{a}hler structure $(\omega_t, \Omega)$, where $c=c_2 (\mathcal{V})$, which is not empty (cf. Theorem \ref{stable criterion}).   By Theorem 7.10 in \cite{Kob},  $(\omega_t, \Omega)$ induces a HyperK\"{a}hler structure $(\omega_{\mathfrak{M},t}, \Omega_{\mathfrak{M},t}) $ on the regular locus  $\mathfrak{M}_t(n, c)^o$ of $\mathfrak{M}_t(n, c)$. Furthermore, it is expected that there is a holomorphic lagrangian fibration $\mathfrak{f}:\mathfrak{M}_t(n, c)^o\rightarrow \mathfrak{U}\subset |n \sigma (N)+ml|$ (cf. Section 2.4 of \cite{FMW0}).  For example, if $D\in |n \sigma (N)+ml|$ is smooth, then the fiber $\mathfrak{f}^{-1}(D)$ is the Jacobian $\mathfrak{J}(D)$ of $D$, which parameterises the flat $U(1)$-connections on $D$. We would investigate the degeneration  behavior of  $(\omega_{\mathfrak{M},t}, \Omega_{\mathfrak{M},t}) $ when $t\rightarrow0$ in   future study.
\end{rk}

\section{The proof of Theorem \ref{thm-main}}
\label{mainproof}

In this section we prove Theorem \ref{thm-main}, assuming some important estimates which will be proved in the subsequent sections. We begin with a bubbling result, which gives a decay estimate for curvature away from a finite  set. This set may depend on the chosen sequence of times $t_k\rightarrow 0$.

Since we are interested in the behavior of the restriction of the connections $\Xi_{t_k}$ to a fiber $M_w$, we use the notation $A_{t_k}(w)=\Xi_{t_k}|_{M_w}$. In general we write this fiberwise connection as $A_{t_k}$, as the dependence on $w$ is clear from context.


\begin{prop}
\label{estimate 1} If $\Xi_t$ 
is a family of anti-self-dual connections on $P$ with respect to $(\omega_t, \Omega)$, then for any sequence $t_k \rightarrow 0$, there is a Zariski open subset $N_1 \subset N_0$, and  a subsequence (still denoted $t_k$), so that  the curvature $F_{\Xi_{t_k}}$  satisfies  \be
\sup_{M_K}|F_{\Xi_{t_k}}|_{\omega_{t_k}}\leq  \frac{\epsilon_k}{t_k}  \nonumber
\ee
on any compact subset $K \subset N_1$.  Here the constants $\epsilon_k$ may depend on $K$, and  satisfy $\epsilon_k \rightarrow 0$ as $k \rightarrow \infty$. Consequently, for any $w\in K$ and $t_k \ll 1$,
     $$\|F_{A_{t_k}}\|_{C^0(\omega^{SF}|_{M_w})}\rightarrow 0,$$   and
 $V_{t_k}|_{M_w}$ is semi-stable. 
\end{prop}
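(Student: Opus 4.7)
My plan is a blowup/contradiction argument based on the curvature radius $R_t$ introduced in Section 2.7. By (\ref{curvradconclusion}), establishing the bound $|F_{\Xi_{t_k}}|_{\omega_{t_k}} \leq \epsilon_k/t_k$ with $\epsilon_k \to 0$ is equivalent to showing that $\lambda_{t_k}(p) := R_{t_k}(p)/\sqrt{t_k} \to \infty$ uniformly for $p \in M_K$. The Zariski open set $N_1 = N_0 \setminus S$ will be produced by removing a finite set $S$ of base points where energy is allowed to concentrate; $S$ will be chosen exactly so that no bubble can form over $N_1$.

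Suppose, for contradiction, that for some compact $K \subset N_1$ and some subsequence there exist $p_k \in M_K$ with $\lambda_{t_k}(p_k)$ bounded. Rescaling by $R_{t_k}(p_k)^{-2}$ and pulling back $\Xi_{t_k}$ through the fiber-respecting diffeomorphisms of (\ref{blowup1})--(\ref{blowup2}) produces ASD connections on balls of radius tending to infinity with $|F|(p_k)=1$ and local energy $\leq \varepsilon$. Three bubble types arise according to the limiting behavior of $\lambda_{t_k}(p_k)$:
\begin{itemize}
\item[(I)] $\lambda_{t_k}(p_k) \to 0$: the rescaled metric opens up to flat $\mathbb{R}^4$; by Uhlenbeck compactness (Theorem \ref{Ucompact}) one extracts a nontrivial ASD limit on $\mathbb{R}^4$ which extends to $S^4$ via removable singularity, with $\int |F|^2 \geq 8\pi^2$.
\item[(II)] $\lambda_{t_k}(p_k) \to c \in (0,\infty)$: by (\ref{blowup2}), after rescaling by $\sqrt{t_k}$ the metric $t_k^{-1}\omega_{t_k}$ converges to the product flat HyperK\"ahler metric on $\mathbb{C}\times M_{w_0}$, and the blow-up is a nontrivial, point-localized ASD connection there with finite positive energy.
\item[(III)] $\lambda_{t_k}(p_k) \to \infty$: the energy spreads along an entire fiber; the appropriate rescaling is again by $\sqrt{t_k}$, the limit space is still $\mathbb{C}\times M_{w_0}$, and the curvature mass is detected by a fiber component of the limiting spectral cover via the energy/spectral-cover analysis of Section 7.
\end{itemize}

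By the Chern--Weil identity (\ref{cw0}) the total energy of $\Xi_{t_k}$ is $8\pi^2 c_2(\mathcal{V})$, independent of $k$. Each of the bubbles (I)--(III) carries a uniformly positive amount of energy (for (I) and (II) by the small-energy concentration at a point; for (III) by the quantization provided by the spectral-cover/energy correspondence of Section 7, which pairs each fiber-supported component of $D_0'$ with a definite amount of energy). Therefore only finitely many bubbles can occur along the sequence, after passing to a subsequence: type (I) and (II) bubbles over finitely many base points, and type (III) bubbles only over the finitely many points of $N_0$ where $D_0'$ is supported. Taking $S$ to be the union of these finitely many base points, $N_1 := N_0 \setminus S$ is Zariski open, no bubble forms over $N_1$, and on any compact $K \subset N_1$ we obtain $\lambda_{t_k}(p) \to \infty$ uniformly in $p \in M_K$, which is the desired bound $|F_{\Xi_{t_k}}|_{\omega_{t_k}} \leq \epsilon_k/t_k$ with $\epsilon_k \to 0$.

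For the last consequences, the semi-flat decay of Section \ref{basemetric} together with Lemma \ref{lem-decay} gives that $t_k\,\omega_{t_k}|_{M_w}$ is uniformly comparable to $\omega^{SF}|_{M_w}$, so
\[
|F_{A_{t_k}}|_{\omega^{SF}|_{M_w}} \leq C\, t_k\, |F_{\Xi_{t_k}}|_{\omega_{t_k}} \leq C \epsilon_k \to 0;
\]
integrating over the fixed-volume fiber yields $\|F_{A_{t_k}}\|_w \to 0$, and Proposition \ref{semistablelemma} (applied once $k$ is large enough that $\|F_{A_{t_k}}\|_w < \delta$) concludes that $V_{t_k}|_{M_w}$ is semi-stable. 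The principal obstacle is the classification of type (III) bubbles, where the four-dimensional geometry genuinely degenerates and the blow-up limit is spread along a full fiber: identifying such bubbles with the fiber components $D_0'$ of the limiting spectral cover requires the analytic input of the Poincar\'e-type inequality of Section \ref{Poincare} together with the energy/spectral-cover relation of Section 7, and is precisely the content of the blowup argument of Section 8.
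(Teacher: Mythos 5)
Your cases (I) and (II) are exactly the paper's proof: assume the bound fails at points over some $w$, rescale by $t_k^{-1}$ (or further, via the point-picking Lemma \ref{pointpick}, when the curvature is unbounded even in the rescaled metric), extract a nontrivial ASD limit on $\mathbb{C}^2$ or on $\mathbb{C}\times M_{w_0}$ by Uhlenbeck compactness, quantize the energy, and conclude via Chern--Weil that only finitely many such $w$ exist. Phrasing the dichotomy through the curvature radius $R_t$ rather than through $\liminf t_k|F_{\Xi_{t_k}}|(p_k)>0$ is a cosmetic difference. The concluding paragraph (fiberwise curvature decay and Proposition \ref{semistablelemma}) also matches.

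The genuine problem is your case (III). First, it is inconsistent with your own contradiction hypothesis: you assume $\lambda_{t_k}(p_k)$ is \emph{bounded}, so after passing to a subsequence only $\lambda_{t_k}(p_k)\to 0$ or $\lambda_{t_k}(p_k)\to c\in(0,\infty)$ can occur, and the alternative $\lambda_{t_k}(p_k)\to\infty$ is vacuous here — indeed it is the \emph{good} regime you are trying to establish. Second, the machinery you invoke to handle it is circular: the energy/spectral-cover relation of Section 7 (Proposition \ref{prop2+0}) and the blowup argument of Section 8 both rest on the Poincar\'e inequality of Section \ref{Poincare}, whose small-curvature hypothesis \eqref{smallcurve} is verified precisely by appealing to Proposition \ref{estimate 1}. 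Third, you define part of the excluded set $S$ as the support of $D_0'$, but the proposition is stated under hypotheses strictly weaker than those of Theorem \ref{thm-main}: no spectral cover, no limit $D_0$, and hence no $D_0'$ is available. The repair is simply to delete case (III): the statement only claims the weak bound $\epsilon_k/t_k$ (not $C t_k^{-1/2}$), so ``type III'' concentration along entire fibers never enters; it is dealt with only later, in Proposition \ref{type3bubbling}. Separately, your normalization ``$|F|(p_k)=1$ after rescaling by $R_{t_k}(p_k)^{-2}$'' is not justified — $R_t$ is defined by energy, and \eqref{curvradconclusion} gives only an upper bound on $|F|$ — so nontriviality of the bubble must instead be extracted from the energy identity $\E_{t_k}(p_k,R_{t_k}(p_k))=\varepsilon$ or, as in the paper, from the assumption $\liminf t_k|F_{\Xi_{t_k}}|(p_k)>0$ together with the point-picking lemma.
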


Note that the above assumptions are slightly weaker than those used in Theorem \ref{thm-main}.  To prove the proposition, we follow a bubbling argument similar to arguments seen previously (for example \cite{DS}), however we present the details here for completeness.

\begin{proof} Suppose that  there exists a sequence of points $p_k \in M$ so that $f(p_k) \rightarrow w$ in $N_0$, and furthermore
  \begin{equation}\label{eq01}\liminf_{k \rightarrow\infty}t_k|F_{\Xi_{t_k}}|_{\omega_{t_k}}(p_k) >0.   \end{equation} We claim that there is a universal constant $\varepsilon>0$ such that   for any neighborhood $U_w$ of $w$,  $$ \int_{M_{U_w}}|F_{\Xi_{t_k}}|_{\omega_{t_k}}^2\omega_{t_k}^2 \geq \varepsilon,$$ for $k \gg 1$. Once this is demonstrated, by \eqref{cw0} there can only be a finite number of such $w$.

By \cite{GTZ}, for some $p\in M_w$ we have
  $$(M, t_k^{-1} \omega_{t_k}, p_k) \rightarrow (M_x \times \mathbb{C}, \omega_\infty =\omega_{w}^{F}+\frac{i}{2}W^{-1}(w)d\tilde{w}\wedge d\bar{\tilde{w}}, p)$$ in the pointed  $C^\infty$-Cheeger-Gromov sense, where $\omega_{w}^{F}$ is the flat K\"{a}hler metric representing $\alpha|_{M_w}$, i.e. $\omega_{w}^{F}=\omega^{SF}|_{M_w} $,  and $\tilde{w}$ denotes the scaled coordinate of $ \mathbb{C}$ (see Section 2.4).  More precisely, if
    $D_r=\{\tilde{w} \in \mathbb{C}| |\tilde{w}|<r\}$,   there are  smooth embeddings $\Phi_{t_k,r}: M_w \times D_r \rightarrow M_U$
   such that  $$\Phi_{t_k,r}^* t_k^{-1}\omega_{t_k} \rightarrow \omega_{\infty}, \  \  \ \Phi_{t_k,r}^* I\Phi_{t_k,r,*} \rightarrow I_{\infty},$$ in the $C^{\infty}$-sense on $M_w \times D_r$, where $I$ (resp. $I_\infty$) denotes the complex structure on $M$ (resp. $M_w \times \mathbb{C}$).

 We have two cases.  In the first case, for any compact subset $K\subset M_w \times \mathbb{C}$, there is a constant $C_K>0$ such that $$|F_{\Xi_{t_k}}|_{t_k^{-1}\omega_{t_k}}=t_k|F_{\Xi_{t_k}}|_{\omega_{t_k}}\leq C_K,$$  on $\Phi_{t_k,r}(K)$, $r\gg 1$.  By passing a subsequence, Uhlenbeck's strong  compactness   theorem shows that there is a sequence of unitary gauge transformations $u_{K,k}$, and an anti-self-dual $SU(n)$-connection $\Xi_\infty$ on $M_w \times \mathbb{C}$ such that
 $u_{K,k}(\Phi_{t_k,r}^*\Xi_{t_k})$ converges to $\Xi_\infty$ in the locally  $C^\infty$-sense on $K$.  Thus, in the $C^0$-sense on $K$,
  $$ \Phi_{t_k,r}^*|F_{\Xi_{t_k}}|_{t_k^{-1}\omega_{t_k}} \rightarrow |F_{\Xi_\infty}|_{\omega_\infty}, \  \  {\rm and}  \  \
 |F_{\Xi_\infty}|_{\omega_\infty}(p)>0. $$
  By \cite{Weh}, there is a constant $\mu=\mu(n)$ depending only on the group $SU(n)$, such that $$\int_{M_x \times \mathbb{C}} |F_{\Xi_\infty}|_{\omega_\infty}^2\omega_\infty^2 \geq \mu.$$ Furthermore if $n=2$, we know $\mu (2)=4\pi^2$.   This is called the bubble  of type $II$ in \cite{DS}. By choosing $K$ large enough,   $$   \int_{M_{U_w}}|F_{\Xi_{t_k}}|_{\omega_{t_k}}^2\omega_{t_k}^2 \geq  \int_{\Phi_{t_k,r}(K)}|F_{\Xi_{t_k}}|_{t_k^{-1}\omega_{t_k}}^2t_k^{-2}\omega_{t_k}^2\geq \frac{\mu}{2},$$ for $k\gg 1$.

  The second case is that there are $p_k'\in M$ such that $$d_{t_k^{-1}\omega_{t_k}}(p_k,p_k')<C< \infty, \  \  {\rm  and }, \ \ t_k|F_{\Xi_{t_k}}|_{\omega_{t_k}}(p_k')\rightarrow\infty,$$ when $k \rightarrow\infty$. In order to preform the bubbling argument, recall the following  point choosing lemma.

 \begin{lem}[Lemma 9.3 in \cite{DS}]
\label{pointpick}
Let $(Y,d_Y)$ be a complete metric space, and $\zeta$ be a continuous non-negative function. For any $y\in Y$, there exist $y' \in Y$ and $0<\rho \leq 1$ such that $$d_Y(y,y')\leq 1,  \  \  \sup_{B_{d_Y}(y',\rho)}\zeta \leq 2 \zeta (y'),  \  \ {\rm and}  \  \ 2\rho \zeta(y')\geq  \zeta(y).$$
\end{lem}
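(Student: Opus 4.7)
The plan is to construct $y'$ by an iterative ``either stop or double'' procedure on a dyadic sequence of radii, and then to invoke completeness of $Y$ and continuity of $\zeta$ to show the procedure must halt after finitely many steps.

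Explicitly, set $\rho_n = 2^{-n-1}$ and $y_0 = y$, and build $\{y_n\}$ inductively as follows. At stage $n$, test whether $\sup_{B_{d_Y}(y_n,\rho_n)} \zeta \leq 2\zeta(y_n)$. If this holds, declare success and output $y' := y_n$ and $\rho := \rho_n$. Otherwise, choose any $y_{n+1} \in B_{d_Y}(y_n,\rho_n)$ with $\zeta(y_{n+1}) > 2\zeta(y_n)$ and pass to stage $n+1$.

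If the procedure halts at some stage $n$, I would verify the three conclusions of the lemma directly. By the triangle inequality and the geometric series $\sum_{k=0}^{\infty} 2^{-k-1} = 1$, one gets $d_Y(y,y') \leq \sum_{k=0}^{n-1}\rho_k < 1$, and $\rho = 2^{-n-1}$ lies in $(0,1]$. The stopping test is exactly the required $\sup_{B_{d_Y}(y',\rho)}\zeta \leq 2\zeta(y')$. Finally, telescoping $\zeta(y_{k+1}) > 2\zeta(y_k)$ along the chain gives $\zeta(y_n) \geq 2^n \zeta(y)$, so $2\rho\,\zeta(y') = 2^{-n}\zeta(y_n) \geq \zeta(y)$, as wanted.

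The only substantive point is to rule out nontermination. If the procedure never halted, the bound $d_Y(y_n,y_{n+1}) < 2^{-n-1}$ would make $\{y_n\}$ Cauchy, hence convergent to some $y_\infty \in Y$ by completeness. When $\zeta(y) > 0$, the telescoping estimate $\zeta(y_n) > 2^n\zeta(y) \to \infty$ contradicts continuity of $\zeta$ at $y_\infty$. When $\zeta(y) = 0$, failure of the stopping test at step $0$ already forces $\zeta(y_1) > 0$, and thereafter $\zeta(y_n) > 2^{n-1}\zeta(y_1)\to\infty$, again contradicting continuity. Either way the iteration must stop, completing the proof. The main (and really only) obstacle is the bookkeeping: the normalization $\rho_n = 2^{-n-1}$ is what simultaneously enforces $d_Y(y,y') \leq 1$ (from the cumulative step sizes), $\rho \leq 1$ (the final radius), and the balance $2\rho \cdot 2^n = 1$ needed to recover $\zeta(y)$ as a lower bound; and one must separately verify the corner case $\zeta(y) = 0$, where the third inequality is trivial but the termination argument still requires a short extra observation.
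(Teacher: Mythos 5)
Your proof is correct, and it is essentially the standard argument: the paper itself does not reproduce a proof but cites Lemma 9.3 of Dostoglou--Salamon, where exactly this dyadic ``stop or double'' iteration is used, with completeness and continuity ruling out nontermination. The bookkeeping with $\rho_n=2^{-n-1}$ and the telescoping bound $\zeta(y_n)\geq 2^n\zeta(y)$ checks out, including the $\zeta(y)=0$ corner case.
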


We apply this lemma to $\zeta=|F_{\Xi_{t_k}}|_{t_k^{-1}\omega_{t_k}}$, $y=p_k'$, and obtain $y'=p_k''$ and $0\leq \rho \leq 1$.
We further rescale the metric, and $(M, |F_{\Xi_{t_k}}|_{\omega_{t_k}}^{-1}(p_k'')\omega_{t_k}, p_k'')$ converges to the standard Euclidean space $(\mathbb{C}^2,\omega_E, 0)$ in the smooth  Cheeger-Gromov sense by passing to a subsequence.  The same argument as above shows that $\Xi_{t_k}$ smoothly converges to an non-trivial  anti-self-dual $SU(n)$-connection $\Xi_\infty'$ on $\mathbb{C}^2$ by passing to certain unitary gauge changes and subsequences. We now have $$\int_{\mathbb{C}^2} |F_{\Xi_\infty'}|_{\omega_E}^2\omega_E^2 \geq \tau,$$ where $\tau$ is the constant in Lemma \ref{energybound}.  This is called a bubble of type $I$, and is standard  in the study of Yang-Mills fields on 4-manifolds (cf. \cite{DK,FU}).   Just as above,   $$   \int_{M_{U_w}}|F_{\Xi_{t_k}}|_{\omega_{t_k}}^2\omega_{t_k}^2 \geq  \int_{\Phi_{K,k}(K)}|F_{\Xi_{t_k}}|_{t_k^{-1}\omega_{t_k}}^2t_k^{-2}\omega_{t_k}^2\geq \frac{\tau}{2},$$ for $k\gg 1$, where $K$ satisfies that $p_k'\in \Phi_{K,k}(K)$.  We obtain the claim by letting $\varepsilon= \frac{1}{2}\min \{\mu, \tau\}$.

Let $S_1$ be the set of points  $x\in N_0$ for which  there is a sequence $p_k \in M$ such that $f(p_k) \rightarrow w$ in $N_0$, and  (\ref{eq01}) is satisfied. By \eqref{cw0} $$ 8 \pi^2 c_2(\mathcal{V})= \lim_{k\rightarrow\infty}\int_{M}|F_{\Xi_{t_k}}|_{\omega_{t_k}}^2\omega_{t_k}^2\geq \sharp (S_1)\varepsilon,$$  and as a result $S_1$ is a finite set.  Therefore $N_1=N_0\backslash S_1$ is a Zariski open subset, and for any compact subset $K\subset N_1$,
$$
\sup_{M_K}t_k|F_{\Xi_{t_k}}|_{\omega_{t_k}}\leq   \epsilon_k \rightarrow 0,
  $$    when $k \rightarrow \infty$.

  Since $\Phi_{t_k,r}^*t_k^{-1}\omega_{t_k}$ converges smoothly  to $\omega_\infty$ on $M_w\in \mathbb{C}$ for $w\in K$, we have $$ \|F_{A_{t_k}}\|_{C^0(\omega^F)}\leq 2 \|F_{A_{t_k}}\|_{C^0(t_k^{-1}\omega_{t_k}|_{M_w})}\leq 2 \sup_{M_K}|F_{\Xi_{t_k}}|_{t_k^{-1}\omega_{t_k}}\rightarrow 0. $$  By Proposition \ref{semistablelemma},
    $V_{t_k}|_{M_w}$ is semi-stable, where as above $V_{t_k}$ denotes $\mathcal{V}$ equipped with the holomorphic structure induced by $\Xi_{t_k}$.
\end{proof}

Restricting to a fiber $M_w$, by the above proposition, weak Uhlenbeck compactness gives that for any $p>2$, there exists  a sequence of unitary gauge $u_{w,k}$ such that along a subsequence of  times, $u_{w,k}(A_{t_k})$ converges in $L^p_1$ to a flat $L^p_1$-connection $\Xi_{\infty,w}$ on $M_w$. In other words, we have  fiberwise convergence of $\Xi_{t_k}$ up  to gauge changes.  However, it is not clear yet that $\Xi_{t_k}$ has any limit when $t_k \rightarrow 0$ on $M_K$. For this, we need the stronger  assumptions in Theorem \ref{thm-main}, and further results and estimates.

We now work under the setup of Theorem \ref{thm-main}, and consider a sequence of connections $\Xi_{t_k}$ where $t_k\rightarrow 0$ as $k\rightarrow\infty$. Before we turn to the key estimates, we need to describe the explicit form of the holomorphic structure of the bundle $V_t$ in a local trivialization.

Note that   $f|_{D_{0}^o}: D_{0}^o \rightarrow N$ is an $n$-sheeted branched covering. If $S_{D_{0}^o}$ denotes  the subset of $D_{0}^o$ consisting all singular points of $D_{0}^o$ and all branch points of $f|_{D_{0}^o}$, then $f(S_{D_{0}^o})$ is a finite subset of $N$.
   We define a Zariski open subset  \begin{equation}\label{subset}N^o=N_1\backslash (f(D_0-D_{0}^o)\cup f(S_{D_{0}^o})).  \end{equation}



    On $N^o$,   $f|_{D_{0}^o}: f|_{D_{0}^o}^{-1}(N^o) \rightarrow N^o$ is an $n$-sheeted   unbranched covering, since $D_{0}^o$ is reduced. For any $w\in N^o$, $D_{0}^o \cap M_w$ consists $n$ distinct  points in $M_w$, i.e. $D_{0}^o \cap M_w= \{q_1, \cdots, q_n\}$ where $q_i \neq q_j$ for any $i\neq j$.  The trivial bundle  $\mathcal{V}|_{M_w}$ equipped with the holomorphic structure induced by $D_{0}^o \cap M_w$ is isomorphic to the flat holomorphic bundle
    $$ \mathcal{O}_{M_w}(q_1 - \sigma (w))\oplus \cdots \oplus  \mathcal{O}_{M_x}(q_n - \sigma (w)).$$
     Since $D_{t}$ converges to $D_0$ and $D_0-D_{0}^o \in |(m-m')l|$ is supported on   fibers, for any compact subset $K\subset N^o$ we have that $f: D_{t} \cap M_{K} \rightarrow K$ is an  $n$-sheeted unbranched  covering    for $t\ll 1$.  For any $w\in K$, $D_{t} \cap M_{w}=\{q_{1,t}, \cdots, q_{n,t}\}$ such that $q_{i,t} \neq q_{j,t}$ for any $i\neq j$,   and $q_{i,t} \rightarrow q_{i}$  when $t \rightarrow 0$. Furthermore,  $V_{t}|_{M_w}$ is semi-stable, which implies that   $V_{t}|_{M_w}$ is regular by Proposition 6.4 in \cite{FMW}, and  $$ V_{t}|_{M_w} \cong   \mathcal{O}_{M_w}(q_{1,t} - \sigma (w))\oplus \cdots \oplus  \mathcal{O}_{M_w}(q_{n,t} - \sigma (w)).$$

For any $t\ll 1$, there is a Zariski open subset $N_t^o \supset K$ such that $V_{t}|_{M_w}$, $w\in N_t^o$,   is regular semi-stable.
 Proposition 5.7 of \cite{FMW} asserts that $$V_t|_{M_{N_t^o}}= (\nu_{D_t})_* (\mathcal{O}_{\tilde{M}_{N_t^o}}(\Delta_t - \Sigma_{D_t})\otimes \tilde{f}^*\tilde{L}_t)$$ for a certain line bundle $\tilde{L}_t$ on $D_t\cap M_{N_t^o}$. Here, as in Section 2.4, $$\nu_{D_t}:\tilde{M}_{N_t^o}=D_t\times_{N_t^o} M\rightarrow M_{N_t^o},$$
 $\Sigma_{D_t} = \nu_{D_t}^* \sigma,$ and $\Delta_t= \tilde{M}_{N_t^o} \cap \Delta_0$ for the diagonal $\Delta_0$ of $ M\times_{N_t^o} M$ via  the natural  embedding $ \tilde{M}_{N_t^o} =D_t\times_{N_t^o} M \hookrightarrow  M\times_{N_t^o} M$.

Let $U\subset K \subset  N_t^o$ be an open subset biholomorphic to the unit disk, and $w$ be a coordinate on $U$. Then $M_U\cong (U\times \mathbb{C})/\Lambda$ for lattice subbundle $\Lambda={\rm Span}_{\mathbb{Z}}\{1, \tau\}$, where $\tau=\tau(w)$ varies holomorphically and is the period of the elliptic curve $M_w$. Furthermore under this identification the section $\sigma$ satisfies $\sigma \equiv 0$.  If $z$ is the coordinate on $\mathbb{C}$, we define real functions  $y_1$ and $y_2$ on $U\times \mathbb{C}$ by $z=y_1+\tau y_2$. Then $dy_1$ and $dy_2$ are well-defined 1-forms on $M_U$, and we have the decomposition of cotangent bundle  $T^*M_U\cong {\rm Span}_{\mathbb{R}}\{dy_1,dy_2\}\oplus {\rm Span}_{\mathbb{R}}\{dx_1,dx_2\}$, where $w=x_1+ix_2$.  Let $\theta=dy_1+\tau dy_2$, whose  restriction  $\theta|_{M_w}=dz$ on any fiber $M_w$.   Note that
  $\overline{\partial}\tau=0$, $d\tau=\partial \tau$ and $0=\overline{\partial}\tau_1+i\overline{\partial}\tau_2$, where
$\tau=\tau_1+i\tau_2$. Thus  $dz=dy_1+\tau dy_2+y_2 d\tau$,  $2i\tau_2y_2=z-\bar{z}$, and  $\theta=dz-\frac{z-\bar{z}}{2i\tau_2}\partial_w \tau dw=dz+bdw.$

  We fix the trivializations $P|_{M_U}\cong  M_U \times SU(n)$ and $\mathcal{V}|_{M_U}\cong  M_U \times \mathbb{C}^n$.  The unitary gauge group consists of $SU(n)$ valued functions, in other words $\mathcal{G}=C^\infty(M_U, SU(n))$, and the complex gauge group is  $\mathcal{G}_{\mathbb{C}}=C^\infty(M_U, SL(n, \mathbb{C}))$ under this trivialization. The respective Lie algebras are $\mathfrak{g}=C^\infty(M_U, \mathfrak{su}(n))$ and $\mathfrak{g}_{\mathbb{C}}=C^\infty(M_U, \mathfrak{sl}(n, \mathbb{C}))$.  Note that there is the decomposition $\mathfrak{g}_{\mathbb{C}}=\mathfrak{g}\oplus i\mathfrak{g}$ induced by $\mathfrak{sl}(n, \mathbb{C})=\mathfrak{su}(n)+i\mathfrak{su}(n)$, and if $s\in \mathfrak{g}_{\mathbb{C}}$ is Hermitian (given by $s^*=s$),  then $s\in i\mathfrak{g}$. Therefore any complex gauge $g$ can be written as $g=\exp(v+s)$, for a certain $v\in \mathfrak{g}$ and an $s \in i \mathfrak{g}$.

  Note that $D_{0}^o \cap M_U$ (resp. $D_{t} \cap M_U$) is given by n distinct holomorphic functions $q_j(w)$ (resp. $q_{j,t}(w)$),  and for any $j$, $q_{j,t}(w)\rightarrow q_{j}(w)$ in the $C^{\infty}$-sense as $t\rightarrow 0$. Thus $D_t \cap M_U$ consists of $ n $ distinct unit disks, and because  $\tilde{L}_t|_{D_t \cap M_U}$ is holomorphically trivial, we obtain  $$V_t|_{M_{U}}\cong \bigoplus_{j=1}^{n} \mathcal{O}_{M_{U}}(q_{j,t} (U)- \sigma (U)).  $$

  We define the background connections on the trivial bundle $\mathcal{V}|_{M_U}$   \begin{equation}\label{bconnections} A_{0,t}=  \pi ({\rm Im} (\tau))^{-1} ({\rm diag}\{q_{1,t}, \cdots, q_{n,t}\}\bar{\theta}-{\rm diag}\{\bar{q}_{1,t}, \cdots, \bar{q}_{n,t}\}\theta),  \end{equation} \begin{equation}\label{bconnections2} A_{0}=  \pi ( {\rm Im} (\tau))^{-1} ({\rm diag}\{q_{1}, \cdots, q_{n}\}\bar{\theta}-{\rm diag}\{\bar{q}_{1}, \cdots, \bar{q}_{n}\}\theta).   \end{equation}  Thus $ A_{0,t} \rightarrow  A_{0}$ in the $C^\infty$-sense when $t\rightarrow 0$, $V_t|_{M_w}$ is isomorphic to $\mathcal{V}|_{M_w}$ equipped with the holomorphic structure induced by  the flat connection $A_{0,t}|_{M_w}$, and $A_{0}|_{M_w}$ induces the holomorphic bundle structure $\bigoplus \limits_{i=1}^{n} \mathcal{O}_{M_w}(q_{i}(w)-\sigma(w))$.

 \begin{lem}
\label{background connection}   The unitary connection $A_{0,t}$ on $\mathcal{V}|_{M_U}$  induces the  holomorphic structure isomorphic to $V_t|_{M_U}$.
\end{lem}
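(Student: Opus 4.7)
The strategy is first to verify that $A_{0,t}$ defines a holomorphic structure by checking $F_{A_{0,t}}^{0,2}=0$, and then to identify the resulting holomorphic bundle with $V_t|_{M_U}$ via a fiberwise comparison followed by a see-saw argument on the contractible base $U$.

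Since $A_{0,t}$ is diagonal in the chosen global trivialization of $\mathcal{V}|_{M_U}$, its curvature is also diagonal, with $j$-th entry the curvature of the $U(1)$-connection $a_{j,t} := \pi(\mathrm{Im}\,\tau)^{-1}(q_{j,t}\bar\theta-\bar q_{j,t}\theta)$. Each $a_{j,t}$ is exactly the connection appearing in formula (\ref{u(1)}) attached to the holomorphic section $q_{j,t}(w)$. Because $q_{j,t}$ is holomorphic in $w$, the calculation carried out in the proof of Proposition \ref{propFM} applies verbatim and yields $F_{a_{j,t}}^{0,2} = \pi\bar\partial(\tau_2^{-1}q_{j,t}\bar\theta) = 0$. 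Hence $F_{A_{0,t}}^{0,2}=0$, and $\bar\partial_{A_{0,t}}$ defines a holomorphic vector bundle $V_{A_{0,t}}$ on $M_U$ that splits holomorphically as $\bigoplus_{j=1}^n L_{j,t}$, where $L_{j,t}$ denotes the holomorphic line bundle defined on the trivial smooth line bundle by the Cauchy-Riemann operator $\bar\partial + \pi\tau_2^{-1}q_{j,t}\bar\theta\,\wedge$.

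It then suffices to produce, for each $j$, an isomorphism $L_{j,t}\cong\mathcal{O}_{M_U}(q_{j,t}(U)-\sigma(U))$. By construction, the restriction $L_{j,t}|_{M_w}$ is the flat line bundle determined by $\pi(\mathrm{Im}\,\tau(w))^{-1}(q_{j,t}(w)d\bar z-\bar q_{j,t}(w)dz)$, which under the Jacobian identification recalled in Section 2.2 is precisely $\mathcal{O}_{M_w}(q_{j,t}(w)-\sigma(w))$; this agrees with the restriction of $\mathcal{O}_{M_U}(q_{j,t}(U)-\sigma(U))$ to $M_w$. By the see-saw principle, $L_{j,t}\otimes\mathcal{O}_{M_U}(\sigma(U)-q_{j,t}(U))\cong f^{*}N_j$ for some holomorphic line bundle $N_j$ on $U$; since $U$ is biholomorphic to the unit disk, $N_j$ is trivial, and the isomorphism follows. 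Summing over $j$ and invoking the description $V_t|_{M_U}\cong\bigoplus_j\mathcal{O}_{M_U}(q_{j,t}(U)-\sigma(U))$ recalled just above the statement of the lemma gives $V_{A_{0,t}}\cong V_t|_{M_U}$.

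The only conceptual obstacle is ensuring that the pointwise diagonal form of $A_{0,t}$ reflects an honest holomorphic splitting, not merely a smooth one; this is automatic because the unitary trivialization of $\mathcal{V}|_{M_U}$ is global, so the commuting diagonal sub-connections $a_{j,t}$ genuinely cut out holomorphic sub-line bundles, and the see-saw step then relies only on the vanishing of $H^1(U,\mathcal{O}^\times)$ on the disk.
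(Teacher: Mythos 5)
Your proof is correct, but it takes a genuinely different route from the paper's. The paper argues by explicit construction: it presents $\mathcal{O}_{M_U}(q_{j,t}(U)-\sigma(U))$ via the multipliers $\{1,\exp(-2\pi i q_{j,t}(w))\}$, writes down a Hermitian metric $h=\exp\pi\bigl(\mathrm{Im}(\tau)^{-1}(z-\bar z)(q_{j,t}-\bar q_{j,t})\bigr)$ together with a global unitary frame $\rho=\exp\bigl(-\pi\,\mathrm{Im}(\tau)^{-1}(z-\bar z)q_{j,t}\bigr)$, and computes that the Chern connection in the trivialization induced by $\rho$ is exactly $\alpha_{t,j}=\pi\,\mathrm{Im}(\tau)^{-1}(q_{j,t}\bar\theta-\bar q_{j,t}\theta)$; this single computation produces the isomorphism explicitly and in particular verifies, in families, the normalization $q\mapsto\mathcal{O}_E(q-0)$ quoted in Section 2.2. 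You instead check integrability by reusing the curvature computation from Proposition \ref{propFM}, identify the resulting line bundles fiber by fiber using the Jacobian identification of Section 2.2, and then globalize with the see-saw principle plus triviality of line bundles on the disk. Both arguments are sound; yours is shorter and more structural, while the paper's is self-contained and yields the concrete unitary frame. The one caveat worth flagging is that the fiberwise statement you cite from Section 2.2 (that the flat connection $\pi(\mathrm{Im}\,\tau)^{-1}(q\,d\bar z-\bar q\,dz)$ induces $\mathcal{O}_E(q-0)$ with that precise sign and normalization) is asserted there without proof, and the explicit frame in the paper's proof is in effect what justifies it; so your argument defers, rather than discharges, that verification. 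Your closing remark about the holomorphic (not merely smooth) splitting is handled correctly: the diagonal Cauchy--Riemann operator preserves the coordinate sub-bundles, so they are $\bar\partial_{A_{0,t}}$-invariant and the splitting is holomorphic.
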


\begin{proof}
In general, if $L$ is a holomorphic line bundle, and $h$ determines a Hermitian metric on $L$ in a local holomorphic trivialization, then the unique Chern connection is given by $A_h=\partial \log h$.  If $\rho$ is a local unitary frame, i.e. $|\rho|_h^2=h|\rho|^2\equiv 1$, then we have smooth trivialization of $L$ via $\rho\mapsto 1$, and under such trivialization, $A_h$ is  transformed to  $A=\overline{\partial}\log \rho- \partial \log \bar{\rho}$. A different choice of $\rho$ gives a unitary gauge  transformation of $A$.

Note that the holomorphic line bundle $ \mathcal{O}_{M_{U}}(q_{j,t} (U)- \sigma (U))$ is given by the multiplier $\{e_1\equiv 1, e_\tau =\exp (-2\pi i q_{j,t}(w))\}$, i.e. $ \mathcal{O}_{M_{U}}(q_{j,t} (U)- \sigma (U))$ is obtained by the quotient of $U\times \mathbb{C} \times \mathbb{C}$  via
 $$ (w,z, \xi) \sim (w,z+1, e_1 \xi),  \  \  \   (w,z, \xi) \sim (w,z+\tau, e_\tau \xi)$$
  (cf. Section 6 in Chapter 2 of \cite{GH}).  On $U\times \mathbb{C}$, if we let $$h=\exp \pi \big({\rm Im} (\tau)^{-1}(z-\bar{z})(q_{j,t}-\bar{q}_{j,t})\big), $$ then $h(w,z+1)=h(w,z)$ and $h(w,z+\tau)=|\exp (2\pi i q_{j,t}(w)) |^2h(w,z)$, and thus $h$ defines a Hermitian metric on $ \mathcal{O}_{M_{U}}(q_{j,t} (U)- \sigma (U))$.  If $$\rho=\exp \big( -\pi {\rm Im} (\tau)^{-1}  (z-\bar{z})q_{j,t}\big) , $$ then $h|\rho|^2=1$, $\rho(w,z+1)=\rho(w,z)$ and $\rho(w,z+\tau)=e_\tau \rho(w,z)$. Thus $\rho$ is a global
   unitary frame, and under the trivialization induced by $\rho$, the Chern connection $\Xi_{0,t,j}=\Xi^{1,0}_j+\Xi^{0,1}_j$ is given by $\Xi^{1,0}_j=- \overline{\Xi^{0,1}_j} $ and  $$\Xi^{0,1}_j=\overline{\partial}\log \rho=\pi {\rm Im} (\tau)^{-1}q_{j,t}d\bar{z}-\pi (z-\bar{z})q_{j,t}\overline{\partial}{\rm Im} (\tau)^{-1}=\pi {\rm Im} (\tau)^{-1}q_{j,t}\bar{\theta},  $$ by  $$\bar{\theta}=d\bar{z}-\frac{z-\bar{z}}{2i{\rm Im} (\tau)}\partial_{\bar{w}} \bar{\tau} d\bar{w}=d\bar{z}+\frac{z-\bar{z}}{{\rm Im} (\tau)}\partial_{\bar{w}} {\rm Im} (\tau) d\bar{w}.$$ We obtain  the desired conclusion.
\end{proof}

Since  $\Xi_t$ and $A_{0,t}$ induce the same holomorphic structure on $\mathcal{V}|_{M_U}$ over $M_U$, there is a complex gauge $g\in \mathcal{G}_{\mathbb{C}}$ such that $g(\Xi_t)=A_{0,t}$.  Note that $gg^*$ is   Hermitian, and $gg^*=e^{2s_t}$ for some  $s_t \in C^\infty (M_U, \mathfrak{sl}(n, \mathbb{C}))$ with  $s_t^*=s_t$.  If we let $u=e^{-s_t}g$, then $u^*=u^{-1}$, i.e. $u$ is a unitary gauge, and $g=e^{s_t} u$.  Therefore, by  a further unitary gauge change if necessary,  we assume that
\be
\label{hermitiangauge}
e^{s_t}(\Xi_t)=A_{0,t}
\ee
 for a Hermitian gauge $e^{s_t}$ on $M_U$.

In order to prove the main theorem, we need to improve the curvature estimates of Proposition \ref{estimate 1}.

\begin{prop}
\label{type3bubbling}  For any compact set $K\subset N^o$, there exists a constant  $C_K$ such that
\begin{equation}\label{eq02}
\sup_{M_K}|F_{\Xi_{t_k}}|_{\omega_{t_k}}\leq C_K {t_k}^{-\frac12}.\nonumber
\end{equation}
\end{prop}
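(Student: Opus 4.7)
The strategy is a blow-up/contradiction argument that detects a new ``intermediate'' scale of collapse, strictly between the fiber scale $\sqrt{t_k}$ and the base scale $1$. Suppose the bound fails: there is a compact $K\subset N^o$ and a sequence $p_k\in M_K$ with $\Lambda_k:=|F_{\Xi_{t_k}}|_{\omega_{t_k}}(p_k)$ satisfying $\sqrt{t_k}\,\Lambda_k\to\infty$. Proposition~\ref{estimate 1} already gives $t_k\Lambda_k\le\epsilon_k\to 0$, so the bubble scale $r_k:=\Lambda_k^{-1/2}$ obeys $\sqrt{t_k}\ll r_k\ll t_k^{1/4}$. After passing to a subsequence and applying the point-picking Lemma~\ref{pointpick}, I arrange $f(p_k)\to w_0\in K$ and that $p_k$ is an approximate maximum of $|F_{\Xi_{t_k}}|_{\omega_{t_k}}$ on an $\omega_{t_k}$-ball of radius comparable to $r_k$.

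Rescale to $\tilde g_k:=r_k^{-2}\omega_{t_k}$. Then $|F_{\Xi_{t_k}}|_{\tilde g_k}(p_k)=1$ with $|F_{\Xi_{t_k}}|_{\tilde g_k}\le 2$ on a unit $\tilde g_k$-ball around $p_k$, while the fibers $M_{f(p_k)}$ collapse at rate $\sqrt{t_k}/r_k\to 0$. This ``type III'' regime (cf.\ Remark~\ref{remark1}) is precisely what must be ruled out on $N^o$. My tool is the Hermitian factorization $\Xi_{t_k}=e^{-s_{t_k}}(A_{0,t_k})$ from~\eqref{hermitiangauge}. A componentwise computation against the semi-flat structure of Section~\ref{basemetric}, combined with Lemma~\ref{lem-decay}, shows that the semi-flat reference connection~\eqref{bconnections} satisfies $|F_{A_{0,t_k}}|_{\omega_{t_k}}=O(t_k^{-1/2})$ uniformly on $K$; the bound uses only that the spectral sections $q_{j,t_k}$ remain smooth on $K$, which follows because $w_0\in N^o$ avoids $f(D_0-D_0^o)$ and the branch locus of $f|_{D_0^o}$. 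Consequently, any failure of the target estimate must be carried by $s_{t_k}$. Fiberwise, the conclusion of Proposition~\ref{estimate 1} gives $\|F_{A_{t_k}}\|_{C^0(\omega^{SF}|_{M_w})}\to 0$, i.e.\ exactly the hypothesis of the normalized gauge-fixing Theorem~\ref{gaugef}, which together with the Poincar\'e type inequality for $SU(n)$-curvatures on elliptic curves (Section~\ref{Poincare}) yields uniform $C^0$ control of the normalized endomorphism $\hat s_{t_k}$ on each fiber.

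The final step is to transport this control into the base direction and close the contradiction. I combine the Yang-Mills Weitzenb\"ock formula~\eqref{wei}, Uhlenbeck's weak compactness Theorem~\ref{Ucompact}, and Anderson's small-energy estimate Lemma~\ref{energybound} (applied at the scale $\tilde g_k$) to extract a subsequential limit connection $\Xi_\infty$. Because the fibers collapse in $\tilde g_k$ and $D_0^o$ is reduced and \'etale over a neighborhood of $w_0$, the Fourier-Mukai picture of Section~2.6 and Proposition~\ref{propFM} identify the limit with a semi-flat object whose curvature pulls back to $\tilde g_k$ with size $O(r_k/\sqrt{t_k})=o(1)$, forcing $|F_{\Xi_\infty}|(p_\infty)=0$ and contradicting the normalization $|F_{\Xi_{t_k}}|_{\tilde g_k}(p_k)=1$. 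The main obstacle is precisely the blow-up analysis in this intermediate regime: both the base and the fiber scales degenerate in $\tilde g_k$, so standard Uhlenbeck and Cheeger-Gromov compactness do not suffice off the shelf, and the gap is bridged by the Poincar\'e inequality of Section~\ref{Poincare} applied transverse to the collapsing direction --- an ingredient absent from Dostoglou-Salamon~\cite{DS} and related higher-genus settings, where the flat moduli space on each fiber surface is smooth.
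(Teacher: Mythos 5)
Your setup is on the right track and matches the paper's strategy up to a point: argue by contradiction, point-pick with Lemma \ref{pointpick}, rescale so that the curvature hypothesis \eqref{hyp:curv} holds automatically at the blow-up scale, and then exploit the fiberwise Poincar\'e inequality through the conditional estimates of Sections \ref{C0boundsection}--\ref{furthreestimates}. (One algebraic slip: the reference connection's curvature measured in $\tilde g_k=r_k^{-2}\omega_{t_k}$ is $O(r_k^2 t_k^{-1/2})$, not $O(r_k/\sqrt{t_k})$; the latter tends to infinity.) The paper rescales by $\delta_k=t_k^{-1/2}|F_{\Xi_{t_k}}|^{-1}(p_k)$ rather than by $|F|^{-1/2}(p_k)$, so that the rescaled data is again a collapsing fibration with parameter $\tilde t_k=t_k\delta_k^{-2}\to 0$ and curvature exactly at the threshold $2\tilde t_k^{-1/2}$; this is a cosmetic difference from your $r_k$, and either normalization puts you in the regime where Propositions \ref{prop2} and \ref{prop2+0} apply.

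The genuine gap is in your closing step. First, there is no $4$-manifold on which your limit $\Xi_\infty$ could live: in $\tilde g_k$ the fibers collapse while the base expands, so the Gromov--Hausdorff limit is $2$-dimensional, and Uhlenbeck compactness (Theorem \ref{Ucompact}) cannot be applied to produce a limiting connection; Lemma \ref{energybound} cannot be invoked either until you already know the local energy is below $\varepsilon$, which is precisely what must be proved. Second, identifying the putative limit ``with a semi-flat object via the Fourier--Mukai picture and Proposition \ref{propFM}'' is circular: those identifications are conclusions of Theorems \ref{thm-main} and \ref{thm-main2}, whose proofs rest on this very proposition. What actually closes the argument in the paper is a quantitative energy computation that your sketch omits: by definition of $R_{t_k}(p_k)$ one has $\E_{t_k}(p_k,R_{t_k}(p_k))=\varepsilon$ with $R_{t_k}(p_k)\le 2|F_{\Xi_{t_k}}|^{-1/2}(p_k)$, while Proposition \ref{prop2+0}, applied in the rescaled coordinates, bounds the $L^2$ energy over the relevant ball by $C(\tilde t_k+\delta_k^2\cdot\mathrm{area})$ because the spectral-cover term $\sum_j\|\partial_{\tilde x_j}A_{0,t_k}\|_w^2$ picks up a factor $\delta_k^2$ under the dilation of the base. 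Dividing by the collapsed volume of the ball then gives $\varepsilon\le C(\tilde t_k^{1/2}+\delta_k^2)\to 0$, the desired contradiction. Without this step -- i.e., without converting the fiberwise Poincar\'e control into the statement that the local energy at the bubble scale is governed by the (now nearly constant) spectral cover -- the contradiction does not materialize, since the conditional $C^0$ bounds of Proposition \ref{prop2} alone only reproduce estimates consistent with $|F|(p_k)=\Lambda_k$.
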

The proof of this proposition can be found in Section 7. This implies the subsequence of connections $\Xi_{t_k}$ satisfies \eqref{hyp:curv}, which is the main assumption of
Proposition \ref{prop2} in Section \ref{C0boundsection}. Thus we can apply Proposition \ref{prop2} to $\Xi_{t_k}$ and achieve uniform $C^0$ control of the curvature, from which we conclude:

\begin{prop}
\label{thm01}
Along the sequence of connection $\Xi_{t_k}$, there exists a constant $C_1>0$ such that $$  \|F_{A_{t_k}}\|_{C^0(M_w)} \leq C_1t_k,  \  \  and \  \   \|F_{\Xi_{t_k}}\|_{C^0(M_K)} \leq C_1, $$ for  any $w\in K$. Consequently, for any $p>2$, by the  weak Uhlenbeck compactness theorem  \cite{U2}  there exists a subsequence (still denoted $t_k$),    a sequence of unitary gauge transformations $u_k\in \mathcal{G}^{2,p}$, and a limiting $L^p_1$ connection $\Xi_\infty$, so that   $$ u_k (\Xi_{t_k} ) \rightarrow \Xi_\infty$$  in $L^p_1(M_K)$. Here all norms are calculated by using a fixed K\"{a}hler metric on $M$.
\end{prop}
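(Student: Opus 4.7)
The plan is to apply the two key a priori estimates that have been set up---Proposition \ref{type3bubbling} and Proposition \ref{prop2}---and then invoke Uhlenbeck's weak compactness theorem to extract a limiting connection. The proof is short because almost all of the analytic work is offloaded to those propositions.

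First, Proposition \ref{estimate 1} already gives $\sup_{M_K} |F_{\Xi_{t_k}}|_{\omega_{t_k}} \leq \epsilon_k t_k^{-1}$ with $\epsilon_k \to 0$, but this bound is too weak with respect to the collapsing metric $\omega_{t_k}$ to be useful. Proposition \ref{type3bubbling}, proved by a blow-up argument in Section 8, improves it to $\sup_{M_K}|F_{\Xi_{t_k}}|_{\omega_{t_k}} \leq C_K t_k^{-1/2}$, which is precisely the hypothesis needed to enter Proposition \ref{prop2} in Section \ref{C0boundsection}.

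Next, I would apply Proposition \ref{prop2} to $\Xi_{t_k}$ to upgrade this decay to the two sharper bounds claimed. The point is that a naive scaling of $|F_{\Xi_{t_k}}|_{\omega_{t_k}} \leq C t_k^{-1/2}$ only gives $\|F_{A_{t_k}}\|_{C^0(M_w)} = O(t_k^{1/2})$ in the fixed fiber metric $\omega^{SF}|_{M_w}$, whereas we need $O(t_k)$. This extra factor of $t_k^{1/2}$ is obtained in Proposition \ref{prop2} by combining the Weitzenb\"ock formula \eqref{wei} with the Poincar\'e-type inequality of Section \ref{Poincare}, applied to $\Xi_{t_k}$ relative to the approximately flat background connection $A_{0,t_k}$ of \eqref{bconnections}. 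The Datar--Jacob gauge-fixing result (Theorem \ref{gaugef}) ensures that, after a Hermitian gauge change as in \eqref{hermitiangauge}, the deformation $s_{t_k}$ can be taken perpendicular to the kernel of $d_{A_{0,t_k}}|_{M_w}$, so that the Poincar\'e inequality actually applies. The base and mixed components of the curvature scale more favorably, so the same output also yields $\|F_{\Xi_{t_k}}\|_{C^0(M_K)} \leq C_1$ in the fixed K\"ahler metric on $M$.

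Once this fixed-metric $C^0$ bound is in hand, we automatically have $\|F_{\Xi_{t_k}}\|_{L^p(M_K)} \leq C_1'$ for any $p>2$. The weak Uhlenbeck compactness theorem (Theorem \ref{Ucompact}(i)) then produces, along a subsequence, unitary gauges $u_k \in \mathcal{G}^{2,p}$ and an $L^p_1$ limiting connection $\Xi_\infty$ with $u_k(\Xi_{t_k}) \to \Xi_\infty$ in $L^p_{1,\mathrm{loc}}$ on $M_K$; a standard patching argument across a finite cover of $M_K$ by Uhlenbeck balls, together with the closedness of the gauge group under $L^p_2$ limits, promotes this to $L^p_1$ convergence on $M_K$.

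The main obstacle---and the reason Proposition \ref{prop2} is the hard step---is that the moduli space $\mathfrak{M}_E(n)$ of flat $SU(n)$-connections on a smooth elliptic curve is everywhere degenerate, so no fiberwise Poincar\'e inequality for arbitrary endomorphisms is available. Theorem \ref{gaugef} is exactly what replaces this missing estimate by restricting attention to a controlled gauge slice, and it is the key ingredient that makes the sharp $O(t_k)$ fiberwise curvature decay possible; everything else in the proof of Proposition \ref{thm01} is then a matter of bookkeeping and Uhlenbeck's theorem.
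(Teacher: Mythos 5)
Your proposal is correct and follows essentially the same route as the paper: Proposition \ref{type3bubbling} supplies hypothesis \eqref{hyp:curv}, the fiberwise smallness of $F_{A_{t_k}}$ activates the Poincar\'e inequality \eqref{hyp:poincare} via Proposition \ref{poincare}, Proposition \ref{prop2} then delivers the $O(t_k)$ fiberwise and uniform $C^0$ curvature bounds, and weak Uhlenbeck compactness yields the $L^p_1$ limit. Your accounting of why the naive scaling only gives $O(t_k^{1/2})$ and why the gauge-fixing of Theorem \ref{gaugef} is the crucial replacement for the missing Poincar\'e inequality on the degenerate moduli space matches the paper's logic exactly.
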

In order to prove Theorem \ref{thm-main}, we also  need a generalization of Theorem 1.1 in \cite{DJ}, which is a direct consequence of Lemma \ref{connectionC0lem}.

\begin{prop}
\label{lemma01}  For any  $w\in K$ and  $0<\alpha <1$, there exists a constant $C_2>0$ so that  $$\|A_{t_k}-A_{0, t_k}\|_{C^{0,\alpha}(M_w)} \leq C_2t_k.$$
\end{prop}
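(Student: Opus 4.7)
The proof plan is to combine the fiberwise restriction of the Hermitian gauge identity from \eqref{hermitiangauge} with the Datar-Jacob gauge fixing theorem and an elliptic bootstrap on the fiber. First, restricting \eqref{hermitiangauge} to a fiber $M_w$ yields $e^{s_{t_k}|_{M_w}}(A_{t_k}) = A_{0,t_k}|_{M_w}$, where $A_{0,t_k}|_{M_w}$ is a smooth flat $SU(n)$-connection by \eqref{bconnections} (since $\theta|_{M_w}=dz$). Proposition \ref{thm01} supplies $\|F_{A_{t_k}}\|_{C^0(M_w)} \leq C_1 t_k$, which for small $t_k$ verifies the hypothesis of Theorem \ref{gaugef}. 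Applying that theorem fiberwise produces a normalized trace-free Hermitian endomorphism $\hat s_{t_k}$ on $M_w$ with $\hat s_{t_k} \perp \ker d_{A_{0,t_k}|_{M_w}}$, $e^{\hat s_{t_k}}(A_{0,t_k}|_{M_w}) = A_{t_k}$, and $\|\hat s_{t_k}\|_{C^0(M_w)} \leq C_0$ uniformly in $w \in K$.

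Next I would set up the elliptic PDE for $\hat s_{t_k}$. Expanding the curvature identity $F_{A_{t_k}} = F_{e^{\hat s_{t_k}}(A_{0,t_k}|_{M_w})}$ and using flatness of the background yields a quasilinear second-order equation whose principal symbol is the Laplace-type operator $\Delta_{A_{0,t_k}|_{M_w}}$ on trace-free Hermitian endomorphisms. On the $L^2$-orthogonal complement of $\ker d_{A_{0,t_k}|_{M_w}}$, a uniform Poincaré inequality (the key analytic point underlying Theorem \ref{gaugef}) gives invertibility of the linearization. To absorb the quadratic nonlinearity I would use a compactness argument: the $C^0$ bound and Schauder yield a uniform $C^{2,\alpha}$ bound on $\hat s_{t_k}$, and any subsequential $C^2$ limit $\hat s_\infty$ satisfies the linearized equation with zero right-hand side (since $\|F_{A_{t_k}}\|_{C^0} \to 0$) and lies perpendicular to the kernel, hence is identically zero. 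This forces $\|\hat s_{t_k}\|_{C^{2,\alpha}(M_w)} \to 0$. Once the nonlinearity is absorbed, the linear Schauder estimate gives $\|\hat s_{t_k}\|_{C^{2,\alpha}(M_w)} \leq C \|F_{A_{t_k}}\|_{C^\alpha(M_w)} \leq C t_k$, where the final inequality follows from Proposition \ref{thm01} together with a Schauder argument for the Bochner-Weitzenböck equation \eqref{wei} on the fiber. Since $A_{t_k} - A_{0,t_k}|_{M_w}$ is expressible in terms of $\hat s_{t_k}$ and its first derivative via the Hermitian gauge formula, this yields $\|A_{t_k} - A_{0,t_k}\|_{C^{1,\alpha}(M_w)} \leq C t_k$, which is stronger than the claimed $C^{0,\alpha}$ estimate.

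The main obstacle I anticipate is the uniformity of constants in $w \in K$ and in $t_k$ — specifically, a uniform Poincaré constant and spectral gap for the family $\{\Delta_{A_{0,t_k}|_{M_w}}\}$. Since the bundles $V_{t_k}|_{M_w}$ are regular semistable by Proposition \ref{estimate 1}, the kernel of $d_{A_{0,t_k}|_{M_w}}$ has constant complex dimension $n$, and the holomorphic data $q_{j,t_k}(w)$ vary smoothly over the compact set $K \subset N^o$ with $q_{j,t_k}(w) \to q_j(w)$; a continuity argument should produce uniform Poincaré and Schauder constants, which is presumably exactly what Lemma \ref{connectionC0lem} encapsulates and why the present proposition is described as its direct consequence.
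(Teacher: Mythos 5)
Your overall skeleton --- restrict the Hermitian gauge relation \eqref{hermitiangauge} to the fiber, normalize via Theorem \ref{gaugef}, and convert the resulting elliptic equation for $\hat s_{t_k}$ together with the curvature bound of Proposition \ref{thm01} into a bound on $A_{t_k}-A_{0,t_k}$ --- is the same as the paper's, which deduces the proposition from Lemma \ref{connectionC0lem} (applied with $\epsilon=C_1t_k$). The gap is in your closing step. You finish with the Schauder estimate $\|\hat s_{t_k}\|_{C^{2,\alpha}}\le C\|F_{A_{t_k}}\|_{C^{0,\alpha}(M_w)}$, which requires a \emph{H\"older} bound of order $t_k$ on the fiberwise curvature. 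Proposition \ref{thm01} only provides $\|F_{A_{t_k}}\|_{C^0(M_w)}\le C_1t_k$, and your proposed source for the H\"older bound --- ``a Schauder argument for the Bochner--Weitzenb\"{o}ck equation \eqref{wei} on the fiber'' --- is not available: \eqref{wei} is an equation on the four-manifold with respect to the collapsing metric $\omega_{t_k}$, and $F_{A_{t_k}}$ does not satisfy an autonomous elliptic equation on $M_w$; the fiber and base directions are coupled, which is exactly why the paper's curvature estimates are run on the product $D_r\times M_{w_0}$ after rescaling the base. Moreover the rate would come out wrong even if a H\"older bound were extracted: the established decay of the fiber derivative of curvature is only $\|\nabla_{A_{0,t}}F_{A_t}\|_{C^0}\le Ct^{1/2}$ (which is why Lemma \ref{prop2+00} reaches only $\|A_t-A_{0,t}\|_{C^{1,\alpha}(M_w)}\le Ct^{1/2}$), so interpolation gives $[F_{A_{t_k}}]_{C^{0,\alpha}}\lesssim t_k^{1-\alpha/2}$ rather than $t_k$; your claimed $C^{1,\alpha}$ conclusion at rate $t_k$ overshoots what the paper proves.

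The repair is the paper's route: work in $L^p$ instead of H\"older spaces. The estimate $\|\hat s_t\|_{L^p_2}\le C(\|\Delta_{A_{0,t}}\hat s_t\|_{L^p}+\|\hat s_t\|_{L^p})$ needs only $\|F_{A_t}\|_{L^p}\le C\|F_{A_t}\|_{C^0}\le Ct_k$; the quadratic terms are absorbed using the $C^0$ bound on $\hat s_t$ from Lemma \ref{C0slem} together with Hamilton's interpolation inequality, and Morrey's embedding for $p$ large yields $\|\nabla_{A_{0,t}}\hat s_t\|_{C^{0,\alpha}}\le Ct_k$, hence the claimed $C^{0,\alpha}$ bound on $A_{t_k}-A_{0,t_k}$ via \eqref{connectionformula}. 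Your compactness argument for the smallness of $\hat s_{t_k}$ is an acceptable substitute for the paper's maximum-principle bound, but it does not rescue the quantitative Schauder step above.
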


Granted these three propositions, we are ready to prove Theorem \ref{thm-main}.

\begin{proof}[Proof of Theorem \ref{thm-main}]
By Proposition \ref{thm01} and the Sobolev embedding theorem,  there exists $u_k\in \mathcal{G}^{1,\alpha}$ and a limiting $C^{0,\alpha}$-connection $\Xi_0$, so that
$$ u_k (\Xi_{t_k} ) \rightarrow \Xi_0$$  in $C^{0,\alpha}(M_K)$.   Thus, for any $w\in K$, the restriction $\Xi_0|_{M_w}$ of $\Xi_0$ is a $C^{0,\alpha}$-connection on $M_w$, and $ u_k (\Xi_{t_k} ) |_{M_w} $ converges to $\Xi_0|_{M_w}$ in the $C^{0,\alpha}$-sense. Proposition \ref{lemma01}, along with the fact that $ A_{0,t} \rightarrow  A_{0}$ in the $C^\infty$-sense, gives
   $$A_{t_k}\rightarrow A_0,$$ on $M_w$ in the $C^{0, \alpha}$-sense, where $A_0$ is given by  (\ref{bconnections2}).

Since  $$du_k= u_k \Xi_{t_k} |_{M_w}-  u_k (\Xi_{t_k} ) |_{M_w} u_k,$$ and the $u_k$ are unitary,  we have a $C^1$-bound for $u_k$, i.e. $\| u_k\|_{C^1(M_w)} \leq C$. As a result, the $C^{0, \alpha}$-convergence of $ u_k (\Xi_{t_k} ) |_{M_w} $ and $ \Xi_{t_k}  |_{M_w} $ imply the $C^{1,\alpha}$-bound of  $u_k$, i.e. $\| u_k\|_{C^{1,\alpha}(M_w)} \leq C'$.  Thus by passing a subsequence, for  $\alpha'<\alpha$ we have  $u_k $ converges to a $C^{1,\alpha'}$-unitary gauge $u_\infty$ in the $C^{1,\alpha'}$-sense, which satisfies that $u_\infty( \Xi_0|_{M_w})=A_0$.  This concludes the theorem.
\end{proof}

\section{A Poincar\'e inequality for $F_{A_t}$}\label{Poincare}
We continue to work under the setup of  Theorem \ref{thm-main}, and choose a sequence of connections $\Xi_{t_k}$. We work on the fiber $M_w$ over a point $w\in N^o$, which is away from the discriminant locus of $f$, the bubbling points, and the ramification points and singularities of the spectral cover. As above we let $A_{t_k}$ denote the restriction of the anti-self-dual connection  $\Xi_{t_k}$ to the smooth fiber $M_w$. The goal of this section is to  derive a Poincar\'e type  inequality for the curvature  $F_{A_{t_k}}$, when $F_{A_{t_k}}$ is sufficiently small in the $C^0$-sense.  The following proposition is the key analytic input to overcome the difficulty of the non-smoothness of the moduli spaces of flat connections on elliptic curves.

For notational simplicity we drop the subscript $k$, and denote our connections by $A_t$. We do this because, aside from being used to define $N^o$, the explicit sequence of times $t_k$ does not have any bearing on the results in this section.
\begin{prop}
\label{poincare} For any compact set $K\subset N^o$, there are constants $\epsilon_K>0$ and $C_K>0$ such that if
\be
\|F_{A_t}\|_{C^0{(M_w, \omega^{SF})}}\leq \epsilon_K\nonumber
\ee
for a certain  $t\in (0,1]$ and $w\in K$,  then
\be
\|F_{A_t}\|_w\leq C_K  \|d_{A_t}^* F_{A_t}\|_{w}.\nonumber
\ee
\end{prop}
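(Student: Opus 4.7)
My approach will be by contradiction, combining the Datar--Jacob gauge-fixing theorem (Theorem \ref{gaugef}) with the Yang--Mills classification on semi-stable bundles over an elliptic curve and R$\mathring{\rm a}$de's convergence of the Yang--Mills flow in real dimension two (already invoked in Proposition \ref{semistablelemma}). Suppose the conclusion fails for some compact $K\subset N^o$. Then one extracts sequences $t_j\in(0,1]$, $w_j\in K$ and fiberwise connections $A_{t_j}=\Xi_{t_j}|_{M_{w_j}}$ satisfying $\|F_{A_{t_j}}\|_{C^0(M_{w_j},\omega^{SF})}\le\epsilon_K$, $\|F_{A_{t_j}}\|_{w_j}=1$ after normalization, and $\|d^*_{A_{t_j}}F_{A_{t_j}}\|_{w_j}\to 0$.

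First I would apply Theorem \ref{gaugef} (taking $\epsilon_K$ small enough that its hypothesis $\|F\|_{C^0}^2\le\epsilon_0$ is verified). This yields trace-free Hermitian endomorphisms $\hat s_j$ perpendicular to $\ker d_{A_{0,t_j}|_{M_{w_j}}}$, with $A_{t_j}=e^{\hat s_j}(A_{0,t_j}|_{M_{w_j}})$ and $\|\hat s_j\|_{C^0}\le C_0$. Passing to subsequences with $w_j\to w_\infty\in K$ and $t_j\to t_\infty\in[0,1]$, and using the explicit formula (\ref{bconnections}) for the background connections together with the fact that $w_\infty\in N^o$ forces the points $q_{i,t_j}(w_j)$ to remain mutually distinct in the limit, the connections $A_{0,t_j}|_{M_{w_j}}$ converge smoothly (under a natural identification of nearby fibers) to a flat connection $A_{0,\infty}$ on $M_{w_\infty}$ of the form (\ref{bconnections2}), so that $A_{0,\infty}$ still corresponds to a regular semi-stable bundle.

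The curvature identity (2.1) expresses $F_{A_{t_j}}$ as a quasilinear second-order elliptic operator applied to $\hat s_j$. From the uniform $C^0$ bounds on both $\hat s_j$ and $F_{A_{t_j}}$, together with the Datar--Jacob Poincar\'e inequality on the subspace perpendicular to $\ker d_{A_{0,t_j}}$, one obtains uniform $L^p_2$ (hence $C^{1,\alpha}$) estimates on $\hat s_j$. A diagonal subsequence satisfies $\hat s_j\to\hat s_\infty$ in $C^{1,\beta}$, giving $A_{t_j}\to A_\infty:=e^{\hat s_\infty}(A_{0,\infty})$ in $C^{0,\beta}$ and $F_{A_{t_j}}\to F_{A_\infty}$ strongly in $L^2$. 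Hence $\|F_{A_\infty}\|_{w_\infty}=1$ and $d^*_{A_\infty}F_{A_\infty}=0$; combined with the Bianchi identity $d_{A_\infty}F_{A_\infty}=0$, the limit $A_\infty$ is a smooth Yang--Mills connection sitting in the complex gauge orbit of the flat representative $A_{0,\infty}$.

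The contradiction comes from the classification of Yang--Mills connections on a regular semi-stable bundle over a smooth elliptic curve. Starting the Yang--Mills flow at $A_\infty$ is stationary since $A_\infty$ is itself a critical point, while R$\mathring{\rm a}$de's theorem forces the flow to converge in $L^2_1$ to a flat connection; the uniqueness of flat representatives inside a complex gauge orbit (Donaldson--Uhlenbeck--Yau / Narasimhan--Seshadri, as in Theorem \ref{DUY}) identifies this limit, up to unitary gauge, with $A_{0,\infty}$. Thus $F_{A_\infty}=0$, contradicting $\|F_{A_\infty}\|_{w_\infty}=1$. The main obstacle I anticipate is the elliptic bootstrap, since the only direct hypothesis is a $C^0$ bound on $F_{A_t}$: to bootstrap I must work in an Uhlenbeck-type gauge on $M_w$ and exploit the Datar--Jacob normalization $\hat s_j\perp\ker d_{A_{0,t_j}}$, precisely where the relevant Laplacian has a uniform spectral gap, in order to convert the $C^0$ bound into higher-order control of $\hat s_j$.
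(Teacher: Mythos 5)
Your proposal has a genuine gap at its core: the contradiction scheme requires you to ``normalize'' so that $\|F_{A_{t_j}}\|_{w_j}=1$, but the curvature is a nonlinear function of the connection and cannot be rescaled. What the failure of the inequality actually gives you is a sequence with $\|F_{A_{t_j}}\|_{w_j}\ge j\,\|d^*_{A_{t_j}}F_{A_{t_j}}\|_{w_j}$, and in the regime relevant to the paper (indeed, by Proposition \ref{estimate 1}, in the only regime that occurs) one has $\|F_{A_{t_j}}\|_{w_j}\to 0$. Your limiting connection $A_\infty$ is then flat, which is perfectly consistent and yields no contradiction; the R\aa de/Yang--Mills-flow endgame only rules out a limit with \emph{nonzero} Yang--Mills energy. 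A Poincar\'e inequality is a statement about a ratio, and compactness alone cannot see the rate at which both numerator and denominator vanish. (A secondary issue: even when $\liminf\|F_{A_{t_j}}\|_{w_j}>0$, your uniform $L^p_2$ bounds on $\hat s_j$ give only weak convergence of $F_{A_{t_j}}$, hence $\|F_{A_\infty}\|\le\liminf\|F_{A_{t_j}}\|$, so the limit could still drop to zero norm without stronger, second-order convergence.)

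The paper closes exactly this gap by linearizing rather than compactifying. Using the Datar--Jacob normalized gauge $\hat s_t\perp\ker d_{A_{0,t}}$ (which you correctly invoke), it first sharpens the a priori bound to $\|\hat s_t\|_{C^0}\le C\epsilon$ and $\|\nabla_{A_{0,t}}\hat s_t\|_{C^{0,\alpha}}\le C\epsilon$, and then writes $\star_w F_{A_t}=-i\Delta_{A_{0,t}}\hat s_t+T_1+T_2$ with $\|T_1+T_2\|_w\le C\epsilon\,\|\Delta_{A_{0,t}}\hat s_t\|_w$. Since $\Delta_{A_{0,t}}\hat s_t$ lies in the orthogonal complement of $\ker\Delta_{A_{0,t}}$, the uniform spectral gap of Lemma \ref{eigen} applies to the component $F^\perp_t$, the component $F^o_t$ in the kernel is an $O(\epsilon)$ fraction of $F^\perp_t$, and the passage from $d^*_{A_{0,t}}$ to $d^*_{A_t}$ costs only $C\epsilon\|F_{A_t}\|_w$, which is absorbed. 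This direct argument is quantitative in $\epsilon$ and survives the limit $\|F_{A_t}\|_w\to 0$, which is precisely where your contradiction argument breaks down.
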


  We begin  by recalling part of our setup, as described in  Theorem \ref{thm-main}. Fix  an open subset  $U\subset N^o$ biholomorphic to a disk in $\mathbb{C}$, satisfying  $f^{-1}(U)\cong (U\times \mathbb{C})/{\rm Span}_{\mathbb{Z}}\{1, \tau\}$, where $\tau$ is a holomorphic function on $U$.  Fix  trivializations $P|_{M_U}\cong  M_U \times SU(n)$ and $\mathcal{V}|_{M_U}\cong  M_U \times \mathbb{C}^n$.
  In Section \ref{mainproof} we define the  connections   $A_{0,t}={\rm diag}\{\alpha_{t,1}, \cdots, \alpha_{t,n}\}$ and  $A_{0}={\rm diag}\{\alpha_{0,1}, \cdots, \alpha_{0,n}\}$  associated to the spectral covers, where
   $$ \alpha_{t,j}=  \pi {\rm Im} (\tau)^{-1} (q_{j,t}(w)\bar{\theta}-\bar{q}_{j,t}(w)\theta), \  \  \  \alpha_{0,j}=  \pi {\rm Im} (\tau)^{-1} (q_{j}(w)\bar{\theta}-\bar{q}_{j}(w)\theta), $$ and $\theta|_{M_w}=dz$.
   Here all points vary holomorphically in the base, and satisfy $$\sum\limits_{j=1}^n  q_{j,t}(w)\equiv 0,\qquad  \sum\limits_{j=1}^n  q_{j}(w)\equiv 0.$$  We also have that $q_{j,t}$ converges to $q_{j}$  as $t\rightarrow 0$ as holomorphic functions. Furthermore, for any $w\in U$,  $$q_{i,t}(w)\neq q_{j,t}(w) {\rm mod}(\mathbb{Z}+ \tau(w) \mathbb{Z}), \  \  \ q_{i}(w)\neq q_{j}(w){\rm mod}(\mathbb{Z}+ \tau(w) \mathbb{Z})$$ if $i\neq j$. The connections $d_{A_{0,t}}$ and $d_{A_{0}}$ act on $\eta\in C^\infty(M_w, \mathfrak{sl}(n, \mathbb{C}))$ via
  $$d_{A_{0,t}} \eta=d\eta+[A_{0,t}, \eta],  \  \ \ d_{A_{0}} \eta=d\eta+[A_{0}, \eta]. $$
 Note that if $d_{A_{0,t}} \eta=0$, then $d\eta_{jj}=0$ and $d\eta_{ij}+(\alpha_{t,i}-\alpha_{t,j})\eta_{ij}=0$, which implies that $\eta_{ij}=0$ for $i\neq j$, and $\eta_{jj}$ are constants.  Therefore $\ker d_{A_{0,t}}=\{{\rm diag} \{\eta_1, \cdots, \eta_n\} \in \mathfrak{sl}(n, \mathbb{C})\}  $, and the same argument gives also $\ker d_{A_{0}}=\{{\rm diag} \{\eta_1, \cdots, \eta_n\} \in \mathfrak{sl}(n, \mathbb{C})\}  $.

  Since $A_{0,t}$ is  flat   ($F_{A_{0,t}}=d_{A_{0,t}}^2=0$), we have a de Rham complex $$  C^\infty(M_w, \mathfrak{sl}(n, \mathbb{C})) \stackrel{d_{A_{0,t}}}\longrightarrow C^\infty(T^* M_w \otimes \mathfrak{sl}(n, \mathbb{C})) \stackrel{d_{A_{0,t}}}\longrightarrow C^\infty(\wedge^2 T^* M_w\otimes \mathfrak{sl}(n, \mathbb{C})).  $$
  Furthermore, there is a well behaved  Hodge theory (cf.  \cite{AtB}). If $ \star_w$ denotes the Hodge star operator with respect to the flat metric $\omega^F_w:=\omega^{SF}|_{M_w}$, then $d^*_{A_{0,t}}=-\star_w d_{A_{0,t}} \star_w$
   is the adjoint of $d_{A_{0,t}}$, and  $d_{A_{0,t}}^*d_{A_{0,t}}+d_{A_{0,t}}d_{A_{0,t}}^*$ is the Hodge Laplacian.  If  we denote $\mathcal{H}_{A_{0,t}}^q(M_w,  \mathfrak{sl}(n, \mathbb{C}))$ the space of $\mathfrak{sl}(n, \mathbb{C})$ valued  harmonic $q$-forms, the Hodge theory asserts an   orthogonal decomposition $$C^\infty(\wedge^q T^* M_w \otimes \mathfrak{sl}(n, \mathbb{C})) \cong  \mathcal{H}_{A_{0,t}}^q(M_w,  \mathfrak{sl}(n, \mathbb{C})) \oplus {\rm Im} d_{A_{0,t}} \oplus  {\rm Im} d_{A_{0,t}}^*,$$ for $q=0,1,2$.

 If we replace $\mathfrak{sl}(n, \mathbb{C})$ by the subalgebra $\mathfrak{su}(n)$, then we have the  subcomplex $(C^\infty(\wedge^q T^* M_w \otimes \mathfrak{su}(n)),  d_{A_{0,t}})$,  the harmonic space of $\mathfrak{su}(n)$ valued $q$-forms $\mathcal{H}_{A_{0,t}}^q(M_w,  \mathfrak{su}(n))$, and the respective Hodge decomposition. Note that we have the connection $A_t \in C^\infty(T^* M_w\otimes \mathfrak{su}(n))$ and the curvature $F_{A_t} \in C^\infty(\wedge^2 T^* M_w\otimes \mathfrak{su}(n))$.  The virtual dimension of the  moduli space $ \mathfrak{M}_{M_w}(n)$ of flat $SU(n)$-connections on $M_w$ is zero due to the Euler number of the complex $(C^\infty(\wedge^q T^* M_w \otimes \mathfrak{su}(n)),  d_{A_{0,t}})$ vanishing, and thus the whole $ \mathfrak{M}_{M_w}(n)$ is regarded as degenerated, which causes many difficulties in the global analysis. However,
  the flat connection $A_{0,t}$ belongs to the regular part  of  $ \mathfrak{M}_{M_w}(n)$, and  $\mathcal{H}_{A_{0,t}}^1(M_w,  \mathfrak{su}(n))$ is the tangent space at $A_{0,t}$.  The infinitesimal deformation space under the  action of the unitary gauge group is ${\rm Im} d_{A_{0,t}} \cap C^\infty(T^* M_w\otimes \mathfrak{su}(n))$, and by using the decomposition $\mathfrak{sl}(n, \mathbb{C})=\mathfrak{su}(n) \oplus i \mathfrak{su}(n)$,  the space ${\rm Im} d_{A_{0,t}}^* \cap C^\infty(T^* M_w\otimes \mathfrak{su}(n))$ is identified with the infinitesimal deformation space induced  by Hermitian  gauges. The readers are referred to \cite{Nis11} for details of the above discussion.

We denote by   $\Delta_{A_{0,t}}=-d_{A_{0,t}}^*d_{A_{0,t}}$  the Laplacian operator acting on $C^\infty(M_w, \mathfrak{sl}(n, \mathbb{C})) $, and have $\ker \Delta_{A_{0,t}}=\ker d_{A_{0,t}}$, ${\rm Im} \Delta_{A_{0,t}}={\rm Im} d_{A_{0,t}}^*,$ and $\ker \Delta_{A_{0,t}}\bot {\rm Im} d_{A_{0,t}}^*$ by the Hodge decomposition.   We need a uniform  estimate for the lower bounds of the first eigenvalue of $\Delta_{A_{0,t}}$.

\begin{lem}
\label{eigen}  For any $w\in U$ and $t\in (0,1]$, if $\lambda_{w,t}$ is
the first eigenvalue of $-\Delta_{A_{0,t}}$ on the fiber $M_w$, then there is a constant $C_1>0$ independent of $t$ and $w$  such that $$\lambda_{w,t}\geq C_1.$$
\end{lem}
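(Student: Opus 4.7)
The plan is to diagonalize the eigenvalue problem via the root-space decomposition $\mathfrak{sl}(n,\mathbb{C}) = \mathfrak{d} \oplus \bigoplus_{i\neq j} \mathbb{C} E_{ij}$, where $\mathfrak{d}$ denotes the traceless diagonal subalgebra. Because $A_{0,t}$ is diagonal, the formulas $(d_{A_{0,t}}\eta)_{jj} = d\eta_{jj}$ and $(d_{A_{0,t}}\eta)_{ij} = d\eta_{ij} + (\alpha_{t,i} - \alpha_{t,j})\eta_{ij}$ show that $-\Delta_{A_{0,t}} = d_{A_{0,t}}^* d_{A_{0,t}}$ preserves the splitting. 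On the diagonal summand the operator acts as the scalar Laplacian of $(M_w, \omega^F_w)$, whose first nonzero eigenvalue depends continuously on the modulus $\tau(w)$ and hence is uniformly positive over any compact $K \subset U$.

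On each root space $\mathbb{C}E_{ij}$ the operator is the Laplacian twisted by the $U(1)$-connection $\alpha_{t,i} - \alpha_{t,j}$, which by Lemma \ref{background connection} is the Chern connection of the flat degree-zero line bundle $\mathcal{O}_{M_w}(q_{i,t}(w) - q_{j,t}(w))$. A direct Fourier expansion on the flat torus $M_w = \mathbb{C}/\Lambda_w$ shows that its first eigenvalue equals a uniformly bounded positive constant times
\begin{equation*}
d_{\omega^F_w}\bigl(q_{i,t}(w) - q_{j,t}(w),\, \Lambda_w\bigr)^2,
\end{equation*}
so the lemma reduces to a uniform positive lower bound for this flat distance as $i\neq j$, $w$ ranges over compact $K \subset U$, and $t$ ranges over the relevant (small) values.

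I would establish this bound by a compactness-and-contradiction argument tailored to the fixed sequence $t_k \to 0$ of Section 5. Suppose the infimum vanished along sequences $t_k \to 0$ and $w_k \in K$; passing to a subsequence with $w_k \to w_\infty \in K$, the $C^\infty$-convergence $q_{j,t_k} \to q_j$ on a neighborhood of $w_\infty$ would force $q_i(w_\infty) = q_j(w_\infty)$ in $M_{w_\infty}$. But this contradicts the very choice $N^o = N_1 \setminus (f(D_0 - D_0^o) \cup f(S_{D_0^o}))$ together with the reducedness hypothesis on $D_0^o$, which together guarantee that $w_\infty$ has $n$ pairwise distinct preimages in $D_0^o$. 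The main obstacle is precisely this uniform non-degeneracy of the sheets of $D_t$ in the limit: without a spectral gap, the singular nature of $\mathfrak{M}_E(n)$ (emphasized in Section 2.2) would block every abstract moduli-theoretic argument. What saves the day is that the algebro-geometric reducedness of $D_0^o$ together with the exclusion of the branch and singular loci in the definition of $N^o$ pin down the limit holonomies to be mutually distinct in a manner stable under perturbation, converting a potentially subtle analytic question into a clean distance-to-lattice estimate.
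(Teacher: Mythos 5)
Your proof is correct, but it takes a genuinely different route from the paper's. The paper argues softly: assuming $\lambda_{w_k,t_k}\to 0$, it takes normalized eigenfunctions $\psi_k$, uses smooth convergence of $\tau(w_k)$, $\omega^F_{w_k}$ and $A_{0,t_k}$ together with elliptic estimates to extract a smooth limit $\psi_\infty$ with $\Delta_{A_{0,t_0}}\psi_\infty=0$ and $\|\psi_\infty\|=1$, and then derives a contradiction from the fact that $\ker\Delta_{A_{0,t}}$ is the \emph{same} space (constant traceless diagonals) for every $t$, so orthogonality to the kernel passes to the limit. You instead block-diagonalize $-\Delta_{A_{0,t}}$ over the root-space decomposition of $\mathfrak{sl}(n,\mathbb{C})$, identify the diagonal block with the scalar Laplacian and each off-diagonal block with a Laplacian twisted by the flat line bundle $\mathcal{O}_{M_w}(q_{i,t}-q_{j,t})$, and compute the gap explicitly as (up to uniform constants) $\min\bigl\{\lambda_1(M_w),\ \min_{i\neq j} d_{\omega^F_w}(q_{i,t}-q_{j,t},\Lambda_w)^2\bigr\}$, reducing the lemma to a distance-to-lattice bound that you then get from the same compactness-plus-reducedness input. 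Both proofs ultimately hinge on the identical geometric fact — the limit connection $A_0$ is regular because $D_0^o$ is reduced and $N^o$ excludes the branch locus, so the $q_j(w)$ stay pairwise distinct mod $\Lambda_w$ — but your version is quantitative: it exhibits exactly how the gap degenerates as two sheets of the spectral cover collide, which the paper's qualitative argument does not, at the cost of the (routine) Fourier computation on the flat torus and of tracking the uniform comparability of the dual metric over compact subsets. One small point of care: the lemma is stated for all $t\in(0,1]$, so your contradiction argument should allow the bad sequence to accumulate at any $t_0\in[0,1]$, not only at $0$; for $t_0>0$ the distinctness of the $q_{j,t_0}(w)$ assumed in the setup disposes of that case immediately, so this is cosmetic (and the paper's own proof handles it the same way).
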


\begin{proof} If the above bound does not hold, there are sequences $w_k$ and $t_k$ such that  $t_k \rightarrow t_0$ in $[0,1]$,  $w_k \rightarrow w_0$ in $U$, and $$\lambda_{w_k,t_k}\rightarrow 0$$ when $k\rightarrow\infty$.   Let $\psi_k \in C^\infty (M_{w_k}, \mathfrak{sl}(n, \mathbb{C}))$ be a normalized  eigenvector of $\Delta_{A_{0,t_k}}$, i.e. $\Delta_{A_{0,t_k}} \psi_k =-\lambda_{w_k,t_k} \psi_k $ and $\|\psi_k\|_{w_k}=1$.

We regard $M_w$ as the 2-torus $T^2$ equipped with the complex structure $I_w$, and the K\"{a}hler metric $\omega^{F}_{w}$ as a metric on $T^2$ with respect to $I_w$. Since $\tau(w_k)\rightarrow \tau(w_0)$, we have that $I_{w_k}\rightarrow I_{w_0}$ and $\omega^{F}_{{w_k}} \rightarrow \omega^{F}_{{w_0}}$ in the $C^\infty$-sense. Note that $A_{0,t_k} \rightarrow A_{0,t_0}$ in the $C^\infty$-sense, and if $t_0=0$, then $A_{0,t_0}=A_{0}$. Standard elliptic estimates show that $\| \psi_k\|_{C^{\ell}}\leq C_{\ell}$ for  constants  $C_{\ell}>0$ independent of $k$, where the $C^{\ell}$-norms are calculated by using any fixed metric on $T^2$.  By passing to a subsequence, we have that $ \psi_k \rightarrow  \psi_\infty$ smoothly on $T^2$,  $\|\psi_\infty\|_{w_0}=1$,   and $\Delta_{A_{0,t_0}}\psi_\infty=0$. Thus $\psi_\infty \in \ker \Delta_{A_{0,t_0}}$ and can be represented as ${\rm diag} \{\eta_1, \cdots, \eta_n\} \in \mathfrak{sl}(n, \mathbb{C})$.

Since $\psi_k \bot \ker \Delta_{A_{0,t_k}}$, for any $\psi \in \ker \Delta_{A_{0,t_0}}= \ker \Delta_{A_{0,t_k}}$ we have $$0=\langle \psi_k, \psi\rangle_{w_k}\rightarrow \langle \psi_\infty, \psi\rangle_{w_0}.$$  So $\langle \psi_\infty, \psi\rangle_{w_0}=0$ yet $\|\psi_\infty\|_{w_0}=1$. This is a contradiction, and we obtain the conclusion.
\end{proof}

Again restricting our attention to a single fiber $M_w$ for $w\in U$, we can compute the norm of the fiberwise component of the curvature $F_{A_t}$ with respect to the semi-flat metric
\be
\|F_{A_t}\|^2_{C^0(M_w,\omega_t^{SF})}=\frac{1}{t^2}\|F_{A_t}\|^2_{C^0(M_w,\omega^{SF})}.\nonumber
\ee  Because the error terms relating $\omega_t$ and $\omega_t^{SF}$ decay fast enough (see Theorem \ref{ttm-decay}), we have
\be
\|F_{A_t}\|^2_{C^0(M_w,\omega^{SF})}\leq Ct^2\|F_{A_t}\|^2_{C^0(M_w,\omega_t)}\leq Ct^2\|F_{\Xi_t}\|^2_{C^0(M_w,\omega_t)}.\nonumber
\ee
We assume  that there is a constant  $0< \epsilon \ll 1$, which is  determined later, such that   for a certain  $t$ small enough it holds \be
 \label{smallcurve}
 \|F_{A_t}\|_{C^0(M_w, \omega^{SF})}\leq \epsilon,
 \ee
 for $w\in U$.
By  Proposition \ref{estimate 1},  there is a sequence   $t_k \rightarrow 0$  such that $$\|F_{A_{t_k}}\|^2_{C^0(M_w,\omega^{SF})}\leq Ct_k^2\|F_{\Xi_{t_k}}\|^2_{C^0(M_w,\omega_{t_k})}\leq \epsilon_k \rightarrow 0. $$
Here we used that $U$ is away from the bubbling set.  Therefore, for any fixed  $\epsilon >0$, if we take $t$ to be some time $t_k \ll 1$ such that $\epsilon_k <\epsilon$, then (\ref{smallcurve}) holds.

Recall by \eqref{hermitiangauge} that there exists a Hermitian gauge transformation $e^{-s_t}$ so that  $e^{-s_t}(A_t)=A_{0,t}$.  Although given above, we include the definition of this action here to emphasize that we are working exclusively on a fiber:
\be
 \label{complexaction}
e^{-s_t}(A_{t})= A_{t}+e^{-s_t}\bar\partial_{ A_{t}} e^{ s_t}+\left( e^{- s_t}\bar\partial_{\ A_{t}} e^{s_t}\right)^*.
\ee
Given inequality \eqref{smallcurve}, the assumptions of Theorem 6.1 from \cite{DJ} are satisfied,  which yields a new sequence of Hermitian gauge transformations $e^{\hat s_t}$ which are perpendicular to the kernel of $d_{A_{0,t}}$,  bounded in $C^0$, and define the same connection $e^{-\hat s_t}_*A_t=A_{0,t}$.

For the remainder of this section we work on the fiber $M_w$, and so we may drop  it from adorning norms when it is clear from context. Similarly, all norms in this section are computed with respect to $H$ and $\omega^{F}_w$.

\begin{lem}
\label{C0slem}
Given \eqref{smallcurve}, for every $w\in U$ the Hermitian endomorphism $\hat s_t$ satisfies
\be
\label{C0s}
\|\hat s_t\|_{C^0{(M_w, \omega^{SF})}}\leq C_2\epsilon
\ee
for a uniform constant $C_2$.
\end{lem}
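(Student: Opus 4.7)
The plan is to convert the identity $A_t = e^{\hat s_t}(A_{0,t})$ into a quasilinear elliptic PDE for $\hat s_t$ on the fiber $M_w$, and then extract the refined $C^0$ bound from the Poincar\'e inequality of Lemma~\ref{eigen} together with elliptic regularity. First I would use the Hermitian gauge transformation formula from Section~2.1 and $F_{A_{0,t}}=0$ to expand
\[
F_{A_t} \;=\; 2\partial_{A_{0,t}}\bar\partial_{A_{0,t}}\hat s_t \;+\; \mathcal{M}(\hat s_t, d_{A_{0,t}}\hat s_t),
\]
where $\mathcal{M}$ is a smooth nonlinearity satisfying the pointwise bound $|\mathcal{M}|\leq C(|\hat s_t|^2 + |\hat s_t|\,|d_{A_{0,t}}\hat s_t|)$; note that $F_{A_t}^{(0,2)}=F_{A_t}^{(2,0)}=0$ because $A_t$ is complex gauge equivalent to $A_{0,t}$, so $F_{A_t}$ is purely of type $(1,1)$. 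Contracting with $\omega^F_w$ and applying the K\"ahler identities on the Riemann surface $M_w$ then produces the equivalent scalar equation
\[
-\Delta_{A_{0,t}}\hat s_t \;=\; c\,\Lambda F_{A_t} + \widetilde{\mathcal{M}}(\hat s_t, d_{A_{0,t}}\hat s_t),
\]
with $c\neq 0$ independent of $t$ and $w$, and with $|\widetilde{\mathcal{M}}|$ obeying the same quadratic bound.

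Next I would test this PDE against $\hat s_t$ in $L^2(M_w,\omega^F_w)$ and integrate by parts. Since $\hat s_t$ is perpendicular to $\ker d_{A_{0,t}}$, Lemma~\ref{eigen} supplies a Poincar\'e inequality $\|\hat s_t\|_w \leq C_1^{-1/2}\|d_{A_{0,t}}\hat s_t\|_w$ with constant uniform in $t\in(0,1]$ and $w\in K$. The hypothesis $\|F_{A_t}\|_{C^0(M_w,\omega^{SF})}\leq\epsilon$ yields $\|\Lambda F_{A_t}\|_w\leq C\epsilon$, while the a priori bound $\|\hat s_t\|_{C^0}\leq C_0$ from Theorem~\ref{gaugef} controls $\widetilde{\mathcal{M}}$ pointwise. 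Applying Cauchy--Schwarz and trading $\|\hat s_t\|_w$ for $\|d_{A_{0,t}}\hat s_t\|_w$ via Poincar\'e, one is led to an inequality of the schematic form
\[
\|d_{A_{0,t}}\hat s_t\|_w^2 \;\leq\; C\epsilon\,\|d_{A_{0,t}}\hat s_t\|_w + C'\|\hat s_t\|_{C^0}\,\|d_{A_{0,t}}\hat s_t\|_w^2.
\]
Taking $\epsilon_K$ small enough that the quadratic term can be absorbed into the left-hand side, via a continuity/contraction-mapping argument parallel to that of Theorem~\ref{gaugef} (which forces $\hat s_t$ itself to lie in a small neighborhood of $0$), one concludes $\|d_{A_{0,t}}\hat s_t\|_w + \|\hat s_t\|_w \leq C_K'\epsilon$.

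Finally, I would bootstrap to $C^0$. The coefficients of $-\Delta_{A_{0,t}}$ are bounded uniformly in $t$ and $w\in K$ by the smooth convergence $A_{0,t}\to A_0$, and $\widetilde{\mathcal{M}}$ is controlled via $\|\hat s_t\|_{C^0}\leq C_0$. Standard $L^p$-elliptic regularity on the two-real-dimensional torus $M_w$, combined with the $L^2$ bound just obtained and $\|F_{A_t}\|_{C^0}\leq\epsilon$, yields $\|\hat s_t\|_{W^{2,p}(M_w)}\leq C_p\epsilon$ for any $p>1$; Sobolev embedding $W^{2,p}(M_w)\hookrightarrow C^0(M_w)$ then produces the desired bound $\|\hat s_t\|_{C^0(M_w,\omega^{SF})}\leq C_2\epsilon$. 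The principal obstacle throughout is the nonlinear term $\widetilde{\mathcal{M}}$: Theorem~\ref{gaugef} guarantees only a fixed a priori $C^0$-bound $C_0$ on $\hat s_t$, not smallness, so the cubic contribution in the energy estimate becomes absorbable only after shrinking $\epsilon_K$ enough to drive $\|\hat s_t\|_{C^0}$ itself below a threshold determined by the Poincar\'e constant $C_1$ of Lemma~\ref{eigen}. Once this is done, the remaining estimates are standard and the final constant $C_2$ is tied directly to $C_1$.
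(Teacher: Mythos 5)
Your proposal contains a genuine gap, and it is located exactly where you flag the ``principal obstacle.'' First, the pointwise bound you assert for the nonlinearity, $|\mathcal{M}|\leq C(|\hat s_t|^2+|\hat s_t|\,|d_{A_{0,t}}\hat s_t|)$, is wrong: the exact expansion of $F_{A_t}$ under the Hermitian gauge action (see \eqref{bigcurveexpression} and \eqref{Tcontrol} in the paper) contains terms of the form $\ti\Upsilon(\pm\hat s_t)\,\nabla^2_{A_{0,t}}\hat s_t$, i.e.\ contributions bounded only by $|\hat s_t|\,|\nabla^2_{A_{0,t}}\hat s_t|$. These second-order terms cannot be dropped, and once they are present both your energy estimate and your $L^p$ bootstrap require absorbing a term of size $C\,\|\hat s_t\|_{C^0}\,\|\nabla^2_{A_{0,t}}\hat s_t\|$ (respectively $C\,\|\hat s_t\|_{C^0}\,\|d_{A_{0,t}}\hat s_t\|_w^2$ after integration by parts) into the left-hand side. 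That absorption needs $\|\hat s_t\|_{C^0}$ to be \emph{small}, but Theorem~\ref{gaugef} only supplies a fixed a priori constant $C_0$ that does not shrink as $\epsilon\to 0$. Your closing appeal to a ``continuity/contraction argument'' to drive $\|\hat s_t\|_{C^0}$ below the needed threshold is therefore circular: smallness of $\|\hat s_t\|_{C^0}$ is precisely the conclusion of the lemma.

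The paper's proof avoids both problems by never expanding the curvature at all. It uses the exact Chern--Weil-type differential inequality
\[
-\Delta_w|\hat s_t|^2\;\leq\;-|\partial_{A_{0,t}}\hat s_t|^2+{\rm Tr}\bigl(e^{\hat s_t}\star_w F_{A_t}\,e^{-\hat s_t}\hat s_t\bigr),
\]
whose right-hand side involves no derivatives of $\hat s_t$; the conjugation factors $e^{\pm\hat s_t}$ are controlled by the a priori (non-small) bound $C_0$, and $\star_w F_{A_t}$ is small by hypothesis. Integrating and using Lemma~\ref{eigen} together with $\hat s_t\perp\ker d_{A_{0,t}}$ gives $\|\hat s_t\|_w^2\leq C\|d_{A_{0,t}}\hat s_t\|_w^2\leq C\epsilon\|\hat s_t\|_w$ with no absorption of nonlinear terms, hence $\|\hat s_t\|_w\leq C\epsilon$. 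The upgrade from $L^2$ to $C^0$ is then done not by $W^{2,p}$ elliptic regularity (which would reintroduce the circularity) but by the mean-value inequality for the subsolution $|\hat s_t|^2$ of $-\Delta_w\phi\leq 1$ after normalization, comparing against a local barrier $u_t$. If you want to repair your argument, replace your expansion of $F_{A_t}$ by this exact inequality and your bootstrap by the local sub-mean-value argument; the expansion \eqref{bigcurveexpression} is only needed later (Lemma~\ref{connectionC0lem}), at which point the smallness $\|\hat s_t\|_{C^0}\leq C\epsilon$ is already available to absorb the second-order terms.
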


\begin{proof}
To begin, we use that  $\hat s_t$ is uniformly bounded in $C^0$. Following Appendix A of   \cite{JW}, the fact that $A_{0,t}$ is flat, along with a standard formula for curvatures related by a complex gauge transformation, yields
\be
\label{thing4}
-\Delta_{w} |\hat s_t|^2\leq-|\partial_{A_{0,t}} \hat s_t|^2+{\rm Tr}\left( e^{  {\hat s_t}} \star_w F_{A_t}e^{-  {\hat s_t}}\hat s_t\right),
\ee where $\Delta_{w}$ is the Laplacian with respect to the flat K\"{a}hler  metric $\omega^{F}_{w}$.
Integrating the above equality over $M_w$, and using Lemma \ref{eigen} along with the fact that $\hat s_t$ is perpendicular to the kernel of $d_{A_{0,t}}$, gives
\be
\|\hat s_t\|_w^2\leq C\|d_{A_{0,t}} \hat s_t\|_w^2\leq C\epsilon \|\hat s_t\|_w.\nonumber
\ee
Therefore $\|\hat s_t\|_w\leq C\epsilon$.
 Now we argue $\|\hat s_t\|_{C^0({M_w})}$ is also bounded by $C\epsilon$.

Note that \eqref{thing4} implies
 \be
-\Delta_w |\hat s_t|^2\leq C\epsilon |\hat s_t|.\nonumber
 \ee
 Now, suppose the desired bound does not hold, so we can find a sequence of constants $C_t\rightarrow \infty$ so $\|\hat s_t\|_{C^0{}}\geq C_t \epsilon$. Set $\phi_t=|\hat s_t|^2/\|\hat s_t\|^2_{C^0{}}$. For $t$ small enough it holds
 \be
 -\Delta_w \phi_t\leq \frac {C \epsilon |\hat s_t|}{\|\hat s_t\|^2_{C^0{}}}\leq\frac{C}{C_t}\leq 1.\nonumber
 \ee
  If $y_t$ denotes the point in $M_w$ realizing $\sup|\hat s_t|^2$, in a fixed neighborhood of radius $a$ of $y_t$ we see $\phi_t$ is a $C^2$ function satisfying $-\Delta_w \phi_t\leq 1$,  $0\leq \phi\leq 1,$ and $\phi_t(y_t)=1.$    Let $u_t$ be a $C^2$ function satisfying both $\Delta_w u_t=-1$ and $u_t(y_t)=1$. By making $a$ smaller if necessary we can guarantee that $u_t$ is strictly positive on $B_a(y_t)$, and this choice will only depend on $\omega^{F}_w$. Thus we have  $-\Delta_w(\phi_t-u_t)\leq   0$ and $\phi_t(y_t)-u_t(y_t)=0$. Applying  the    mean value inequality to $\phi_t-u_t$ gives \be 0\leq  \int_{B_a(y_t)}(\phi_t-u_t).\nonumber \ee By the positivity of $u_t$,  there exists a constant $\delta>0$ independent of $t$  so that \be \delta\leq \int_{B_a(y_t)}u_t\leq \int_{B_a(y_t)}\phi_t.\nonumber \ee  Rearranging terms gives \be \|\hat s_t\|^2_{C^0{}}\leq \frac1\delta \int_{B_a(y_t)}|\hat s_t|^2\leq \frac1\delta\|\hat s_t\|^2_w\leq {C\epsilon^2} ,\nonumber \ee which is our desired bound.
 \end{proof}

 The above lemma has some  strong consequences, which we now detail. First we need a few key formulas on $M_w$. The complex gauge action by a Hermitian endomorphism \eqref{complexaction} gives
 \be
A_t=e^{\hat s_t}_*A_{0,t}= A_{0,t}+e^{\hat s_t}\bar\partial_{ A_{0,t}} e^{-\hat s_t}+\left( e^{\hat s_t}\bar\partial_{\ A_{0,t}} e^{-\hat s_t}\right)^*.\nonumber
\ee
For a given $s$ define ${\rm ad}_s:=[s,\cdot]$, and let $\Upsilon(s)\in{\rm End}({\rm End}(V_t))$ denote the power series
\be
\Upsilon(s)=\frac{e^{{\rm ad}_s}-1}{{\rm ad}_s}=\sum_{m=0}^{\infty}\frac{(-1)^m}{(m+1)!}({\rm ad}_s)^m.\nonumber
\ee
Note that the first term from the power series $\Upsilon(\hat s_t)$ is the identity, allowing us to write $\Upsilon(\hat s_t)=Id+\ti\Upsilon(\hat s_t)$. Now, recall the standard formula for the derivative of the exponential map
\be
e^{\hat s_t}\bar\partial_{ A_{0,t}} e^{-\hat s_t}=-\Upsilon(\hat s_t)\bar\partial_{A_{0,t}}\hat s_t.\nonumber
\ee
Following Appendix A in \cite{JW} we see
\bea
\label{connectionformula}
A_t&=&A_{0,t}-\bar\partial_{A_{0,t}}\hat s_t+\partial_{ A_{0,t}}\hat s_t-\ti\Upsilon(\hat s_t)\bar\partial_{A_{0,t}}\hat s_t+\ti\Upsilon(-\hat s_t)\partial_{ A_{0,t}}\hat s_t\nonumber\\
&=&A_{0,t}-i\star_w  d_{A_{0,t}} \hat s_t+o(\hat s_t,\nabla_{A_{0,t}}\hat s_t),
\eea and \bea
\label{bigcurveexpression}
F_{ A_t} &=&F_{ A_{0,t}}+ \Upsilon(-\hat s_t ) \bar\partial_{A_{0,t}}\partial_{A_{0,t}} \hat s_t- \Upsilon(\hat s_t)\partial_{A_{0,t}}\bar\partial_{A_{0,t}}\hat s_t \nonumber\\
&&      + \bar\partial_{A_{0,t}} \Upsilon(-\hat s_t) \wedge \partial_{A_{0,t}}\hat s_t -  \partial_{A_{0,t}}\Upsilon(\hat s_t ) \wedge \bar\partial_{A_{0,t}}\hat s_t \\
&&  -   \Upsilon(-\hat s_t )\partial_{A_{0,t}}\hat s_t \wedge \Upsilon(\hat s_t )\bar\partial_{A_{0,t}}\hat s_t+  \Upsilon(\hat s_t)\bar\partial_{A_{0,t}}\hat s_t\wedge \Upsilon(-\hat s_t)\partial_{A_{0,t}}\hat s_t  .\nonumber
\eea
This formula, along with the fact that $A_{0,t}$ is flat, leads to the following characterization of the curvature $F_{A_t}$
\be
\label{curvatureformula}
F_{A_t}=-i\, d_{A_{0,t}}\star_w d_{A_{0,t}} \hat s_t+T_1(\hat s_t,\nabla_{A_{0,t}}^2\hat s_t)+T_2(\partial_{A_{0,t}}\hat s_t,\bar\partial_{A_{0,t}}\hat s_t).
\ee
Thus we conclude
\be
\star_w F_{A_t}=-i\Delta_{A_{0,t}}\hat s_t+T_1+T_2\nonumber
\ee
where the tensors $T_1$ and $T_2$ satisfy
\be
\label{Tcontrol}
|T_1|\leq C\epsilon |\nabla^2_{A_{0,t}}\hat s_t|\qquad{\rm and}\qquad |T_2|\leq |\nabla_{A_{0,t}}\hat s_t|^2.
\ee
\begin{lem}
\label{connectionC0lem}
Given   \eqref{smallcurve} and \eqref{C0s}, the following bound holds
\be
\label{connectionC0}
\|A_t-A_{0,t}\|_{C^{0,\alpha}{(M_w, \omega^{SF})}}\leq C_3\epsilon, \  \  \  \|\nabla_{A_{0,t}}\hat s_t\|_{C^{0,\alpha}{(M_w, \omega^{SF})}}\leq C_3\epsilon
\ee for any  $0<\alpha <1$, by choosing $\epsilon$ small enough.  Here the constant $C_3$ depends on $U\subset N^o$.
\end{lem}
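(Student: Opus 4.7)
The plan is to exploit the nonlinear elliptic equation satisfied by $\hat s_t$ on each fiber $M_w$ and bootstrap from the $C^0$ bound of Lemma \ref{C0slem}. From \eqref{connectionformula} one has
\[
A_t - A_{0,t} = -i\star_w d_{A_{0,t}}\hat s_t + o(\hat s_t,\nabla_{A_{0,t}}\hat s_t),
\]
where the $o(\cdot)$ term is a convergent power series in $\hat s_t$ with coefficients of the same order as $\nabla_{A_{0,t}}\hat s_t$. Given the $C^0$ bound $\|\hat s_t\|_{C^0}\leq C_2\epsilon$, it therefore suffices to establish $\|\nabla_{A_{0,t}}\hat s_t\|_{C^{0,\alpha}(M_w,\omega^{SF})}\leq C\epsilon$ to obtain both inequalities in \eqref{connectionC0}.

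To that end I apply $\star_w$ to \eqref{curvatureformula} and rearrange to find
\[
-\Delta_{A_{0,t}}\hat s_t = i\star_w F_{A_t} + E(\hat s_t,\nabla_{A_{0,t}}\hat s_t,\nabla^2_{A_{0,t}}\hat s_t),
\]
where, by \eqref{Tcontrol}, the error satisfies $|E|\leq C\epsilon|\nabla^2_{A_{0,t}}\hat s_t| + |\nabla_{A_{0,t}}\hat s_t|^2$. Since the background connection $A_{0,t}$ and the flat metric $\omega^F_w$ depend smoothly on $(t,w)$ over a compact subset of $N^o$, and since $\hat s_t$ is orthogonal to $\ker d_{A_{0,t}}$ (so Lemma \ref{eigen} supplies a uniform Poincaré inequality), the $L^p$ Calderón-Zygmund estimate for $\Delta_{A_{0,t}}$ on the two-torus $M_w$ gives
\[
\|\hat s_t\|_{W^{2,p}(M_w)} \leq C\bigl(\|F_{A_t}\|_{L^p}+\epsilon\|\hat s_t\|_{W^{2,p}}+\|\nabla_{A_{0,t}}\hat s_t\|_{L^{2p}}^2+\|\hat s_t\|_{L^p}\bigr),
\]
with a constant uniform in $(t,w)$. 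I would then bootstrap in two steps. First at $p=2$: using the Gagliardo-Nirenberg interpolation $\|\nabla\hat s_t\|_{L^4}^2\leq C\|\hat s_t\|_{L^\infty}\|\nabla^2\hat s_t\|_{L^2}$ valid in real dimension two, together with $\|\hat s_t\|_{L^\infty}\leq C_2\epsilon$, both the $T_1$-term and the $T_2$-term in $E$ are absorbed into the left-hand side once $\epsilon$ is small enough, yielding $\|\hat s_t\|_{W^{2,2}}\leq C\epsilon$. Second, the two-dimensional Sobolev embedding $W^{1,2}\hookrightarrow L^q$ for all $q<\infty$ upgrades the $L^{2p}$-norm of $\nabla\hat s_t$ to $O(\epsilon)$, and reinserting gives $\|\hat s_t\|_{W^{2,p}(M_w)}\leq C\epsilon$ for any fixed $p>2$. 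The Sobolev embedding $W^{2,p}(M_w)\hookrightarrow C^{1,\alpha}(M_w)$ with $\alpha=1-2/p$ then yields the desired bound on $\nabla_{A_{0,t}}\hat s_t$, and plugging this back into the displayed expression for $A_t-A_{0,t}$ produces the corresponding $C^{0,\alpha}$ bound on the connection difference.

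The main technical obstacle is the $T_1$-contribution in \eqref{curvatureformula}, which involves $\nabla^2_{A_{0,t}}\hat s_t$ and is therefore of the same order as the left-hand side of the elliptic estimate, so it cannot be treated as a harmless forcing term. What rescues the argument is that $T_1$ carries an explicit factor of $\hat s_t$, which by Lemma \ref{C0slem} is of size $O(\epsilon)$; under the smallness hypothesis \eqref{smallcurve} this allows $T_1$ to be absorbed on the left and the bootstrap to close. The dependence of $C_3$ on $U\subset N^o$ enters through the uniform ellipticity constants for $\Delta_{A_{0,t}}$, the spectral gap furnished by Lemma \ref{eigen}, and the $C^\ell$ norms of the background data $\tau(w)$ and $q_{j,t}(w)$ over $U$.
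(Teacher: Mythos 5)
Your proof is correct and follows essentially the same route as the paper: the fiberwise elliptic estimate applied to \eqref{curvatureformula}, absorption of the $T_1$ term via its explicit factor of $\hat s_t=O(\epsilon)$, control of the quadratic $T_2$ term by interpolating $\|\nabla_{A_{0,t}}\hat s_t\|_{L^{2p}}^2$ against $\|\hat s_t\|_{C^0}\|\nabla^2_{A_{0,t}}\hat s_t\|_{L^p}$, and finally Morrey/Sobolev embedding together with \eqref{connectionformula}. The only cosmetic difference is that you bootstrap in two stages ($p=2$ then general $p$ via the two-dimensional Sobolev embedding), whereas the paper applies Hamilton's interpolation inequality directly at general $p$; both close the argument in the same way.
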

\begin{proof}
We begin the proof with the standard elliptic a priori estimate (cf. \cite{GT,Bess})
\bea
\|\hat s_t\|_{L^p_2{}}&\leq& C\left(\|\Delta_{A_{0,t}}\hat s_t\|_{L^p{}}+\|\hat s_t\|_{L^p{}}\right)\nonumber\\
&\leq&C\left(\|F_{A_t}\|_{L^p{}}+\|T_1\|_{L^p{}}+\|T_2\|_{L^p{}}+\|\hat s_t\|_{L^p{}}\right)\nonumber\\
&\leq&C\left(\epsilon+\|T_1\|_{L^p{}}+\|T_2\|_{L^p{}}\right)\nonumber
\eea
where we have used \eqref{smallcurve} and \eqref{C0s} in the last inequality. We also use the assumption that $A_{0,t} \rightarrow A_0$ smoothly, and therefore all derivatives of  $A_{0,t}$ are bounded independent of $t$. Thus all constants in the above inequality are independent of $t$.

 The necessary  bound for $T_1$ follows immediately $\|T_1\|_{L^p}\leq C\epsilon\|\hat s_t\|_{L^p_2{}}.$ For $T_2$ we use the interpolation inequality for tensors from \cite{Ham} (see also Section 7.6 in  \cite{Au})
\be
\left(\int_{M_w}|\nabla_{A_{0,t}}\hat s_t|^{2p}\right)^{\frac1p}\leq (\sqrt 2+2p-2) \|\hat s_t\|_{C^0{}}\left(\int_{M_w}|\nabla_{A_{0,t}}^2\hat s_t|^p\right)^{\frac1p}.\nonumber
\ee
This implies $\|T_2\|_{L^p{}}\leq C\epsilon\|\hat s_t\|_{L^p_2{}}$. Thus
$$
\|\hat s_t\|_{L^p_2{}}\leq C\left(\epsilon+\epsilon\|\hat s_t\|_{L^p_2{}}\right)
$$ and
for $\epsilon$ small enough
\be
\label{lq2}
\|\hat s_t\|_{L^p_2{}}\leq C\epsilon.
\ee  By Morrey's inequality, for large enough $p$ we can conclude
\be
\|\nabla_{A_{0,t}}\hat s_t\|_{C^{0,\alpha}{}}\leq C\epsilon,\nonumber
\ee
and the proof follows from \eqref{connectionformula}.
\end{proof}

Comparing this lemma to  Theorem 3.11 of \cite{Nis11},  the bound of
(\ref{connectionC0}) is stronger, i.e.  we have $\epsilon$ instead of  $\epsilon^{\frac{1}{2}}$, due to our assumption that $A_{0,t}$ and $A_{0}$ are regular.

We now turn to the proof of  the main proposition of this section.

\begin{proof}[Proof of Proposition \ref{poincare}]
Once again we begin with the standard elliptic a priori estimate
\be
\|\hat s_t\|_{L^2_2{}}\leq C \left(\|\Delta_{A_{0,t}}\hat s_t\|_{w}+\|\hat s_t\|_{w}\right).\nonumber
\ee
Since $\hat s_t$ is perpendicular to the the kernel of $d_{A_{0,t}}$, we have  
a stronger inequality
\be
\|\hat s_t\|_{L^2_2{}}\leq C \|\Delta_{A_{0,t}}\hat s_t\|_{w}\nonumber
\ee (cf. \cite{GT,Bess}).  Again we use the fact that all derivatives of  $A_{0,t}$ and  $A_0$ are bounded independent of $t$.

Next, we recall  \eqref{Tcontrol}. Applying the the interpolation inequality for tensors from the previous lemma for $p=2$, we have
\be
\|T_1+T_2\|_{w}\leq C \epsilon\|\hat s_t\|_{L^2_2{}}\leq C \epsilon\|\Delta_{A_{0,t}}\hat s_t\|_{w}.\nonumber
\ee
Let $F^o_t$ denote the projection of $\star_w F_{A_t}$ onto the kernel of $\Delta_{A_{0,t}}$, and set $F^\perp_t=\star_w F_{A_t}-F^o_t$. Because $\Delta_{A_{0,t}}\hat s_t$ is perpindicular to the kernel of $\Delta_{A_{0,t}}$, we can conclude
\bea
\|F^\perp_t\|_{w}&\geq& \|\Delta_{A_{0,t}}\hat s_t\|_{w}-\|F^\perp_t-\Delta_{A_{0,t}}\hat s_t\|_{w}\nonumber\\
&=&\|\Delta_{A_{0,t}}\hat s_t\|_{w}-\|(T_1+T_2)^\perp\|_{w}\nonumber\\
&\geq& (1-C\epsilon)\|\Delta_{A_{0,t}}\hat s_t\|_{w}\nonumber\\ & \geq & \frac12\|\Delta_{A_{0,t}}\hat s_t\|_{w}.\nonumber
\eea  We take $\epsilon$ small enough such that $C \epsilon < \frac{1}{2}$.

Now, since $(\Delta_{A_{0,t}}\hat s_t)^o=0$, we also have
\be
\|F^o_t\|_{w}\leq \|(T_1+T_2)^o\|_{w}\leq C\epsilon \|\Delta_{A_{0,t}}\hat s_t\|_{w}\leq 2 C\epsilon \|F^\perp_t\|_{w},\nonumber
\ee
which implies
\bea
\|F_{A_t}\|_{w}&\leq& \|F^o_t\|_{w}+\|F^\perp_t\|_{w}\nonumber\\
&\leq& (1+2 C\epsilon)\|F^\perp_t\|_{w}\nonumber\\ & \leq &  2 \|F^\perp_t\|_w.\nonumber
\eea
Thus, applying the Poincar\'e inequality  to $F^\perp_t$ and Lemma \ref{eigen}, we can conclude
\be
\|F_{A_t}\|_{w}\leq 2\|F_t^\perp\|_{w}\leq C\|d_{A_{0,t}}^* F_{A_t}\|_{w}.\nonumber
\ee
The proposition now follows from Lemma \ref{connectionC0lem}, which allows us to bound the difference between the connections $A_t$ and $A_{0,t}$
\bea
\|F_{A_t}\|_{w}&\leq&C  \|d_{A_{0,t}}^*F_{A_t}\|_{w}\nonumber\\
&\leq&C  \|d_{A_t}^*F_{A_t}\|_w+C \|A_t-A_{0,t}\|_{C^0{}}\|F_{A_t}\|_{w}\nonumber\\
&\leq&C  \|d_{A_t}^* F_{A_t}\|_{w}+C \epsilon \|F_{A_t}\|_{w}.\nonumber
\eea
We choose further that $C \epsilon < \frac{1}{2}$, and obtain $$ \|F_{A_t}\|_{w} \leq  2 C  \|d_{A_t}^* F_{A_t}\|_{w}.$$

For any $K\subset N^o$, we cover $K$ by finite open disks  $U_\beta$, i.e. $K\subset \bigcup U_\beta \subset N^o$, and apply the above arguments to any $U_\beta$.
  By letting $\epsilon_K=\min \{\epsilon\}$ over the covering, and $C_K$ the maximum   constant over the covering, the proposition is proved.
\end{proof}

A corollary is  the following Sobolev inequality.
 \begin{cor}\label{sobolev}
 For any $p\geq 2$, there exists a cosntant $C_p$ so that $$\|F_{A_t}\|_{L^p(M_w)} \leq C_p\|d_{A_t}^\star F_{A_t}\|_w.$$
 \end{cor}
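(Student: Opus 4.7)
The strategy is to leverage the fact that on the two-dimensional fiber $M_w$ the curvature $F_{A_t}$ is a top form, so passing through the Hodge star converts it to a $\mathfrak{su}(n)$-valued function $\eta := \star_w F_{A_t}$. The equation $d_{A_t}^*F_{A_t} = -\star_w d_{A_t}\eta$ reduces the problem to controlling a function in $L^p$ by its $L^2$-gradient, which is the classical setting for Sobolev embedding.

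First I would set $\eta = \star_w F_{A_t}$ and observe that, since $\star_w$ is a fibrewise isometry, $|\eta| = |F_{A_t}|$ and $\|d_{A_t}\eta\|_w = \|d_{A_t}^* F_{A_t}\|_w$. The ordinary exterior derivative satisfies $d\eta = d_{A_t}\eta - [A_t,\eta]$, so
\[
\|d\eta\|_w \le \|d_{A_t}^* F_{A_t}\|_w + \|A_t\|_{C^0(M_w)}\|\eta\|_w.
\]
The $C^0$-bound on $A_t$ is the point where the earlier analysis is needed: by Lemma~\ref{connectionC0lem} we have $\|A_t - A_{0,t}\|_{C^{0,\alpha}(M_w)} \le C_3\epsilon$, and $A_{0,t}$ itself is uniformly bounded in $C^\infty$ since the holomorphic functions $q_{j,t}(w)$ converge smoothly to $q_j(w)$ on the compact set $K\subset N^o$. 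Hence $\|A_t\|_{C^0(M_w)}$ is bounded by a constant depending only on $K$.

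Next I would invoke Proposition~\ref{poincare} to control the zeroth-order term: $\|\eta\|_w = \|F_{A_t}\|_w \le C_K \|d_{A_t}^* F_{A_t}\|_w$. Combining with the previous inequality,
\[
\|\eta\|_{L^2_1(M_w)} \le C\bigl(\|\eta\|_w + \|d\eta\|_w\bigr) \le C \|d_{A_t}^* F_{A_t}\|_w.
\]
Finally, since $M_w$ is a two-dimensional compact manifold equipped with the fixed flat metric $\omega^{SF}|_{M_w}$, the Sobolev embedding $L^2_1(M_w) \hookrightarrow L^p(M_w)$ holds for every finite $p\ge 2$ with a constant $C_p$ depending only on $p$ and the (uniformly bounded) geometry of $M_w$. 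Applying this embedding to $\eta$ yields
\[
\|F_{A_t}\|_{L^p(M_w)} = \|\eta\|_{L^p(M_w)} \le C_p \|\eta\|_{L^2_1(M_w)} \le C_p \|d_{A_t}^* F_{A_t}\|_w,
\]
as required.

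There is no real obstacle here beyond the two inputs already established: the Poincar\'e inequality for $F_{A_t}$ (Proposition~\ref{poincare}) and the $C^0$ closeness of $A_t$ to the smooth background $A_{0,t}$ (Lemma~\ref{connectionC0lem}). The key conceptual point is that the two-dimensionality of the fibre makes $F_{A_t}$ equivalent to a scalar, at which stage standard $L^2_1\hookrightarrow L^p$ Sobolev embedding on a compact surface finishes the job; the non-smoothness of $\mathfrak{M}_E(n)$ enters only through Lemma~\ref{eigen} which already underlies the Poincar\'e inequality.
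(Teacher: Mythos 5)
Your proof is correct and follows essentially the same route as the paper: convert $F_{A_t}$ to the function $\star_w F_{A_t}$, apply the two-dimensional Sobolev inequality, and absorb the zeroth-order term via Proposition~\ref{poincare}. The only cosmetic difference is that the paper states the Sobolev inequality with the covariant derivative $\nabla_{A_{0,t}}$ (so it only needs the bound on $A_t-A_{0,t}$ from Lemma~\ref{connectionC0lem}), whereas you use the flat exterior derivative in the fixed trivialization and bound $\|A_t\|_{C^0}$ directly; both are equivalent.
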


 \begin{proof} In dimension two we have the  Sobolev inequality $$\|\xi\|_{L^p} \leq C_p(\|\nabla_{A_{0,t}} \xi\|_{w} + \|\xi\|_{w})\leq C_p(\|\nabla_{A_{t}} \xi\|_{w} + \|(A_{t}-A_{0,t})\xi\|_{w}+\|\xi\|_{w}),$$ for any smooth  section  $\xi$ of ${\rm End}(\mathcal{V})$ and some constant $C_p$   independent of $w\in U$ and $t$.
  Applying this to $ \xi=\star_w  F_{A_t}$, we obtain $$\|F_{A_t}\|_{L^p} \leq C_p(\|d_{A_{t}}^\star F_{A_t}\|_{w} + (1+\epsilon) \|F_{A_t}\|_{w})\leq 2C_pC_K\|d_{A_t}^\star F_{A_t}\|_w,$$ by Proposition \ref{poincare}.
   \end{proof}

\section{$C^0$ bounds on curvature}
\label{C0boundsection}

The main goal of this section is to prove Proposition \ref{prop2}, which establishes $C^0$ control for the curvature of a family of connections. It is a conditional result relying on assumption \eqref{hyp:curv}. To avoid confusion, we note that this   result is applied twice. In Section \ref{lowerbounds}, in the proof of  Proposition \ref{type3bubbling}, it is applied to a family of connections in scaled coordinates, for which \eqref{hyp:curv} can be verified directly. Once Proposition \ref{type3bubbling} is established, assumption \eqref{hyp:curv} holds for our main sequence of connections $\Xi_{t_k}$ from the statement of Theorem \ref{thm-main}, and so Proposition \ref{prop2} can be used to establish  Proposition \ref{thm01}.

As above, let $U\subset\subset N^o$ be an open subset, compactly contained in $N_0$, and biholomorphic to a disk in $\mathbb{C}$. We have  $f^{-1}(U)\cong (U\times \mathbb{C})/{\rm Span}_{\mathbb{Z}}\{1, \tau\}$, where the period $\tau$ is holomorphic on $U$. Let $w$ denote the complex coordinate on $U$, and $z$ the coordinate on $\mathbb{C}$.    Furthermore, we fix a  trivialization $P|_{M_U}\cong  M_U \times SU(n)$ and $\mathcal{V}|_{M_U}\cong  M_U \times \mathbb{C}^n$. Under such trivialization, the Hermitian metric $H$ is the absolute value $|\cdot|$, the connection $\Xi_t$ is a matrix valued 1-form, and the curvature $F_{\Xi_t}$ is a matrix valued 2-form, i.e. $\Xi_t\in C^\infty(T^*M_U, \mathfrak{su}(n))$  and $F_{\Xi_t} \in C^\infty(\wedge^2 T^*M_U, \mathfrak{su}(n))$.

Define   real coordinates  $(x_1, x_2)$ on $U$ satisfying $w=x_1+i x_2$, and recall that we have the decomposition $T^*M_U\cong {\rm Span}_{\mathbb{R}}\{dy_1,dy_2\}\oplus {\rm Span}_{\mathbb{R}}\{dx_1,dx_2\}$, where $z=y_1+\tau y_2$, and
 $z$ is the coordinate on $\mathbb{C}$.
 In these coordinates we write
 \be\label{decomp}
 \Xi_t=A_t+B_{t,1}dx_1+B_{t,2} dx_2,
 \ee
where  $A_t$ is a connection on the restriction to the fiber $\mathcal{V}|_{M_w}$, and $B_{t,i}$ is a section in $\Gamma(U, \Omega^0(M_w, \mathfrak{su}(n)))$ for $i=1,2$. Given this decomposition, the curvature can be written as
 \be
 \label{curve}
 F_{\Xi_t}=F_{A_t}-\kappa_{t,1}\wedge dx_1-\kappa_{t,2}\wedge dx_2-F_{B,t} dx_1\wedge dx_2.
 \ee
Here $F_{A_t}$ is the curvature of $A_t$, the mixed terms are given by
\be
\kappa_{t,i}=\frac{\partial}{\partial x_i} A_t-d_{A_t}B_{t,i}\qquad{\rm for}\qquad i=1,2,\nonumber
\ee
and the curvature in the base direction can be expressed as
\be
F_{B,t}=\frac{\partial}{\partial x_2} B_{t,1}-\frac{\partial}{\partial x_1} B_{t,2}-[B_{t,1},B_{t,2}].\nonumber
\ee
 Because of the uniform equivalence $$ C_U^{-1}\omega^{SF}_t \leq \omega_t \leq C_U \omega^{SF}_t, \  \  \ {\rm and}  \  \   \omega^{SF}_t|_{M_w}=t\omega^{SF}|_{M_w}, $$
 the norms of the different curvature components satisfy $$  |F_{A_t}|_{\omega^{SF}}= t|F_{A_t}|_{\omega_t^{SF}}, \  \  \  |\kappa_{t,i}|_{\omega^{SF}} = \sqrt{t} |\kappa_{t,i}|_{\omega_t^{SF}},  \  \  \  |F_{B,t}|_{\omega^{SF}} =|F_{B,t}|_{\omega_t^{SF}}. $$

We now state the main assumption of this section. Assume that there is a constant $C_1>0$, so that for a   $t\in (0,1]$ it holds
\be \label{hyp:curv}
\sup_{M_U} |F_{\Xi_t}|_{\omega_t} \leq C_1t^{-\frac{1}{2}}.
\ee
This implies
\be \label{hyp:curv+=} \sup_{M_U} |F_{A_t}|_{\omega^{SF}} \leq C_1t^{\frac{1}{2}},  \  \  \sup_{M_U} |\kappa_{t,i}|_{\omega^{SF}} \leq C_1,  \  \  \sup_{M_U} |F_{B,t}|_{\omega^{SF}} \leq C_1t^{-\frac{1}{2}}.\nonumber
\ee
We assume that   $t\ll 1$ small enough such that  $C_1t^{\frac{1}{2}}<\epsilon_K$, where $\epsilon_K$ is the small constant controlling the curvature in  Proposition \ref{poincare}, and $U\subset K$.
Thus by Proposition \ref{poincare}, we see that  the curvature $F_{A_t}$   satisfies the Poincar\'{e}  type inequality
\be \label{hyp:poincare}
\|F_{A_t}\|_{w}\leq C_2 \|d_{A_t}^* F_{A_t}\|_{w}.
\ee
This inequality, along with  assumption \eqref{hyp:curv}, are instrumental in the following:

\begin{prop}\label{prop2} Let $\nabla_{x_i}=\partial_{x_i}+B_{t,i}$ for $ i=1,2$  denote covariant differentiation in the base direction. If (\ref{hyp:curv}) and (\ref{hyp:poincare}) hold for $t\ll 1$, for $U'\subset\subset U$ we have
the following inequalities:  \begin{itemize} \item[i)] $$  \|F_{A_t}\|_{C^{0}(M_{U'},\omega^{SF})} \leq C_3   t, \   \  \  \|F_{B,t}\|_{C^{0}(M_{U'},\omega^{SF})} \leq C_3 , $$ \item[ii)] $$  \|\nabla_{x_i} F_{A_t}\|_{L^2(M_{U'}, \omega^{SF})} \leq C_3  t^{\frac{1}{2}} ,$$  \item[iii)] $$  \|F_{\Xi_t}\|_{C^0(M_{U'}, \omega^{SF})} \leq C_3, $$
      \end{itemize} where    the constant $C_3$ may depend on the distance from $U'$ to $\partial U$, but is   independent of $t$.

        \end{prop}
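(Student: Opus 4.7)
The proof proceeds in three stages: extracting algebraic identities from the ASD equations, establishing $L^p$ integral bounds via the Poincar\'e inequality, and upgrading to $C^0$ bounds by Moser iteration. First, I would expand the two ASD equations $F_{\Xi_t}\wedge\omega_t = 0$ and $F_{\Xi_t}\wedge\Omega = 0$ using the decomposition \eqref{curve} and the asymptotic form of $\omega_t - \omega_t^{SF}$ supplied by Lemma \ref{lem-decay}. The top-degree part of the first equation yields, on each fiber, a pointwise identity of the form
$$F_{A_t} = t W^2 F_{B,t}\,dy_1\wedge dy_2 + \mathcal{R}_t,$$
where $W = ({\rm Im}\,\tau)^{-1}$ and $\mathcal{R}_t$ is of strictly lower order in $t$: the mixed components $\varphi_{t,z\bar z}, \varphi_{t,z\bar w}, \varphi_{t,w\bar z}$ of $\omega_t - \omega_t^{SF}$ decay to any polynomial order by Lemma \ref{lem-decay}, while the base correction $\chi_{t,w\bar w}$ tends to zero in $C^\infty$. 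Consequently $|F_{A_t}|_{\omega^{SF}} \leq C t|F_{B,t}|_{\omega^{SF}}+o(1)$, so the bound on $F_{A_t}$ in (i) reduces to establishing $|F_{B,t}|_{\omega^{SF}}\leq C_3$. The second ASD equation yields in parallel a Cauchy-Riemann-type coupling between $\kappa_{t,1}$ and $\kappa_{t,2}$ along each fiber, to be used below.

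Next, I would expand the Yang-Mills equation $d_{\Xi_t}^* F_{\Xi_t} = 0$ with respect to $\omega_t$. Because the fiber directions of $\omega_t$ carry a factor of $t$ while the base directions do not, the fiber-1-form component of Yang-Mills reads
$$d_{A_t}^* F_{A_t} = t\,\mathcal{E}_t,$$
where $\mathcal{E}_t$ is built from $\nabla_{x_i}\kappa_{t,i}$ and zero-order contributions in $B_{t,i}, \kappa_{t,i}$. The factor of $t$ on the right is precisely what renders the Poincar\'e inequality of Proposition \ref{poincare} effective: it gives $\|F_{A_t}\|_w \leq C_K\,\|d_{A_t}^*F_{A_t}\|_w \leq C_K t\|\mathcal{E}_t\|_w$, and, combined with the algebraic identity of the previous paragraph, this reads $\|F_{B,t}\|_w \leq C\|\mathcal{E}_t\|_w + o(1)$. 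Integrating over $U'$, bounding $\mathcal{E}_t$ via hypothesis \eqref{hyp:curv} together with the companion coupling on $\kappa_{t,i}$, and using Corollary \ref{sobolev} for higher fiberwise $L^p$, one arrives at $\|F_{B,t}\|_{L^p(M_{U'})} + t^{-1}\|F_{A_t}\|_{L^p(M_{U'})} \leq C$ for every $p>2$; statement (ii) is produced along the way by differentiating the Yang-Mills system in $x_i$ and running the same argument with the same Poincar\'e estimate.

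Finally I would upgrade these integral estimates to the pointwise bounds in (i) and (iii) via Moser iteration applied to the Weitzenb\"ock formula \eqref{wei}, rewritten with respect to the fixed non-collapsing metric $\omega^{SF}$: the Riemannian curvature $R_{\omega^{SF}}$ is uniformly bounded on $M_{U'}$, so the linear term is harmless, and the nonlinear $F\#F$ term is absorbed using the high-$L^p$ control just obtained, exactly in the spirit of Lemma \ref{energybound}. Part (iii) then follows from (i) combined with the $\omega^{SF}$-bound $|\kappa_{t,i}|_{\omega^{SF}}\leq C_1$ already supplied by \eqref{hyp:curv}. The principal technical difficulty is the bookkeeping of $t$-factors in the second paragraph: one must verify that the gain provided by the factor of $t$ in $d_{A_t}^*F_{A_t} = t\,\mathcal{E}_t$ is not consumed by the error terms $\omega_t - \omega_t^{SF}$ (where Lemma \ref{lem-decay}'s componentwise decay estimates are indispensable), and one must arrange the bootstrap so that $F_{B,t}$ is controlled integrally first, with $F_{A_t}$ then recovered via the algebraic identity, to avoid circularity.
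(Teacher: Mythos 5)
Your first paragraph is essentially the paper's opening lemma (the identities \eqref{ASD1}--\eqref{ASD2}), but the second paragraph contains the gap that sinks the argument. The fiber-one-form component of the Yang--Mills equation does have the schematic form $d_{A_t}^*F_{A_t}=t\,\mathcal{E}_t$, but after using $\star_w\kappa_{t,1}=\kappa_{t,2}$ and the Bianchi identity, $\mathcal{E}_t$ is (up to bounded factors) $\star_w d_{A_t}F_{B,t}$ --- a \emph{first derivative} of the curvature. Hypothesis \eqref{hyp:curv} bounds only $|F_{\Xi_t}|_{\omega_t}$ pointwise and gives no control whatsoever on $\nabla_{x_i}\kappa_{t,i}$ or $d_{A_t}F_{B,t}$, so the chain $\|F_{A_t}\|_w\le C t\|\mathcal{E}_t\|_w\le Ct$ cannot be closed as you propose. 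The paper's way around this is to take one more base derivative: it forms $\Delta\|F_{A_t}\|_w^2$ (Laplacian in $x_1,x_2$), uses $(\nabla_{x_1}^2+\nabla_{x_2}^2)F_{A_t}=-d_{A_t}\star_w d_{A_t}F_{B,t}+\sum[\kappa_{t,j},\kappa_{t,j}]$, integrates by parts \emph{along the fiber} so that the unwanted derivative of $F_{B,t}$ lands back on $F_{A_t}$ as $d_{A_t}^*F_{A_t}$, and then substitutes $F_{B,t}\sim t^{-1}G_4\star_wF_{A_t}$ from \eqref{ASD2}. Together with the Poincar\'e inequality this yields the differential inequality $\Delta\|F_{A_t}\|_w^2\ge\frac{\delta}{t}\|F_{A_t}\|_w^2-Ct$ on the base, and a maximum principle with the weight $\exp(-a|w-w_0|^2/\sqrt t)$ (Lemma \ref{le3}) then gives $\|F_{A_t}\|_w\le Ct$ and, as a by-product, the $L^2$ bound on $\nabla_{x_i}F_{A_t}$ of part (ii). Without this second-order, integrated-by-parts structure your estimate requires derivative bounds on curvature that are not among the hypotheses.

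The final Moser-iteration step also does not go through as written: $\Xi_t$ is Yang--Mills with respect to $\omega_t$, not $\omega^{SF}$, and these metrics are not uniformly equivalent (the base directions differ by a factor $t^{-1}$ after rescaling the fibers), so the Weitzenb\"ock formula \eqref{wei} cannot simply be "rewritten with respect to $\omega^{SF}$" without producing uncontrolled error terms. What the paper actually does to upgrade the fiberwise $L^2$ bound $\|F_{A_t}\|_w\le Ct$ to the pointwise bound of (i) is a blow-up contradiction argument: rescale by $t_k^{-1}\omega_{t_k}\to\omega_\infty$ on $D_r\times M_{w_0}$, apply strong Uhlenbeck compactness, and run interior elliptic $L^p$ estimates for a genuine second-order elliptic operator $\mathcal{D}_k$ built from both base and fiber directions, with the $L^2$ input $\|F_{A_{t_k}'}\|_{L^2(D_r\times M_{w_0})}\le Ct_k$ forcing $\|F_{A_{t_k}}\|_{C^{1,\alpha}}\le Ct_k$ and contradicting the assumed blow-up. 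The bound on $F_{B,t}$ is then read off algebraically from \eqref{ASD2}, which is the one place where your ordering (control $F_{B,t}$ first, recover $F_{A_t}$) is inverted relative to the paper's.
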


As above let $\star_w$ denote the Hodge star operator on the fiber $M_w$ with respect to the flat metric $\omega^F_w:=\omega^{SF}|_{M_w}=i{\rm Im}(\tau)^{-1}\,dz\wedge d\bar z$. Then $\star_w^2=-1$, $\star_wdz=-idz$ and $\star_w d\bar z=id\bar z$. We write the anti-self-dual equation under the decomposition (\ref{curve}).

\begin{lem}
The curvature of $\Xi_t$  satisfies
\be
\label{ASD1}
\star_w \kappa_{t,1}=\kappa_{t,2}
\ee
and
\be
\label{ASD2}
t^{-1}(1+G_0+G_1)\star_w F_{A_t}-(W+G_2) F_{B,t}=\sum_{j=1}^2\kappa_{t,j}\# G_3,
\ee
where $G_1$, $G_2$, $G_3$  are smooth functions depending on $t$ such that
\be
t^{-\frac{\nu}{2}}(\| G_1\|_{C^{0}(\omega^{SF})}+ \|\frac{\partial}{\partial z} G_1\|_{C^{\ell}(\omega^{SF})}+\|\frac{\partial}{\partial \bar{z}} G_1\|_{C^{\ell}(\omega^{SF})}+\sum_{j=2,3}\| G_j\|_{C^{\ell}(\omega^{SF})})\rightarrow 0, \nonumber
\ee
for  any $\nu\in\mathbb{N}$, and $G_0$ is a function on $U$ such that $ \|G_0\|_{C^{\ell}(U)}\rightarrow 0$,     when $t\rightarrow 0$.
\end{lem}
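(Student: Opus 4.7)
The plan is to derive both identities directly from the anti-self-dual system $F_{\Xi_t}\wedge\omega_t = 0$ and $F_{\Xi_t}\wedge\Omega = 0$ by substituting the decomposition (\ref{curve}) of $F_{\Xi_t}$ together with the explicit form of $\omega_t$ provided by Lemma \ref{lem-decay}. First I would write each fiberwise $\kappa_{t,i} = \alpha_i\,dz + \beta_i\,d\bar z$ and expand $dx_1 = \frac12(dw+d\bar w)$, $dx_2 = \frac{1}{2i}(dw-d\bar w)$. Since $F_{A_t}$ is of type $(1,1)$ on the fiber and $F_{B,t}\,dx_1\wedge dx_2$ is of type $(1,1)$, the only $(0,2)$-contribution to $F_{\Xi_t}$ comes from $-\sum_i \kappa_{t,i}\wedge dx_i$ and equals $-\frac12(\beta_1+i\beta_2)\,d\bar z\wedge d\bar w$. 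Taking $\Omega = dw\wedge dz$ locally, $F_{\Xi_t}\wedge \Omega = 0$ forces this $(0,2)$-part to vanish, giving $\beta_2 = i\beta_1$; the conjugate vanishing of the $(2,0)$-part yields $\alpha_2 = -i\alpha_1$. Using $\star_w dz = -i\,dz$ and $\star_w d\bar z = i\,d\bar z$, these two identities are exactly $\star_w\kappa_{t,1} = \kappa_{t,2}$, which is (\ref{ASD1}).

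After imposing (\ref{ASD1}) the curvature becomes the pure $(1,1)$-form
\[
F_{\Xi_t} = f_0\,dz\wedge d\bar z - \alpha\,dz\wedge d\bar w - \beta\,d\bar z\wedge dw - \tfrac{i}{2}F_{B,t}\,dw\wedge d\bar w,
\]
with $\alpha := \alpha_1$, $\beta := \beta_1$. I would next substitute $\omega_t = \omega_t^{SF} + f^*\chi_t + \eta_t$, where $\omega_t^{SF} = \frac{itW}{2}\,\theta\wedge\bar\theta + \frac{i}{2W}\,dw\wedge d\bar w$ with $\theta = dz + b\,dw$, the pullback $f^*\chi_t = \chi_{t,w\bar w}\,dw\wedge d\bar w$ depends only on $w$ with $\|\chi_t\|_{C^\infty(U)}\to 0$, and $\eta_t$ collects the $\varphi_{t,IJ}$-corrections of Lemma \ref{lem-decay}. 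Expanding $\theta\wedge\bar\theta$ in the $(dz, d\bar z, dw, d\bar w)$-basis and applying the standard ordered-index wedge identity, the equation $F_{\Xi_t}\wedge\omega_t = 0$ at top degree reduces to the scalar identity
\[
f_0\!\left(\tfrac{i}{2W}(1+tW^2|b|^2) + \varphi_{t,w\bar w}\right) + \alpha Q_2 - \beta Q_1 - \tfrac{i}{2}F_{B,t}\!\left(\tfrac{itW}{2}+\varphi_{t,z\bar z}\right) = 0,
\]
with $Q_1 = \frac{itW\bar b}{2}+\varphi_{t,z\bar w}$ and $Q_2 = \frac{itWb}{2}+\varphi_{t,w\bar z}$. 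Substituting $f_0 = \frac{iW}{2}\star_w F_{A_t}$ (read off from $\omega^F_w = \frac{i}{2}W\,dz\wedge d\bar z$), multiplying through by $-4/t$, and isolating $t^{-1}\star_w F_{A_t}$ and $F_{B,t}$ yields the claimed form (\ref{ASD2}).

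The final step is simply to read off the coefficients: $G_0 := -2iW\chi_{t,w\bar w}$, $\; G_1 := tW^2|b|^2 - 2iW(\varphi_{t,w\bar w}-\chi_{t,w\bar w})$, $\; G_2 := -2it^{-1}\varphi_{t,z\bar z}$, and to package the right-hand side $\frac{4}{t}(\alpha Q_2 - \beta Q_1)$ as $\sum_{j=1}^2 \kappa_{t,j}\# G_3$ after using (\ref{ASD1}) to trade freely between $\kappa_{t,1}$ and $\kappa_{t,2}$. The $C^\ell(U)$-decay of $G_0$ is immediate from $\chi_t\to 0$ in $C^\infty(U)$. The super-polynomial decay of $G_2$, of the $\varphi$-parts of $G_3$, and of $G_1$ in the $C^0$ and $\partial_z, \partial_{\bar z}$ norms all follow directly from Lemma \ref{lem-decay}, since $\partial_z\chi_{t,w\bar w} = \partial_{\bar z}\chi_{t,w\bar w} = 0$ and the remaining $\varphi$-terms satisfy the rapid-decay estimates stated there. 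The main subtlety, and the only nontrivial bookkeeping point, is the $tW^2|b|^2$ contribution to $G_1$: since $W$ and $b = -{\rm Im}(z)\tau_2^{-1}\partial_w\tau$ are smooth background quantities independent of $t$, this piece together with its $\partial_z, \partial_{\bar z}$ derivatives all carry an explicit $t$-prefactor, so that once combined with the $t^{-1}$-scaling of the leading coefficient on the left it supplies precisely the correction to $1$ needed downstream in the proof of Proposition \ref{prop2}.
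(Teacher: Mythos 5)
Your derivation of (\ref{ASD1}) is correct and is essentially the paper's argument written in fiber coordinates. The gap is in (\ref{ASD2}), and it is precisely the point you dismiss as harmless bookkeeping. The decomposition (\ref{curve}) is taken with respect to the splitting $T^*M_U={\rm Span}_{\mathbb R}\{dy_1,dy_2\}\oplus{\rm Span}_{\mathbb R}\{dx_1,dx_2\}$, and ${\rm Span}_{\mathbb C}\{dy_1,dy_2\}={\rm Span}_{\mathbb C}\{\theta,\bar\theta\}$, \emph{not} ${\rm Span}_{\mathbb C}\{dz,d\bar z\}$. Consequently $F_{A_t}$ is proportional to $\theta\wedge\bar\theta$ and each $\kappa_{t,i}$ lies in ${\rm Span}\{\theta,\bar\theta\}$, so $F_{A_t}\wedge\theta\wedge\bar\theta=0$ and $\kappa_{t,i}\wedge dx_i\wedge\theta\wedge\bar\theta=0$: the fiber part $\tfrac{i}{2}tW\,\theta\wedge\bar\theta$ of $\omega_t^{SF}$ pairs only with $F_{B,t}\,dx_1\wedge dx_2$, and no $b$-dependent cross terms arise. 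By writing $F_{A_t}=f_0\,dz\wedge d\bar z$ and $\kappa_{t,i}=\alpha_i dz+\beta_i d\bar z$ and then expanding $\theta\wedge\bar\theta$ in the $(dz,dw)$-basis, you are either computing with the $(dz,d\bar w)$-components of $F_{\Xi_t}$ — which differ from the $F_{A_t},\kappa_{t,j},F_{B,t}$ of (\ref{curve}) by $b$-multiples of the fiber components — or retaining terms that in fact vanish identically.

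The error is not cosmetic, because the lemma is quantitative. Your $G_1$ contains $tW^2|b|^2$, which decays only like $t$, while the lemma requires $t^{-\nu/2}\|G_1\|_{C^0}\to 0$ for every $\nu$; this already fails at $\nu=2$. Worse, your $Q_1,Q_2$ contain $\tfrac{itW\bar b}{2}$ and $\tfrac{itWb}{2}$, so after the $4/t$ normalization $G_3$ acquires the order-one terms $2iW\bar b$ and $2iWb$, which moreover depend on $z$ through $b=-{\rm Im}(z)({\rm Im}\,\tau)^{-1}\partial_w\tau$. This contradicts $\|G_3\|_{C^\ell}=o(t^{\nu/2})$ outright and would break the downstream analysis: Lemma \ref{le2} and the proof of Proposition \ref{prop2} use essentially that $d^f G_4=o(t^\nu)$ and $|d^f G_5|_{\omega^{SF}}=o(t^\nu)$, which your $G_1,G_3$ do not provide. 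The repair is to carry out the wedge computation in the frame $\{\theta,\bar\theta,dw,d\bar w\}$ throughout; rewriting the correction terms $\varphi_{t,IJ}$ of Lemma \ref{lem-decay} in this frame only reshuffles them among themselves with bounded coefficients and preserves the $O(t^{\nu/2})$ bounds, after which the only surviving coefficients are exactly those recorded in the lemma.
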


 \begin{proof}
 We first demonstrate that \eqref{ASD1} follows from $F_{\Xi_t}^{0,2}=F_{\Xi_t}^{2,0}=0$.  Note that
 \be
 2(\kappa_{t,1}\wedge dx_1+\kappa_{t,2} \wedge dx_2)=(\kappa_{t,1} -i \kappa_{t,2}) \wedge dw+(\kappa_{t,1}+i\kappa_{t,2})\wedge d\bar w.\nonumber
 \ee
  This implies, using $\star_wdz=-idz$ and $\star_w d\bar z=id\bar z$, that
 \be
 \star_w(\kappa_{t,1} -i \kappa_{t,2})=i (\kappa_{t,1} -i \kappa_{t,2})=i\kappa_{t,1}+\kappa_{t,2}\nonumber
 \ee
 and
 \be
  \star_w(\kappa_{t,1}+i\kappa_{t,2})=-i (\kappa_{t,1}+i\kappa_{t,2})=-i\kappa_{t,1}+\kappa_{t,2}.\nonumber
 \ee
 Adding these two equations together proves \eqref{ASD1}.

 We now concentrate on \eqref{ASD2}. Using $F_{\Xi_t}\wedge \omega_t=0$, along with the decompositions \eqref{SFmetric} and \eqref{curve}, we see
 \bea
 0=F_{\Xi_t}\wedge\omega_t&=&F_{\Xi_t}\wedge\omega_t^{SF}+F_{\Xi_t}\wedge i\partial\bar\partial\varphi_t\nonumber\\
 &=& \frac i{2}(W^{-1}+2\varphi_{t,w \bar{w}})F_{A_t}\wedge dw \wedge d\bar{w}\nonumber\\
 &&-\frac i2(tW+ 2\varphi_{t,z \bar{z}})F_{B,t}\,dx_1 \wedge dx_2 \wedge \theta \wedge \bar{\theta}\nonumber\\
 &&+ (\kappa_{t,1}\wedge dx_1+\kappa_{t,2}\wedge dx_2)\wedge {\rm Im}\left(2\varphi_{t,w\bar{z}} dw \wedge d\bar{z}\right).\nonumber
 \eea
 Next, note that  $\theta=dy_1+\tau dy_2=dz+bdw$,
 \be
 dx_1\wedge dx_2=\frac i2 dw\wedge d\bar w\qquad{\rm and}\qquad F_{A_t}=\frac i2(\star_w F_{A_t})W \theta\wedge \bar \theta.\nonumber
 \ee
 Thus, dividing out by the volume form $dz\wedge dw \wedge d\bar{z}\wedge d\bar{w}= \theta \wedge dw \wedge \bar{\theta}\wedge d\bar{w}$, the above equation can be rewritten as
 \bea
 0&=&(1+2\varphi_{t,w \bar{w}}W)\star_w F_{A_t}-(tW+ 2\varphi_{t,z \bar{z}})F_{B,t}\nonumber\\
 &&+\sum_{i=1}^2 \kappa_{t,i}\, \#\left(\varphi_{t,z\bar w}+\varphi_{t,w\bar z}\right).\nonumber
 \eea
 We set $G_0=2\chi_{t,w\bar w}W$,   $G_1=2(\varphi_{t,w\bar w}-\chi_{t,w\bar w})W$, $G_2=2t^{-1}\varphi_{t,z\bar z}$, and $G_3=t^{-1}(\varphi_{t,z\bar w}+\varphi_{t,w\bar z}).$  The proof now follows from Lemma \ref{lem-decay}.
 \end{proof}

Next we turn to a Bochner type formula for $F_{A_t}$.

 \begin{lem}\label{le2} If we denote $\Delta=\partial_{x_1}^2+\partial_{x_2}^2$, then \begin{eqnarray*}\Delta \| F_{A_t}\|_w^2 & \geq & \frac{1}{4} \sum_{i=1,2}  \|\nabla_{x_i}  F_{A_t}\|_w^2+ \frac{\delta}{t} \|  d_{A_t}^* F_{A_t}  \|_w^2\\ & & - C_4't(\sum_{j=1,2}\|[\kappa_{t,j}, \kappa_{t,j}]\|_w^2 +t^\nu)\\ & \geq & \frac{1}{4} \sum_{i=1,2}  \|\nabla_{x_i}  F_{A_t}\|_w^2+ \frac{\delta}{t} \|  d_{A_t}^* F_{A_t}  \|_w^2 - C_4t, \end{eqnarray*} for constants $\delta >0$,  $C_4>0$ and $C_4'>0$.   \end{lem}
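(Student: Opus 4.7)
The plan is to derive this Bochner-type inequality on the base by combining the Bianchi identity for $F_{\Xi_t}$, the anti-self-dual equations (\ref{ASD1})--(\ref{ASD2}), and the fiberwise Poincar\'e inequality (\ref{hyp:poincare}). Working in the trivialization from Section~6, I view $M_w$ as a fixed torus whose flat metric $\omega^F_w$ depends smoothly on $w\in U$ through $\tau(w)$. First I would differentiate $\|F_{A_t}\|_w^2=\int_{M_w}|\star_w F_{A_t}|^2\omega^F_w$ twice in the base variables, covariantly, to obtain
\begin{equation*}
\Delta\|F_{A_t}\|_w^2 = 2\sum_{i=1}^{2}\|\nabla_{x_i}F_{A_t}\|_w^2 + 2\sum_{i=1}^{2}\int_{M_w}\langle \nabla_{x_i}^2F_{A_t},F_{A_t}\rangle\,\omega^F_w + \mathcal{E}_t,
\end{equation*}
where $\mathcal{E}_t$ collects the metric-variation corrections coming from $\partial_{x_i}\tau$; these are of order $t$ because of the hypothesis (\ref{hyp:curv}) via $|F_{A_t}|_{\omega^{SF}}\le Ct^{1/2}$.

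Next, the Bianchi identity $d_{\Xi_t}F_{\Xi_t}=0$, projected onto the appropriate bidegree, gives the two relations $\nabla_{x_i}F_{A_t}=d_{A_t}\kappa_{t,i}$ and $d_{A_t}F_{B,t}=\nabla_{x_1}\kappa_{t,2}-\nabla_{x_2}\kappa_{t,1}$, along with the commutator rule $[\nabla_{x_i},d_{A_t}]=[\kappa_{t,i},\cdot]$. Consequently
\begin{equation*}
\nabla_{x_i}^2F_{A_t}=d_{A_t}\nabla_{x_i}\kappa_{t,i}+[\kappa_{t,i}\wedge \kappa_{t,i}],
\end{equation*}
and integrating by parts on the fiber yields
\begin{equation*}
\int_{M_w}\langle \nabla_{x_i}^2F_{A_t},F_{A_t}\rangle=\int_{M_w}\langle \nabla_{x_i}\kappa_{t,i},d_{A_t}^*F_{A_t}\rangle+\int_{M_w}\langle[\kappa_{t,i}\wedge \kappa_{t,i}],F_{A_t}\rangle.
\end{equation*}
The commutator contribution is handled directly by Cauchy--Schwarz combined with the Poincar\'e inequality (\ref{hyp:poincare}), which produces $\tfrac{\delta}{4t}\|d_{A_t}^*F_{A_t}\|_w^2+Ct\,\|[\kappa_{t,i},\kappa_{t,i}]\|_w^2$ after Young's inequality; this is exactly the $[\kappa,\kappa]$ term in the stated estimate.

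The main obstacle is the cross term $\int\langle\nabla_{x_i}\kappa_{t,i},d_{A_t}^*F_{A_t}\rangle$, which must contribute the crucial $t^{-1}$ factor in front of $\|d_{A_t}^*F_{A_t}\|_w^2$. The strategy here is to exploit (\ref{ASD2}): it expresses $\star_w F_{A_t}$ as $t$ times a bounded combination of $F_{B,t}$ and $\kappa_{t,j}$ up to the small tensors $G_0,G_1,G_2,G_3$. Applying $d_{A_t}$ to this identity, together with $d_{A_t}\star_w F_{A_t}=-\star_w d_{A_t}^*F_{A_t}$ and the second Bianchi relation $d_{A_t}F_{B,t}=\nabla_{x_1}\kappa_{t,2}-\nabla_{x_2}\kappa_{t,1}$, yields an identity in which $d_{A_t}^*F_{A_t}$ is $t$ times a linear combination of $\nabla_{x_j}\kappa_{t,j}$, $\kappa_{t,j}$, and fiberwise derivatives of the $G_j$. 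Young's inequality then bounds the cross term by $\tfrac18\sum_i\|\nabla_{x_i}F_{A_t}\|_w^2+\tfrac{\delta}{4t}\|d_{A_t}^*F_{A_t}\|_w^2+Ct(\|[\kappa_{t,i},\kappa_{t,i}]\|_w^2+t^\nu)$, with the $t^\nu$ term arising from the $o(t^{\nu/2})$ decay of $G_1,G_2,G_3$ and of $\partial_z G_1,\partial_{\bar z}G_1$ furnished by Lemma~\ref{lem-decay}.

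Combining the four ingredients above, with the Young constants chosen small enough that the $\tfrac14\sum\|\nabla_{x_i}F_{A_t}\|^2$ and $\tfrac{\delta}{t}\|d_{A_t}^*F_{A_t}\|^2$ contributions survive with strictly positive coefficients, yields the first inequality. The second then follows immediately: the hypothesis (\ref{hyp:curv}) forces $|\kappa_{t,j}|_{\omega^{SF}}\le Ct^{1/2}$, so $\|[\kappa_{t,j},\kappa_{t,j}]\|_w^2\le Ct^2$, and the term $-C_4' t\sum_j\|[\kappa_{t,j},\kappa_{t,j}]\|_w^2$ is therefore $O(t^3)$, absorbed into $-C_4 t$ for $t\le 1$. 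The delicate part of the argument is Step~4: tracking carefully how the single $t$-factor in (\ref{ASD2}) propagates through all subsequent differentiations and ensuring every correction coming from the non-constancy of $W$, from the variation of $\tau$, and from the $G_j$'s can be absorbed into one of the four controlled pieces.
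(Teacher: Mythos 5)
Your outline assembles the right ingredients (the Bianchi relations, \eqref{ASD1}, \eqref{ASD2}, the fiberwise Poincar\'e inequality, and the decay of the $G_j$), and the overall architecture matches the paper's. But the execution of your ``Step 4'' has a sign problem that defeats the whole estimate: the term $+\frac{\delta}{t}\|d_{A_t}^*F_{A_t}\|_w^2$ in the conclusion is a \emph{positive} contribution to the lower bound for $\Delta\|F_{A_t}\|_w^2$, and nothing in your argument produces it. You treat the cross term $\sum_i\int\langle\nabla_{x_i}\kappa_{t,i},d_{A_t}^*F_{A_t}\rangle$ as an error to be \emph{bounded} by Young's inequality by $\frac18\sum\|\nabla_{x_i}F_{A_t}\|^2+\frac{\delta}{4t}\|d_{A_t}^*F_{A_t}\|^2+\dots$; an upper bound on its absolute value only contributes $-\frac{\delta}{4t}\|d_{A_t}^*F_{A_t}\|^2$ to the lower bound, and your treatment of the $[\kappa,\kappa]$ term costs another $-\frac{\delta}{4t}\|d_{A_t}^*F_{A_t}\|^2$. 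Summing your four pieces gives $\Delta\|F_{A_t}\|_w^2\geq c\sum\|\nabla_{x_i}F_{A_t}\|^2-\frac{\delta}{2t}\|d_{A_t}^*F_{A_t}\|^2-\dots$, which is strictly weaker than (and of the opposite sign in the key term to) the claimed inequality. The correct route, as in the paper, is to use \eqref{ASD1} to write $\nabla_{x_1}\kappa_{t,1}+\nabla_{x_2}\kappa_{t,2}=\star_w(\nabla_{x_2}\kappa_{t,1}-\nabla_{x_1}\kappa_{t,2})=-\star_w d_{A_t}F_{B,t}$, substitute $F_{B,t}$ from \eqref{ASD2}, and integrate by parts so that the \emph{leading} term of $2\,{\rm Re}\langle(\nabla_{x_1}^2+\nabla_{x_2}^2)F_{A_t},F_{A_t}\rangle_w$ is identified exactly as $2t^{-1}{\rm Re}\langle G_4\,d_{A_t}^*F_{A_t},d_{A_t}^*F_{A_t}\rangle_w$ with $G_4=(W+G_2)^{-1}(1+G_0+G_1)$ a positive function bounded below; this manifestly positive quantity is the sole source of $\frac{\delta}{t}\|d_{A_t}^*F_{A_t}\|^2$, and only the corrections (from $d^fG_4$, from the $G_5$-terms, from $[\kappa_{t,j},\kappa_{t,j}]$, and from the $w$-dependence of $\star_w$) are estimated by Cauchy--Schwarz/Young, each budgeted to consume at most a fixed fraction of that positive term.

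A secondary error: the hypothesis \eqref{hyp:curv} gives $|\kappa_{t,j}|_{\omega^{SF}}=\sqrt{t}\,|\kappa_{t,j}|_{\omega_t^{SF}}\leq C$, not $\leq Ct^{1/2}$ as you assert; the mixed curvature components are merely bounded in the fixed metric. Your final absorption into $-C_4t$ still goes through because $\|[\kappa_{t,j},\kappa_{t,j}]\|_w^2\leq C$ already suffices, but the $O(t^3)$ claim is unjustified.
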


\begin{proof}  Note  we can write the mixed and base curvature terms as $$ \nabla_{x_1}d_{A_t}-d_{A_t}\nabla_{x_1}=\kappa_{t,1}, \  \  \ \nabla_{x_2}d_{A_t}-d_{A_t}\nabla_{x_2}=\kappa_{t,2},  \  \  \ [ \nabla_{x_1}, \nabla_{x_2}] =F_{B,t}. $$  By the Bianchi identity $d_{\Xi_t}F_{\Xi_t}=0$, and so $$d_{A_t}F_{t,B}=\nabla_{x_1}\kappa_{t,2}-\nabla_{x_2}\kappa_{t,1},     \  \  \ \nabla_{x_1} F_{A_t}=d_{A_t} \kappa_{t,1},  \ \ {\rm and}  \  \   \nabla_{x_2} F_{A_t}=d_{A_t} \kappa_{t,2}.$$


Recall that $\star_w dz=-i dz$, $\star_w d\bar{z}=id\bar{z}$ and $\star_w \frac{i}{2}Wdz\wedge d\bar{z}=1$.  Also, $\star_w$ is independent of $w$ when acting  on 1-forms,   and $\partial_{x_i}\star_w= -W^{-1} (\partial_{x_i}W)\star_w  $ in the other cases. By the above formulas, we derive
\bea
(\nabla_{x_1}^2+\nabla_{x_2}^2)F_{A_t}&=&\nabla_{x_1} d_{A_t} \kappa_{t,1}+ \nabla_{x_2} d_{A_t} \kappa_{t,2}\nonumber\\
&=&d_{A_t}(\nabla_{x_1}  \kappa_{t,1} + \nabla_{x_2}  \kappa_{t,2})+\sum_{j=1,2}[\kappa_{t,j}, \kappa_{t,j}].\nonumber
\eea
By \eqref{ASD1}, we also have
$$ \nabla_{x_1} \kappa_{t,1} =-\star_{w} \nabla_{x_1} \kappa_{t,2},  \  \  {\rm and} \  \  \nabla_{x_2} \kappa_{t,2} =\star_{w} \nabla_{x_2} \kappa_{t,1}. $$
Hence, using \eqref{ASD2}, we obtain a Weitzenb\"{o}ck type formula for $F_{A_t}$:
\bea
\label{thing20}
 (\nabla_{x_1}^2+\nabla_{x_2}^2)F_{A_t} & =   & d_{A_t}\star_{w}(  \nabla_{x_2} \kappa_{t,1}-  \nabla_{x_1} \kappa_{t,2})+\sum_{j=1,2}[\kappa_{t,j}, \kappa_{t,j}]\\ &=& - d_{A_t}\star_{w} d_{A_t}F_{B,t} +\sum_{j=1,2}[\kappa_{t,j}, \kappa_{t,j}]  \nonumber\\ &=&- t^{-1} d_{A_t}\star_{w} d_{A_t}(G_4  \star_w F_{A_t})  +\sum_{j=1,2}[\kappa_{t,j}, \kappa_{t,j}]  \nonumber\\ & & +d_{A_t}\star_{w} d_{A_t}( \sum_{i=1,2} \kappa_{t,i}\# G_5 ),\nonumber
\eea
 where $$G_4= (W+ G_2)^{-1}(1+G_0+G_1),  \   \  {\rm and} \  \ G_5= (W+ G_2)^{-1}G_3.$$  Note that for any differential form $\alpha$,  $d_{A_t}\alpha =d^{f} \alpha$, where $d^{f}$ denotes the differential along the fiber direction, i.e. $d^f=\partial_{y_1}(\cdot)dy_1+\partial_{y_2}(\cdot)dy_2$,  and $\nabla_{x_i}\alpha =\partial_{x_i} \alpha$.

Since $\| F_{A_t}\|_w^2=\int_{M_w}{\rm tr}F_{A_t}\wedge \star_w F_{A_t}$, a  direct calculation shows
$$\partial_{x_i}^2\| F_{A_t}\|_w^2=\|\nabla_{x_i}  F_{A_t}\|_w^2+2{\rm Re}\langle \nabla_{x_i}^2  F_{A_t} ,   F_{A_t}  \rangle_w+T_i,    $$
where the term  $T_i$ arises from derivative on the fiber metric, and satisfies
 \begin{eqnarray*} |T_i|& \leq &  C(|\partial_{x_i}\star_w|\|\nabla_{x_i} F_{A_t}\|_w\| F_{A_t}\|_w+ |\partial_{x_i}^2 \star_w|\| F_{A_t}\|_w^2)\\ & \leq & \frac{1}{2} \|\nabla_{x_i} F_{A_t}\|_w^2+C\| F_{A_t}\|_w^2 .   \end{eqnarray*}
Using the notation $\|\nabla_x  F_{A_t}\|_w^2=\sum\limits_{i=1,2}  \|\nabla_{x_i}  F_{A_t}\|_w^2$, the above calculations give
$$ \Delta \| F_{A_t}\|_w^2=\|\nabla_x  F_{A_t}\|_w^2+2{\rm Re}\langle (\nabla_{x_1}^2+\nabla_{x_2}^2)  F_{A_t} ,   F_{A_t}  \rangle_w+T_1+T_2.      $$
To this equality, we can now apply  \eqref{thing20}. Using $d_{A_t}^*=-\star_w d_{A_t} \star_w$,  we see
\begin{eqnarray*}
{\rm Re}\langle  (\nabla_{x_1}^2+\nabla_{x_2}^2)F_{A_t} ,   F_{A_t}  \rangle_w   &=& t^{-1}  {\rm Re}\langle G_4 d_{A_t}^*  F_{A_t} ,  d_{A_t}^*  F_{A_t}  \rangle_w  \\
&&+ {\rm Re}\langle \sum_{j=1,2}[\kappa_{t,j}, \kappa_{t,j}] ,   F_{A_t}  \rangle_w \\ & & -  t^{-1}  {\rm Re}\langle \star_{w} ( d^f G_4) \star_{w}  F_{A_t} ,  d_{A_t}^*  F_{A_t}  \rangle_w\\
 & & + {\rm Re}\langle \star_{w} d_{A_t}( \sum_{i=1,2} \kappa_{t,i}\# G_5 ),   d_{A_t}^*  F_{A_t}  \rangle_w.
\end{eqnarray*}

Next, note that for a constant $\delta >0$, we have $${\rm Re}\langle G_4 d_{A_t}^*  F_{A_t} ,  d_{A_t}^*  F_{A_t} \rangle_w  \geq 8  \delta \|  d_{A_t}^*  F_{A_t}  \|_w^2.  $$ Using \eqref{hyp:curv} to bound the mixed terms, and  the Poincar\'e inequality \eqref{hyp:poincare}, we have
 \begin{eqnarray*}|\langle \sum_{j=1,2}[\kappa_{t,j}, \kappa_{t,j}]   ,   F_{A_t} \rangle_w|  & \leq &  C \sum_{j=1,2}\|[\kappa_{t,j}, \kappa_{t,j}]\|_w\| F_{A_t} \|_w\\ & \leq &  C \sum_{j=1,2}\|[\kappa_{t,j}, \kappa_{t,j}]\|_w \| d_{A_t}^*F_{A_t} \|_w\\ & \leq & C t\sum_{j=1,2}\|[\kappa_{t,j}, \kappa_{t,j}]\|_w^2 +\frac{\delta}{t}\| d_{A_t}^*F_{A_t} \|_w^2.   \end{eqnarray*}
Because $d^f W=0$,  $ d^f G_0=0$,  and $d^f G_4=o(t^{\nu})$ for $\nu\gg 1$,  it follows that
\begin{eqnarray*}  | t^{-1}  {\rm Re}\langle \star_{w} ( d^f  G_4) \star_{w}  F_{A_t} ,  d_{A_t}^*  F_{A_t} \rangle_w|& \leq & C \| F_{A_t}\|_w\|d_{A_t}^* F_{A_t}\|_w \\ &\leq &  C t \| F_{A_t}\|_w^2+\frac{\delta}{t}\|d_{A_t}^* F_{A_t}\|_w^2.  \end{eqnarray*}
Finally, $ |d^fG_5|_{\omega^{SF}}=  o(t^{\nu})$ for any  $\nu\gg 1$, and so
 \begin{eqnarray*}|\langle \star_{w} d_{A_t}( \sum_{i=1,2} \kappa_{t,i}\# G_5 ),   d_{A_t}^*  F_{A_t} \rangle_w| & \leq & C\|d_{A_t}^* F_{A_t}\|_w (t^{\nu}+\sum_{i=1,2}\|  d_{A_t} \kappa_{t,i}\|_w)\\  & = &   C\|d_{A_t}^* F_{A_t}\|_w (t^{\nu}+\sum_{i=1,2}\| \nabla_{x_i} F_{A_t}\|_w) \\ & \leq  & Ct(t^{\nu}+\| \nabla_x  F_{A_t}\|_w^2)+  \frac{\delta}{t}\|d_{A_t}^* F_{A_t}\|_w^2. \end{eqnarray*}
Putting everything together \begin{eqnarray*} {\rm Re}\langle  (\nabla_{x_1}^2+\nabla_{x_2}^2)F_{A_t} ,   F_{A_t} \rangle_w  & \geq & \frac{4\delta}{t} \|  d_{A_t}^*  F_{A_t}  \|_w^2 - Ct(t^{\nu}+ \| F_{A_t} \|_w ^2+\| \nabla_x  F_{A_t}\|_w^2\\ & & +\sum_{j=1,2}\|[\kappa_{t,j}, \kappa_{t,j}]\|_w^2 ),\end{eqnarray*}
which implies
  \begin{eqnarray*} \Delta \| F_{A_t}\|_w^2& \geq &  \|\nabla_x  F_{A_t}\|_w^2+ \frac{4\delta}{t} \|  d_{A_t}^*  F_{A_t}  \|_w^2 -\frac{1}{2} \|\nabla_x  F_{A_t}\|_w^2-2C\| F_{A_t}\|_w^2 \\ & &- Ct(t^{\nu}+ \| F_{A_t} \|_w ^2+\| \nabla_x  F_{A_t}\|_w^2+\sum_{j=1,2}\|[\kappa_{t,j}, \kappa_{t,j}]\|_w^2 ).  \end{eqnarray*} The Poincar\'e inequality  (\ref{hyp:poincare}), along with Young's inequality, gives  $$ \Delta \| F_{A_t}\|_w^2 \geq \frac{1}{4}  \|\nabla_x  F_{A_t}\|_w^2+ \frac{\delta}{t} \|  d_{A_t}^*  F_{A_t}  \|_w^2- C t(\sum_{j=1,2}\|[\kappa_{t,j}, \kappa_{t,j}]\|_w^2 +t^\nu). $$  \end{proof}

We need the following elementary lemma, and we  include the proof for the reader's convenience (cf. Sublemma 6.48 in \cite{Fuk2}). As in the pervious lemma, let  $\Delta=\partial_{x_1}^2+\partial_{x_2}^2$ denote the coordinate Laplacian in the base.

\begin{lem}\label{le3}Let $\zeta$ be a non-negative real valued function satisfying $$ \Delta \zeta \geq \frac{\delta}{t}\zeta-t$$ on a disk $U\subset \mathbb{C}$. Then for an open subset  $U'\subset\subset U$,  there exists a constant $C_{5}$, which depends on the distance from $U'$ to $\partial U$, such that $$\sup_{U'}|\zeta|\leq C_{5}t^2.$$
    \end{lem}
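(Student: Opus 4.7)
\noindent\emph{Proof plan.} The first step is to subtract off a constant particular solution to make the inequality homogeneous: setting $\eta := \zeta - t^2/\delta$, the hypothesis becomes $\Delta \eta \geq (\delta/t)\eta$ on $U$, so $\eta$ is a subsolution of the linear elliptic operator $-\Delta + (\delta/t)\mathrm{Id}$. The heuristic is that for small $t$ this operator has a very large positive zeroth-order coefficient, so its subsolutions must decay on the length scale $\sqrt{t/\delta}$; since $U'$ sits at a fixed positive distance from $\partial U$, this exponential decay will crush any boundary data down to order $t^N$ for every $N$, in particular below $t^2$.

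To make this precise I will run a barrier comparison on small disks. Note first that in the context in which this lemma is applied, namely with $\zeta = \|F_{A_t}\|_w^2$ and hypothesis (\ref{hyp:curv}) in force, $\zeta$ is uniformly bounded on $\overline{U}$ by a constant $M$ independent of $t$ (since $|F_{A_t}|_{\omega^{SF}} \leq C_1 t^{1/2}$ by (\ref{hyp:curv+=}) and the fiber has bounded area), so $\eta \leq M$ everywhere on $U$. Fix $x_0 \in U'$ and $R$ with $\overline{B_R(x_0)} \subset U$, and set $k := \sqrt{\delta/t}$. As barrier I take the radial function
\[
\psi(x) \; := \; \frac{M}{I_0(kR)}\, I_0\bigl(k|x-x_0|\bigr),
\]
where $I_0$ is the modified Bessel function of the first kind. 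The identity $I_0'' + z^{-1} I_0' = I_0$ gives $\Delta \psi = k^2 \psi$ on $B_R(x_0)$, while $\psi = M \geq \eta$ on $\partial B_R(x_0)$. The difference $\psi - \eta$ then satisfies $-\Delta(\psi - \eta) + k^2(\psi - \eta) \geq 0$ with nonnegative boundary values, so the weak maximum principle (valid because the zeroth-order coefficient $k^2$ is positive) yields $\psi \geq \eta$ throughout $B_R(x_0)$. Evaluating at the center gives $\eta(x_0) \leq \psi(x_0) = M/I_0(kR)$.

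The final step invokes the asymptotic $I_0(z) \sim e^z/\sqrt{2\pi z}$ as $z \to \infty$, which gives $M/I_0(kR) = O\bigl(\sqrt{t}\, e^{-R\sqrt{\delta/t}}\bigr)$. This exponential is dominated by $t^2$ for all $t$ small enough (the threshold depending only on $M$, $\delta$ and $R$, hence ultimately only on $\mathrm{dist}(U',\partial U)$), so $\eta(x_0) \leq t^2$ and therefore $\zeta(x_0) \leq C_5 t^2$ below the threshold; above the threshold the bound is automatic after enlarging $C_5$ to absorb $M$. Since $x_0 \in U'$ was arbitrary, the conclusion follows. The main technical point to be careful about is the implicit a priori $C^0$ bound on $\zeta$: the homogeneous operator $-\Delta + (\delta/t)\mathrm{Id}$ admits positive subsolutions (e.g.\ $I_0(k|x|)$ itself) that are large in the interior once they are allowed to be large near the boundary, so without the uniform bound supplied by (\ref{hyp:curv}) the comparison would have nothing to compare against.
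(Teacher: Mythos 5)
Your proof is correct, and it takes a recognizably different technical route from the paper's, though both are maximum-principle arguments built around an auxiliary function decaying on the length scale $\sqrt{t}$. The paper does not subtract a particular solution; instead it considers $\xi = \zeta\exp(-a|w-w_0|^2/\sqrt{t})$ with $a$ chosen so that $4a^2d^2+4a<\delta$, and splits into two cases at the maximum point of $\xi$: if the maximum is interior, the inhomogeneous term $-t$ is balanced directly against $\frac{\delta}{2t}\zeta$ to give $\zeta(w_1)\le 2\delta^{-1}t^2$, and this is where the $t^2$ comes from; if the maximum is on $\partial U$, the Gaussian factor $e^{-ar^2/\sqrt{t}}$ crushes the boundary value. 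In your argument the $t^2$ instead comes entirely from the particular solution $t^2/\delta$ subtracted at the outset, and the Bessel barrier only has to supply super-polynomial smallness; this makes the source of the constant $C_5$ more transparent and avoids the slightly fiddly choice of $a$. Two remarks. First, you are right that the lemma as stated is not self-contained: both proofs need an a priori bound $\zeta\le M$ on $U$ to handle the boundary case (the paper's proof writes $\zeta(w_1)\le C$ there without comment), and in the application $\zeta=\|F_{A_t}\|_w^2\le Ct$ by (\ref{hyp:curv}), so the hypothesis is available; your explicit flagging of this hidden assumption is a genuine improvement in rigor. Second, a harmless slip: $I_0(z)\sim e^z/\sqrt{2\pi z}$ gives $M/I_0(kR)=O\bigl(t^{-1/4}e^{-R\sqrt{\delta/t}}\bigr)$, not $O\bigl(\sqrt{t}\,e^{-R\sqrt{\delta/t}}\bigr)$; the exponential still dominates every power of $t$, so nothing downstream changes.
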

 \begin{proof}
For any point $w_0\in U'$, let $d= \sup \{ |w-w_0\|w\in  U\}$, and let $a$ be a positive number such that $4a^2d^2+4a <\delta$.
 Consider the function  $\xi=\zeta\exp(-\frac{a|w-w_0|^2}{\sqrt{t}})$.   If $\xi$ achieves its maximum $w_1$ on $\partial U$,  then   $$\zeta(w_0)=\xi(w_0)\leq \xi(w_1)= \zeta(w_1) \exp(-\frac{a|w_1-w_0|^2}{\sqrt{t}})\leq C  \exp(-\frac{ar^2}{\sqrt{t}}), $$ where $r$ is the distance from $w_0$ to $\partial U$.  For $t$ small enough the right hand side is smaller than $Ct^2$.

 Otherwise, at an interior maximum $w_1$, we see $$0=\partial_w \xi(w_1)=(-\frac{a(\bar{w}_1-\bar{w}_0)}{\sqrt{t}}\zeta (w_1) +\partial_w \zeta (w_1))\exp(-\frac{a|w_1-w_0|^2}{\sqrt{t}}), $$ and $\partial_{\bar{w}} \xi(w_1)=0 $.  Furthermore, since $\Delta=2\partial_w \partial_{\bar{w}}$,  at this maximum point
\bea
0 & \geq &  \Delta \xi(w_1)  \nonumber \\  &= &  2   \left(\partial_w \partial_{\bar{w}} \zeta (w_1)- \frac{a^2|w_1-w_0|^2+a \sqrt{t}}{t}\zeta (w_1) \right)\exp(-\frac{a|w_1-w_0|^2}{\sqrt{t}}) \nonumber \\
&\geq&     \left(\frac{\delta}{t}\zeta (w_1)-2 \frac{a^2d^2+a }{t}\zeta (w_1)-t \right)\exp(-\frac{a|w_1-w_0|^2}{\sqrt{t}}) \nonumber \\ &\geq&   \left(\frac{\delta}{2t}\zeta (w_1)-t \right)\exp(-\frac{a|w_1-w_0|^2}{\sqrt{t}}).    \nonumber
\eea
 Thus $$\xi(w_1)\leq \zeta (w_1) \leq 2 \delta^{-1} t^2,$$ and so $$\zeta(w_0)=\xi(w_0)\leq \xi(w_1) \leq 2 \delta^{-1} t^2.  $$ \end{proof}

\begin{lem}\label{le-l2}  For any $w\in U' \subset\subset U$,  $$\| F_{A_t}\|_w \leq C_{6} t,  \  \  \   and  \  \    \|\nabla_{x_i} F_{A_t}\|_{L^2(U', \omega^{SF})} \leq C_{6}  t^{\frac{1}{2}} , $$ for a constant $C_{6}>0$ independent of $t$ and $w$.
    \end{lem}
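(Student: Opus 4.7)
The plan is to combine the differential inequality from Lemma \ref{le2} with the fiberwise Poincar\'e inequality (\ref{hyp:poincare}), apply the maximum principle Lemma \ref{le3} to obtain the pointwise $L^2$-bound on $F_{A_t}$, and then extract the derivative bound through a standard Caccioppoli-type integration against a cutoff.

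First, I would dispose of the ``error'' terms in the Bochner inequality. The assumption (\ref{hyp:curv}) gives $\|\kappa_{t,j}\|_{C^0(M_U,\omega^{SF})} \leq C_1$, so
$$t\sum_{j=1,2}\|[\kappa_{t,j},\kappa_{t,j}]\|_w^2 + t^{1+\nu} \leq C t,$$
and thus Lemma \ref{le2} reduces to
$$\Delta\|F_{A_t}\|_w^2 \geq \tfrac14\sum_i\|\nabla_{x_i}F_{A_t}\|_w^2 + \tfrac{\delta}{t}\|d_{A_t}^*F_{A_t}\|_w^2 - Ct.$$
Next, the fiberwise Poincar\'e inequality (\ref{hyp:poincare}) implies $\|d_{A_t}^*F_{A_t}\|_w^2 \geq C_2^{-2}\|F_{A_t}\|_w^2$, so after discarding the nonnegative gradient term we obtain
$$\Delta\|F_{A_t}\|_w^2 \geq \tfrac{\delta'}{t}\|F_{A_t}\|_w^2 - Ct$$
with $\delta' = \delta/C_2^2$. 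Applying Lemma \ref{le3} to $\zeta = \|F_{A_t}\|_w^2$ on a slightly larger disk $U'' \subset\subset U$ containing $U'$ yields $\sup_{U'}\|F_{A_t}\|_w^2 \leq C t^2$, hence the pointwise bound $\|F_{A_t}\|_w \leq C_6 t$ on $U'$.

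For the second estimate, I would run a standard cutoff argument. Choose a nonnegative $\eta \in C_c^\infty(U'')$ with $\eta \equiv 1$ on $U'$, multiply the Bochner inequality by $\eta^2$, and integrate over $U''$. Since $\eta$ is compactly supported, integration by parts on the base gives
$$\int_{U''}\eta^2 \Delta\|F_{A_t}\|_w^2\, dx_1 dx_2 = \int_{U''}\|F_{A_t}\|_w^2\, \Delta(\eta^2)\, dx_1 dx_2 \leq C\sup_{U''}\|F_{A_t}\|_w^2 \leq Ct^2,$$
using the $C^0$-bound just established on $U''$ (after shrinking $U''$ inside where the pointwise bound holds). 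Combining with the lower bound in the Bochner inequality,
$$\tfrac14 \int_{U'}\sum_i\|\nabla_{x_i}F_{A_t}\|_w^2\, dx_1 dx_2 \leq Ct^2 + Ct \leq Ct,$$
which yields $\|\nabla_{x_i}F_{A_t}\|_{L^2(U',\omega^{SF})} \leq C_6 t^{1/2}$, noting that the measure $\omega^{SF} \wedge \omega^{SF}$ differs from $dx_1 dx_2$ (against the fiber volume) only by uniformly bounded factors on $U'$.

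The main technical issue is purely bookkeeping: one must iterate the estimate on a nested chain $U' \subset\subset U'' \subset\subset U$ so that the pointwise bound on $U''$ (coming from Lemma \ref{le3}) is available when controlling the right-hand side $\int \|F_{A_t}\|_w^2 \Delta(\eta^2)$ in the cutoff argument for $U'$. Since Lemma \ref{le3} produces a constant depending on the distance to $\partial U$, this nesting is harmless. No new analytic input is required beyond Lemmas \ref{le2}, \ref{le3}, and the already-proved Poincar\'e inequality \eqref{hyp:poincare}.
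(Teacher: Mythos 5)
Your argument is correct and follows essentially the same route as the paper: Lemma \ref{le2} plus the fiberwise Poincar\'e inequality give $\Delta\|F_{A_t}\|_w^2\geq \frac{\delta'}{t}\|F_{A_t}\|_w^2-Ct$, Lemma \ref{le3} yields the pointwise bound, and a cutoff/integration-by-parts in the base gives the gradient estimate. The only cosmetic difference is that the paper bounds $\int_U\|F_{A_t}\|_w^2$ on the full disk using the crude bound from \eqref{hyp:curv} rather than nesting domains as you do; both are fine.
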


\begin{proof}  Lemma \ref{le2} and Lemma \ref{eigen} imply  $$ \Delta \|F_{A_t}\|_w^{2} \geq  \frac{1}{4}  \|\nabla_x  F_{A_t}\|_w^2+ \frac{\delta}{t} \|  d_{A_t}^*  F_{A_t}  \|_w^2- Ct\geq \frac{\delta' }{t} \|F_{A_t}\|_w^{2} -Ct.$$  Thus by Lemma \ref{le3}, $$\|F_{A_t}\|_w^2 \leq C t^2.$$

Let $\vartheta$ be a smooth non-negative  function on  $U$ such that $\vartheta\equiv 1$ on $U'$, and $U'\subset {\rm supp}(\vartheta) \subset U$.  By  Lemma \ref{le2},  \bea
 \int_{U'}\frac{1}{4}  \|\nabla_x  F_{A_t}\|_w^2dx_1dx_2 & \leq &  \int_{U}\vartheta \Delta \|F_{A_t}\|_w^{2}  dx_1dx_2 +C t   \nonumber\\ & \leq& \int_{U}\max \{0, \Delta \vartheta\}  \|F_{A_t}\|_w^{2}  dx_1dx_2 +C_{22}t \nonumber \\ & \leq & C  ( \int_{U}  \|F_{A_t}\|_w^{2}  dx_1dx_2 +t) \nonumber \\ & \leq & C t, \nonumber
\eea and we obtain the second estimate.
 \end{proof}

 \begin{proof}[Proof of Proposition \ref{prop2}] Firstly,  we prove the $C^0$-estimate of $
F_{A_t}$.   Assume that there is a sequence $t_k \rightarrow 0$ such that  $$ t_k^{-1}\sup_{M_{w_k}}|F_{A_{t_k}}|_{\omega^{SF}}\rightarrow\infty, $$  where $w_k \rightarrow w_0$ in $U'$.

In Section 2.4,  we saw that for
    $D_r=\{\tilde{w} \in \mathbb{C}| |\tilde{w}|<r\}$,   one can define smooth embeddings $\Phi_{k,r}: D_r \times M_{w_0}   \rightarrow M_U$ by $$(\tilde{w}, a_1+a_2 \tau (w_0))\mapsto ( w_k+ \sqrt{t_k}\tilde{w},  a_1+a_2 \tau(w_k+\sqrt{t_k}\tilde{w})),   \  \  \  a_1,a_2 \in\mathbb{R}/\mathbb{Z},$$  using the identification of $M_U$ with $(U\times\mathbb{C})/{\rm Span}_{\mathbb{Z}}\{1, \tau\}$.
We also demonstrated that  $d\Phi_{k,r}^{-1}I d\Phi_{k,r} $ $\rightarrow I_\infty$, where $I$ is the complex structure of $M$, and $I_\infty$ denotes the complex structure of $\mathbb{C} \times M_{w_0}. $ Furthermore, as $t_k\rightarrow0$, we have both $$\Phi_{k,r}^*  t_k^{-1} \omega_{t_k}^{SF} \rightarrow \omega_\infty\ \ \ {\rm and}  \ \  \ (T_{\sigma_0} \circ \Phi_{k,r})^*  t_k^{-1} \omega_{t_k} =\Phi_{k,r}^*  t_k^{-1}T_{\sigma_0}^* \omega_{t_k} \rightarrow \omega_\infty $$ in the $C^\infty$-sense on $ D_r \times M_{w_0}$.
 For any $t_k$, we identify $  D_r\times M_{w_0}$ with $ \Phi_{k,r}( D_r\times M_{w_0})$ by $\Phi_{k,r}$.
  We have the curvature bound
     $$ | F_{\Xi_{t_k}}|_{ t_k^{-1} \omega_{t_k}^{SF}}\leq C t_k^{\frac{1}{2}},  \  \ {\rm and} \ \  | F_{\Xi_{t_k}}|_{  \omega_\infty}\leq 2C t_k^{\frac{1}{2}},$$  by (\ref{hyp:curv}).

 Since   $\Xi_{t_k}$ is Yang-Mills, by the strong Uhlenbeck compactness theorem (cf. Theorem \ref{Ucompact}),  there exists a subsequence and a family of unitary gauges $u_{t_k}$, such that $$\Xi_{t_k}'=u_{t_k}(\Xi_{t_k}) \rightarrow \Xi_\infty$$ in the locally   $C^\infty$-sense on $D_r\times M_{w_0}$, where $\Xi_\infty$ is a flat $SU(n)$-connection.  Note that $ F_{\Xi_{t_k}'}=u_{t_k}F_{\Xi_{t_k}}u_{t_k}^{-1}$, and so
   $$| F_{\Xi_{t_k}'}|_{ t_k^{-1} \omega_{t_k}^{SF}}= | F_{\Xi_{t_k}}|_{ t_k^{-1} \omega_{t_k}^{SF}}\leq C t_k^{\frac{1}{2}}\ \ \  {\rm and} \ \ \ | F_{\Xi_{t_k}'}|_{  \omega_\infty}\leq 2C t_k^{\frac{1}{2}}.$$
   Furthermore we have $     \| F_{\Xi_{t_k}'}\|_{ C^{\ell}(  \omega_\infty)}\rightarrow 0  $ for any $\ell \geq 0$, when $t_k \rightarrow 0$.
   Now, recall the Weitzenb\"{o}ck formula   $$0= \Delta _{\Xi_{t_k}'}F_{\Xi_{t_k}'}=\nabla_{\Xi_{t_k}'}^\ast\nabla_{\Xi_{t_k}'}F_{\Xi_{t_k}'}+ R_{{t_k}^{-1}\omega_{t_k}}\#F_{\Xi_{t_k}'}+F_{\Xi_{t_k}'}\#F_{\Xi_{t_k}'},$$  which is an elliptic partial differential equation with smooth coefficients.   The $L^p$-estimate for elliptic equations (cf. \cite{GT}, and the appendix of \cite{Bess}) gives  $$ \|F_{\Xi_{t_k}'}\|_{L^p_2(\omega_\infty)}\leq C  \|F_{\Xi_{t_k}'}\|_{L^p(\omega_\infty)} \leq C  t_k^{\frac{1}{2}},$$ for any $p>2$.

  We have  $w-w_k=\sqrt{t_k}\tilde{w}$ through $\Phi_{k,r}$, and let $\tilde{w}=\tilde{x}_1+i\tilde{x}_2$.  By  (\ref{thing20}),  \bea
  \label{keyequality}
 (\nabla_{x_1}^2+\nabla_{x_2}^2)F_{A_{t_k}'}  &=&- t_k^{-1} d_{A_{t_k}'}\star_{w} d_{A_{t_k}'}(G_4  \star_w F_{A_{t_k}'})  +\sum_{ij}\kappa_{t_k,i}'\# \kappa_{t_k,j}'  \nonumber\\ & & +d_{A_{t_k}'}\star_{w} d_{A_{t_k}'}( \sum_{i=1,2} \kappa_{t_k,i}'\# G_5 ),
\eea
 where $\nabla_{x_j}=\partial_{x_j}+B_{t_k,j}'$,   $G_4= (W+ G_2)^{-1}(1+G_0+G_1)$ and $G_5= (W+ G_2)^{-1}G_3$. Recall $$\|G_1\|_{C^{0}}+\|d^f G_1\|_{C^{\ell}}+\|G_j\|_{C^{\ell}}\leq C t_k^\nu $$ for  $\nu \gg 1$. Let $z=\tilde{y}_1+i  \tilde{y}_2$, and set $\nabla_{A_{t_k}', y_j}=\partial_{\tilde{y}_j}+A_{t_k,j}'$. By the Weitzenb\"{o}ck formula, $$d_{A_{t_k}'}d_{A_{t_k}'}^* F_{A_{t_k}'} =\nabla_{A_{t_k}'}^*\nabla_{A_{t_k}'} F_{A_{t_k}'}+ F_{A_{t_k}'}\#F_{A_{t_k}'}.$$ The connection Laplacian above is given by   $$\nabla_{A_{t_k}'}^*\nabla_{A_{t_k}'}=-W^{-1}(\nabla_{A_{t_k}', \tilde{y}_1}^2+\nabla_{A_{t_k}',\tilde{y}_2}^2), $$ since $|\partial_{\tilde{y}_j}|^2_{\omega^{SF}}=W$. 

We want to bound terms on the right hand side of \eqref{keyequality}. Scaling gives $B_{t_k,i}'dx_i=\sqrt{t_k}B_{t_k,i}'d\tilde{x}_i$ and  $\kappa_{t_k,i}'dx_i=\sqrt{t_k}\kappa_{t_k,i}'d\tilde{x}_i$, in addition to $$   F_{B,t_k}dx_1\wedge dx_2=t_k F_{B,t_k}d\tilde{x}_1\wedge d\tilde{x}_2.$$
This leads to the following control of the mixed terms
$$ | \sqrt{t_k}\kappa_{t_k,i}' |_{\omega_\infty}\leq 2C  t_k^{\frac{1}{2}},\ \ \ \ \| \sqrt{t_k}\kappa_{t_k,i}' \|_{C^{\ell}(  \omega_\infty)} \rightarrow 0, $$
 and $$   \| \sqrt{t_k}\kappa_{t_k,i}' \|_{L^p_2(  \omega_\infty)}\leq  \|F_{\Xi_{t_k}'}\|_{L^p_2(\omega_\infty)} \leq C   t_k^{\frac{1}{2}}.$$
Additionally, writing $\nabla_{\tilde{x}_j}=\partial_{\tilde{x}_j}+\sqrt{t_k }B_{t_k,j}'$, we have $$  \nabla_{\tilde{x}_1}^2+\nabla_{\tilde{x}_2}^2=t_k(\nabla_{x_1}^2+\nabla_{x_2}^2).$$ The bound $|\partial^{\ell}_{y_j} G_5|\leq C $ gives $$\|t_k^{\frac{1}{2}}d_{A_{t_k}'}\star_{w} d_{A_{t_k}'}( \sum_{i=1,2} \kappa_{t_k,i}'\# G_5 )\|_{L^p(\omega_\infty)}\leq C  t_k^{\frac{1}{2}}$$ for any $p>2$. Furthermore
     $$\|\sum_{ij}\kappa_{t_k,i}'\# \kappa_{t_k,j}'\|_{ C^0(\omega_\infty)}\leq C.$$

Now, if we write $G_4= W^{-1}(1+G_0)+G_6$, then  $$\frac{1}{2} W^{-1}(w_0)\leq G_4 \leq 2  W^{-1}(w_0),  \ \ |\partial_{\tilde{y}_j}^{\ell}G_6|\leq C t_k^{\nu},$$
and
   \bea d_{A_{t_k}'}d_{A_{t_k}'}^* G_4 F_{A_{t_k}'}& = &G_4   d_{A_{t_k}'}d_{A_{t_k}'}^*  F_{A_{t_k}'}+d^f G_6\# \nabla_{A_{t_k}'} F_{A_{t_k}'}\nonumber  \\ & & + \partial_{\tilde{y}_i \tilde{y}_j}^2 G_6\#  F_{A_{t_k}'}. \nonumber \eea
We define the operator $$\mathcal{D}_k=\nabla_{\tilde{x}_1}^2+\nabla_{\tilde{x}_2}^2-G_4 \nabla_{A_{t_k}'}^*\nabla_{A_{t_k}'}=\nabla_{\tilde{x}_1}^2+\nabla_{\tilde{x}_2}^2+W^{-1}G_4(\nabla_{A_{t_k}', \tilde{y}_1}^2+\nabla_{A_{t_k}',\tilde{y}_2}^2), $$ which  is a uniformly  elliptic operator of order two. Then  $ F_{A_{t_k}'}$ satisfies the following  elliptic equation
          \bea & &
\mathcal{D}_k  F_{A_{t_k}'}  -   d^f G_6 \# \nabla_{A_{t_k}'} F_{A_{t_k}'}- \partial_{\tilde{y}_i \tilde{y}_j}^2 G_6 \#  F_{A_{t_k}'} \\ &= &  G_4 F_{A_{t_k}'}\#F_{A_{t_k}'}  + t_k \sum_{ij}\kappa_{t_k,i}'\# \kappa_{t_k,j}'   + t_k d_{A_{t_k}'}\star_{w} d_{A_{t_k}'}( \sum_{i=1,2} \kappa_{t_k,i}'\# G_5 ) \nonumber\\ & =& G_7. \nonumber
\eea

     By the  $L^p$-estimate for elliptic equations, for any $p>2$, $$ \| F_{A_{t_k}'}\|_{L_2^p(D_{r'}\times M_{w_0})}\leq C(\| F_{A_{t_k}'}\|_{L^2(D_{r}\times M_{w_0})}+\| G_7\|_{L^p(D_{r}\times M_{w_0})}),$$ for a $r'<r$. We obtain  $$\| F_{A_{t_k}'}\|_{L_2^p(D_{r'}\times M_{w_0})}\leq C  t_k,$$  since  $$\| G_7\|_{L^p(D_{r}\times M_{w_0})}\leq C  (\| F_{A_{t_k}'}\|_{C^0(D_{r}\times M_{w_0})}^2+t_k) \leq C t_k,$$
     and  $$ \| F_{A_{t_k}'}\|_{L^2(D_{r}\times M_{w_0})}^2=\int_{D_{r}}\|F_{A_{t_k}'}\|_w^2d\tilde{x}_1d\tilde{x}_2 \leq C  t_k^2    $$ by Lemma \ref{le-l2}.
        The Sobolev embedding theorem gives $$\|F_{A_{t_k}'}\|_{C^{1,\alpha}(D_{r'}\times M_{w_0})} \leq  C  t_k, $$ and  thus $$\|F_{A_{t_k}}\|_{C^0(M_{w_k})}=\|F_{A_{t_k}'}\|_{C^0(M_{w_k})}\leq \|F_{A_{t_k}'}\|_{C^{1,\alpha}{(D_{r'}\times M_{w_0})}}\leq C  t_k ,$$  which is a contradiction.

         Therefore we obtain  the $C^0$-estimate, i.e. $$  \|F_{A_t}\|_{C^{0}(M_{U'},\omega^{SF})} \leq C    t,$$ for a constant $C >0$, and
         $$  \|F_{B,t}\|_{C^{0}(M_{U'},\omega^{SF})} \leq C (t^{-1}  \|F_{A_t}\|_{C^{0}(M_{U'},\omega^{SF})} +  \|\kappa_{t,j}\|_{C^{0}(M_{U'},\omega^{SF})}) \leq C  , $$ by (\ref{ASD2}).
 \end{proof}

\section{Further estimates for small fiberwise curvature}
\label{furthreestimates}

We continue our discussion of the previous  section, and prove further estimates under  the exact  same setup.  Let $U\subset\subset N^o$ be an open subset,  biholomorphic to a disk in $\mathbb{C}$, and $M_U\cong (U\times \mathbb{C})/{\rm Span}_{\mathbb{Z}}\{1, \tau\}$. Fix a  trivialization $P|_{M_U}\cong  M_U \times SU(n)$ and $\mathcal{V}|_{M_U}\cong  M_U \times \mathbb{C}^n$. Under such trivialization,  the Hermitian metric $H$ is the absolute value $|\cdot|$, the connection $\Xi_t$ is a matrix valued 1-form, and the curvature $F_{\Xi_t}$ is a matrix valued 2-form. Assume that for $t\ll 1$,  (\ref{hyp:curv}) and (\ref{hyp:poincare}) hold, and thus all conclusions of Section \ref{C0boundsection}  hold.

 Recall that a fiberwise flat  connection   \begin{equation}\label{bconnections3} A_{0,t}=  \pi ({\rm Im} (\tau))^{-1} ({\rm diag}\{q_{1,t}, \cdots, q_{n,t}\}\bar{\theta}-{\rm diag}\{\bar{q}_{1,t}, \cdots, \bar{q}_{n,t}\}\theta)  \end{equation}  is induced by $D_t \cap M_U$ (see Section 3.3), i.e.  $D_t \cap M_w=\{q_{1,t}(w), \cdots, q_{n,t}(w)\}$. The goal of this section is the following proposition, which shows the relationship between the energy of curvature and the spectral covers. Here, as above, the coordinate derivative in the base is computed in our fixed frame.

\begin{prop}\label{prop2+0} If (\ref{hyp:curv}) and (\ref{hyp:poincare}) hold for $t\ll 1$,
 we have the following inequalities. For $U'\subset\subset U$,  $$  \|F_{\Xi_t}\|_{L^2(M_{U'}, \omega_t)}^2  \leq C_1(t+\int_{U'}\sum_{j=1,2}\|\partial_{x_j}A_{0,t}\|_w^2  dx_1dx_2), \  \  \  and$$ $$  \|F_{\Xi_t}\|_{L^2(M_{U'}, \omega_t)}^2  \geq C_1^{-1}(\int_{U'}\sum_{j=1,2}\|\partial_{x_j}A_{0,t}\|_w^2  dx_1dx_2-t), $$
       where    the constant $C_1$ may depend on the distance from $U'$ to $\partial U$, but is   independent of $t$.
 \end{prop}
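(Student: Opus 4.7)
The plan is to split $F_{\Xi_t}$ according to the block decomposition \eqref{curve}, use Proposition \ref{prop2} to discard all but the mixed term $\sum_i \kappa_{t,i}\wedge dx_i$, and then compare $\kappa_{t,i}$ to $\partial_{x_i}A_{0,t}$ via fiberwise Hodge theory.

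For the first reduction, the three groups of basis $2$-forms $\{dz\wedge d\bar z\}$, $\{dz\wedge dx_i,\,d\bar z\wedge dx_i\}$ and $\{dx_1\wedge dx_2\}$ are pairwise orthogonal with respect to $\omega_t^{SF}$, so there are no cross terms in $|F_{\Xi_t}|^2_{\omega_t^{SF}}(\omega_t^{SF})^2$. By Lemma \ref{lem-decay} the $L^2$ norms with respect to $\omega_t$ and $\omega_t^{SF}$ agree up to a $(1+o(1))$ factor, and hence
\[
\|F_{\Xi_t}\|^2_{L^2(\omega_t)}=(1+o(1))\Bigl(\|F_{A_t}\|^2_{L^2(\omega_t^{SF})}+\textstyle\sum_i\|\kappa_{t,i}\wedge dx_i\|^2_{L^2(\omega_t^{SF})}+\|F_{B,t}\,dx_1\wedge dx_2\|^2_{L^2(\omega_t^{SF})}\Bigr).
\]
The fiber/base scaling of $\omega_t^{SF}$ gives pointwise identities $|F_{A_t}|^2_{\omega_t^{SF}}=t^{-2}|F_{A_t}|^2_{\omega^{SF}}$, $|\kappa_{t,i}\wedge dx_i|^2_{\omega_t^{SF}}=t^{-1}|\kappa_{t,i}|^2_{\omega^{SF}}|dx_i|^2_{\omega^{SF}}$, and $|F_{B,t}\,dx_1\wedge dx_2|^2_{\omega_t^{SF}}=|F_{B,t}|^2_{\omega^{SF}}|dx_1\wedge dx_2|^2_{\omega^{SF}}$, while $(\omega_t^{SF})^2$ is proportional to $t$ times the Euclidean volume form. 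Combined with the $C^0$-bounds $\|F_{A_t}\|_{C^0(\omega^{SF})}\le C_3 t$ and $\|F_{B,t}\|_{C^0(\omega^{SF})}\le C_3$ from Proposition \ref{prop2}, the first and third summands are both $O(t)$, while the middle one equals $\int_{U'}\sum_i\|\kappa_{t,i}\|_w^2\,dx_1 dx_2$. Hence both inequalities of the proposition reduce to showing
\[
\int_{U'}\textstyle\sum_i\|\kappa_{t,i}\|_w^2\,dx_1 dx_2=\int_{U'}\sum_i\|\partial_{x_i}A_{0,t}\|_w^2\,dx_1 dx_2+O(t).
\]

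For the Hodge-theoretic step I would work on each fiber $M_w$ with the flat reference connection $A_{0,t}|_{M_w}$. Because $q_{i,t}(w)\neq q_{j,t}(w)$ for $i\neq j$, the twisted cohomology $\mathcal{H}^1_{A_{0,t}|_{M_w}}(M_w,\mathfrak{su}(n))$ consists precisely of $\mathfrak{su}(n)$-diagonal $1$-forms with constant coefficients, and $\partial_{x_i}A_{0,t}|_{M_w}$ is visibly of this form, so it equals its own harmonic projection with $L^2$ norm exactly $\|\partial_{x_i}A_{0,t}\|_w$. To control the non-harmonic part of $\kappa_{t,i}$, the identity \eqref{ASD1} gives $d_{A_t}^*\kappa_{t,1}=-\star_w\nabla_{x_2}F_{A_t}$ and analogously for $\kappa_{t,2}$, while the Bianchi identity yields $d_{A_t}\kappa_{t,i}=\nabla_{x_i}F_{A_t}$; the $L^2$ estimate $\|\nabla_{x_i}F_{A_t}\|_{L^2(\omega^{SF})}\le Ct^{1/2}$ from Proposition \ref{prop2}, the fiberwise bound $\|A_t-A_{0,t}|_{M_w}\|_{C^{0,\alpha}}\le Ct$ from Lemma \ref{connectionC0lem} (used to swap $d_{A_t}$ for $d_{A_{0,t}|_{M_w}}$ at the cost of $O(t)$), and the uniform first-eigenvalue bound of Lemma \ref{eigen} then combine to give $\int_{U'}\|\kappa_{t,i}-\kappa_{t,i}^{\mathrm{harm}}\|_w^2\,dx_1 dx_2\le Ct$.

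What remains is to match $\kappa_{t,i}^{\mathrm{harm}}$ with $\partial_{x_i}A_{0,t}|_{M_w}$ in $L^2(U')$, and this is the main obstacle. Writing $a_t=A_t-A_{0,t}|_{M_w}$ and decomposing $d_{A_t}B_{t,i}=d_{A_{0,t}|_{M_w}}B_{t,i}+[a_t,B_{t,i}]$, the first summand is $d$-exact and hence orthogonal to the harmonic space, while the second is $O(t)$ in $C^0$; so the only remaining contribution is the harmonic projection of $\partial_{x_i}a_t$. I would control this by using that the holonomy of $A_t$ around each generator of $\pi_1(M_w)$ is unitarily conjugate to $\mathrm{diag}\{e^{2\pi i q_{j,t}(w)}\}$ by construction of the spectral data: differentiating this identity in $x_i$ and pairing against the dual harmonic basis identifies the harmonic projection of $\partial_{x_i}A_t$ with $\partial_{x_i}A_{0,t}|_{M_w}$ up to an $O(t)$ gauge-exact error, which traces back to a careful study of the $w$-dependence of the fiberwise Hermitian gauge $\hat s_t$ of Theorem \ref{gaugef}. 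Collecting all $O(t)$ error terms and absorbing the base-geometry constants into a single $C_1$ then produces both inequalities of the proposition.
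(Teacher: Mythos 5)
Your first reduction (orthogonality of the three blocks of $F_{\Xi_t}$, the scaling of $\omega_t^{SF}$, and the $C^0$ bounds from Proposition \ref{prop2} to discard the $F_{A_t}$ and $F_{B,t}$ contributions) is essentially the paper's opening move, and your control of the non-harmonic part of $\kappa_{t,i}$ via the Bianchi identity, \eqref{ASD1}, the $L^2$ bound on $\nabla_{x_i}F_{A_t}$ and Lemma \ref{eigen} is sound. The proof, however, is not complete: the step where you match the harmonic projection of $\kappa_{t,i}$ with $\partial_{x_i}A_{0,t}$ is exactly the hard part, and your argument for it does not work as stated. The holonomy of the non-flat connection $A_t$ around generators of $\pi_1(M_w)$ is \emph{not} unitarily conjugate to $\mathrm{diag}\{e^{2\pi i q_{j,t}(w)}\}$; the spectral data only determines the complex gauge orbit (equivalently the S-equivalence class), and the unitary holonomy of $A_t$ differs from that of $A_{0,t}$ by curvature-dependent terms. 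Even granting an approximate identity, differentiating it in $x_i$ requires controlling the base derivative of the conjugating element, i.e.\ of the Hermitian gauge $\hat s_t$ — and the estimate $\int_{U}\sum_j\|\nabla_{x_j}(A_t-A_{0,t})\|_w^2\,dx_1dx_2\le C t$ is precisely the content of the paper's Lemma \ref{prop1}, a delicate elliptic/interpolation argument that you defer to "a careful study of the $w$-dependence of $\hat s_t$" without supplying it. A second, smaller gap: you treat $[a_t,B_{t,i}]$ as $O(t)$ in $C^0$, but $B_{t,i}$ is not uniformly bounded in the fixed trivialization (only its component orthogonal to $\ker d_{A_{0,t}}$ is controlled, via \eqref{?+++}), so this term cannot be discarded that way.

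For comparison, the paper avoids both issues by a cleaner algebraic device: it applies the Hermitian gauge $e^{-\hat s_t}$ globally over $M_U$ to obtain $\Xi_t^0$ with $\Xi_t^0|_{M_w}=A_{0,t}$, and then uses $F_{\Xi_t^0}^{0,2}=0$ together with the Hodge decomposition on the fiber to conclude the \emph{exact} identities $d_{A_{0,t}}B^0_{t,j}=0$ and $\kappa^0_{t,j}=\partial_{x_j}A_{0,t}$ (equation \eqref{curv+++}). The difference $\kappa_{t,j}-\kappa^0_{t,j}$ then collapses, after an exact cancellation of the commutator terms, to $\nabla_{x_j}(A_t-A_{0,t})-d_{A_{0,t}}B_{t,j}$, whose $L^2(U)$ norm is bounded by $Ct^{1/2}$ via Lemma \ref{prop1} and Proposition \ref{prop2}(ii). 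If you want to complete your route, you would need to replace the holonomy argument by this gauge-transformation identity (or prove the $L^2$ bound on $\nabla_{x_j}(A_t-A_{0,t})$ directly), since that estimate cannot be bypassed.
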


The proof rests on several important lemmas.

\begin{lem}
There exists a constant $C_2$ such that for all $t\ll1$,
\be
\sup_{M_{U'}}|\nabla_{A_{0,t}}F_{A_t}|_{\omega^{SF}}\leq C_2t^{\frac{1}{2}}.\nonumber
\ee
\end{lem}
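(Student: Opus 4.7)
The plan is to prove this by a blow-up/rescaling contradiction argument that closely parallels the proof of Proposition \ref{prop2}, but extracts bounds on derivatives of $F_{A_t}$ rather than only a pointwise bound on $F_{A_t}$ itself. First I would suppose, for contradiction, that there exist sequences $t_k\to 0$ and $p_k\in M_{U'}$ with $w_k=f(p_k)\to w_0\in\overline{U'}$ such that
$$t_k^{-1/2}|\nabla_{A_{0,t_k}}F_{A_{t_k}}|_{\omega^{SF}}(p_k)\longrightarrow\infty.$$
Rescaling via the embeddings $\Phi_{k,r}\colon D_r\times M_{w_0}\to M_U$ from Section 2.5, the metrics $\Phi_{k,r}^*(t_k^{-1}\omega_{t_k}^{SF})$ converge smoothly to the product metric $\omega_\infty$. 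Since $|F_{\Xi_{t_k}}|_{t_k^{-1}\omega_{t_k}^{SF}}\le Ct_k^{1/2}$ and $\Xi_{t_k}$ is Yang--Mills, strong Uhlenbeck compactness (Theorem \ref{Ucompact}) lets me pass to a unitary gauge $u_k$ and a subsequence so that $\Xi_{t_k}'=u_k(\Phi_{k,r}^*\Xi_{t_k})$ converges in $C^\infty_{\rm loc}$ on $D_r\times M_{w_0}$ to a flat connection.

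Next I would revisit the elliptic equation
$$\mathcal{D}_k F_{A_{t_k}'}-d^fG_6\#\nabla_{A_{t_k}'}F_{A_{t_k}'}-\partial_{\tilde y_i\tilde y_j}^2G_6\# F_{A_{t_k}'}=G_7$$
derived in the proof of Proposition \ref{prop2}, where $\mathcal{D}_k$ is a uniformly elliptic operator of order two with smooth coefficients, and the bound $\|G_7\|_{L^p(\omega_\infty)}\le Ct_k$ follows from the $C^0$-bounds $\|F_{A_{t_k}}\|_{C^0}\le Ct_k$ and $\|\kappa_{t_k,i}\|_{C^0}\le C$ of Proposition \ref{prop2} together with Lemma \ref{lem-decay}. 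Combining the interior $L^p$-estimate with the fiberwise $L^2$-bound $\|F_{A_{t_k}}\|_w\le Ct_k$ of Lemma \ref{le-l2} gives
$$\|F_{A_{t_k}'}\|_{L^p_2(D_{r'}\times M_{w_0})}\le Ct_k,$$
and choosing $p>4$ and invoking the Sobolev embedding $L^p_2\hookrightarrow C^{1,\alpha}$ I obtain $\|F_{A_{t_k}'}\|_{C^{1,\alpha}(D_{r'}\times M_{w_0})}\le Ct_k$.

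Finally I would unscale. Because the rescaled base coordinate is $\tilde w=(w-w_k)/\sqrt{t_k}$, the $C^1$-bound on $F_{A_{t_k}'}$ in the $(\tilde w, z)$-coordinates translates to
$$|\partial_{x_i}F_{A_{t_k}}|_{\omega^{SF}}\le Ct_k^{1/2},\qquad |\partial_{y_j}F_{A_{t_k}}|_{\omega^{SF}}\le Ct_k,$$
the latter because the fiber component of $\omega_\infty$ agrees with $\omega^{SF}|_{M_{w_0}}$ and the $y_j$ are unaffected by the rescaling. Combining with the uniform $C^\infty$-bound on $A_{0,t}$ (which converges smoothly to $A_0$) and the pointwise bound $|F_{A_t}|_{\omega^{SF}}\le Ct$ from Proposition \ref{prop2}, and writing $\nabla_{A_{0,t}}F_{A_t}=dF_{A_t}+[A_{0,t},F_{A_t}]$, one concludes
$$|\nabla_{A_{0,t_k}}F_{A_{t_k}}|_{\omega^{SF}}\le |dF_{A_{t_k}}|_{\omega^{SF}}+|A_{0,t_k}|_{\omega^{SF}}|F_{A_{t_k}}|_{\omega^{SF}}\le Ct_k^{1/2}$$
uniformly on $M_{U'}$, contradicting the choice of $p_k$. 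I expect the main obstacle to be not conceptual but careful bookkeeping of the $\sqrt{t_k}$ scaling factors between base and fiber directions, and verifying that all error terms (involving $G_1,\ldots,G_5$) decay fast enough that $\|G_7\|_{L^p}\le Ct_k$ rather than a worse power --- essentially the same accounting performed in the proof of Proposition \ref{prop2}.
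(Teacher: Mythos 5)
Your proposal is correct, but it reaches the bound by a genuinely different route than the paper. The paper reduces to bounding $\nabla_{A_t}F_{A_t}$ via \eqref{connectionC0}, rescales around points realizing the supremum, and then works with the \emph{full} four\--dimensional curvature: a Weitzenb\"ock/Bochner identity for $|F_{\hat\Xi_k}|^2_{\hat\omega_k}$ integrated against a cutoff gives $\int|\nabla_{\hat\Xi_k}F_{\hat\Xi_k}|^2_{\hat\omega_k}\,\hat\omega_k^2\leq Ct_k$, and a second, higher\--order Bochner formula for $|\nabla_{\hat\Xi_k}F_{\hat\Xi_k}|^2$ combined with the mean\--value\--inequality trick from Lemma \ref{C0slem} upgrades this to the pointwise bound $\sup|\nabla_{\hat\Xi_k}F_{\hat\Xi_k}|_{\hat\omega_k}\leq Ct_k^{1/2}$, which contains the fiberwise derivative as a component. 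You instead recycle the elliptic system $\mathcal{D}_kF_{A'_{t_k}}-d^fG_6\#\nabla F_{A'_{t_k}}-\partial^2G_6\#F_{A'_{t_k}}=G_7$ from the proof of Proposition \ref{prop2}; since the derivation of $\|F_{A'_{t_k}}\|_{L^p_2}\leq Ct_k$ there uses only \eqref{hyp:curv}, Lemma \ref{lem-decay}, Lemma \ref{le-l2} and Uhlenbeck compactness (not that proof's contradiction hypothesis), the resulting $C^{1,\alpha}$ bound is a legitimate standalone consequence, and unscaling $\tilde w=(w-w_k)/\sqrt{t_k}$ gives $|\partial_{x_i}F_{A_{t_k}}|\leq Ct_k^{1/2}$ and $|\partial_{y_j}F_{A_{t_k}}|\leq Ct_k$. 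What each approach buys: yours stays entirely within fiberwise $L^p$ elliptic theory, avoids the second Bochner formula and the bounds on $R_{\hat\omega_k}$ and $\nabla R_{\hat\omega_k}$, and in fact yields the sharper estimate $O(t)$ for the fiber derivative $\nabla_{A_{0,t}}F_{A_t}$ (which is all the lemma actually concerns, since it is used fiberwise in Lemma \ref{prop2+00}); the paper's argument is self\--contained Yang\--Mills analysis and additionally controls the mixed and base components of $\nabla_{\hat\Xi_k}F_{\hat\Xi_k}$. One small point you should make explicit: the $C^{1,\alpha}$ bound is obtained for $F_{A'_{t_k}}=u_{t_k}F_{A_{t_k}}u_{t_k}^{-1}$ in the Uhlenbeck gauge, so to transfer it to coordinate derivatives of $F_{A_{t_k}}$ in the fixed trivialization you either use the gauge\--covariance $|\nabla_{A'_{t_k}}F_{A'_{t_k}}|=|\nabla_{A_{t_k}}F_{A_{t_k}}|$ and then pass to $\nabla_{A_{0,t_k}}$ via the commutator with $A_{t_k}-A_{0,t_k}$ (bounded by \eqref{connectionC0}, with $|F_{A_{t_k}}|\leq Ct_k$), or control $du_{t_k}$ from the smooth convergence of $u_{t_k}(\Xi_{t_k})$; either repair is routine.
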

\begin{proof}
By \eqref{connectionC0}, it suffices to prove the above bound for $\nabla_{A_{t}}F_{A_t}$.   We argue by contradiction. Let $t_k\rightarrow 0$ such that $$\lim_{k\rightarrow \infty}t_k^{-\frac{1}{2}}\sup_{M_{U'}}|\nabla_{A_{t_k}}F_{A_{t_k}}|_{\omega^{SF}} = \infty.$$
Let $p_k\in M_{U'}$ be the points where the supremum is attained, and in addition let $f(p_k) := w_k\rightarrow w_0 \in U$. As in Section 2.5, we consider the rescaled metrics $\hat \omega_{k} = t_k^{-1}\omega_{t_k}$ and the embeddings $\Phi_{k,r}: D_r \times  M_{w_0}  \rightarrow M_U$ defined by $$( \tilde{w},  a_1+a_2 \tau (w_0))\mapsto (w_k +\sqrt{t_k}\tilde{w},  a_1+a_2 \tau(w_k+\sqrt{t_k}\tilde{w})),  \  \  a_1,a_2 \in\mathbb{R}/\mathbb{Z},$$  where $D_r = \{\tilde{w} \in \mathbb{C}| |\tilde{w}|<r\}$. We have seen that if $I$ is the complex structure of $M$, and $I_\infty$  the complex structure of $\mathbb{C} \times M_{w_0} $, then     $d\Phi_{k,r}^{-1}I d\Phi_{k,r} \rightarrow I_\infty$, and in addition
$$\Phi_{k,r}^*  t_k^{-1} \omega_{t_k}^{SF} \rightarrow \omega_\infty\ \ \ {\rm and} \ \  \  \Phi_{k,r}^*   \hat{\omega}_{k} \rightarrow \omega_\infty$$
in the $C^\infty$-sense on $ D_r \times M_{w_0}$. Here $\omega_\infty$ is  a flat   K\"{a}hler metric on  $ D_r \times M_{w_0}$.
Denote by $\hat \Xi_k$  the pull-back of $\Xi_{t_k}$ by $\Phi_{k,r}$, and  identify $  D_r\times M_{w_0}$ with $ \Phi_{k,r}( D_r\times M_{w_0})$ via  $\Phi_{k,r}$. By our hypothesis,
\begin{equation}\label{step1F}
\sup_{D_r\times M_{w_0}}t_k^{-\frac{1}{2}}|\nabla_{\hat \Xi_k}F_{\hat\Xi_k}|_{\omega_\infty} = \infty,
\end{equation} while by (\ref{hyp:curv}) we have the curvature bounds
     $$ | F_{\hat{\Xi}_{k}}|_{ t_k^{-1} \omega_{t_k}^{SF}}\leq C t_k^{\frac{1}{2}}  \  \ {\rm and} \ \  | F_{\hat{\Xi}_{k}}|_{ \hat{ \omega}_k}\leq 2C t_k^{\frac{1}{2}}.$$

 Since $\hat\omega_k$ is equivalent to a fixed metric, standard Yang-Mills theory gives the first derivative bound  $ |\nabla_{\hat\Xi_k}F_{\hat{\Xi}_{k}}|_{ \hat{ \omega}_k}\leq C $ (for instance see \cite{Wein}), but this is of course not enough to obtain a contradiction.  So following \cite{Wein}, as in the proof of Lemma \ref{energybound}, we consider the the  Bochner formula
\begin{equation}
0=\Delta_{\hat\omega_k} |F_{\hat\Xi_k}|_{\hat\omega_k}^2-2|\nabla_{\hat\Xi_k} F_{\hat\Xi_k}|_{\hat\omega_k}^2+F_{\hat\Xi_k}\#F_{\hat\Xi_k}\#F_{\hat\Xi_k}+R_{\hat\omega_k}\#F_{\hat\Xi_k}\#F_{\hat\Xi_k}.\nonumber
\end{equation}
We have seen that the curvature of the base metric satisfies $|R_{\omega_t}|_{\omega_t}^2\leq C$  on a compact subset of $N_0$, and scaling only improves this bound  $|R_{\hat\omega_k}|_{\hat\omega_k}^2\leq C t_k^2.$  Rearranging terms, and multiplying by a positive function $\chi$ yields
\begin{equation}
2\chi|\nabla_{\hat\Xi_k} F_{\hat\Xi_k}|_{\hat\omega_k}^2\leq \chi\Delta_{\hat\omega_k} |F_{\hat\Xi_k}|_{\hat\omega_k\hat\Xi_k}^2+\chi |F_{\hat\Xi_k}|_{\hat\omega_k}^3+C \chi |F_{\hat\Xi_k}|_{\hat\omega_k}^2.\nonumber
\end{equation}
If $\eta$ is a positive bump function supported in $D_{r/2}$ and satisfying $\eta\equiv1$ in $D_{r/4}$, we specify $\chi=f^{-1}(\eta)$. Integrating the above inequality gives
\bea\label{L2controlderiv}
 \int_{{D_{\frac r4}}\times M_{w_0}}|\nabla_{\hat\Xi_k} F_{\hat\Xi_k}|_{\hat\omega_k}^2\hat\omega_k^2 &\leq & \frac12\int_{D_{\frac r2}\times M_{w_0}}\Delta_{\hat\omega_k}\chi|F_{\hat{\Xi}_k}|_{\hat\omega_k}^2\hat\omega_k^2+C\int_{D_{\frac r2}\times M_{w_0}} t_k \nonumber \\ & \leq & Ct_k,
\eea
where the constant $ C$ depends on $r$, which again we take to be fixed.

We next turn to the higher order Bochner formula for Yang-Mills connections:
\begin{align}
0=&\Delta_{\hat\omega_k} |\nabla_{\hat\Xi_k} F_{\hat\Xi_k}|_{\hat\omega_k}^2-2|\nabla^2_{\hat\Xi_k} F_{\hat\Xi_k}|_{\hat\omega_k}^2+\nabla_{\hat \Xi_t} F_{\hat\Xi_k}\#\nabla_{\hat\Xi_k} F_{\hat\Xi_k}\#F_{\hat\Xi_k}\nonumber\\
&+R_{\hat\omega_k}\#\nabla_{\hat\Xi_k} F_{\hat\Xi_k}\#\nabla_{\hat\Xi_k} F_{\hat\Xi_k}+\nabla_{\hat\omega_k} R_{\hat\omega_k}\#F_{\hat\Xi_k}\#\nabla F_{\hat\Xi_k}.\nonumber
\end{align}
Since $|\nabla_{\hat\omega_k} R_{\hat\omega_k}|_{\hat\omega_k}\leq t_k|\nabla_{\omega_{t_k}} R_{\omega_{t_k}}|_{\omega_{t_k}}\leq C_U t_k$, we have
\begin{equation}
-\Delta_{\hat\omega_k} |\nabla_{\hat\Xi_k} F_{\hat\Xi_k}|_{\hat\omega_k}^2\leq  C(t_k^{\frac{1}{2}} |\nabla_{\hat\Xi_k} F_{\hat\Xi_k}|_{\hat\omega_k}^2+t_k^{\frac{3}{2}} |\nabla_{\hat\Xi_k} F_{\hat\Xi_k}|_{\hat\omega_k}).\nonumber
\end{equation}
Set
$$\psi_k:= |\nabla_{\hat\Xi_k} F_{\hat\Xi_k}|_{\hat\omega_k}^2/\sup_{D_r\times M_{w_0}}|\nabla_{\hat\Xi_k} F_{\hat\Xi_k}|^2_{\hat\omega_k}.$$
The above Bochner formula, in addition to our hypothesis \eqref{step1F}, gives $$-\Delta_{\hat\omega_k} \psi_k\leq C( t_k^{\frac{1}{2}}+ t_k)\leq 1,$$ for $k\gg 1$. We now follow the argument used in Lemma \ref{C0s}. Let $\hat p_k$ be the pullbacks of the points $p_k$ via $\Phi_{k,r}$. These are the points realizing the supremum of $|\nabla_{\hat\Xi_k} F_{\hat\Xi_k}|^2_{\hat\omega_k}$, so that $\psi_k(\hat p_k) = 1$. Now construct a sequence of functions $u_k$ solving $\Delta_{\hat\omega_k} u_k=-1$ and $u_k(\hat p_k)=1$. Working on a small ball   $B_{\hat\omega_k}(\hat p_k, r_0)$, we can assume that $u_k>\varepsilon_0$ for some $\varepsilon_0>0$ independent of $k$. Then since $-\Delta(\psi_k-u_k)\leq 0$, by the mean value inequality, there exists a $\delta>0$ depending only $\varepsilon_0$ and $r_0$ such that
\begin{equation}
\delta<\int_{B_{\hat\omega_k}(\hat p_k,r_0)}u_k\leq \int_{B_{\hat\omega_k}(\hat p_k,r_0)}\psi_k\leq  \int_{D_{r/4}\times M_{w_0}}\psi_k\leq  \frac{C_8t_k}{\sup\limits_{D_r\times M_{w_0}}|\nabla_{\hat\Xi_k} F_{\hat\Xi_k}|^2_{\hat\omega_k}}\nonumber
\end{equation}
where the final inequality follows from \eqref{L2controlderiv}. This contradicts \eqref{step1F},  completing the proof.
\end{proof}
Next, we have a  $C^{1,\alpha}$-estimate for $A_t$.

\begin{lem}\label{prop2+00} For all  $w\in U'$, and for all $t\ll 1$, $0<\alpha <1$, $$\|A_t-A_{0,t}\|_{C^{1,\alpha}(M_w)}\leq C_{3}t^{\frac{1}{2}} \ \  and  \  \   \|\nabla^2_{A_{0,t}}\hat s_t\|_{C^{0,\alpha}(M_w)}\leq C_{4}t^{\frac{1}{2}},$$ for   constants $C_{3}$ and $C_{4}$ independent of $w$ and $t$.
 \end{lem}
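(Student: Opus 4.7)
The plan is to upgrade the $C^{0,\alpha}$ control of Lemma \ref{connectionC0lem} to $C^{2,\alpha}$ control on $\hat s_t$ via fiberwise Schauder estimates applied to the curvature PDE \eqref{curvatureformula}, using the improved gradient bound on $F_{A_t}$ from the preceding lemma as the key new input. The result on $A_t - A_{0,t}$ then follows from the expansion \eqref{connectionformula}.

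First, I would derive a fiberwise $C^{0,\alpha}$ bound on the curvature. Restricting to $M_w$ and combining the $C^{0}$ bound $\|F_{A_t}\|_{C^0(M_w)} \leq C\,t$ from Proposition \ref{prop2}(i) with the gradient bound $|\nabla_{A_{0,t}} F_{A_t}|_{\omega^{SF}} \leq C_2 t^{1/2}$ just proved, the standard interpolation
\[
[F_{A_t}]_{C^\alpha(M_w)} \;\leq\; C\,\|F_{A_t}\|_{C^0(M_w)}^{1-\alpha}\,\|F_{A_t}\|_{C^1(M_w)}^{\alpha} \;\leq\; C\,t^{\,1-\alpha/2}
\]
gives $\|F_{A_t}\|_{C^{0,\alpha}(M_w)} \leq C\,t^{1/2}$ for every $\alpha \in (0,1)$ (since $1-\alpha/2>1/2$).

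Next, I would apply Schauder estimates on the fiber to the equation
\[
-i\,\Delta_{A_{0,t}}\hat s_t \;=\; \star_w F_{A_t} - T_1 - T_2,
\]
obtained from \eqref{curvatureformula}. Since $\hat s_t$ is perpendicular to $\ker d_{A_{0,t}}$ and Lemma \ref{eigen} provides a uniform spectral gap, while the coefficients of $\Delta_{A_{0,t}}$ depend smoothly on $(w,t)$ with limits as $t \to 0$, the Schauder constants are uniform, yielding
\[
\|\hat s_t\|_{C^{2,\alpha}(M_w)} \;\leq\; C\bigl(\|F_{A_t}\|_{C^{0,\alpha}(M_w)} + \|T_1\|_{C^{0,\alpha}(M_w)} + \|T_2\|_{C^{0,\alpha}(M_w)} + \|\hat s_t\|_{C^0(M_w)}\bigr).
\]
The nonlinear terms are controlled using \eqref{Tcontrol}, \eqref{C0s} and Lemma \ref{connectionC0lem}: schematically $T_1 = O(\hat s_t)\cdot \nabla^2_{A_{0,t}}\hat s_t$ and $T_2 = O(|\nabla_{A_{0,t}}\hat s_t|^2)$, so
\[
\|T_1\|_{C^{0,\alpha}(M_w)} \;\leq\; C\varepsilon\,\|\nabla^2_{A_{0,t}}\hat s_t\|_{C^{0,\alpha}(M_w)}, \qquad \|T_2\|_{C^{0,\alpha}(M_w)} \;\leq\; C\varepsilon^2,
\]
with $\varepsilon \leq Ct$. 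For $t \ll 1$ the $T_1$ contribution can be absorbed into the left-hand side, leaving
\[
\|\hat s_t\|_{C^{2,\alpha}(M_w)} \;\leq\; C\,t^{1/2},
\]
which proves the second inequality of the lemma. The $C^{1,\alpha}$ estimate on $A_t - A_{0,t}$ then follows immediately from the expansion \eqref{connectionformula}, since every term in $A_t - A_{0,t}$ is linear or higher order in $\hat s_t$ and $\nabla_{A_{0,t}}\hat s_t$ (whose $C^{0,\alpha}$ norms are themselves $O(t)$ by Lemma \ref{connectionC0lem}), so the $C^{1,\alpha}$ norm is dominated by $\|\nabla^2_{A_{0,t}}\hat s_t\|_{C^{0,\alpha}(M_w)} \leq C\,t^{1/2}$.

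The main technical point is the absorption step: the $T_1$ term carries the top-order quantity $\nabla^2_{A_{0,t}}\hat s_t$ that we are trying to estimate, and it can only be moved to the left-hand side because its coefficient $\varepsilon$ is forced to be small by the smallness of curvature proved in Proposition \ref{prop2}. The rest is a straightforward combination of interpolation, uniform Schauder estimates (made possible by the smooth limits $A_{0,t} \to A_0$ and the uniform spectral gap from Lemma \ref{eigen}), and the algebra of the nonlinear terms already set up in Section \ref{Poincare}. No additional geometric input is needed beyond what has been established in the preceding lemmas of this section.
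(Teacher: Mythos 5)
Your proof is correct and follows essentially the same route as the paper: both arguments bootstrap the fiberwise elliptic equation \eqref{curvatureformula} one derivative higher, with the new input being the gradient bound $|\nabla_{A_{0,t}}F_{A_t}|\leq Ct^{1/2}$ from the preceding lemma, and both absorb the top-order piece of $T_1$ using the $O(t)$ smallness of $\hat s_t$. The only (cosmetic) difference is that you interpolate to get a $C^{0,\alpha}$ bound on $F_{A_t}$ and then apply Schauder estimates, whereas the paper differentiates the equation, applies the $L^p_3$ a priori estimate, and concludes via Morrey's inequality.
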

\begin{proof}
We begin by recalling inequality \eqref{lq2}, which follows from Proposition \ref{prop2},  and properties of $\hat s_t$
\be
\|\hat s_t\|_{L^p_2(M_w)}\leq Ct.\nonumber
\ee
We would like to extend the above estimate to the case of $p=\infty$. To accomplish this, we  turn to the higher order elliptic a priori estimate
\bea
\|\hat s_t\|_{L^p_3(M_w)}&\leq& C \left(\|\Delta_{A_{0,t}}\hat s_t\|_{L^p_1(M_w)}+\|\hat s_t\|_{L^p(M_w)}\right)\nonumber\\
&\leq&C \left(\|\Delta_{A_{0,t}}\hat s_t\|_{L^p_1(M_w)}+ t \right).\nonumber
\eea
Taking one fiber derivative of \eqref{curvatureformula}, and using the fact that $\|\hat s_t\|_{C^0(M_w)}$ and $\|\nabla_{A_{0,t}}\hat s_t\|_{C^0(M_w)}$ are controlled by $t$, we see that
\be
\|\Delta_{A_{0,t}}\hat s_t\|_{L^p_1(M_w)}\leq \|\nabla_{A_{0,t}}F_{A_t}\|_{L^p(M_w)}+t\|\hat s_t\|_{L^p_3(M_w)}+t\|\hat s_t\|_{L^p_2(M_w)}.\nonumber
\ee
Thus, for $t$ small enough
\be
\|\hat s_t\|_{L^p_3(M_w)}\leq C (t+ \|\nabla_{A_{0,t}}F_{A_t}\|_{L^p(M_w)})\leq C  t^{\frac{1}{2}}.\nonumber
\ee
By Morrey's inequality we have
\be
\label{thing6}
\|\nabla^2_{A_{0,t}}\hat s_t\|_{C^{0,\alpha}(M_w)}\leq C t^{\frac{1}{2}}.
\ee
\end{proof}

If we let $\Xi_{t}^0=e^{-\hat{s}_t}(\Xi_t)$, then $\Xi_{t}^0|_{M_w}=A_{0,t}$, and we write  $$\Xi_{t}^0=A_{0,t}+B_{t,1}^0dx_1+B_{t,2}^0dx_2,  \  \  \  {\rm and} \  \  F_{\Xi_{t}^0}=-\kappa_{t,1}^0dx_1-\kappa_{t,2}^0dx_2-F_{B,t}^0dx_1\wedge dx_2,$$ where
$$\kappa_{t,j}^0=\partial_{x_j} A_{0,t}-d_{A_{0,t}}B_{t,j}^0.$$ Note that  we still have $F_{\Xi_{t}^0}^{0,2}=0$, which implies
\begin{equation}\label{HYMB++}\star_w \kappa_{t,1}^0=\kappa_{t,2}^0,\end{equation} and thus $$ \star_w\partial_{x_1} A_{0,t}-\partial_{x_2} A_{0,t}=\star_w d_{A_{0,t}}B_{t,1}^0-d_{A_{0,t}}B_{t,2}^0.  $$
 Since $$\star_w\partial_{x_1} A_{0,t}-\partial_{x_2} A_{0,t} \in \ker \Delta_{A_{0,t}}, \ \ d_{A_{0,t}}B_{t,2}^0\in {\rm Im}d_{A_{0,t}},  \ \  {\rm and} \  \  \star_w d_{A_{0,t}}B_{t,1}^0 \in {\rm Im}d_{A_{0,t}}^*,$$ we have $ \star_w\partial_{x_1} A_{0,t}=\partial_{x_2} A_{0,t}$   and $d_{A_{0,t}}B_{t,j}^0=0$ by the Hodge decomposition.  As a result we obtain
\begin{equation}\label{curv+++} \kappa_{t,j}^0=\partial_{x_j} A_{0,t}.\end{equation}
A direct calculation shows
\bea\label{for+==}
\kappa_{t,j}-\kappa_{t,j}^0 & =&\partial_{x_j} (A_t-A_{0,t})-d_{A_{t}}B_{t,j} \\
&= &\nabla_{x_j} (A_t-A_{0,t})-[B_{t,j}, A_t-A_{0,t}]\nonumber\\ & & -d_{A_{0,t}}B_{t,j} + [A_{0,t}-A_t,B_{t,j}]\nonumber \\ &=& \nabla_{x_j} (A_t-A_{0,t})-d_{A_{0,t}}B_{t,j}.
\nonumber \eea

Now, by (\ref{ASD1}), (\ref{HYMB++}) and (\ref{for+==}),  $$ \star_w\nabla_{x_1} (A_t-A_{0,t})-\nabla_{x_2} (A_t-A_{0,t})=\star_w d_{A_{0,t}}B_{t,1}-d_{A_{0,t}}B_{t,2},  $$ and since $\star_w d_{A_{0,t}}B_{t,1} \bot d_{A_{0,t}}B_{t,2}$, i.e. $\langle\star_w d_{A_{0,t}}B_{t,1} , d_{A_{0,t}}B_{t,2}\rangle_w=0$,  we have
$$ \|d_{A_{0,t}}B_{t,j}\|_w \leq  \sum_{i=1,2} \|\nabla_{x_i} (A_t-A_{0,t}) \|_w,$$ for any  $w\in  U$.   Consequently, for $j=1,2$ \begin{equation}\label{?+++}\|\kappa_{t,j}-\kappa_{t,j}^0\|_w \leq 2 \sum_{i=1,2} \|\nabla_{x_i} (A_t-A_{0,t}) \|_w. \end{equation}
 Furthermore, if we decompose $B_{t,j}=B_{t,j}^o+B_{t,j}^\bot$, where $B_{t,j}^o\in\ker d_{A_{0,t}}$ and $B_{t,j}^\bot \bot\ker d_{A_{0,t}}$, then \begin{equation}\label{?+++} \|B_{t,j}^\bot\|_w \leq C  \|d_{A_{0,t}}B_{t,j}\|_w \leq C  \sum_{i=1,2} \|\nabla_{x_i} (A_t-A_{0,t}) \|_w,\end{equation} by Lemma \ref{eigen}. We need one more Lemma before we are ready to prove Proposition \ref{prop2+0}.

\begin{lem}
\label{prop1}
On  $U'\subset\subset U$,   we have
\be
\int_{U}\sum_{j=1,2}\|\nabla_{x_j}(A_t-A_{0,t})\|_w^2dx_1dx_2\leq C_{5}(t^2+\int_U\sum_{j=1,2}\|\nabla_{x_j} F_{ A_t} \|_{w}^2dx_1dx_2),\nonumber
\ee for a constant $C_{5}>0$. Consequently, by ii) of  Proposition \ref{prop2}, $$ \int_{U}\sum_{j=1,2}\|\kappa_{t,j}-\kappa_{t,j}^0\|_w^2  dx_1dx_2\leq C_{6} t.$$ \end{lem}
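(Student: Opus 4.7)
The plan is to pass to the Hermitian endomorphism $\hat s_t$ defined by \eqref{hermitiangauge}, apply a uniform fiberwise elliptic estimate to $\partial_{x_j}\hat s_t$, and invoke the equation $\Delta_{A_{0,t}}\hat s_t=i(\star_w F_{A_t}-T_1-T_2)$ from the proof of Lemma~\ref{connectionC0lem} to trade $\partial_{x_j}\hat s_t$ for $\partial_{x_j}F_{A_t}$.

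First I would observe that $\partial_{x_j}\hat s_t$ remains $L^2$-orthogonal to $\ker d_{A_{0,t}}$ on every fiber. Indeed, as $A_{0,t}$ is diagonal in our trivialization and the $q_{j,t}(w)$ are pairwise distinct on $U$, a direct computation shows that $\ker d_{A_{0,t}}$ consists of constant traceless diagonal matrices, the same vector space for every $w\in U$ and $t$; differentiating $\langle\hat s_t,\eta\rangle_w=0$ in $x_j$ then gives $\langle\partial_{x_j}\hat s_t,\eta\rangle_w=0$. Combined with Lemma~\ref{eigen} and the smooth convergence $A_{0,t}\to A_0$, the standard elliptic a priori estimate on $M_w$ yields
\be
\|\partial_{x_j}\hat s_t\|_{L^2_2(M_w)}\leq C\|\Delta_{A_{0,t}}\partial_{x_j}\hat s_t\|_w,\nonumber
\ee
with $C$ uniform in $w\in U'$ and $t\ll 1$.

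Next, I would commute $\partial_{x_j}$ with $\Delta_{A_{0,t}}$: the commutator involves only fiber derivatives of $\hat s_t$ up to second order multiplied by uniformly bounded coefficients (derivatives of $A_{0,t}$ and of the fiber metric in the $w$-direction), so $\|[\Delta_{A_{0,t}},\partial_{x_j}]\hat s_t\|_w\leq C\|\hat s_t\|_{L^2_2(M_w)}\leq Ct$ by \eqref{lq2}. Differentiating $\Delta_{A_{0,t}}\hat s_t=i(\star_w F_{A_t}-T_1-T_2)$ in $x_j$ and estimating $\partial_{x_j}(T_1+T_2)$ via \eqref{Tcontrol} together with the $C^{0,\alpha}$-bounds from Lemmas~\ref{C0slem}, \ref{connectionC0lem}, and especially $\|\nabla_{A_{0,t}}^2\hat s_t\|_{C^{0,\alpha}(M_w)}\leq Ct^{1/2}$ from Lemma~\ref{prop2+00}, one obtains
\be
\|\partial_{x_j}(T_1+T_2)\|_w\leq Ct\|\partial_{x_j}\hat s_t\|_{L^2_2(M_w)}+Ct^{1/2}\|\partial_{x_j}\hat s_t\|_w+Ct.\nonumber
\ee
For $t\ll 1$ the first two terms can be absorbed into the left-hand side of the elliptic estimate, yielding $\|\partial_{x_j}\hat s_t\|_{L^2_2(M_w)}\leq C(\|\partial_{x_j}F_{A_t}\|_w+t)$.

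To finish, I would differentiate \eqref{connectionformula} in $x_j$ and apply the pointwise bounds on $\hat s_t,\nabla_{A_{0,t}}\hat s_t$ to obtain $\|\partial_{x_j}(A_t-A_{0,t})\|_w\leq C(\|\partial_{x_j}\hat s_t\|_{L^2_2(M_w)}+t)$. Switching between $\partial_{x_j}$ and $\nabla_{x_j}=\partial_{x_j}+[B_{t,j},\cdot]$ on both sides introduces cross terms of order $\|B_{t,j}\|_w\|A_t-A_{0,t}\|_{C^0}$ and $\|B_{t,j}\|_w\|F_{A_t}\|_{C^0}$, each bounded by $Ct\|B_{t,j}\|_w$ using Lemma~\ref{connectionC0lem} and Proposition~\ref{prop2}~i). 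Squaring, summing over $j$, and integrating over $U$, with $\int_U\|B_{t,j}\|_w^2\,dx_1dx_2$ uniformly bounded via weak Uhlenbeck compactness on $M_U$, produces the main inequality; the subsequent bound on $\|\kappa_{t,j}-\kappa_{t,j}^0\|_w$ then follows directly from \eqref{?+++} and Proposition~\ref{prop2}~ii). The main obstacle is controlling the nonlinear terms $\partial_{x_j}T_1,\partial_{x_j}T_2$, which hinges on the sharpened decay $\|\nabla_{A_{0,t}}^2\hat s_t\|_{C^{0,\alpha}(M_w)}\leq Ct^{1/2}$ from Lemma~\ref{prop2+00}, rather than only the $C^0$ bound of Lemma~\ref{connectionC0lem}.
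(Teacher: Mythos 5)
Your strategy of working with the ordinary base derivative $\partial_{x_j}$ instead of the covariant derivative $\nabla_{x_j}=\partial_{x_j}+[B_{t,j},\cdot]$ has a genuine gap at the "switching" step. The conclusion of the lemma, and the right-hand side you must land on, are stated in terms of $\nabla_{x_j}(A_t-A_{0,t})$ and $\nabla_{x_j}F_{A_t}$, so you must estimate commutators such as $[B_{t,j},A_t-A_{0,t}]$ and $[B_{t,j},\star_wF_{A_t}]$. Decomposing $B_{t,j}=B_{t,j}^o+B_{t,j}^{\bot}$ with $B_{t,j}^o\in\ker d_{A_{0,t}}$, only the piece $B_{t,j}^{\bot}$ is controlled (by $C\sum_i\|\nabla_{x_i}(A_t-A_{0,t})\|_w$, via the Hodge decomposition applied to $\star_w\kappa_{t,1}=\kappa_{t,2}$); the fiberwise-constant diagonal piece $B_{t,j}^o$ is completely uncontrolled in the fixed trivialization, because only $d_{A_t}B_{t,j}$, not $B_{t,j}$ itself, is constrained by the curvature. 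Your appeal to weak Uhlenbeck compactness to bound $\int_U\|B_{t,j}\|_w^2$ does not help: Uhlenbeck's theorem bounds the connection only after an unknown gauge change, not in the specific trivialization in which $A_{0,t}$ has its diagonal form, and indeed a $w$-dependent diagonal unitary gauge change leaves $A_{0,t}$ and all curvature components fixed while making $B_{t,j}^o$ arbitrarily large. Since $A_t-A_{0,t}$ and $F_{A_t}$ are not diagonal, $[B_{t,j}^o,\cdot]$ does not vanish on them, so your cross terms are of size $t\|B_{t,j}^o\|$ rather than $t\|B_{t,j}^{\bot}\|_w$, and cannot be absorbed. The same issue already contaminates your first step: $\partial_{x_j}F_{A_t}$ and $\partial_{x_j}\hat s_t$ are themselves uncontrolled quantities for the same reason.

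The paper's proof of Lemma \ref{prop1} is built precisely to avoid this. It works with $\nabla_{x_j}$ throughout, and the only place where orthogonality to $\ker d_{A_{0,t}}$ must be differentiated is handled by the identity $\nabla_{x_j}\eta=[B_{t,j},\eta]=[B_{t,j}^{\bot},\eta]$ for constant diagonal $\eta$ (the $B_{t,j}^o$ part commutes), yielding $\|\nabla_{x_j}\hat s_t^o\|_w\leq Ct\Lambda$ with $\Lambda=\sum_i\|\nabla_{x_i}(A_t-A_{0,t})\|_w$ — an error absorbable on the left. It then estimates $\|d_{A_t}\nabla_{x_j}\hat s_t\|_w$ via integration by parts against $\Delta_{A_t}\nabla_{x_j}\hat s_t$, and controls the latter by applying $\nabla_{x_j}$ to the full curvature expansion \eqref{bigcurveexpression}, arriving at $\Lambda^2\leq C(\Theta^2+t^2)$ with $\Theta=\sum_i\|\nabla_{x_i}F_{A_t}\|_w$. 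Your observation that $\partial_{x_j}\hat s_t$ stays orthogonal to the ($w$-independent) kernel is correct as far as it goes, and your handling of $\partial_{x_j}(T_1+T_2)$ via Lemma \ref{prop2+00} is structurally sound, but without a covariant formulation the argument does not close. Your final deduction of the bound on $\|\kappa_{t,j}-\kappa_{t,j}^0\|_w$ from the main inequality and Proposition \ref{prop2} ii) is correct.
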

\begin{proof}

We denote two important terms by $$\Lambda= \sum_{j=1,2}\|\nabla_{x_j}(A_t-A_{0,t})\|_w, \  \  \ \Theta=\sum_{j=1,2}\|\nabla_{x_j} F_{ A_t} \|_{w}. $$

First, for $j=1,2$, we decompose   $\nabla_{x_j}\hat s_t=\nabla_{x_j}\hat s_t^o+\nabla_{x_j}\hat s_t^\bot$, where  $\nabla_{x_j}\hat s_t^\bot$ is perpendicular to the kernel of $d_{A_{0,t}}$,
and $\nabla_{x_j}\hat s_t^o\in \ker d_{A_{0,t}}$.  Recall that $\ker d_{A_{0,t}}=\{{\rm diag}\{\eta_1, \cdots, \eta_n\}\in\mathfrak{sl}(n, \mathbb{C})\}$, and  as a volume form $\omega^{SF}|_{M_w}=dv$ is independent of $w$ under the identification $M_w \cong T^2$. For any $\eta\in \ker d_{A_{0,t}}$, since $[B_{t,j}^o, \eta]=0$,   $$\nabla_{x_j} \eta=\partial_{x_j}\eta+[B_{t,j},\eta]=[B_{t,j}^\bot,\eta].$$
  Thus $$0=\partial_{x_j} \langle \hat s_t, \eta \rangle_w = \langle \nabla_{x_j} \hat s_t, \eta \rangle_w+\langle  \hat s_t, \nabla_{x_j} \eta \rangle_w=\langle \nabla_{x_j} \hat s_t^o, \eta \rangle_w+\langle  \hat s_t, [B_{t,j}^\bot, \eta] \rangle_w ,$$ and  by (\ref{?+++})
   $$ \|\nabla_{x_j}\hat s_t^o\|_w\leq C  \|\hat s_t\|_{C^0} \|B_{t,j}^\bot \|_w\leq C  t\Lambda.$$  Along with Lemma \ref{eigen}, this implies $$  \|\nabla_{x_j}\hat s_t\|_w\leq C  (\|\nabla_{x_j}\hat s_t^\bot\|_w+ t\Lambda)\leq  C  (\|d_{A_{0,t}}\nabla_{x_j}\hat s_t\|_w+ t\Lambda). $$    Since $$d_{A_{t}}\nabla_{x_j}\hat s_t=d_{A_{0,t}}\nabla_{x_j}\hat s_t+[A_t-A_{0,t},\nabla_{x_j}\hat s_t], \  \  {\rm  and}  \ \   \| A_t-A_{0,t}\|_{C^0}\leq Ct, $$ we obtain $$  \|\nabla_{x_j}\hat s_t\|_w\leq   C (\|d_{A_{t}}\nabla_{x_j}\hat s_t\|_w+ t\Lambda).  $$

Next, take the derivative of \eqref{connectionformula} in the base direction to see
\bea
\|\nabla_{x_j}(A_t-A_{0,t})\|_{w}^2&\leq& 2\|\nabla_{x_j}(\Upsilon(\hat s_t))d_{A_{t}}\hat s_t\|_{w}^2+2\|\Upsilon(\hat s_t)\nabla_{x_j}(d_{A_{t}}\hat s_t)\|_{w}^2.\nonumber
\eea  We concentrate on the two terms on the right hand side above separately. By Lemma \ref{connectionC0lem} and Proposition \ref{prop2},  $\hat s_t$, $\nabla_{A_{0,t}}\hat s_t$ and $A_t-A_{0,t}$ are  bounded in $C^0$ by $t$, and so the first term satisfies
\be
 \|\nabla_{x_j}(\Upsilon(\hat s_t))d_{A_{t}}\hat s_t\|_{w}^2\leq t^2 C \|\nabla_{x_j}\hat s_t\|_{w}^2\leq t^2 C (\|d_{A_{t}}\nabla_{x_j}\hat s_t\|_{w}^2+t^2\Lambda^2).\nonumber
\ee
To bound the second of the two terms, note that $\kappa_{t,j}$ is bounded, and $ \nabla_{x_j} d_{A_{t}}-d_{A_{t}}\nabla_{x_j}=\kappa_{t,j} $. Thus
\be
\|\Upsilon(\hat s_t)\nabla_{x_j}(d_{A_{t}}\hat s_t)\|_{w}^2\leq C \|\hat s_t\|_{w}^2+2\|d_{A_{t}}\nabla_{x_j}\hat s_t\|_{w}^2\leq C t^2+2\|d_{A_{t}}\nabla_{x_j}\hat s_t\|_{w}^2,\nonumber
\ee
from which we conclude
\be
\Lambda^2\leq 2\sum_{j=1,2}\|\nabla_{x_j}(A_t-A_{0,t})\|_{w}^2\leq 6\sum_{j=1,2}\|d_{A_{t}}\nabla_{x_j}\hat s_t\|_{w}^2+C t^2.\nonumber
\ee
Therefore it suffices to bound $\|d_{A_{t}}\nabla_{x_j}\hat s_t\|_{w}^2$.

Integration by parts, along with Lemma \ref{eigen}, gives\bea
\int_{M_w}|d_{A_{t}}\nabla_{x_j}\hat s_t|^2\omega^{SF}&\leq& \int_{M_w}|\nabla_{x_j}\hat s_t\|\Delta_{A_{t}}\nabla_{x_j}\hat s_t|\omega^{SF} \nonumber\\
&\leq&\|\nabla_{x_j}\hat s_t\|_{w}\|\Delta_{A_{t}}\nabla_{x_j}\hat s_t\|_{w}\nonumber\\
&\leq&C (\|d_{A_{t}}\nabla_{x_j}\hat s_t\|_{w}+t\Lambda)\|\Delta_{A_{t}}\nabla_{x_j}\hat s_t\|_{w}\nonumber
\eea
and so
\be
\label{thing7}
\|d_{A_{t}}\nabla_{x_j}\hat s_t\|_{w}^2\leq C \|\Delta_{A_{t}}\nabla_{x_j}\hat s_t\|_{w}^2+t^2\Lambda^2.
\ee
Thus we obtain $$\Lambda^2 \leq C (\sum_{j=1,2}\|\Delta_{A_{t}}\nabla_{x_j}\hat s_t\|_{w}^2+t^2).$$

In order to bound $\Delta_{A_{t}}\nabla_{x_j}\hat s_t$, we turn to  the  equality  (\ref{bigcurveexpression}) for the curvature of $A_t$,  using  the fact  that $A_{0,t}$ is flat,  \bea
F_{ A_t} &=&i\, d_{A_{t}}\star_w d_{A_{t}} \hat s_t    - \tilde{\Upsilon}(\hat s_t ) \bar\partial_{A_{t}}\partial_{A_{t}} \hat s_t +  \tilde{\Upsilon}(-\hat s_t)\partial_{A_{t}}\bar\partial_{A_{t}}\hat s_t \nonumber \\
&&      -  \bar\partial_{A_{t}} \tilde{\Upsilon}(\hat s_t) \wedge \partial_{A_{t}}\hat s_t + \partial_{A_{t}}\tilde{\Upsilon}(-\hat s_t ) \wedge \bar\partial_{A_{t}}\hat s_t \nonumber\\
&&  -  \Upsilon(\hat s_t )\partial_{A_{t}}\hat s_t \wedge \Upsilon(-\hat s_t)\bar\partial_{A_{t}}\hat s_t+  \Upsilon(-\hat s_t)\bar\partial_{A_{t}}\hat s_t\wedge \Upsilon(\hat s_t)\partial_{A_{t}}\hat s_t  .\nonumber
\eea We take the derivative of this equation  in the base direction, and calculate $\nabla_{x_j}F_{A_t}$.
Firstly,
\bea   \nabla_{x_j} d_{A_{t}}\star_w d_{A_{t}} \hat s_t& = & d_{A_{t}}\nabla_{x_j}\star_w d_{A_{t}} \hat s_t+\kappa_{t,j}\# d_{A_{t}} \hat s_t
\nonumber \\ & = & d_{A_{t}}\star_w d_{A_{t}}\nabla_{x_j} \hat s_t+d_{A_{t}}[\star_w\kappa_{t,j} ,\hat s_t ]+\kappa_{t,j}\# d_{A_{t}} \hat s_t
\nonumber\\ & =& d_{A_{t}}\star_w d_{A_{t}}\nabla_{x_j} \hat s_t\pm [\nabla_{x_i}F_{A_t} ,\hat s_t ]+\kappa_{t,j}\# d_{A_{t}} \hat s_t  \nonumber \eea by $\nabla_{x_i}F_{A_t}=d_{A_t}\kappa_{t,i}=\pm d_{A_t}\star_w\kappa_{t,j}$, which implies
 \bea
& & | \nabla_{x_j} d_{A_{t}}\star_w d_{A_{t}} \hat s_t-  d_{A_{t}}\star_w d_{A_{t}}\nabla_{x_j} \hat s_t|\nonumber \\
&& \qquad\qquad\qquad\qquad\leq  C (|\nabla_{A_{0,t}}\hat s_t|+|A_t-A_{0,t}\| \hat s_t|  +|\nabla_{x_i}F_{A_t}\|\hat s_t|),\nonumber \\
&& \qquad\qquad\qquad\qquad\leq  C t(1+\sum_{i=1,2}|\nabla_{x_i}F_{A_t}|). \nonumber  \eea
As a result, we have   $$ \| \nabla_{x_j} d_{A_{t}}\star_w d_{A_{t}} \hat s_t-  d_{A_{t}}\star_w d_{A_{t}}\nabla_{x_j} \hat s_t\|_w  \leq   C  t(1+\Theta). $$

  Secondly, note that   $\nabla_{A_t}= \nabla_{A_{0,t}}+(A_t-A_{0,t})$, and  $$\nabla_{A_t}^2=\nabla_{A_{0,t}}^2+(A_t-A_{0,t})\# \nabla_{A_{0,t}} +\nabla_{A_{0,t}}(A_t-A_{0,t}) +(A_t-A_{0,t})\# (A_t-A_{0,t}). $$
  A direct calculation shows
  \bea
  &&\|\nabla_{x_j}( \tilde{\Upsilon}(\hat s_t) \bar\partial_{A_{t}}\partial_{A_{t}} \hat s_t)\|_w\nonumber\\
 && \qquad\qquad\leq  C ( \|\nabla_{x_j}\hat s_t\|_w \|\nabla_{A_{t}}^2\hat s_t\|_{C^0}+\|\nabla_{x_j}\bar\partial_{A_{t}}\partial_{A_{t}} \hat s_t \|_w\| \hat s_t\|_{C^0}) \nonumber \\
 && \qquad\qquad \leq  C  \|\nabla_{x_j}\hat s_t\|_w (\|\nabla_{A_{0,t}}^2\hat s_t\|_{C^0}+\|A_t-A_{0,t}\|_{C^1}\|\hat s_t\|_{C^1}) \nonumber\\
 & &  \qquad\qquad \phantom{\leq}+C (\|\Delta_{A_t}\nabla_{x_j} \hat s_t \|_w +   1+t\Theta)\| \hat s_t\|_{C^0} \nonumber \\
 &&  \qquad\qquad\leq  C (t\|\Delta_{A_t}\nabla_{x_j} \hat s_t \|_w+t^{\frac{1}{2} }\|\nabla_{x_j}\hat s_t\|_w +t+t^2\Theta) \nonumber \\
 && \qquad\qquad\leq  C (t^{\frac{1}{2}}\|\Delta_{A_t}\nabla_{x_j} \hat s_t \|_w+ t+ t\Lambda +t^2\Theta),   \nonumber \eea
  where we used Lemma \ref{prop2+00}. For the later terms, we have
  \bea & & \|\nabla_{x_j}(\bar\partial_{A_{t}} \tilde{\Upsilon}(\hat s_t) \wedge \partial_{A_{t}}\hat s_t)\|_w +\|\nabla_{x_j}(\Upsilon(\hat s_t)\partial_{A_{t}}\hat s_t \wedge \Upsilon(-\hat s_t)\bar\partial_{A_{t}}\hat s_t)\|_w  \nonumber \\  && \qquad \leq  C  (\|\nabla_{A_{0,t}}\hat s_t\|_{C^0}+\|A_t-A_{0,t}\|_{C^0}\|\hat s_t\|_{C^0})(\|\nabla_{x_j}\hat s_t\|_w \nonumber\\
  &&\qquad\qquad\qquad\qquad\qquad\qquad\qquad\qquad\qquad+\|d_{A_{t}}\nabla_{x_j}\hat s_t\|_w+\|\hat s_t\|_w) \nonumber\\
   && \qquad\leq  C (t^2+t\|\Delta_{A_{t}}\nabla_{x_j} \hat s_t\|_w+t^2\Lambda).    \nonumber
   \eea

Returning to \eqref{thing7}, we put everything together to see $$\|\nabla_{x_j}F_{A_t}-i  d_{A_{t}}\star_w d_{A_{t}}\nabla_{x_j} \hat s_t\|_w \leq C (t^{\frac{1}{2}}\|\Delta_{A_{t}}\nabla_{x_j}\hat s_t\|_{w}+t\Lambda +t\Theta +t),  $$
$$
\|\Delta_{A_{t}}\nabla_{x_j}\hat s_t\|_{w}
\leq  C (\Theta+t+t\Lambda),$$ and  $$\|d_{A_{t}}\nabla_{x_j}\hat s_t\|_{w}\leq C (\Theta+t+t\Lambda). $$
Thus we conclude
$$\Lambda^2 \leq C (\Theta^2+t^2),$$
proving the lemma.
\end{proof}

Now, we are ready to prove Proposition \ref{prop2+0}.

\begin{proof}[Proof of Proposition \ref{prop2+0}]
Note that  we have
\be
\|F_{ \Xi_t}\|^2_{L^2(M_{ U'},\omega_{ t})}\leq 2 \int_{M_{ U'}}( t^{-1}|F_{A_t}|^2_{\omega^{SF}}+\sum_{j=1,2}| \kappa_{ t,j}|^2_{\omega^{SF}}+ t|F_{ B,t}|^2_{\omega^{SF}})(\omega^{SF})^2.\nonumber
\ee
By (\ref{ASD2}), we have
\be
 t|F_{ B,t}|^2_{\omega^{SF}}\leq C  ( t^{-1}|F_{A_t}|^2_{\omega^{SF}}+t \sum_{j=1,2}| \kappa_{ t,j}|^2_{\omega^{SF}}),\nonumber
\ee
which in turn implies
\bea
\|F_{ \Xi_t}\|^2_{L^2(M_{ U'}, \omega_{ t})}&\leq& C \int_{ U'}(t^{-1}\|F_{A_t}\|^2_{w}+\sum_{j=1,2}\|\kappa_{ t,j}\|^2_{w})d x_1 d x_2\nonumber\\
&\leq& C ( t + \sum_{j=1,2}\int_{ U'} \|\kappa_{ t,j}-\kappa_{ t,j}^0\|^2_{w} d x_1 d x_2 \nonumber \\ & & + \sum_{j=1,2} \int_{U'} \|\partial_{x_j}A_{0,t}\|_w^2 d x_1 d x_2) \nonumber\\ &\leq& C  ( t +  \sum_{j=1,2} \int_{U'} \|\partial_{x_j}A_{0,t}\|_w^2 d x_1 d x_2)\nonumber
\eea
For the second inequality above we used $\|F_{A_t}\|^2_{w}\leq C t^2$ and $ \kappa_{ t,j}^0 =\partial_{x_j}A_{0,t} $.

Finally,   \bea
\|F_{ \Xi_t}\|^2_{L^2(M_{ U'}, \omega_{ t})}&\geq & \frac{1}{2}\int_{ U'}\sum_{j=1,2}\|\kappa_{ t,j}\|^2_{w}d x_1 d x_2\nonumber\\
&\geq &  \frac{1}{2} ( \sum_{j=1,2} \int_{U'} \|\partial_{x_j}A_{0,t}\|_w^2 d x_1 d x_2 \nonumber \\ & & - \sum_{j=1,2}\int_{ U'} \|\kappa_{ t,j}-\kappa_{ t,j}^0\|^2_{w} d x_1 d x_2 ) \nonumber\\ &\geq & C  (  \sum_{j=1,2} \int_{U'} \|\partial_{x_j}A_{0,t}\|_w^2 d x_1 d x_2-t),\nonumber
\eea and we obtain the conclusion.
\end{proof}

We finish this section by a lemma that  is needed in the proof of Theorem \ref{thm-main2}.

\begin{lem}\label{lem9.1}    $$\sum_{j=1,2}\|[\kappa_{t,j}, \kappa_{t,j}]\|_{L^{2}(M_{U'},\omega^{SF})}^2 \leq C_7   t,$$ for a constant $C_7>0$.
 \end{lem}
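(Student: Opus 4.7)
The plan is to exploit the fact that the background mixed curvature $\kappa_{t,j}^0 = \partial_{x_j} A_{0,t}$ is diagonal, combined with the $L^2$ closeness of $\kappa_{t,j}$ to $\kappa_{t,j}^0$ from Lemma \ref{prop1} and the uniform $C^0$ bound on $\kappa_{t,j}$ from Proposition \ref{prop2}.

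First, I would observe that by formula (\ref{bconnections3}), $A_{0,t}$ is a diagonal $\mathfrak{su}(n)$-valued 1-form on $M_U$, and hence so is $\partial_{x_j} A_{0,t}$. In particular $\kappa_{t,j}^0 = \partial_{x_j} A_{0,t}$, as a matrix-valued 1-form along the fiber, takes values in the abelian subalgebra of diagonal matrices, which gives
$$[\kappa_{t,j}^0, \kappa_{t,j}^0] = 0, \qquad j=1,2.$$

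Next, using the bilinearity of the bracket, I would write
$$[\kappa_{t,j}, \kappa_{t,j}] = [\kappa_{t,j}, \kappa_{t,j}] - [\kappa_{t,j}^0, \kappa_{t,j}^0] = [\kappa_{t,j} - \kappa_{t,j}^0, \kappa_{t,j}] + [\kappa_{t,j}^0, \kappa_{t,j} - \kappa_{t,j}^0].$$
Taking pointwise norms on $M_{U'}$ with respect to $\omega^{SF}$ and applying the $C^0$ bounds, we get
$$|[\kappa_{t,j}, \kappa_{t,j}]|_{\omega^{SF}} \leq C \bigl(\|\kappa_{t,j}\|_{C^0(M_{U'},\omega^{SF})} + \|\kappa_{t,j}^0\|_{C^0(M_{U'},\omega^{SF})}\bigr)\,|\kappa_{t,j} - \kappa_{t,j}^0|_{\omega^{SF}}.$$
By Proposition \ref{prop2} iii) we have $\|\kappa_{t,j}\|_{C^0(M_{U'},\omega^{SF})} \leq C_3$, while $\|\kappa_{t,j}^0\|_{C^0(M_{U'},\omega^{SF})} = \|\partial_{x_j} A_{0,t}\|_{C^0(M_{U'},\omega^{SF})}$ is bounded independent of $t$ since $A_{0,t} \to A_0$ smoothly on $U$.

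Squaring and integrating over $M_{U'}$ with respect to $(\omega^{SF})^2$, and using the product decomposition $(\omega^{SF})^2 = c\,\omega^{SF}|_{M_w} \wedge dx_1 \wedge dx_2$, I would then conclude
$$\|[\kappa_{t,j}, \kappa_{t,j}]\|_{L^2(M_{U'},\omega^{SF})}^2 \leq C \int_{U'} \|\kappa_{t,j} - \kappa_{t,j}^0\|_w^2 \, dx_1 dx_2.$$
The final step is to invoke Lemma \ref{prop1}, which gives $\int_{U'} \sum_j \|\kappa_{t,j} - \kappa_{t,j}^0\|_w^2\, dx_1 dx_2 \leq C_6 t$, yielding the desired bound. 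There is no serious obstacle here; the lemma essentially combines the diagonal (hence abelian) structure of the background data with estimates already established.
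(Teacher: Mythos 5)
Your proof is correct and follows essentially the same route as the paper: both exploit that $\kappa_{t,j}^0=\partial_{x_j}A_{0,t}$ is diagonal so its self-bracket vanishes, use the uniform $C^0$ bound on $\kappa_{t,j}$ to get the pointwise estimate $|[\kappa_{t,j},\kappa_{t,j}]|\leq C|\kappa_{t,j}-\kappa_{t,j}^0|$, and then conclude via Lemma \ref{prop1}. The only cosmetic difference is the algebraic rearrangement of the bracket (the paper writes $[\kappa_{t,j},\kappa_{t,j}]=2[\kappa_{t,j}^0,\kappa_{t,j}-\kappa_{t,j}^0]+[\kappa_{t,j}-\kappa_{t,j}^0,\kappa_{t,j}-\kappa_{t,j}^0]$), which leads to the same bound.
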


\begin{proof} Recall that
$ \kappa_{t,j}^0=\partial_{x_j} A_{0,t}$ by (\ref{curv+++}), and thus  $$ [\kappa_{t,j}^0,\kappa_{t,j}^0]=0,  \  \  \  j=1,2.$$
 We have $$[\kappa_{t,j},\kappa_{t,j}]=2 [\kappa_{t,j}^0,\kappa_{t,j}-\kappa_{t,j}^0]+[\kappa_{t,j}-\kappa_{t,j}^0,\kappa_{t,j}-\kappa_{t,j}^0],  $$ and  by $|\kappa_{t,j}|\leq C$,  $$|[\kappa_{t,j},\kappa_{t,j}]|\leq C | \kappa_{t,j}-\kappa_{t,j}^0|.$$
 Lemma \ref{prop1} shows that
$$\int_{U}\sum_{j=1,2}\|[\kappa_{t,j},\kappa_{t,j}]\|_w^2  dx_1dx_2 \leq C \int_{U}\sum_{j=1,2}\|\kappa_{t,j}-\kappa_{t,j}^0\|_w^2  dx_1dx_2\leq C t.$$ We obtain the conclusion.
\end{proof}

\section{Proof of Proposition \ref{type3bubbling}}
\label{lowerbounds}

Now, we have the tools to verify assumption \eqref{hyp:curv} along our main subsequence of times $t_k$, which is chosen in Proposition \ref{estimate 1}.
\begin{proof}[Proof of Proposition \ref{type3bubbling}]
We work via contradiction, and assume the Proposition is false, in other words assumption \eqref{hyp:curv} fails for our sequence $\Xi_{t_k}$. By passing to a subsequence, there exists a sequence  of  points $p_k'\in M_K$ so that
\be
\label{contraassump}
t_k^{\frac{1}{2}}|F_{\Xi_{t_k}}|_{\omega_{t_k}}(p_k')\rightarrow\infty,
\ee
 and $f(p_k')$ converges to a point $x\in K$, as $t_k\rightarrow 0$.

 Applying Lemma \ref{pointpick}, we can pick new points near  $p_k$ to carry out our argument. Specifically, if $r=\frac{1}{2}{\rm dist}_{\omega}(x, N\backslash K)$,  there exists a sequence of real numbers $0<\rho_{k}<r$ and a sequence $p_k \in M$  so that $d_{\omega_{t_k}}(p_k,p_k')\leq r$,
 \be
 \sup_{B_{\omega_{t_k}}(p_k, \rho_{k})}
 |F_{\Xi_{t_k}}|_{\omega_{t_k}}\leq 2|F_{\Xi_{t_k}}|_{\omega_{t_k}}(p_k),\nonumber
 \ee
 and
 $$ 2\rho_{k}|F_{\Xi_{t_k}}|_{\omega_{t_k}}(p_k)\geq r |F_{\Xi_{t_k}}|_{\omega_{t_k}}(p_k').$$
If we set $\delta_{k}:=t_k^{-\frac{1}{2}}|F_{\Xi_{t_k}}|_{\omega_{t_k}}^{-1}(p_k)$, then \eqref{contraassump} and the above inequalities give $\delta_{k} \rightarrow 0$,  and
 $$\rho_{k} \delta_{k}^{-1}\geq r t_k^{\frac{1}{2}}|F_{\Xi_{t_k}}|_{\omega_{t_k}}(p_k') \rightarrow\infty.$$
 Furthermore, define
\be
\ti t_k:= t_k \delta_{k}^{-2}={t_k}^2|F_{\Xi_{t_k}}|_{\omega_{t_k}}^{2}(p_k)\leq \epsilon_k^2\rightarrow 0,\nonumber
\ee
which goes to zero as $t_k\rightarrow0$ by Proposition  \ref{estimate 1}.

We now consider the scaled matric $\tilde{\omega}_{\ti t_k}= \delta_{k}^{-2}\omega_{t_k}$, and claim that $\tilde{\omega}_{\ti t_k}$ satisfies the same collapsing properties of $\omega_{t_k}$.  If $\tilde{w} = \delta_{k}^{-1} w$ denotes the scaled coordinate   on $D_r=\{|w|<r\delta_{k}\}=\{|\tilde{w}|<r\}$, where $f(p_k)$ is given by $w=0$,  then  \be
\delta_{k}^{-2}\omega_{t_k}^{SF}=\frac {i}2\left(\tilde{t}_k W(dz+\tilde{b}d\ti w)\wedge\overline{(dz+ \tilde{b}d\ti w)}+W^{-1} d\ti w\wedge d\bar {\ti w}\right),\nonumber
\ee where $\ti b= -\frac{{\rm Im}(z)}{{\rm Im}(\tau)}\frac{\partial\tau}{\partial \ti w}$.
 For a certain  fiberwise  translation $T_{\sigma_0}$, we write
\bea
 T_{\sigma_0}^*\delta_{k}^{-2} \omega_{t_k}- \delta_{k}^{-2}\omega^{SF}_{t_k} & = &  \delta_{k}^{-2}\varphi_{{t_k}, z \bar{z}}dz\wedge d\bar{z}+ \varphi_{{t_k}, w \bar{w}}d\tilde{w}\wedge d\bar{\tilde{w}}\nonumber \\ & & +\delta_{k}^{-1} \varphi_{{t_k}, w \bar{z}}d\tilde{w}\wedge d\bar{z}+ \delta_{k}^{-1} \varphi_{{t_k}, z \bar{w}}dz\wedge d\bar{\tilde{w}}.\nonumber
\eea
By   Lemma  \ref{lem-decay},
  for $\nu\gg  1$,   $$\|\delta_{k}^{-2}\varphi_{t_k, z \bar{z}}\|_{C_{\rm loc}^\ell} +   \|\delta_{k}^{-1} \varphi_{t_k, z \bar{w}}\|_{C_{\rm loc}^\ell}+ \|\delta_{k}^{-1} \varphi_{t_k, w \bar{z}}\|_{C_{\rm loc}^\ell}\leq   C_\ell \tilde{t}_k^\nu,$$
  and
   $$\|\frac{\partial}{\partial z}\varphi_{t_k, w \bar{w}}\|_{C_{\rm loc}^\ell}+\|\frac{\partial}{\partial \bar{z}}\varphi_{t_k, w \bar{w}}\|_{C_{\rm loc}^\ell}\leq C_{\ell} \tilde{t}_k^{\nu}, \ \ \,\,\, \  \  \|\varphi_{t_k, w \bar{w}}-\chi_{t_k,w\bar{w}}\|_{C_{\rm loc}^0}\leq C_0   \tilde{t}_k^{\nu}. $$
   Here we used $t_k \leq \tilde{t}_k$, and that $\chi_{t_k, w \bar{w}}$ is a function on $D_r$ that satisfies    $\chi_{t_k, w \bar{w}} \rightarrow 0$ in the $C^\infty$-sense as $t_k\rightarrow 0$. The $C_{\rm loc}^\ell$-norms are calculated in coordinates $z$ and $ \tilde{w}$.

Working in the scaled metrics, we have that $d_{\omega_{\ti t_k}}(p_k, p)\leq \rho_{k}\delta_{k}^{-1}$ for any $p\in B_{\omega_{t_k}}(p_k, \rho_{k})$, so the radius of the disk approaches infinity. In particular this implies that on $ B_{\ti \omega_{ \ti t_k}}(p_k,\rho_{k}\delta_{k}^{-1})$, we have the bound
\be
 |F_{ \Xi_{t_k}}|_{\ti \omega_{\ti t_k}}= \delta_{k}^{2}|F_{\Xi_{t_k}}|_{\omega_{t_k}}  \leq 2 \delta_{k}^{2} |F_{\Xi_{t_k}}|_{\omega_{t_k}}(p_k)=    2 t_k^{-1} |F_{\Xi_{t_k}}|_{\omega_{t_k}}^{-1}(p_k)=  2\ti t_k^{-\frac12}.\nonumber
\ee
Now, because the energy $\E_{t_k}(p, R_{t_k}(p_k))$ is scale invariant,
\bea
\varepsilon &= & \E_{t_k} (p_k, R_{t_k}(p_k))\nonumber\\ &=&
\frac{\delta_{k}^{-4}R_{t_k}(p_k)^{4}}{{\rm Vol}(B_{\ti \omega_{ \ti t_k}}(p_k,\delta^{-1}_{t_k}R_{t_k}(p_k)))}\int_{B_{\ti \omega_{\ti t_k}}(p_k,\delta^{-1}_{t_k}R_{t_k}(p_k))}|F_{ \Xi_{t_k}}|^2_{\ti \omega_{\ti t_k}}\ti\omega_{\ti t_k}^2.   \nonumber
\eea
Additionally, note that
$$
\delta_{k}^{-1}R_{t_k}(p_k) = {t_k}^{\frac12}|F_{\Xi_{t_k}}|_{\omega_{t_k}}(p_k) R_{t_k}(p_k) \leq 4 t_k^{\frac12} |F_{\Xi_{t_k}}|_{\omega_{t_k}}^{\frac{1}{2}}(p_k) =4 \ti t_k^{\frac14},\nonumber
$$ since  $ |F_{\Xi_{t_k}}|_{\omega_{t_k}}(p_k)\leq 4 R_{t_k}^{-2}(p_k)$ by (\ref{curvradconclusion}).  Thus, on $B_{\ti \omega_{ \ti t_k}}(p_k,\rho_{k}\delta_{k}^{-1})$ we have  \be
\label{thing10}
 |F_{ \Xi_{t_k}}|_{\ti \omega_{\ti t_k}}\leq   2\ti t_k^{-\frac12}
 \ee
and
\be
\label{thing9}
\varepsilon \leq\frac{4^4 \ti t_k}{{\rm Vol}(B_{\ti \omega_{ \ti t_k}}(p_k,4 \ti t_k^{\frac14}))}\int_{B_{\ti \omega_{\ti t_k}}(p_k,4 \ti t_k^{\frac14})}|F_{ \Xi_{t_k}}|^2_{\ti \omega_{\ti t_k}}\ti\omega_{\ti t_k}^2.
\ee
Inequality  \eqref{thing10} gives assumption \eqref{hyp:curv} for our connections in scaled coordinates (with scaled parameter $\ti t$).   Also \eqref{hyp:poincare} is also satisfied  since the scaling does not effect the fiber direction. Thus  Proposition \ref{prop2} holds in scaled coordinates, which in turn allows us to conclude Proposition \ref{prop2+0} as well.

 To achieve our contradiction, we show these bounds force the energy on the right hand side of \eqref{thing9} to go to zero. We continue to use the notation $\|\cdot\|_{w}:=\|\cdot\|_{L^2(M_{w},\ti\omega^{SF})}$ since scaling does not affect the fiber direction.

Applying Proposition \ref{prop2+0},  on any $K\subset D_r$ we have
 $$  \|F_{\Xi_{t_k}}\|_{L^2(M_{K}, \ti \omega_{\ti t_k})}^2  \leq C(\ti t_k +\int_{K}\sum_{j=1,2}\|\partial_{\tilde{x}_j}A_{0,t_k}\|_w^2  d\tilde{x}_1d\tilde{x}_2) $$
for a uniform constant $C$,  where $\tilde{x}_1+i\tilde{x}_2=\tilde{w}$.  Since $A_{0,t_k} \rightarrow A_{0}$ in the $C^\infty$-sense on $M_U$, we have
 $$\|\partial_{\tilde{x}_j}A_{0,t_k}\|_w^2=\delta_k^2\|\partial_{x_j}A_{0,t_k}\|_w^2\leq C\delta_k^2,  $$
and thus
 $$  \|F_{\Xi_{t_k}}\|_{L^2(M_{K}, \ti \omega_{\ti t_k})}^2  \leq C(\ti t_k +\delta_k^2 \int_K d\tilde{x}_1d\tilde{x}_2 ). $$
Because the radius $\ti t_k^{\frac14}$ grows slower than the injectivity radius of the elliptic fibers in the metric $\ti \omega_{\ti t_k}$ (which is roughly  $\ti t_k^{\frac12}$), we see that for $\ti t_k$ small enough
\be
\frac{\ti t_k}{{\rm Vol}(B_{\ti \omega_{ \ti t_k}}(p,4\ti t_k^{\frac14}))}\leq \frac{C \ti t_k }{\ti t_k \ti t_k^{\frac12}}=\frac{C }{ \ti t_k^{\frac12}}. \nonumber
\ee
Also $B_{\ti \omega_{ \ti t_k}}(p_k, 4\ti t_k^{\frac14})\subset M_{D_r}$. Thus, returning to \eqref{thing9}, we have
\bea
   \varepsilon & \leq & \frac{4^4 \ti t_k}{{\rm Vol}(B_{\ti \omega_{ \ti t_k}}(p_k,4\ti t_k^{\frac{1}{4}}))}\int_{B_{\ti \omega_{\ti t_k}}(p_k,4\ti t_k^{\frac{1}{4}})}|F_{ \Xi_{t_k}}|^2_{\ti \omega_{\ti t_k}}\ti\omega_{\ti t_k}^2\nonumber \\
&\leq& \frac{C}{ \ti t_k^{\frac12}}(\ti t_k +\delta_k^2 \ti t_k^{\frac12} )\nonumber \\ &\leq &  C(\ti t_k^{\frac12}+\delta_k^2).\nonumber
\eea
The right hand side above goes to zero, a contradiction.
\end{proof}

\section{The proof of Theorem \ref{thm-main2}}

At last, we prove Theorem \ref{thm-main2} in this section.  Under the same setup as in Section 6,  the first lemma shows that for any fixed $p\geq 2$,  $$\| F_{B,t}\|_{L^p(M_U, \omega^{SF})}\rightarrow 0$$ when $t \rightarrow 0$.

\begin{lem}\label{prop9.2}   If (\ref{hyp:curv}) and (\ref{hyp:poincare}) hold for $t\ll 1$, for any $p\geq 2$,   we have
the following inequalities  $$ \| F_{A_t}\|_{L^p(M_U, \omega^{SF})}^p\leq C_1t^{1+p}, \   \   and  \  \   \| F_{B,t}\|_{L^p(M_U, \omega^{SF})}^p\leq  C_1t^{1+\frac{1}{p}} , $$ where    the constant $C_1$  is   independent of $t$.
 \end{lem}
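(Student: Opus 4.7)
The strategy is a two-step argument combining the commutator decay of Lemma \ref{lem9.1} with the fiberwise Poincar\'e inequality of Proposition \ref{poincare} established in Section \ref{Poincare}, together with log-convex interpolation against the pointwise curvature bounds of Section \ref{C0boundsection}.

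For the bound on $F_{A_t}$, I would integrate the Bochner inequality of Lemma \ref{le2} against a smooth cutoff $\eta\equiv 1$ on $U'$, supported in $U$. The left-hand side becomes $\int_U\Delta\eta\cdot\|F_{A_t}\|_w^2\,dx_1dx_2$, which by Lemma \ref{le-l2} is bounded by $C\sup_U\|F_{A_t}\|_w^2\leq Ct^2$. The essential input on the right-hand side is Lemma \ref{lem9.1}, which upgrades the naive commutator bound to $\sum_j\|[\kappa_{t,j},\kappa_{t,j}]\|_{L^2(M_{U'},\omega^{SF})}^2\leq C_7 t$, so the error contribution $Ct\sum_j\|[\kappa_{t,j},\kappa_{t,j}]\|_w^2$ integrates to at most $Ct^2$. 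Rearranging gives $\int_{U'}\|d_{A_t}^*F_{A_t}\|_w^2\,dx_1dx_2\leq Ct^3$, and the fiberwise Poincar\'e inequality from Proposition \ref{poincare} upgrades this to $\int_{U'}\|F_{A_t}\|_w^2\,dx_1dx_2\leq Ct^3$. Interpolating this refined $L^2$ decay against the $C^0$ bound $\|F_{A_t}\|_{C^0(\omega^{SF})}\leq C_3 t$ from Proposition \ref{prop2} yields
\[
\|F_{A_t}\|_{L^p(M_{U'},\omega^{SF})}^p\leq \|F_{A_t}\|_{C^0}^{p-2}\|F_{A_t}\|_{L^2}^2\leq (Ct)^{p-2}\cdot Ct^3 = C t^{p+1}.
\]

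For the bound on $F_{B,t}$, I would use the ASD relation (\ref{ASD2}) to solve
\[
F_{B,t}=(W+G_2)^{-1}\Bigl[t^{-1}(1+G_0+G_1)\star_w F_{A_t}-\sum_j\kappa_{t,j}\#G_3\Bigr].
\]
Since $(W+G_2)^{-1}(1+G_0+G_1)$ is uniformly bounded on $M_{U'}$ and $\|G_3\|_{C^\ell}=o(t^{\nu/2})$ for every $\nu\gg 1$, the dominant contribution is $t^{-1}W^{-1}(1+G_0)\star_w F_{A_t}$, whose coefficient depends only on the base variable $w$. Feeding the $L^p$ estimate from the previous step into this identity and combining with $\|\kappa_{t,j}\|_{L^p}\leq C$ and the $C^0$ bound $\|F_{B,t}\|_{C^0}\leq C_3$ from Proposition \ref{prop2}, an interpolation between the resulting $L^2$ decay and the $L^\infty$ bound on $F_{B,t}$ should produce the claimed exponent.

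The main obstacle lies in the second step. The naive substitution of the $L^p$ bound for $F_{A_t}$ into the identity from (\ref{ASD2}) only yields $\|F_{B,t}\|_{L^p}^p\leq Ct^{-p}\|F_{A_t}\|_{L^p}^p+o(t^{p\nu/2})\leq Ct$, which falls short of the target $Ct^{1+1/p}$. Closing this gap should exploit the product structure of the leading term, namely that the dominant coefficient depends only on $w$ while $F_{A_t}$ concentrates along fibers; combining base-fiber Fubini with log-convex interpolation between $\|F_{A_t}\|_w^2\leq Ct^2$ (Lemma \ref{le-l2}) and the integrated $L^2$ bound $\int_{U'}\|F_{A_t}\|_w^2\leq Ct^3$ from Step 1, together with the uniform $C^0$ control on $F_{B,t}$, should then extract the sharper rate. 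Verifying this refined interpolation is expected to be the most delicate part of the proof.
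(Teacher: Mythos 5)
Your first estimate is exactly the paper's argument: integrate the Bochner inequality of Lemma \ref{le2} against a cutoff, control the commutator term by Lemma \ref{lem9.1}, conclude $\int_U\|F_{A_t}\|_w^2\,dx_1dx_2\leq Ct^3$, and interpolate against the $C^0$ bound to get $\|F_{A_t}\|_{L^p}^p\leq Ct^{p+1}$. (The paper first converts $\|d_{A_t}^*F_{A_t}\|_w$ into $\|F_{A_t}\|_w$ inside the Bochner inequality and absorbs the $\Delta\eta$ term rather than estimating it by the sup bound, but these are cosmetic rearrangements of the same proof.)

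For the second estimate, you should stop looking for the ``refined interpolation'': there isn't one in the paper. The paper's entire proof of the $F_{B,t}$ bound is the phrase ``by (\ref{ASD2})'', i.e.\ precisely the naive substitution you carried out, and that substitution gives $\|F_{B,t}\|_{L^p}^p\leq Ct^{-p}\|F_{A_t}\|_{L^p}^p+o(t^{p\nu/2})\leq Ct$. The stated exponent $t^{1+\frac1p}$ does not follow from the available ingredients — already for $p=2$ it would require $\|F_{B,t}\|_{L^2}^2\leq Ct^{3/2}$, whereas (\ref{ASD2}) combined with $\|F_{A_t}\|_{L^2(M_U)}^2\leq Ct^3$ yields only $Ct$, and interpolating with the uniform bound $\|F_{B,t}\|_{C^0}\leq C$ cannot improve this. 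So the obstacle you identified is a slip (or at least an unjustified strengthening) in the paper's statement, not a missing step in your argument; the rate $Ct$ is what the proof actually delivers, and it is all that is used downstream, since in Section 9 the lemma only serves to show $\|F_{B,t}\|_{L^p}\to0$ and hence $F_{\tilde B,0}\equiv0$ for the limit connection.
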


 \begin{proof}  By Lemma \ref{le2},
  $$ \Delta \| F_{A_t}\|_w^2  \geq  \frac{\delta}{t} \|  F_{A_t}  \|_w^2 - CtZ_t, $$
   where $$Z_t=\sum_{j=1,2}\|[\kappa_{t,j}, \kappa_{t,j}]\|_w^2 +t^\nu,$$ for $\nu\gg 1$ and a constant $C>0$.    Lemma \ref{lem9.1} implies that $$\int_U  Z_tdx_1dx_2 \leq Ct.$$

   Let $\eta$ be  a smooth function such that $0\leq \eta \leq 1$ and ${\rm supp} (\eta) \subset U$. Then
  \begin{eqnarray*} \int_U  \|   F_{A_t}  \|_w^2 dx_1dx_2 & \leq &  t \delta^{-1} \int_U  \|   F_{A_t}  \|_w^2  \Delta \eta dx_1dx_2 +t^2 C  \int_U \eta Z_tdx_1dx_2\\ & \leq &  t\tilde{C}\delta^{-1}   \int_U  \|   F_{A_t}  \|_w^2 dx_1dx_2+ t^2 C  \int_U  Z_tdx_1dx_2,  \end{eqnarray*} for a constant $\tilde{C}\geq \sup\limits_{U} \Delta \eta$. Thus for $t\ll 1$, $$ \int_U  \|   F_{A_t}  \|_w^2 dx_1dx_2 \leq Ct^{3}.  $$
    For any $p\geq 2$,  $$ \| F_{A_t}\|_{L^p(M_U, \omega^{SF})}^p\leq C t^{p-2} \int_U  \|   F_{A_t}  \|_w^2 dx_1dx_2 \leq Ct^{p+1},  $$ by Lemma \ref{le-l2}, and  $$ \| F_{B,t}\|_{L^p(M_U, \omega^{SF})}^p\leq  Ct^{1+\frac{1}{p}},  $$ by (\ref{ASD2}).
\end{proof}

 Recall that for any sequence $t_k \rightarrow 0$, a subsequence of  $\Xi_{t_k}$ $L^p_1\cap C^{0,\alpha}$-converges to a $L^p_1\cap C^{0,\alpha}$-connection $\Xi_0$ by preforming  certain further   unitary gauge changes   if necessary on $M_K$
 in Theorem \ref{thm-main}, where $K\subset N^o$. Thus the curvature $F_{\Xi_{t_k}}$ $L^p$-converges to $F_{\Xi_{0}}$ on $M_K$.

  On any open disc $U\subset K$,  we have the decompositions  $$\Xi_{0}=\tilde{A}_0+\tilde{B}_{0,1}dx_1+\tilde{B}_{0,2}dx_2, \  \  \  {\rm and}$$  $$F_{\Xi_{0}} =F_{\tilde{A}_0}-\tilde{\kappa}_{0,1}\wedge dx_1-\tilde{\kappa}_{0,2}\wedge dx_2-F_{\tilde{B},0} dx_1\wedge dx_2,$$ where $\tilde{\kappa}_{0,j}=\frac{\partial}{\partial x_j} \tilde{A}_0-d_{\tilde{A}_0}\tilde{B}_{0,j} $.
By Lemma \ref{prop9.2} and the convergence,  we obtain that $$F_{\tilde{A}_0}\equiv 0, \  \   F_{\tilde{B},0} \equiv 0,  \  \  {\rm and}  \  \  \star_w \tilde{\kappa}_{0,1}=\tilde{\kappa}_{0,2}.$$ Thus $\Xi_{0}$ is an anti-self-dual connection with respect to $(\omega^{SF}, \Omega)$, i.e. $$ F_{\Xi_{0}}\wedge \omega^{SF} =0, \  \  {\rm and} \  \  F_{\Xi_{0}}\wedge \Omega =0.$$  It is standard (cf. Theorem 9.4 of  \cite{Weh2}) that by preforming  a further unitary gauge change  if necessary, we can have  that $\Xi_{0} $ is smooth.

\begin{lem}\label{prop9.3}  There is a unitary gauge $u$ such that $$u(\Xi_{0})=A_0$$ on $M_U$, where  $A_0$ is given by (\ref{bconnections2}).
 \end{lem}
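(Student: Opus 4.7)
The plan is to produce $u$ in two stages: first, a smooth fiberwise unitary gauge $u_1$ on $M_U$ aligning $\Xi_0|_{M_w}$ with $A_0|_{M_w}$ for every $w\in U$; then a residual gauge $u_2=e^{\phi(w)}$, depending only on the base coordinate, to eliminate the remaining base-direction components.

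\emph{Stage 1 (fiberwise gauge fixing).} Since $\Xi_0$ is smooth with $F_{\tilde A_0}\equiv 0$, the family $\{\tilde A_0|_{M_w}\}_{w\in U}$ is a smoothly varying family of flat $SU(n)$-connections, each inducing the regular semi-stable bundle $\bigoplus_{j=1}^n\mathcal{O}_{M_w}(q_j(w)-\sigma(w))$ by the fiberwise convergence of Theorem \ref{thm-main} iii). The pointwise unitary equivalence with $A_0|_{M_w}$ is unique modulo the diagonal stabilizer $T^{n-1}\subset SU(n)$ of $A_0|_{M_w}$, since the $q_j(w)$ are distinct throughout $N^o$. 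These data assemble into a smooth principal $T^{n-1}$-bundle over the contractible disk $U$, which is therefore trivial; a smooth section produces $u_1\in C^\infty(M_U,SU(n))$ with
\be
u_1(\Xi_0) = A_0 + B_1'\,dx_1 + B_2'\,dx_2,\nonumber
\ee
for smooth $B_j'\in C^\infty(M_U,\mathfrak{su}(n))$.

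\emph{Stage 2 (analysis of the base components).} The ASD system and fiberwise flatness $F_B'\equiv 0$ are preserved under $u_1$, so with $\kappa_j' := \partial_{x_j}A_0 - d_{A_0}B_j'$ one has $\star_w\kappa_1' = \kappa_2'$ and $F_B' = \partial_{x_2}B_1' - \partial_{x_1}B_2' - [B_1',B_2'] = 0$. The key identity $\star_w\partial_{x_1}A_0 = \partial_{x_2}A_0$ is established exactly as in the derivation leading to (\ref{curv+++}), either by direct calculation exploiting the holomorphicity of $q_j$ in $w$, or by observing that both $\star_w\partial_{x_1}A_0$ and $\partial_{x_2}A_0$ are fiber-harmonic (their coefficients in the $dy_j$-basis are constant in the fiber variable, and $d_{A_0}\partial_{x_j}A_0 = \partial_{x_j}F_{A_0} = 0$) while the discrepancy would otherwise sit in $\mathrm{Im}(d_{A_0})\oplus\mathrm{Im}(d_{A_0}^*)$. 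Subtracting from $\star_w\kappa_1' = \kappa_2'$ yields $\star_w d_{A_0}B_1' = d_{A_0}B_2'$, and the fiberwise $L^2$-orthogonality of $\mathrm{Im}(d_{A_0})$ and $\mathrm{Im}(d_{A_0}^*)$ forces $d_{A_0}B_j' = 0$. Hence $B_j'|_{M_w}\in\ker d_{A_0|_{M_w}}$, which consists of constant diagonal trace-free anti-Hermitian matrices, so $B_j'(w,z)=\mathrm{diag}\{b_{j,1}(w),\ldots,b_{j,n}(w)\}$ depends only on $w$.

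\emph{Stage 3 (residual gauge and conclusion).} Since $B_j'$ is diagonal, $[B_1',B_2']=0$ and $F_B'=0$ reduces to $\partial_{x_2}B_1' = \partial_{x_1}B_2'$. On the simply connected disk $U$, solve $\partial_{x_j}\phi = -B_j'$ for a smooth trace-free diagonal anti-Hermitian $\phi\in C^\infty(U,\mathfrak{su}(n))$. Setting $u_2 := e^\phi$, which commutes with $A_0$ (both diagonal), a direct computation gives $u_2(u_1(\Xi_0)) = A_0 + (B_j'+\partial_{x_j}\phi)\,dx_j = A_0$, so $u := u_2 u_1$ is the required unitary gauge. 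The main obstacle is Stage 1 --- upgrading the pointwise fiberwise equivalences of Theorem \ref{thm-main} iii) to a smooth global unitary gauge on $M_U$. This is a triviality question for a $T^{n-1}$-principal bundle on the contractible $U$, which succeeds because of the smoothness of $\Xi_0$ together with the distinct, unramified spectral data on $N^o$ that guarantees the stabilizer has constant rank.
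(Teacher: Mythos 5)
Your proposal is correct and follows essentially the same route as the paper: fiberwise gauge fixing assembled into a $T^{n-1}$-bundle over the contractible disk, the fiberwise Hodge decomposition (harmonicity of $\partial_{x_j}A_0$ plus orthogonality of $\ker\Delta_{A_0}$, $\mathrm{Im}\,d_{A_0}$, $\mathrm{Im}\,d_{A_0}^*$) to force $d_{A_0}B_j'=0$, and integration of the resulting closed diagonal $1$-form on $U$. The only difference is one of detail: the paper devotes most of its proof to justifying your Stage 1 rigorously, realizing the $T^{n-1}$-bundle as $\Psi'^{-1}(A_0|_{M_{w_0}})$ for an orbit map $\Psi'$ whose derivative $-d_{A_0|_{M_{w_0}}}$ makes $A_0|_{M_{w_0}}$ a regular value, which you assert more informally but with the correct supporting reasons (regularity of the spectral data and contractibility of $U$).
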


 \begin{proof} By Theorem \ref{thm-main}, for any $w\in U$, there is a unitary gauge $u_w$ on $M_w$ such that $u_w(\Xi_{0}|_{M_w})=A_0|_{M_w}$, and $u_w$ is smooth since both $\Xi_{0}|_{M_w}$ and $A_{0}|_{M_w}$ are smooth. We claim that one can choose $u_w$ depending on  $w$ smoothly.

Note that $M_w \cong T^2$ and $P|_{M_w}\cong M_w\times SU(n)$.  Let $\mathcal{A}^{\ell, p}$ be the space of $L^{ p}_{\ell}$ $SU(n)$-connections on the trivial bundle on $T^2$, $\ell \geq 1 $,  and $\mathcal{G}^{\ell+1, p}$ be the $L^{ p}_{\ell+1}$ unitary gauge  group. We have  identifications  $\mathcal{A}^{\ell, p}= L^{ p}_{\ell}(T^2, \mathfrak{sl}(n))$ and $\mathcal{G}^{\ell+1, p}=L^{ p}_{\ell+1}(T^2, SU(n))$ under the trivialization, and $\mathcal{G}^{\ell+1, p}$ acts on $\mathcal{A}^{\ell, p}$ by $u(A)=u^{-1}Au+u^{-1}du$. If we denote the orbit $O_w=\{u(\Xi_{0}|_{M_w})| u\in \mathcal{G}^{\ell+1, p}\}\subset \mathcal{A}^{\ell, p}$ for any $w\in U$, then $ A_{0}|_{M_w} \in O_w$. Define the orbit  map $$\Psi: \mathcal{G}^{\ell+1, p}\times U \rightarrow \bigcup_{w\in U}O_w\subset \mathcal{A}^{\ell, p},  \  \  {\rm by} \  \  \Psi(u,w)=u(\Xi_{0}|_{M_w}). $$

 For a fixed $w_0\in U$, let $\varrho_w: O_w\rightarrow O_{w_0}$ by $A \mapsto v(A_{0}|_{M_{w_0}})$, where  $v(A_{0}|_{M_w})=A$ for a unitary gauge $v$.  If $v'$ is an  another unitary gauge such that $v'(A_{0}|_{M_w})=A$, then $v'v^{-1}(A_{0}|_{M_w})=A_{0}|_{M_w}$, and thus $v'v^{-1}\in T^{n-1}\subset SU(n)$, i.e. a diagonal matrix. Since $A_{0}|_{M_{w_0}}$ is a diagonal matrix valued 1-form, we have $v (A_{0}|_{M_{w_0}})=v' (A_{0}|_{M_{w_0}})$, and
 $\varrho_w$ is well-defined.

    Let  $\Psi'=\varrho_w\circ \Psi:  \mathcal{G}^{\ell+1, p}\times U \rightarrow O_{w_0}$ be the the composition.
 Note that the tangent space $T_{A_{0}|_{M_{w_0}}}O_{w_0}={\rm Im}(d_{A_{0}|_{M_{w_0}}})$, and the first partial  derivative of  $\Psi'$  at $(u,w)$ such that $\Psi'(u,w)= A_{0}|_{M_{w_0}}$ is $D_1 \Psi'=-d_{A_{0}|_{M_{w_0}}}$. Thus $A_{0}|_{M_{w_0}}$ is a regular value of $\Psi'$, and $\Psi'^{-1}(A_{0}|_{M_{w_0}})$ is a smooth submanifold.  Furthermore, the projection $\mathcal{G}^{\ell+1, p}\times U\rightarrow U$ induces a $T^{n-1}$-bundle structure on $\Psi'^{-1}(A_{0}|_{M_{w_0}})$ with fiber $T^{n-1}\subset SU(n)$.

 If $\tilde{u}: U\rightarrow \Psi'^{-1}(A_{0}|_{M_{w_0}})$ is a smooth section, then $\tilde{u}(w)(\Xi_{0}|_{M_w})=A_{0}|_{M_w}$, and we can regard $\tilde{u}$ as a smooth  unitary gauge change on $M_U$.  Therefore we have
 $$\tilde{u}(\Xi_{0})=A_{0}+ B_{0,1}dx_1+B_{0,2}dx_2, $$ which still satisfies $$\star_w \kappa_{0,1}=\kappa_{0,2},  \  \  {\rm with} \ \ \kappa_{0,j}=\frac{\partial}{\partial x_j} A_0-d_{A_0}B_{0,j},  \  \ j=1,2, \   \   {\rm and} $$ $$0=F_{B,0}=\frac{\partial}{\partial x_2} B_{0,1}-\frac{\partial}{\partial x_1} B_{0,2}-[B_{0,1},B_{0,2}].$$

 Note that $\frac{\partial}{\partial x_j} A_0 \in \ker \Delta_{A_0}$,  $j=1,2$,  on any $M_w$, and $$\star_w \frac{\partial}{\partial x_1} A_0-\frac{\partial}{\partial x_2} A_0= \star_w d_{A_0}B_{0,1}-d_{A_0}B_{0,2}  . $$   By the Hodge decomposition, $\ker \Delta_{A_0}$, ${\rm Im} ( d_{A_0}^*)$ and ${\rm Im} (d_{A_0})$ are orthogonal to each other. Thus  $$d_{A_0}B_{0,j} \equiv 0, \  \  j=1,2,$$ on any $M_w$, and $B_{0,j}|_{M_w}$ is a diagonal matrix in $\mathfrak{sl}(n) $. If we write  $B_{0,j}=i{\rm diag}\{b_{j,1}, \cdots, b_{j,n}\}$, then $\frac{\partial}{\partial x_2} B_{0,1}=\frac{\partial}{\partial x_1} B_{0,2}$ implies that there are real functions $ \vartheta_{\ell}$ on $U$ such that $b_{1,\ell}dx_1+b_{2,\ell}dx_2=- d \vartheta_{\ell}$,  $\ell=1, \cdots,n$.  If $\tilde{v}={\rm diag}\{\exp(i\vartheta_{1}), \cdots, \exp(i\vartheta_{n})\}$, and we regard $\tilde{v}$ as a unitary gauge change on $M_U$,  then  $$ \tilde{v}(\tilde{u}(\Xi_{0}))=A_{0}.$$ We obtain the conclusion by letting $u=\tilde{v}\cdot\tilde{u}$.
\end{proof}

\begin{proof}[Proof of Theorem \ref{thm-main2}]  Let $\{U_\lambda| \lambda\in\Lambda\}$ be an  open cover of $N^o$ such that any intersection $U_{\lambda_1}\cap \cdots \cap U_{\lambda_h}$ is contractible.  For any $U_\lambda$, $D_0^o \cap M_{U_\lambda}=U_\lambda^1 \cup \cdots \cup  U_\lambda^n$ is a disjoint union of open sets biholomorphic  to $U_\lambda$, and $\{U_\lambda^j|\lambda\in\Lambda, j=1, \cdots, n \}$ is an open cover of $D_0^o \cap M_{N^o}$ such that any intersections are contractible.

 On any $M_{U_\lambda}$, there is a unitary gauge $u_\lambda$ such that $u_\lambda(\Xi_0)=A_0$ by Lemma \ref{prop9.3}. Recall that $$A_0={\rm diag}\{\alpha_1, \cdots, \alpha_n\}, \  \    \  \alpha_j=\pi({\rm Im}( \tau))^{-1}(q_j \bar{\theta}-\bar{q}_j\theta),$$ where $\{(w, q_j(w))\}=U_\lambda^j$ is one component of $D_0^o\cap M_{U_\lambda}$, and  $\alpha_j$ is not unitary  gauge equivalent to $\alpha_i$ if $j\neq i$.
On any intersection $M_{U_\lambda \cap U_\mu}$, $A_0=u_\mu \cdot u_\lambda^{-1}(A_0)$. Thus $ u_\mu \cdot u_\lambda^{-1} |_{M_w} \in T^{n-1}\subset SU(n)$ for any $w\in U_\lambda \cap U_\mu$. We can  write $u_\mu \cdot u_\lambda^{-1}= {\rm diag}\{g_{\mu\lambda}^{1j_1}, \cdots, g_{\mu\lambda}^{nj_n}\}$, where $g_{\mu\lambda}^{ij_i}$ is a $U(1)$-valued function on $U_\lambda \cap U_\mu$, and  is the unitary gauge change  between $\alpha_i$ on $M_{U_\mu}$ and  $\alpha_{j_i}$ on $M_{U_\lambda}$.   Hence we have that  $U_\mu^i \cap U_\lambda^{j_i} \neq\emptyset$, and  $d \log g_{\mu\lambda}^{ij_i} =0$, which implies that  $g_{\mu\lambda}^{ij_i}$, $i=1, \cdots, n$, are $U(1)$-valued constant functions on $U_\lambda \cap U_\mu$. By regarding $g_{\mu\lambda}^{ij_i}$ as a function on $U_\mu^i \cap U_\lambda^{j_i}$,   we obtain a 1-chain $\{(U_\mu^i \cap U_\lambda^{j_i}, g_{\mu\lambda}^{ij_i})\} \in \mathcal{C}^1(\{U_\lambda^j\}, \mathcal{U}_c(1))$  for the   $U(1)$-valued  locally constant sheaf $\mathcal{U}_c(1)$ on $D_0^o \cap M_{N^o}$.

If $U_\mu^i \cap U_\lambda^{j}\cap U_\nu^{k}\neq\emptyset$,  then $U_\mu \cap U_\lambda \cap U_\nu \neq\emptyset$,  and by $u_\mu \cdot u_\lambda^{-1} \cdot u_\lambda \cdot u_\nu^{-1}\cdot u_\nu \cdot u_\mu^{-1}={\rm Id}$,   we obtain  that  $g_{\mu\lambda}^{ij}g_{\lambda\nu}^{jk}g_{\nu\mu}^{ki}=1$. Therefore $\{(U_\mu^i \cap U_\lambda^{j_i}, g_{\mu\lambda}^{ij_i})\}$ satisfies the cocycle condition, and
 defines a cohomological class $\Theta=[\{(U_\mu^i \cap U_\lambda^{j_i}, g_{\mu\lambda}^{ij_i})\}]\in H^1(D_0^o \cap M_{N^o}, \mathcal{U}_c(1))$, which is equivalent to a flat $U(1)$-connection on $D_0^o \cap M_{N^o}$.  From the construction in Subsection 2.6, it is clear that $\Xi_0\in \mathcal{FM}(D_0^o \cap M_{N^o}, \Theta)$. \end{proof}

\appendix
 \section{Collapsing rate  of Ricci-flat   K\"{a}hler-Einstein metrics}
\label{app}

Here we study the collapsing rate of Ricci-flat   K\"{a}hler-Einstein metrics on general Calabi-Yau manifolds, which is used in the proof of the main theorem.

 Let  $M$ be  a Calabi-Yau $m$-manifold, i.e. $M$ is  projective with trivial canonical bundle $\mathcal{K}_{M}\cong \mathcal{O}_M$.  Assume  $M$
  admits a holomorphic fibration  $
f:M\rightarrow N$, where $N$ is smooth projective manifold with  $n=\dim_{\mathbb{C}}N < m$. As above, let  $S_N$ denotes   the discriminant locus $f$, and  $N_0=N\backslash S_N$ the regular locus. For any $w\in N_0$, the smooth  fiber  $M_w=f^{-1}(w)$ is a Calabi-Yau manifold of dimension $m-n$.
Let  $\alpha$ be an ample class on $M$, and $\alpha_0$ an ample class on $N$. Then for $t\in[0,1)$,   $\alpha_{t} =t \alpha + f^*\alpha_0$  is a family of K\"{a}hler classes. Denote by $\omega_t \in \alpha_{t} $  the unique Ricci-flat K\"{a}hler-Einstein metric, which satisfies the complex Monge-Amp\`ere equation
$$ \omega_t^m=c_t t^{m-n} (-1)^{\frac{m^2}{2}}  \Omega \wedge \overline{\Omega}.$$
Here $\Omega$ is a holomorphic volume  form on $M$, and $c_t $ has  a positive limit  when $t\rightarrow 0$.

    The behavior of $\omega_t$ when $t\rightarrow 0$ has been studied intensively in the  literature (see cf. \cite{GW,To1,GTZ,GTZ2,HT,TWY,TZ0,TZ,HT1}, among others). We briefly recall some of the important developments, and refer the readers to the above sources for details.  Under the assumption that  $M$ is an elliptically fibered K3 surface with only singular fibers of Kodaira type $I_1$, Gross-Wilson first proved that $(M, \omega_t)$ converges to a compact metric space homeomorphic  to the sphere $S^2$  \cite{GW}. In the case of general fibered Calabi-Yau manifolds, Tosatti proved that $\omega_t$ converges to $f^{*}\omega$ in the current sense \cite{To1}, where $\omega$ is the K\"ahler metric  on $N_{0}$ with $${\rm Ric}(\omega) =\omega_{WP}$$ obtained in \cite{To1,ST,ST1}, and  $\omega_{WP}$ is the  Weil-Petersson metric of the fibers  on $N_0$.

 If  $M$ is an Abelian fibered Calabi-Yau $m$-manifold, then Gross-Tosatti-Zhang improved the convergence of $\omega_{t}$ to $C^{\infty}$ away from the singular fibers  \cite{GTZ}. More precisely $\omega_{t}$  converges smoothly  to $f^{*}\omega$ on $f^{-1}(K)$ for any compact $K\subset N_{0}$ when $t\rightarrow 0$, and additionally the curvature of $\omega_t$ is locally uniformly bounded  on $f^{-1}(N_0)$.  The Gromov-Hausdorff convergence of $(M, \omega_{t})$ is obtained in \cite{GTZ2} for  the case of one dimensional base $N$, which generalizes the Gross-Wilson's result to any elliptically fibered K3  surface. In a recent paper of Tosatti-Zhang \cite{TZ}, the Gromov-Hausdorff convergence of $(M, \omega_{t})$ is generalized   to the case when  $M$ is a holomorphic symplectic manifold admitting a holomorphic Lagrangian fibration, and $\omega_{t}$ is a HyperK\"{a}hler metric.

However, despite  this later progress, one important property is still missing for the general cases of Calabi-Yau manifolds that appears in   the original work of Gross-Wilson. In their setting they show that $\omega_{t}$ approaches  a  semi-flat K\"{a}hler metric exponentially fast on compact subsets away from the singular fibers. This behavior is expected in general. In fact, motivated by physics, Gaiotto-Moore-Neitzke propose a construction of complete HyperK\"{a}hler metrics on certain compactifications of complex, completely  integrable systems, which asserts the exponential approximations by semi-flat K\"{a}hler metrics  \cite{GMN}. In particular, the  asymptotic behavior of HyperK\"{a}hler metrics on the Hitchin moduli spaces is studied in a recent paper \cite{MSWW}.

The goal of this appendix is to study the asymptotic rate  of  $\omega_{t}$ for any Abelian fibered Calabi-Yau manifolds. From now on assume any smooth  fiber $M_w$ is an Abelian variety.
For an open subset  $U\subset N_0$ biholomorphic to a polydisk, $f:M_U \rightarrow U$ is a family of Abelian varieties, which is isomorphic to
 $f:(U\times\mathbb{C}^{m-n})/\Lambda\to U,$ where $\Lambda\to U$ is a lattice bundle with fiber $\Lambda_w\cong \mathbb{Z}^{2m-2n}$, so that $M_w\cong \mathbb{C}^{m-n}/\Lambda_w.$  We denote the universal covering map
   $p:U\times\mathbb{C}^{m-n}\to M_U$, which satisfies that $f\circ p(w,z)=w$ for all $(w,z)\in U\times\mathbb{C}^{m-n}$.

 For completeness we recall the construction of the semi-flat K\"{a}hler metric on
  $M_U$ (cf. \cite{GSVY,GTZ}).   Note that the ample class $\alpha$
   gives an ample polarization of type $(d_1,\ldots,d_{m-n})$ of the fiber $M_w$, where $d_i \in\mathbb{N}$ and
 $d_1|d_2|\cdots|d_{m-n}$.
Then  $\Lambda_w$ is generated by
$d_1e_1,\ldots,d_{n-m}e_{m-n}, Z_1,\ldots,Z_{m-n}\in \mathbb{C}^{m-n}$, where
$e_1,\ldots,e_{m-n}$ denotes the standard basis for $\mathbb{C}^{m-n}$, and  the matrix $Z=[Z_1,\ldots,Z_{m-n}]$ is the period matrix of $M_w$, which satisfies the Riemann relationship
 $$Z=Z^t,  \  \   {\rm  and}  \  \ {\rm Im} Z>0. $$
  If $z_1, \cdots, z_{m-n}$ denote the coordinates on $\mathbb{C}^{m-n}$, then
  on the fiber $M_w$, the flat K\"ahler form
  $$i\sum\limits_{kl}(\Im Z)_{kl}^{-1}dz_k\wedge d\bar z_l$$
 represents  $\alpha|_{M_w}$. Using the notation $W_{kl}=({\rm Im} Z)^{-1}_{kl},$ by Section 3 in \cite{GTZ}, if
$$
\eta(w,z)=-\frac{1}{2}\sum_{k,l=1}^{m-n}W_{kl}(w)(z_k-\bar z_k)(z_l-\bar
z_l) ,
$$ then  $i\partial\bar\partial \eta$ is
invariant under translation by  sections of  $\Lambda$, and therefore,  defines a semi-positive $(1,1)$-form on $M_U$.    The semi-flat metric is defined as
\be
\label{ASFmetric}
\omega_t^{SF}  = i t \partial\bar\partial \eta + f^* \omega,
\ee
for any $t\in(0,1]$, which satisfies that
 $\omega_t^{SF}|_{M_w}$ is the flat metric in the class $t\alpha|_{M_w}$. Again $\omega\in\alpha_0$ is the    K\"ahler metric  on $N$   whose Ricci curvature is the Weil-Petersson metric of fibers on the regular part.

The main result of the appendix is the following:

 \begin{thm}\label{ttm-decay}  For any $\nu \in\mathbb{N}$, there is a constant $C_{\nu}>0$ such that   $$ \|T_{\sigma_0}^*\omega_t- \omega^{SF}_t-f^*\chi_t\|_{C_{\rm loc}^0(M_U, \omega^{SF}_t)}\leq C_{\nu} t^{\frac{\nu}{2}}, $$ for a certain local section $\sigma_0$,    where $\chi_t$ is a $(1,1)$-form  on $U$ such that $\chi_t \rightarrow 0$ in the $C^\infty$-sense when $t\rightarrow 0$, and   $T_{\sigma_0}$ is the fiberwise  translation by $\sigma_0$.
 \end{thm}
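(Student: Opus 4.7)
The plan is to construct, for each $\nu\in\mathbb{N}$, an explicit approximate Ricci-flat K\"{a}hler metric $\omega_t^{(\nu)}$ on $M_U$ whose Monge-Amp\`ere volume form differs from $c_t t^{m-n}(-1)^{m^2/2}\Omega\wedge\overline\Omega$ only by $O(t^{\nu})$, and then to compare $T_{\sigma_0}^*\omega_t$ with $\omega_t^{(\nu)}$ via a Monge-Amp\`ere potential estimate. This is the same philosophy used by Gross-Wilson in the elliptic K3 case, but pushed out to arbitrary polynomial order in $t^{1/2}$ and generalized to Abelian fibers of arbitrary dimension.

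The construction is iterative. I would write the ansatz in the form
\[
\omega_t^{(\nu)} = \omega_t^{SF} + f^*\chi_t^{(\nu)} + i\partial\bar\partial \psi_t^{(\nu)},
\]
where $\chi_t^{(\nu)}$ is a $(1,1)$-form on $U$ and $\psi_t^{(\nu)}$ is a potential with zero fiberwise average, both expanded as Taylor series in $t^{1/2}$:
\[
\chi_t^{(\nu)} = \sum_{k=1}^{\nu} t^{k/2} \chi_{(k)}, \qquad \psi_t^{(\nu)} = \sum_{k=1}^{\nu} t^{k/2}\psi_{(k)}.
\]
Expanding $(\omega_t^{(\nu)})^m$ in powers of $t^{1/2}$ and matching the leading-order contribution of $\omega_t^{SF}$ against $t^{m-n}(-1)^{m^2/2}\Omega\wedge\overline\Omega$, one obtains at each order a linear equation of the form
\[
\Delta_{\omega^F_w} \psi_{(k)} + (\mathrm{trace\ term\ in\ }\chi_{(k)}) = R_{(k)}(\psi_{(1)},\ldots,\psi_{(k-1)},\chi_{(1)},\ldots,\chi_{(k-1)}),
\]
where $\Delta_{\omega^F_w}$ is the flat Laplacian on the Abelian fiber. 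Because the fiber is flat, harmonic analysis is explicit: the Fredholm alternative forces $\chi_{(k)}$ to be determined by the fiber average of $R_{(k)}$, and then $\psi_{(k)}$ is solved as the unique mean-zero Poisson solution. At the leading step ($k=2$) the condition on $\chi_{(1)}$ is precisely $\mathrm{Ric}(\omega)=\omega_{WP}$, which is already satisfied by our choice of $\omega$ on $U$, so the inductive scheme starts. Smoothness in $w$ and uniform bounds on the $\chi_{(k)}$ and $\psi_{(k)}$ follow from the fact that the period lattice $\Lambda$ varies holomorphically in $w$ and that $f^*\chi_{(k)}$ lives on the base, making $\chi_t^{(\nu)}\to 0$ smoothly as $t\to 0$.

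With the approximate solution in hand, the translation $\sigma_0$ is chosen so that the mean-zero normalization and the base-point ambiguity of the semi-flat ansatz are synchronized with $\omega_t$; this is possible because Gross-Tosatti-Zhang (Lemma 4.7 of \cite{GTZ}) already provide a base case $T_{\sigma_0}^*\omega_t-\omega_t^{SF}\to 0$ in $C^\infty_{\rm loc}$, which serves as the seed for our bootstrap. Setting $T_{\sigma_0}^*\omega_t = \omega_t^{(\nu)} + i\partial\bar\partial\phi_t$, the Monge-Amp\`ere equation yields
\[
(\omega_t^{(\nu)} + i\partial\bar\partial\phi_t)^m = (1+O(t^{\nu}))(\omega_t^{(\nu)})^m.
\]
Standard $L^\infty$ estimates of Kolodziej/Yau type, adapted to the collapsing setting by \cite{GTZ,TWY,TZ}, produce a potential bound $\|\phi_t\|_{C^0}\le C t^{\nu}$, which together with the higher-order estimates of Hein-Tosatti translate into $\|T_{\sigma_0}^*\omega_t-\omega_t^{(\nu)}\|_{C^0(\omega_t^{SF})}\le C t^{\nu - n/2}$; choosing $\nu$ large enough and relabeling then produces the advertised $t^{\nu/2}$ bound for each prescribed $\nu$.

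The main obstacle will be the bookkeeping of the iteration: verifying that at each order $k$ the solvability condition for $\psi_{(k)}$ is consistent with the previously fixed $\chi_{(1)},\ldots,\chi_{(k-1)}$, and that $\chi_{(k)}$ is indeed a smooth $(1,1)$-form on $U$ rather than only a distributional correction. This is a genuine compatibility question, since the fiberwise mean of $R_{(k)}$ involves global geometric quantities (periods, Weil-Petersson form, holomorphic volume form) whose interaction is not purely local in the fiber. A second technical issue is propagating the translation $\sigma_0$ consistently through the iterative scheme, ensuring that the single translation obtained at the base step continues to align the higher-order approximate models with $\omega_t$. Once these points are verified, the remainder of the argument is mechanical: solve at each order, differentiate in $w$ to get higher $C^k$ bounds, and invoke the complex Monge-Amp\`ere estimates to close.
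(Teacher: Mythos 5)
Your proposal follows the Gross--Wilson/Hein--Tosatti philosophy: build an approximate Ricci-flat metric order by order in $t^{1/2}$ by solving fiberwise Poisson equations, then compare with the true solution via Monge--Amp\`ere estimates. The paper does something entirely different and much softer. It takes the potential $\varphi_t$ with $T_{\sigma_0}^*\omega_t=\omega_t^{SF}+i\partial\bar\partial\varphi_t$ (the section $\sigma_0$ comes directly from Proposition 3.1 of \cite{GTZ}, not from any synchronization with an iterative scheme), pulls everything back to the universal cover, and rescales the fibers by $\lambda_t(w,z)=(w,t^{-1/2}z)$. After rescaling, the metric is non-collapsed and the estimates of \cite{GTZ} give uniform $C^\ell_{\rm loc}$ bounds on $i\partial\bar\partial\psi_t$ for \emph{all} $\ell$, where $\psi_t$ is now $\sqrt{t}\Lambda$-periodic. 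The entire theorem then reduces to the elementary Lemma \ref{decay}: a $\sqrt{t}\Lambda$-periodic function whose derivatives of every order are uniformly bounded must have all derivatives of size $O(t^{\nu/2})$ for every $\nu$, because each derivative vanishes at an interior extremum of the previous one on a fundamental domain of diameter $O(\sqrt{t})$, and the mean value theorem then trades one derivative for a factor of $\sqrt{t}$, which one iterates $\nu$ times. The $(1,1)$-form $\chi_t$ is simply the fiberwise extremal value $\bar h_t$ of the base-base components; no approximate solution is ever constructed. So no new PDE input beyond \cite{GTZ} is needed.

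As written, your argument has a genuine gap at its final step, which is precisely where the difficulty of the collapsing setting sits. From $(\omega_t^{(\nu)}+i\partial\bar\partial\phi_t)^m=(1+O(t^\nu))(\omega_t^{(\nu)})^m$ you can hope for $\|\phi_t\|_{C^0}\le Ct^{\nu}$ (even this requires the collapsing-adapted $L^\infty$ estimate with explicit powers of $t$, since the relevant Sobolev/volume constants degenerate), but the passage from a $C^0$ bound on the \emph{potential} to a $C^0$ bound on $i\partial\bar\partial\phi_t$ \emph{measured in $\omega_t^{SF}$} is not ``mechanical'': Yau's second-order estimate degenerates as $t\to0$, and the interpolation you implicitly invoke requires uniform higher-order bounds in the collapsed metric with controlled $t$-dependence --- which is essentially the content of \cite{HT1}, a substantial independent result, not a routine step. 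The claimed exponent $t^{\nu-n/2}$ is asserted without any derivation. In addition, the solvability of your iteration is only sketched: you must verify at each order $k$ that the fiberwise average of $R_{(k)}$ is an exact $(1,1)$-form on $U$ (so that $\chi_{(k)}$ exists as a smooth form and the Fredholm alternative applies), and that the single translation $\sigma_0$ fixed at the base step remains the correct normalization to all orders; neither is checked. If you want to complete a proof along your lines you would essentially be reproving \cite{HT1}; the paper's periodicity argument sidesteps all of this.
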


 Note that $ \omega^{SF}_t+f^*\chi_t$ is still a semi-flat metric for $0< t\ll 1$. Thus this theorem asserts that as $t\rightarrow0$, $\omega_t $  approaches  a  semi-flat metric  faster than any polynomial rate. We remark that this decay rate is not as fast as the one demonstrated by Gross-Wilson \cite{GW},
  where $$ T_{\sigma_0}^*\omega_t= \omega^{SF}_t+f^*\chi_t+o(e^{-\frac{C'}{\sqrt{t}}})$$ is obtained.
  However a sufficiently high  polynomial decay  rate  is enough for the proof of the main theorem of the present paper. We leave the exponential rate for future study.

 \begin{proof}[Proof of Theorem \ref{ttm-decay}]
 By Proposition 3.1 in \cite{GTZ}, for any K\"{a}hler metric $\omega_M \in\alpha$,
  there is a holomorphic section $\sigma_0: U \rightarrow M_U$ such that $$\omega+t \omega_{M}=T_{-\sigma_0}^*\omega_t^{SF}+i\partial\overline{\partial} \xi_t.$$
Thus
$$T_{\sigma_0}^*\omega_t=\omega+t T_{\sigma_0}^* \omega_{M} + i\partial\overline{\partial}\phi_t\circ  T_{\sigma_0} =\omega_t^{SF}+i\partial\overline{\partial}\varphi_t,$$ where $\varphi_t= (\phi_t+\xi_t)\circ  T_{\sigma_0} $.
  If we denote $\lambda_t: U\times \mathbb{C}^{m-n} \rightarrow U\times \mathbb{C}^{m-n}$ the dilation given by $ \lambda_t(w,z)=(w, t^{-\frac{1}{2}}z)$, then $\lambda_t^*i t \partial\bar\partial \eta =i \partial\bar\partial \eta$, and
  $$ \lambda_t^*p^* \omega_t^{SF}  = i  \partial\bar\partial \eta + f^* \omega.    $$
 By Proposition 4.3 in \cite{GTZ},
 $$ \| \lambda_t^*p^* T_{\sigma_0}^*\omega_t \|_{C_{\rm loc}^{\ell}} \leq C_{\ell}$$
 for constants $C_{\ell}>0$, and by Lemma 4.7 in  \cite{GTZ} (also Proposition 3.2 of \cite{TZ}),  $$  \lambda_t^*p^* T_{\sigma_0}^*\omega_t \rightarrow   i  \partial\bar\partial \eta + f^* \omega$$ when $t\rightarrow 0,$ in the locally  $C^{\infty}$-sense.

If we denote  $\psi_t= \varphi_t\circ T_{\sigma_0} \circ p \circ \lambda_t$, then $\psi_t$ is $t^{\frac{1}{2}} \Lambda$-periodic, i.e.  $$\psi_t(w,z)=\psi_t(w,z+t^{\frac{1}{2}} a+ t^{\frac{1}{2}}bZ)$$ where $a+bZ=(a_1+b_1Z_1, \cdots,  a_{m-n}+b_{m-n}Z_{m-n})$ for any  $a_j, b_j \in\mathbb{Z}$. By the above we can write $$  \lambda_t^*p^* T_{\sigma_0}^*\omega_t =   i  \partial\bar\partial \eta +  \omega + i\partial\overline{\partial} \psi_t,$$ and note that    $ \| i\partial\overline{\partial} \psi_t \|_{C_{\rm loc}^{\ell}} \leq C_{\ell}$,   and  $i\partial\overline{\partial} \psi_t \rightarrow 0$  as $t\rightarrow 0$, on $ U\times \mathbb{C}^{m-n}$.

\begin{lem}\label{decay} Denote $$\psi_{t,w_k\bar{w}_l}= \frac{\partial^2 \psi_t}{\partial w_k\partial \bar{w}_l} ,  \  \  \psi_{t,z_k\bar{z}_l}= \frac{\partial^2 \psi_t}{\partial z_k\partial \bar{z}_l} , \  \  {\rm and}  \  \ \psi_{t,z_k\bar{w}_l}= \frac{\partial^2 \psi_t}{\partial z_k\partial \bar{w}_l}. $$
For any $\nu\in \mathbb{N}$ and $\ell\geq 0$, there is a constant $C'_{\ell,\nu}>0$ such that $$ \|\psi_{t,w_k\bar{w}_l}-\chi_{t,kl} \|_{C_{\rm loc}^0} \leq C'_{0,\nu} t^{\frac{\nu}{2}},$$
and
 $$\|\frac{\partial}{\partial z_j}\psi_{t,w_k\bar{w}_l}\|_{C_{\rm loc}^\ell}+ \|\psi_{t,z_k\bar{z}_l}\|_{C_{\rm loc}^\ell} + \|\psi_{t,z_k\bar{w}_l}\|_{C_{\rm loc}^\ell} \leq C'_{\ell,\nu} t^{\frac{\nu}{2}}, $$ where $\chi_{t,kl}$ are functions  on $U$.
 \end{lem}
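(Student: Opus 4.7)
The plan is to exploit the shrinking fiberwise periodicity of $\psi_t$ (period lattice $t^{1/2}\Lambda$ in $z$) together with the uniform $C^\ell_{\mathrm{loc}}$ estimates from Proposition~4.3 of \cite{GTZ}, via an iterated Poincar\'e inequality on fibers, to extract polynomial decay of arbitrary order. First, I would perform the fiberwise averaging decomposition $\psi_t = \bar\psi_t(w) + \tilde\psi_t(w,z)$, where $\bar\psi_t(w)$ is the average of $\psi_t(w,\cdot)$ over one fundamental domain of $t^{1/2}\Lambda_w$ in $\mathbb{C}^{m-n}$, so that $\tilde\psi_t$ remains $t^{1/2}\Lambda$-periodic with vanishing fiber mean. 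Setting $\chi_{t,kl}(w) := \partial_{w_k}\partial_{\bar w_l}\bar\psi_t(w)$, all four quantities in the statement reduce to pure derivatives of $\tilde\psi_t$: fiberwise averaging commutes with $w$-differentiation, so $\psi_{t,w_k\bar w_l} - \chi_{t,kl} = \partial_{w_k}\partial_{\bar w_l}\tilde\psi_t$, while $\psi_{t,z_k\bar z_l}$, $\psi_{t,z_k\bar w_l}$, and $\partial_{z_j}\psi_{t,w_k\bar w_l}$ all contain a $z$-derivative, which annihilates the $z$-independent $\bar\psi_t$. Each such derivative of $\tilde\psi_t$ is again $t^{1/2}\Lambda$-periodic with vanishing fiber mean.

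Second, I would record uniform $C^\ell_{\mathrm{loc}}(U\times\mathbb{C}^{m-n})$ bounds on $\psi_t$ itself (not merely on $i\partial\bar\partial\psi_t$). Since $\psi_t = (\phi_t+\xi_t)\circ T_{\sigma_0}\circ p\circ\lambda_t$ is automatically $t^{1/2}\Lambda$-periodic in $z$ by construction, and $\phi_t$ is uniformly $C^0$-bounded under the normalization from \cite{GTZ}, the sup of $\psi_t$ on any compact subset of $U\times\mathbb{C}^{m-n}$ is controlled uniformly in $t$. Combining this $C^0$ bound with the uniform $C^\ell_{\mathrm{loc}}$ control of $i\partial\bar\partial\psi_t = \lambda_t^* p^* T_{\sigma_0}^*\omega_t - (i\partial\bar\partial\eta+f^*\omega)$ from Proposition~4.3 of \cite{GTZ}, and applying interior Schauder estimates with respect to the fixed background $\omega_0 := i\partial\bar\partial\eta + f^*\omega$, I obtain $\|\psi_t\|_{C^{\ell+1,\alpha}_{\mathrm{loc}}} \leq C_{\ell,\alpha}$ uniformly in $t$, and the same bound transfers to $\tilde\psi_t$ after subtracting the smooth function $\bar\psi_t(w)$.

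Third, I would apply the core Fourier/Poincar\'e decay mechanism. Expanding $\tilde\psi_t(w,z) = \sum_{\mu\in\Lambda^*_w\setminus\{0\}} \hat\psi_{t,\mu}(w)\,e^{2\pi i\operatorname{Re}\langle\mu,z\rangle/t^{1/2}}$, the Parseval identity immediately gives the Poincar\'e inequality
$$\|\tilde\psi_t(w,\cdot)\|_{L^2(M_w)} \leq C\, t^{1/2}\,\|\nabla_z\tilde\psi_t(w,\cdot)\|_{L^2(M_w)},$$
valid on each fiber. Since $\nabla_z^j\tilde\psi_t$ again has vanishing fiber mean for every $j\geq 0$, iterating $k$ times and using the uniform $C^{k+\ell}$ bound from Step~2 yields
$$\|\tilde\psi_t(w,\cdot)\|_{L^2(M_w)} \leq C^k t^{k/2}\,\|\nabla_z^k\tilde\psi_t(w,\cdot)\|_{L^2(M_w)} \leq C'_k\, t^{k/2 + (m-n)/2}.$$
Applying the same chain to each of the four derivatives in place of $\tilde\psi_t$ produces analogous $L^2(M_w)$-decay for each. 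Finally, Gagliardo-Nirenberg interpolation between this $L^2(M_w)$-decay and the uniform $C^M_{\mathrm{loc}}$ bound of Step~2 (with $M$ taken arbitrarily large), combined with the periodicity to reduce to a single fundamental domain, converts the $L^2$-decay to the desired $C^\ell_{\mathrm{loc}}$-decay bounded by $C_{\ell,\nu}t^{\nu/2}$ for any prescribed $\nu$; the polynomial degeneration of the Sobolev constant on tori of diameter $\sim t^{1/2}$ is absorbed by choosing $k$ sufficiently large.

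The hard part will be the bookkeeping of $t$-factors in the final interpolation step: on the shrinking fiber $M_w$ (diameter $\sim t^{1/2}$ in the rescaled coordinates), the Sobolev embedding $H^s(M_w)\hookrightarrow L^\infty(M_w)$ degenerates polynomially in $t^{-1}$, and one must verify that the gain of $t^{k/2}$ from the iterated Poincar\'e inequality dominates this loss for every prescribed decay rate. This amounts to writing down the precise Gagliardo-Nirenberg interpolation with $t$-dependent constants and observing that the exponent of $t$ in the final bound grows linearly in $k$ while the Sobolev loss is a fixed power of $t^{-1}$, so the claimed bound follows by choosing $k$ large. Once this is in place, the remaining verifications — that $\chi_{t,kl}(w)$ depends only on $w$ and tends to $0$ in $C^\infty(U)$ as $t\to 0$, and that all commutators between averaging and differentiation behave as expected — are immediate.
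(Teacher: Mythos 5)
Your proposal is correct in its essential mechanism, but it takes a genuinely different and heavier route than the paper. The paper's proof is purely pointwise and elementary: it observes that each quantity to be estimated is (after subtracting a $w$-dependent constant, taken to be the fiberwise supremum rather than the average) a pure fiber derivative of a $\sqrt{t}\Lambda$-periodic function with uniformly bounded derivatives of all orders; such a derivative vanishes at an extremum in each fundamental domain, and since the fundamental domain has diameter $O(\sqrt{t})$, integration along a segment gives $\sup|\partial^{\beta}h_t|\leq C\sqrt{t}\,\sup|\partial^{\beta+1}h_t|$, which is then iterated $\nu$ times. Your route replaces this with the fiberwise $L^2$ Poincar\'e inequality (spectral gap $\sim t^{-1}$ on the shrinking torus), iterated, followed by Sobolev/Gagliardo--Nirenberg interpolation back to $C^{\ell}$ with $t$-dependent constants. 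Both exploit exactly the same two inputs --- periodicity at scale $\sqrt{t}$ and the uniform $C^{\ell}_{\rm loc}$ bounds on $i\partial\bar\partial\psi_t$ from Proposition 4.3 of \cite{GTZ} --- and your bookkeeping of the competing powers of $t$ (linear gain in $k$ versus a fixed Sobolev loss) is right, so the argument closes. What the paper's version buys is that it never needs $C^0$ control of $\psi_t$ itself, Schauder theory, or any Sobolev constant on a degenerating fiber; what yours buys is nothing extra here, though the $L^2$ formulation would survive in settings where only integral bounds on the potential are available.

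One point to repair: the fiberwise average does \emph{not} commute exactly with $\partial_{w_k}\partial_{\bar w_l}$ taken at fixed $z$, because the lattice $\Lambda_w$ (hence the fundamental domain in the $z$-coordinate) varies with $w$; differentiating under the integral in the fixed $y$-box produces correction terms involving $y''\partial_w Z$, which are not periodic and need not average to zero. This does not damage the proof, but the fix is to define $\chi_{t,kl}$ directly as the fiber average (or, as the paper does, the fiber supremum) of $\psi_{t,w_k\bar w_l}$ itself, so that $\psi_{t,w_k\bar w_l}-\chi_{t,kl}$ has vanishing fiber mean by construction; the lemma only requires $\chi_{t,kl}$ to be a function of $w$, not the second derivative of an averaged potential.
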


 \begin{proof}
For any $t\in (0,1]$, let $h_t$ be  a  $\sqrt{t}\Lambda$-periodic real  function on $U \times \mathbb{C}^{m-n}$ such that $$\big|\partial^{\beta}_{\beta_1, \cdots,  \beta_{2(m-n)}}h_t\big|\leq C_\beta,$$
where $$
\partial^{\beta}_{\beta_1, \cdots,  \beta_{2(m-n)}}h_t= \frac{\partial^{\beta}h_t}{\partial^{\beta_1}y_1 \cdots \partial^{\beta_{2(m-n)}}y_{2(m-n)}},$$
and $z_j=y_j+y_{m-n+j}Z_j$,   $\beta=\beta_1+\cdots +\beta_{2(m-n)}$, and $ C_\beta$ is  independent of $t$.  For $w\in U$, let $D_w\subset \{w\}\times \mathbb{C}^{m-n}$ be the fundamental domain of the  $\sqrt{t}\Lambda_w$-action.
  For any $p_1$ and $p_2\in D_w$, if we denote by $\gamma \subset D_w$ the  line segment connecting  $p_1$ and $p_2$, then
  \bea  & &
|\partial^{\beta}_{\beta_1, \cdots,  \beta_{2(m-n)}}h_t(p_1)-\partial^{\beta}_{\beta_1, \cdots,  \beta_{2(m-n)}}h_t(p_2)|  \nonumber\\ &&\qquad\qquad\qquad\qquad\qquad\qquad\leq \Big|\int_{\gamma}\partial_{\dot{\gamma}} \partial^{\beta}_{\beta_1, \cdots,  \beta_{2(m-n)}}h_t (\gamma (s)) ds \Big|  \nonumber\\
&&\qquad\qquad\qquad\qquad\qquad\qquad\leq  C \sqrt{t}  \sum_{j=1}^{2(m-n)} \sup |\partial_{y_j}\partial^{\beta}_{\beta_1, \cdots,  \beta_{2(m-n)}}h_t |.   \nonumber
\eea
Since $h_t$ is periodic we can choose $p_2$ to be  a local max, which implies  $\partial^{\beta}_{\beta_1, \cdots,  \beta_{2(m-n)}}h_t (p_2)=0$. Thus for any $ k \geq 1$, we obtain  $$ |h_t-\bar{h}_t|\leq C_{0,\nu} t^{\frac{\nu}{2}}, \   \ {\rm and }  \  \  |\partial^{\beta}_{\beta_1, \cdots,  \beta_{2(m-n)}}h_t|\leq C_{\beta,\nu} t^{\frac{\nu}{2}},   $$ for constants $C_{\beta,\nu}$ independent of $t$, where $\bar{h}_t=\sup\limits_{z \in D_w}h_t$ is a function on $U$.

   The first inequality in the lemma is obtained by letting $h_t=\psi_{t,w_k\bar{w}_l}$ and $\bar{h}_t= \chi_{t,kl}$, and the second inequality follows by taking $$h_t= \frac{\partial^{\ell}\psi_t}{\partial^{\ell_1}y_1 \cdots \partial^{\ell_{2(m-n)}}y_{2(m-n)}}$$ for any $\ell \geq 1$.
\end{proof}

We obtain the desired conclusion by letting $\chi_t =i\sum\limits_{kl}\chi_{t,kl} dw_k \wedge d\bar{w}_l$. Note that the convergence in Lemma \ref{decay} is slightly stronger than Theorem  \ref{ttm-decay}, and  we  use Lemma \ref{lem-decay},  a simplified version of  Lemma \ref{decay},   in the proof of Theorem \ref{thm-main}.
\end{proof}

\end{document}